\numberwithin{equation}{section}
\theoremstyle{plain}
\newtheorem{thm}{Theorem}[section]
\newtheorem{lemma}[thm]{Lemma}
\newtheorem{corollary}[thm]{Corollary}
\newtheorem{prop}[thm]{Proposition}
\theoremstyle{remark}
\theoremstyle{definition}
\newtheorem{remark}[thm]{Remark}
\newtheorem{exmp}[thm]{Example}
\newtheorem{defn}[thm]{Definition}
\newtheorem*{notation*}{Notation}
\DeclarePairedDelimiterX{\inp}[2]{\langle}{\rangle}{#1, #2}
\newcommand*{\bigcdot}{}
\DeclareRobustCommand*{\bigcdot}{%
  \mathbin{\mathpalette\bigcdot@{}}%
}
\newcommand*{\bigcdot@scalefactor}{.5}
\newcommand*{\bigcdot@widthfactor}{1.15}
\newcommand*{\bigcdot@}[2]{%
  \sbox0{$#1\vcenter{}$}
  \sbox2{$#1\cdot\m@th$}%
  \hbox to \bigcdot@widthfactor\wd2{%
    \hfil
    \raise\ht0\hbox{%
      \scalebox{\bigcdot@scalefactor}{%
        \lower\ht0\hbox{$#1\bullet\m@th$}%
      }%
    }%
    \hfil
  }%
}
\newcommand{\C}{\mathbb{C}}
\newcommand{\Z}{\mathbb{Z}\xspace}
\newcommand{\B}{\mathbb{B}\xspace}
\newcommand{\M}{\mathbb{M}\xspace}
\newcommand{\K}{\mathcal{K}}
\DeclareMathOperator{\supp}{supp}
\DeclareMathOperator{\Aut}{Aut}
\DeclareMathOperator{\id}{id}
\DeclareMathOperator{\nor}{nor}
\DeclareMathOperator{\opp}{op}
\DeclareMathOperator{\Ad}{Ad}
\DeclareMathOperator{\Dom}{Dom}
\newcommand\restr[2]{\ensuremath{\left.#1\right|_{#2}}}
\newlist{steps}{enumerate}{1}
\setlist[steps, 1]{label = Step \arabic*:}
\title{On Relative Biexactness of Amalgamated Free Product von Neumann Algebras}
\author{Kai Toyosawa$^{1}$ and Zhiyuan Yang$^{2}$}
\address{$^{1}$Department of Mathematics, Vanderbilt University, 1326 Stevenson Center, Nashville, TN 37240, USA }
\email{$^{1}$\href{mailto:kai.toyosawa@vanderbilt.com}{kai.toyosawa@vanderbilt.com}}
\address{$^{2}$Department of Mathematics, Texas A\&M University, College Station, TX 77843, USA}
\email{$^{2}$\href{mailto:zhiyuanyang@tamu.edu}{zhiyuanyang@tamu.edu}}
\newcommand{\KK}{{\mathcal K}}
\newcommand{\FF}{{\mathcal F}}
\newcommand{\BB}{{\mathbb{ B}}}
\newcommand{\HH}{{\mathcal H}}
\begin{document}
\maketitle

\begin{abstract}
    Given weakly exact tracial von Neumann algebras $M_{1}, M_{2}$ with a common injective amalgam $B$, we prove that the amalgamated free product $M_{1}\overline{*}_{B}M_{2}$ is biexact relative to $\{M_{1},M_{2}\}$. In the case where $ M_1 $ and $M_2$ are injective, we further show that $M_{1}\overline{*}_{B}M_{2}$ is biexact relative to the amalgam $B$, and if $B$ is mixing in each of $M_1$ and $M_2$, $M_{1}\overline{*}_{B}M_{2}$ itself is biexact. As applications, we derive structural decomposition results and subalgebra absorption theorems for amalgamated free product von Neumann algebras, extending those previously known in the group case.
\end{abstract}

\section{Introduction}

In \cite{MR2211141}, Ozawa introduced a class of exact groups, which nowadays is referred to as groups of class $\mathcal{S}$ or biexact groups in \cite{MR2391387}, and proved that the group von Neumann algebra $L\Gamma$ of a biexact group $\Gamma$ is solid, that is, for any diffuse von Neumann subalgebra $A$ in $L\Gamma$, the relative commutant $A' \cap L\Gamma$ is injective \cite{MR2079600}. As a consequence, one recovers the result from \cite{MR1609522} proved via different techniques from Voiculescu's free probability theory that every free group factor $L\mathbb{F}_{n}$ is prime, i.e., if one decomposes $L\mathbb{F}_{n}$ into tensor product of two von Neumann algebras, then one of which is finite dimensional. 

In \cite{MR2391387}, a relative version of biexact groups was introduced: given a countable group $\Gamma$ and a collection $\mathcal{G}$ of subgroups of $\Gamma$, let $c_{0}(\Gamma;\mathcal{G})$ denote the set of bounded functions $f$ on $\Gamma$ such that for every $\varepsilon>0$, the set $\{x\in \Gamma: |f(x)|>\varepsilon\}$ is contained in a finite union of $s\Lambda t$'s, where $s,t\in \Gamma$ and $\Lambda\in\mathcal{G}$. The \textit{small-at-infinity boundary of $\Gamma$ relative to $\mathcal{G}$} is defined to be 
\[\mathbb{S}(\Gamma;\mathcal{G}) = \{f\in \ell^{\infty}\Gamma: f- R_{t}f \in c_{0}(\Gamma;\mathcal{G}),\ \forall t\in \Gamma\},\]
where $R_{t}$ denotes the right translation action of $t\in \Gamma$ on $\ell^{\infty}\Gamma$. The countable group $\Gamma$ is called \textit{biexact relative to $\mathcal{G}$} if $\Gamma$ is exact and the left translation action of $\Gamma$ on $\mathbb{S}(\Gamma;\mathcal{G})$ is amenable. In particular, the group $\Gamma$ is biexact if and only if it is biexact relative to $\mathcal{G}= \{\{1\}\}$. 

Relative biexactness is in general a weaker condition on groups than the genuine biexactness, but this property is often enough to conclude rigidity results in group von Neumann algebras. For example, it is shown in \cite[Proposition 15.3.12]{MR2391387} that if $\Gamma_{1},\Gamma_{2}$ are countable exact groups with a common amenable subgroup $\Lambda$, then the amalgamated free product $\Gamma = \Gamma_{1}*_{\Lambda}\Gamma_{2}$ is biexact relative to $\mathcal{G} = \{\Gamma_{1},\Gamma_{2}\}$, although the amalgamated free product group $\Gamma$ is not biexact in general. Combining with Popa's powerful intertwining-by-bimodule theorems \cite{MR2215135}, Ozawa proved a Kurosh-type theorem for free products of group von Neumann algebras associated with nonamenable exact groups \cite{MR2211141}.

Ding and Peterson extended the notion of relative biexactness to the setting of von Neumann algebras in \cite{ding2023biexact}. Given a von Neumann algebra $M$ and a collection $\mathcal{B}$ of von Neumann subalgebras of $M$ with normal conditional expectations, Ding, Kunnawalkam Elayavalli, and Peterson introduced in \cite{MR4675043} the small-at-infinity boundary of $M$ relative to $\mathcal{B}$, which is an operator subsystem of $\BB(L^2 M)$ that contains $M$. They defined $M$ to be biexact relative to $\mathcal{B}$ if the inclusion $M$ into the small-at-infinity boundary is $M$-nuclear, meaning the inclusion map can be pointwise approximated in a locally convex topology depending $M$ by contractive completely positive (c.c.p.)\ maps that factor through finite-dimensional matrix algebras. 

This generalizes relative biexactness in the group case, as Ding and Peterson showed in \cite[Theorem 6.2]{ding2023biexact} that the countable discrete group $\Gamma$ is biexact relative to a collection $\mathcal{G}$ of subgroups of $\Gamma$ if and only if the group von Neumann algebra $L\Gamma$ is biexact relative to $\{L\Lambda:\Lambda \in \mathcal{G}\}$.

In particular, a von Neumann algebra $M$ is called biexact $M$ is biexact relative to the scalars $\mathcal{B} = \{\mathbb{C}1\}$. As noted above, this notion recovers the classical definition: $L\Gamma$ is biexact in the sense of \cite{ding2023biexact} if and only if $\Gamma$ is biexact in the sense of \cite{MR2391387}. However, Ding and Peterson also observed that many examples of biexact von Neumann algebras do not arise from groups, including the von Neumann algebras of the dual of free orthogonal and unitary quantum groups \cite{Iso15biexactquantum}, free Araki-Woods von Neumann algebras \cite{houdayer2007thesis}, and $q$-Gaussian von Neumann algebras associated to finite-dimensional real Hilbert space \cite{Shl04freeentropyestimate,Kuz23}.

In \cite[Theorem 5.10]{ding2023biexact}, Ding and Peterson proved that when $M_{1}\subset \mathbb{B}(\mathcal{H}_{1}), M_{2}\subset \mathbb{B}(\mathcal{H}_{2})$ are weakly exact $\sigma$-finite von Neumann algebras with fixed vector states $\omega_{\xi_{i}}$, the canonical embedding of the free product von Neumann algebra $M = M_{1}\overline{*}M_{2}$ into the free product $(\mathbb{B}(\mathcal{H}_{1}),\omega_{\xi_{1}})\overline{*} (\mathbb{B}(\mathcal{H}_{2}),\omega_{\xi_{2}})$ is $M$-nuclear. Furthermore, \cite[Proposition 6.16]{ding2023biexact} shows that $(\mathbb{B}(\mathcal{H}_{1}),\omega_{\xi_{1}})\overline{*} (\mathbb{B}(\mathcal{H}_{2}),\omega_{\xi_{2}})$ lies in small-at-infinity boundary of $M$ relative to $M_{1}, M_{2}$. These results together imply that $M$ is biexact relative to $\{M_{1},M_{2}\}$, thereby allowing a recovery of Kurosh type theorems for free products of weakly exact von Neumann algebras that was first proved in \cite{MR3594283}. 

Our first main result adapts \cite[Theorem 5.10]{ding2023biexact} to the the setting of amalgamated free product von Neumann algebras with injective amalgam. In the following, we abbreviate “amalgamated free product” as AFP. Note also that the theorems and corollaries below are proved for AFPs of arbitrarily finitely many von Neumann algebras, but for simplicity, we state them only in the case of two.


\begin{thm} \label{thm: inclusion M-nuclear}
    Suppose $M_1,M_2$ are two von Neumann algebras and $B\subset M_i$ is a common injective unital von Neumann subalgebra with faithful normal conditional expectation $ E_{i} \colon M_i\to B $. Let $\langle M_i,e_B\rangle $ is the (Jones') basic construction associated with $E_{i}$, and the (not necessarily faithful) normal conditional expectation from $ \langle M_i,e_B\rangle$ onto $ B$ is taken to be $ E_i(x)\coloneqq i^*_B xi_B $ for $x\in \langle M_i,e_B\rangle$, where $i_B: L^2B\to L^2M_i$ is the canonical inclusion. 
    
    Let $M_r = M_1*_B M_2$ be the reduced AFP $C^*$-algebra and $M = M_1 \overline{*}_B M_2$ be the AFP von Neumann algebra. Then the inclusion
    \[ M_r=M_1 *_B M_2\hookrightarrow \langle M_1,e_B\rangle *_B \langle M_2,e_B\rangle \] is $(M_r \subset M)$-nuclear.
\end{thm}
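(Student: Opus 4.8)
The plan is to adapt the argument of \cite[Theorem 5.10]{ding2023biexact} from ordinary free products to the amalgamated setting, the point being that over an \emph{injective} amalgam $B$ one still has the two analytic ingredients that drove the $B=\mathbb{C}$ case. First I would reduce the global statement to a local one: produce, for each $i$, c.c.p. maps approximating the inclusion $M_i\hookrightarrow\langle M_i,e_B\rangle$ through finite-dimensional matrix algebras, and then assemble these into approximations of $M_r\hookrightarrow\langle M_1,e_B\rangle*_B\langle M_2,e_B\rangle$ by freely multiplying. The injectivity of $B$ enters twice: once to furnish a conditional expectation onto the basic construction, and once to collapse the amalgam to finite dimensions at the end.

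For the local step, recall $\langle M_i,e_B\rangle=(JBJ)'$ inside $\mathbb{B}(L^2M_i)$. Since $B$ is injective, so is $JBJ$, and hence so is its commutant $(JBJ)'$; therefore there is a conditional expectation $\mathcal{E}_i\colon\mathbb{B}(L^2M_i)\to\langle M_i,e_B\rangle$. The weak exactness of $M_i$ gives nets of c.c.p. maps $M_i\xrightarrow{\varphi^n_i}\mathbb{M}_{k_n}(\mathbb{C})\xrightarrow{\rho^n_i}\mathbb{B}(L^2M_i)$ converging to the inclusion in the point-ultraweak topology; composing with $\mathcal{E}_i$ lands them inside $\langle M_i,e_B\rangle$ while retaining the factorization through $\mathbb{M}_{k_n}(\mathbb{C})$. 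A multiplicative-domain/averaging argument, again using injectivity of $B$, then upgrades the compositions to $B$-bimodular maps $\theta^n_i\colon M_i\to\langle M_i,e_B\rangle$ fixing $B$ and satisfying $E_i\circ\theta^n_i=E_i$, which is exactly the compatibility needed to take free products.

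Next I would invoke Boca's theorem on amalgamated free products of expectation-preserving c.c.p. maps to form $\Theta^n:=\theta^n_1*_B\theta^n_2\colon M_r\to\langle M_1,e_B\rangle*_B\langle M_2,e_B\rangle$, a net of c.c.p. maps that converges to the inclusion on each reduced word. The difficulty is that $\Theta^n$ a priori factors only through the amalgamated free product of the $\mathbb{M}_{k_n}(\mathbb{C})$'s over $B$, which is not finite-dimensional, so freeness has destroyed the finite-dimensional factorization. I expect this to be the main obstacle, and I would resolve it using injectivity of $B$ a second time: approximate $\mathrm{id}_B$ point-ultraweakly by c.c.p. maps through finite-dimensional algebras and feed these into the amalgam, so that in the $(M_r\subset M)$-topology — which is weak enough to ignore the high-word-length tail — the $\Theta^n$ are themselves limits of genuinely finite-dimensional-factoring c.c.p. maps. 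Equivalently, one may package this as a composition lemma: a c.c.p. map post-composed with an $(M_r\subset M)$-nuclear map stays $(M_r\subset M)$-nuclear.

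Finally I would verify convergence in the $(M_r\subset M)$-topology, estimating the relevant seminorms on reduced words by tracking how $\theta^n_i$ interacts with the word-length decomposition of $L^2M$ and with the conditional expectations $E_i$; the compatibility $E_i\circ\theta^n_i=E_i$ ensures that the free-product maps respect this grading, so that the tail of long words is uniformly negligible in the $M$-topology. The crux of the whole argument is the reconciliation in the previous paragraph — maintaining a finite-dimensional factorization across the free product — and the structural input that makes it possible is precisely the injectivity of the amalgam $B$, which is what allows both the expectation onto $(JBJ)'$ and the finite-dimensional collapse of $B$ while preserving freeness in the limit.
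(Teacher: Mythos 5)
Your local step is sound and coincides with the paper's Lemma \ref{lem: weak exactness into smaller injective}: injectivity of $B$ makes $\langle M_i,e_B\rangle=(JBJ)'$ injective, so a conditional expectation $\mathcal{E}_i\colon\mathbb{B}(L^2M_i)\to\langle M_i,e_B\rangle$ exists and weak exactness gives c.c.p.\ maps through matrix algebras converging to the inclusion $M_i\hookrightarrow\langle M_i,e_B\rangle$. The genuine gap is in your assembly step, and it is precisely the obstruction the paper names when explaining why the Ding--Peterson strategy does not extend to amalgams: your maps $\theta^n_i$ cannot simultaneously fix $B$, factor through $\mathbb{M}_{k_n}(\mathbb{C})$, and exist for infinite-dimensional $B$. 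Indeed, if $\theta^n_i=\rho^n_i\circ\varphi^n_i$ with $\varphi^n_i\colon M_i\to\mathbb{M}_{k_n}(\mathbb{C})$ c.c.p.\ and $\theta^n_i|_B=\mathrm{id}_B$, then $\varphi^n_i|_B$ is injective, forcing $\dim B\leqslant k_n^2<\infty$. So either you keep the finite-dimensional factorization and lose exact $B$-bimodularity---in which case Boca's theorem does not apply and $\theta^n_1*_B\theta^n_2$ is not even a well-defined completely positive map---or you average over $\mathcal{U}(B)$ to restore $B$-bimodularity and thereby destroy the factorization you need. Your proposed repair (approximating $\mathrm{id}_B$ by finite-dimensional c.c.p.\ maps and ``feeding these into the amalgam'') runs into the same wall: the object $\mathbb{M}_{k_n}(\mathbb{C})*_B\mathbb{M}_{k_n}(\mathbb{C})$ you would want $\Theta^n$ to factor through is not defined, since $\mathbb{M}_{k_n}(\mathbb{C})$ contains no copy of $B$, and replacing $B$ inside reduced words by a finite-dimensional approximation breaks exactly the $B$-linearity that the amalgamated free product construction of maps requires; your closing ``composition lemma'' does not help because there is no nuclear map in sight to compose with---the nuclearity of $\Theta^n$ is the very thing in question.

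The paper avoids free products of u.c.p.\ maps altogether. It encodes the free structure in a single creation-type isometry $T$ on an auxiliary Fock-like space and identifies $C^*(M_1\oplus M_2,T)$ with the Toeplitz--Pimsner algebra $\mathcal{T}(\mathcal{H})$ of the C$^*$-correspondence $\mathcal{H}$ over the \emph{direct sum} $M_1\oplus M_2$ associated with $(m_1,m_2)\mapsto(E_2(m_2),E_1(m_1))$. One u.c.p.\ map $\Psi\colon M_r\to p\,\mathcal{T}(\mathcal{H})p$ (Ozawa's construction for maps into a common target), a normal u.c.p.\ compression $\Phi$ back to $\mathbb{B}(L^2M)$, and the universal property of $\mathcal{T}(\mathcal{H})$ applied to the nuclear inclusion $M_1\oplus M_2\subset N_1\oplus N_2$---where nuclearity is a direct-sum statement and costs nothing---yield a weakly nuclear $*$-homomorphism $\pi$ with $\Phi^{\sharp*}\circ\pi\circ\Psi$ equal to the canonical inclusion, verified by a multiplicative-domain computation. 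Since the approximating maps only ever act on the direct sum, the infinite-dimensionality of the amalgam never interferes; this is the idea your proposal is missing.
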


Here $(M_r \subset M)$-nuclearity is a relative version of $M$-nuclearity; see Section \ref{sec: weakly exact von Neumann algebra} for precise definition.

Our approach to Theorem \ref{thm: inclusion M-nuclear} is different from that of \cite[Theorem 5.10]{ding2023biexact}, in which Ding and Peterson showed the nuclearity of the embedding map of the free product von Neumann algebra $M_{1} \overline{*} M_{2}$ by constructing free products of the unital completely positive (u.c.p.)\ maps that factor through finite-dimensional matrix algebras arising from the weak exactness of $M_{1},M_{2}$. 
However, this strategy does not extend to the setting of AFP von Neumann algebras, where taking free products of u.c.p.\ maps is no longer feasible due to the lack of $B$-linearity, so instead we adopt an approach inspired by \cite{MR4833744}. We show the embedding map into $\langle M_1,e_B\rangle {\ast}_B \langle M_2,e_B\rangle$ factors through a Toeplitz-Pimsner algebra, and we apply its universal property to deduce the nuclearity of the embedding map. An advantage of our method is that it does not require any assumption on the states of $M_i$, and in particular, it remains valid when $M_1, M_2$ are not $\sigma$-finite.

When $M_{1},M_{2}$ are tracial, we show that $\langle M_1,e_B\rangle\ast_B \langle M_2,e_B\rangle$ lies in the small-at-infinity boundary relative to $\{M_{1},M_{2}\}$. Combining with Theorem \ref{thm: inclusion M-nuclear}, we can generalize the results in \cite[Proposition 15.3.12]{MR2391387} and \cite[Proposition 6.16]{ding2023biexact} and prove the relative biexactness for AFPs of weakly exact tracial von Neumann algebras over injective amalgams.

\begin{corollary}\label{cor: relative biexactness tracial}
If $M_{1}, M_{2}$ are weakly exact tracial von Neumann algebras admitting a common injective unital von Neumann subalgebra $B$ with trace preserving conditional -expectations, then the amalgamated free product $M_{1}\overline{*}_{B}M_{2}$ is biexact relative to $\{M_{1},M_{2}\}$. 
\end{corollary}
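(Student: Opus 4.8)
The plan is to realize the free product of basic constructions $\mathcal{A} := \langle M_1,e_B\rangle *_B \langle M_2,e_B\rangle$, represented on $L^2M$, as an operator system squeezed between $M$ and the small-at-infinity boundary $\mathbb{S}(M;\{M_1,M_2\})$, through which the inclusion defining relative biexactness factors. First I would recall that, in the Ding--Peterson formulation, $M$ is biexact relative to $\{M_1,M_2\}$ exactly when the inclusion $M_r \hookrightarrow \mathbb{S}(M;\{M_1,M_2\})$ is $(M_r\subset M)$-nuclear, and that Theorem \ref{thm: inclusion M-nuclear} already supplies the $(M_r\subset M)$-nuclearity of $M_r\hookrightarrow \mathcal{A}$ (the tracial hypothesis being what turns that $C^*$-statement into a von Neumann one via the standard form). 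Thus the whole corollary reduces to the single containment
\[ \mathcal{A} \subseteq \mathbb{S}(M;\{M_1,M_2\}), \]
after which one composes the two maps and invokes permanence of $(M_r\subset M)$-nuclearity under post-composition with an inclusion (enlarging the target).

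For the containment I would use the description $\mathbb{S}(M;\{M_1,M_2\}) = \{T\in\BB(L^2M): [T,JxJ]\in\mathbb{K}_{\{M_1,M_2\}} \text{ for all } x\in M\}$, where $\mathbb{K}_{\{M_1,M_2\}}$ is the norm-closed ideal of relatively compact operators generated by $Me_{M_1}M$ and $Me_{M_2}M$, and $JxJ$ is the right action. Since $\{T:[T,JxJ]\in\mathbb{K}_{\{M_1,M_2\}}\ \forall x\}$ is a norm-closed $*$-subalgebra of $\BB(L^2M)$ — the Leibniz identity $[T,J(xy)J]=[T,JxJ]JyJ+JxJ[T,JyJ]$ keeps it closed under products because $\mathbb{K}_{\{M_1,M_2\}}$ is an ideal — it suffices to verify the commutator condition on a generating set of $\mathcal{A}$, namely on $M_1$, $M_2$, and the two Jones projections $e_B^{(1)}, e_B^{(2)}$. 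For $T\in M_i\subset M$ the commutator $[T,JxJ]$ vanishes identically since $M$ and $JMJ$ commute, so the crux is the two projections.

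Here I would exploit the reduced-word decomposition $L^2M = L^2B \oplus \bigoplus_w \mathcal{H}_w$, under which (via $L^2M\cong L^2M_i\otimes_B\mathcal{K}_i$ and $e_B^{(i)}=e_B\otimes_B 1$) the projection $e_B^{(i)}$ is exactly the orthogonal projection onto $L^2B$ together with all words whose first letter lies in $M_j$, $j\neq i$; equivalently $1-e_B^{(i)}$ projects onto words beginning with a letter from the kernel of $E_i$. The key computation is that right multiplication by a single reduced letter $m$ in the kernel of $E_j$ modifies only the right-hand end of a word, and therefore changes whether a word ``begins with index $i$'' only for words of length at most one; carrying out the four short-word cases shows that $[e_B^{(i)},JmJ]$ is supported, on both sides, inside $L^2M_i = L^2B\oplus(L^2M_i\ominus L^2B)$, i.e. $[e_B^{(i)},JmJ]=e_{M_i}[e_B^{(i)},JmJ]e_{M_i}$. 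As $e_{M_i}\in\mathbb{K}_{\{M_1,M_2\}}$ and the latter is a two-sided ideal, this commutator is relatively compact. The same short computation gives $[e_B^{(i)},JbJ]=0$ for $b\in B$, so $M_1,M_2$, and hence the reduced algebra $M_r$, lie in the defining subalgebra of the boundary.

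The step I expect to be the main obstacle is upgrading the commutator condition from the weak-$*$ dense subalgebra $M_r$ to all of $M$: the set of ``good'' $x$ is only norm-closed a priori, while $\mathbb{K}_{\{M_1,M_2\}}$ is not weak-$*$ closed, so the naive passage to the weak-$*$ closure fails. I would resolve this by using that $x\mapsto[e_B^{(i)},JxJ]$ is normal and that the relative compact space built into the definition of the boundary is closed in the appropriate (bimodule / weak) topology afforded by the tracial setting, so that membership in $\mathbb{S}(M;\{M_1,M_2\})$ is genuinely detected on a generating set. A secondary technical point, to be dispatched carefully rather than deeply, is the precise identification of the free-product representation of $\mathcal{A}$ on $L^2M$ and of the $e_B^{(i)}$ as the stated projections; this is where the tracial hypothesis and the injectivity of $B$ enter. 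Once these are in place, composing $M_r\hookrightarrow\mathcal{A}\subseteq\mathbb{S}(M;\{M_1,M_2\})$ with Theorem \ref{thm: inclusion M-nuclear} yields $(M_r\subset M)$-nuclearity of the inclusion into the boundary, which is precisely relative biexactness.
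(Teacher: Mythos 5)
Your high-level strategy coincides with the paper's: invoke Theorem \ref{thm: inclusion M-nuclear}, prove that the free product of basic constructions sits inside the small-at-infinity boundary, then compose and use ultraweak density of $M_r$ in $M$ (this last step is \cite[Corollary 4.9]{ding2023biexact}, which the paper uses in Proposition \ref{prop: amalg free prod rel biex}). Your commutator computation for $\lambda_i(e_B)$ against single reduced letters is also essentially Step 1 of the paper's Proposition \ref{prop: N is in S}. However, there is a genuine gap in your containment step. Your reduction to a generating set only shows that the \emph{norm-closed} algebra $C^*(M_1,e_B)\ast_B C^*(M_2,e_B)$, i.e.\ the C$^*$-algebra generated by $M_1$, $M_2$ and the two Jones projections, lies in $\mathbb{S}_{\{M_1,M_2\}}(M)$. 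But the target of Theorem \ref{thm: inclusion M-nuclear} is $\mathcal{A}=\langle M_1,e_B\rangle\ast_B\langle M_2,e_B\rangle$, the free product of the \emph{full} basic constructions, and $\langle M_i,e_B\rangle$ is the von Neumann algebra generated by $M_i$ and $e_B$, which is in general strictly larger than $C^*(M_i,e_B)$. Since membership in $\mathbb{S}_{\{M_1,M_2\}}(M)$ is only a norm-closed condition in $T$ (the boundary is not closed under weak$^*$ limits), you cannot pass from $C^*(M_i,e_B)\subset \mathbb{S}_{\{M_1,M_2\}}(M)$ to $\langle M_i,e_B\rangle\subset \mathbb{S}_{\{M_1,M_2\}}(M)$. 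Nor can this mismatch be avoided on the nuclearity side: under the weak exactness hypothesis, Lemma \ref{lem: weak exactness into smaller injective} produces c.c.p.\ maps through matrix algebras whose ranges lie in $N_i=\langle M_i,e_B\rangle$ (they are obtained by composing with a conditional expectation $\mathbb{B}(L^2M_i)\to N_i$ furnished by injectivity of $N_i$), not in $C^*(M_i,e_B)$; one may take $N_{i,0}=C^*(M_i,e_B)$ only when $M_i$ is injective (second half of Corollary \ref{cor: nuclear embedding of free product of M_i}), which is the route of Theorem \ref{thm: relative biexactness injective}, not of the present corollary.

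Closing this gap is precisely the content of Condition (A) and Steps 2--4 of Proposition \ref{prop: N is in S}: for $T\in\langle M_i,e_B\rangle$ one approximates $Te_B$ by $m_te_B$ with $m_t\in M_i$ in the $M_i$-$\mathbb{C}$ topology, and then uses that $\mathbb{K}^{\infty,1}_{\{M_1,M_2\}}(M)$ is closed in the relevant bimodule topologies to upgrade the commutator estimates from words with letters in the $M_{i_j}^o$ to words with letters in the $N_{i_j}^o$. This is also where the tracial hypothesis actually enters the proof (Lemma \ref{lem: tracial implies condition A}, via the spectral projections of $L_{T\widehat{1}}$); it is not, as you suggest, a device for ``turning the C$^*$-statement into a von Neumann one'' or for identifying the free-product representation. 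By contrast, the obstacle you flag as the main one --- verifying the commutator condition against all of $M'$ rather than against the weakly dense $*$-subalgebra generated by $JM_1J\cup JM_2J$ --- is real but resolvable exactly along the lines you indicate: by the Leibniz rule, the bimodule property of $\mathbb{K}^{\infty,1}_{\{M_1,M_2\}}(M)$, and its bidual characterization inside $\mathbb{B}(L^2M)^{\sharp*}_J$, the set of $a\in M'$ with $[T,a]\in\mathbb{K}^{\infty,1}_{\{M_1,M_2\}}(M)$ is an ultraweakly closed subalgebra of $M'$, so checking it on $JM_1J\cup JM_2J$ suffices. The missing idea is the passage from $C^*(M_i,e_B)$ to $\langle M_i,e_B\rangle$, without which your argument proves the corollary only for injective $M_i$.
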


As in the group case, we immediately obtain from the above corollary and \cite[Proposition 6.13]{ding2023biexact} a relative solidity result for von Neumann subalgebras $N$ in $M_{1}\overline{*}_{B}M_{2}$, which states that either the relative commutant $N'\cap M_{1}\overline{*}_{B}M_{2}$ is amenable, or $N$ intertwines into one of the $M_{i}$'s (in the sense of Popa, see e.g., \cite{MR2215135}\cite{anantharaman2017introduction}). However, we do not pursue this direction further, as a more general result is already proved in \cite[Theorem 4.4]{MR3594283} without the tracial or weakly exact assumptions.

We note that the injectivity assumption on the amalgam $B$ is essential, as counterexamples exist even for the group case when the amalgam is not an amenable group (see e.g., the remark after \cite[Proposition 15.3.12]{MR2391387}). Also, in general we cannot improve the relative biexactness to actual biexactness. 
For example, if we let $\mathcal{R} $ be the separable hyperfinite $\text{II}_1$ factor and consider $L(\mathbb{Z})\otimes \mathcal{R}$ with the canonical conditional expectation onto $\mathbb{C}1\otimes  \mathcal{R}$, then $ ( L(\mathbb{Z})\otimes \mathcal{R} )\overline{*}_{ \mathbb{C}1\otimes  \mathcal{R} } ( L(\mathbb{Z})\otimes \mathcal{R} ) = L(\mathbb{F}_2)\otimes \mathcal{R} $, which is not prime and therefore not solid. In particular, it is not biexact.

When $M_1,M_2$ are injective, we can remove the tracial assumption in Corollary \ref{cor: relative biexactness tracial} and improve the relative biexactness.
\begin{thm}\label{thm: relative biexactness injective}
    If $M_1,M_2$ are injective von Neumann algebras admitting a common unital von Neumann subalgebra $B$ with faithful normal conditional expectations, then the AFP von Neumann algebra $ M_1\overline{\ast}_B M_2 $ is biexact relative to $B$.
\end{thm}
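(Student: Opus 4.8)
The plan is to combine Theorem \ref{thm: inclusion M-nuclear} with a sharpened boundary computation. Write $P := \langle M_1, e_B\rangle *_B \langle M_2, e_B\rangle$ for the reduced AFP of the basic constructions, viewed inside $\mathbb{B}(L^2 M)$. Theorem \ref{thm: inclusion M-nuclear} already gives that the inclusion $M_r \hookrightarrow P$ is $(M_r \subset M)$-nuclear, so in the Ding--Peterson framework it suffices to establish the boundary containment
\[ P \subseteq \mathbb{S}(M; B), \]
where $\mathbb{S}(M;B)$ is the small-at-infinity boundary of $M$ relative to the single algebra $B$. Once this is in hand, the composition $M_r \hookrightarrow P \hookrightarrow \mathbb{S}(M;B)$ is $(M_r \subset M)$-nuclear, which is precisely the definition of $M$ being biexact relative to $B$.

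The first step is to record that injectivity passes to the objects we actually manipulate. Since $B = E_i(M_i)$ is the image of a normal conditional expectation from the injective algebra $M_i$, it is injective, hence $JBJ$ is injective; as $\langle M_i, e_B\rangle = (JBJ)'$, the basic construction $\langle M_i, e_B\rangle$ is injective as well. This is the structural input that lets us upgrade the boundary from the collection $\{M_1, M_2\}$ (which is all one gets in Corollary \ref{cor: relative biexactness tracial}) to the amalgam $B$ alone.

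The heart of the argument is the verification that, for every $T \in P$ and every $b \in M_1 \cup M_2$, the right commutator $[T, Jb^*J]$ lies in $\mathbb{K}(M; B)$, the relative compacts generated by the Jones projection $e_B$ of $B \subseteq M$. By bimodularity and density it is enough to treat $T$ a reduced word $x_1 x_2 \cdots x_n$ with $x_j \in \langle M_{i_j}, e_B\rangle \ominus B$ and alternating indices. Commuting such a word past $Jb^*J$ on the Fock-type decomposition of $L^2 M$ produces only ``boundary'' terms, in which a factor of $e_B$ appears, together with ``diagonal'' contributions. The free-product structure handles the boundary terms, placing them in $\mathbb{K}(M; B)$; the role of injectivity is to control the diagonal contributions. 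Here I would use that injectivity of $\langle M_i, e_B\rangle$ yields a net of normal unital completely positive maps on $\langle M_i, e_B\rangle$ that factor through the $B$-compacts $\mathbb{K}(M_i; B)$ and converge to the identity in the point-$\sigma$-weak topology; approximating the factors $x_j$ through these maps replaces each $e_{M_{i_j}}$-localized piece by a genuinely $B$-compact one, so that in the limit the commutator lands in $\mathbb{K}(M; B)$ rather than merely in $\mathbb{K}(M; \{M_1, M_2\})$.

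The main obstacle is exactly this $B$-compactness upgrade. Exactness alone (as in the tracial corollary) suffices to land the commutators in the larger ideal $\mathbb{K}(M; \{M_1, M_2\})$, but pushing them into $\mathbb{K}(M; B)$ requires that the full algebras $M_i$ behave ``$B$-compactly,'' which fails for general exact $M_i$ and is precisely where the amenability encoded in injectivity must be spent. Concretely, the projection $e_{M_i}$ onto $L^2 M_i$ is not itself $B$-compact, so one cannot argue termwise; the injective approximation must be inserted with enough uniformity—compatibly with the free-product word length—that the error terms vanish in the locally convex topology defining $M$-nuclearity. Making this approximation interact correctly with the reduced-word computation, rather than merely pointwise, is the technical crux I expect to absorb most of the work.
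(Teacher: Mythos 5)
Your skeleton (nuclearity into a dilated free product, then a boundary containment, then conclude by ultraweak density) is the same as the paper's, and your Step 1 is fine: $B$ is injective, so Theorem \ref{thm: inclusion M-nuclear} gives $(M_r\subset M)$-nuclearity of $M_r\hookrightarrow P=\langle M_1,e_B\rangle *_B\langle M_2,e_B\rangle$. The gap is Step 2: the containment $P\subseteq \mathbb{S}_B(M)$ is not merely unproven by your approximation argument, it is false under exactly the hypotheses of the theorem. Take $M_1=M_2=L\mathbb{Z}$, $B=\mathbb{C}$, so $\langle M_i,e_B\rangle=\mathbb{B}(\ell^2\mathbb{Z})$, $M=L\mathbb{F}_2=L\langle s,t\rangle$ and $\mathbb{X}_B=\mathbb{K}(L^2M)$. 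Let $T\in\mathbb{B}(\ell^2\mathbb{Z})$ be the diagonal unitary $T\delta_{s^n}=(-1)^n\delta_{s^n}$ (note $T\notin C^*(L\mathbb{Z},e_B)=L\mathbb{Z}+\mathbb{K}$). Writing $g=s^{n(g)}w(g)$ with $w(g)$ not beginning in $s^{\pm1}$, one computes $[\lambda_1(T),J\lambda_sJ]\delta_g=\bigl((-1)^{n(gs)}-(-1)^{n(g)}\bigr)\delta_{gs}$, which vanishes unless $g\in\langle s\rangle$; thus $[\lambda_1(T),J\lambda_sJ]=-2V$ where $V\delta_{s^n}=(-1)^n\delta_{s^{n+1}}$ and $V=0$ off $\ell^2\langle s\rangle$. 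This $V$ is not in $\mathbb{K}^{\infty,1}(M)$: with $\eta_n=\tfrac{1}{\sqrt2}(\delta_{s^n}+(-1)^n\delta_{s^{n+1}})$ and $\omega$ a free ultrafilter supported on the even integers, the state $\varphi=\lim_{n\to\omega}\langle\eta_n,\,\cdot\;\eta_n\rangle$ vanishes on $\mathbb{K}(L^2M)$, restricts on $M$ and on $JMJ$ to the fixed normal functional $\tau\bigl(\,\cdot\,(1+\tfrac{\lambda_s+\lambda_s^*}{2})\bigr)$, and satisfies $\varphi(V)=\tfrac12$; since any state vanishing on the compacts and normal on both $M$ and $JMJ$ must annihilate $\mathbb{K}^{\infty,1}(M)$ (the bidual description of Section \ref{sec: biduals}), we get $V\notin\mathbb{K}^{\infty,1}(M)$, hence $\lambda_1(T)\notin\mathbb{S}_B(M)$. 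The structural reason is the one you flagged and then tried to argue around: a general element of $\langle M_i,e_B\rangle$ is only a weak$^*$ limit of words in $M_i\cup\{e_B\}$, its commutators with $JM_iJ$ land only in the corner $e_{M_i}\mathbb{B}(L^2M)e_{M_i}$ (this is Proposition \ref{prop: N is in S}, and is exactly why the weakly exact case yields only $\mathbb{S}_{\{M_1,M_2\}}(M)$), and $\mathbb{K}^{\infty,1}_{\mathbb{X}_B}(M)$ is not weak$^*$ closed — its weak$^*$ closure is all of $\mathbb{B}(L^2M)$. So no scheme whose convergence is point-$\sigma$-weak, which is all that injectivity or semidiscreteness can supply, can push these commutators into $\mathbb{K}^{\infty,1}_{\mathbb{X}_B}(M)$.

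The paper resolves this by choosing a smaller intermediate algebra and spending injectivity in the opposite place from where you spend it. It replaces $\langle M_i,e_B\rangle$ by $N_{i,0}=C^*(M_i,e_B)$, whose elements are \emph{norm} limits of words in $M_i\cup\{e_B\}$; the containment $C^*(M_1,e_B)*_BC^*(M_2,e_B)\subseteq\mathbb{S}_B(M)$ (Proposition \ref{prop: inside S_B}) is then a purely combinatorial word computation — every commutator with $M'$ reduces to terms of the form $M[e_B,a']M\subseteq\mathbb{X}_B$ — requiring no injectivity and no limiting procedure, because $\mathbb{K}^{\infty,1}_{\mathbb{X}_B}(M)$ is norm closed. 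Injectivity is instead consumed by the nuclearity step: semidiscreteness makes $\mathrm{id}_{M_i}$ an $M_i$-nuclear map, hence $M_i\hookrightarrow C^*(M_i,e_B)$ is $M_i$-nuclear, so Theorem \ref{thm: main 3} applies with the small target $N_{i,0}=C^*(M_i,e_B)$ (Corollary \ref{cor: nuclear embedding of free product of M_i}). In short: weak exactness reaches the big algebra $N_i$, but its free product sits only in $\mathbb{S}_{\{M_1,M_2\}}(M)$; injectivity lets one stop at the small algebra $C^*(M_i,e_B)$, whose free product sits in $\mathbb{S}_B(M)$. Your proposal pairs the big algebra with the small boundary, and that pairing cannot work.
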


As a generalization of nonamenable biexact groups, Boutonnet, Ioana, and Peterson introduced in \cite{MR4258166} the notion of proper proximality for a group $\Gamma$, defined by the nonexistence of a $\Gamma$-invariant state on the small-at-infinity boundary $\mathbb{S}(\Gamma)$. This concept was later extended to the von Neumann algebraic setting in \cite{MR4675043}. In \cite{DS24structure}, Ding and Kunnawalkam Elayavalli showed that if $\Gamma$ is biexact relative to a finite collection of almost normal subgroups, then any von Neumann subalgebra $A \subset L\Gamma$ admits a decomposition into a sum of relatively amenable components and a complementary part that is properly proximal. By combining this structure with tools from Popa’s deformation/rigidity theory, they established a subalgebra absorption theorem for group von Neumann algebras associated with free products of exact groups. In this paper, we refine the methods of \cite{DS24structure} to prove a general decomposition theorem for tracial von Neumann algebras, and extend the subalgebra absorption result to amalgamated free products of tracial von Neumann algebras as an application of Corollary \ref{cor: relative biexactness tracial}.

\begin{thm} \label{thm: split rel biex}
      Let $M$ be a separable tracial von Neumann algebra and $\mathcal{F}=\{N_1,\cdots,N_n\}$ be a finite family of mixing von Neumann subalgebra of $M$. Let $p\in M$ be a projection. If $M$ is biexact relative to $ \mathbb{X}_\FF $, then for any von Neumann subalgebra $ A\subset pMp $, there exist projections $f_0, f_{n+1}\in \mathcal{Z}(A)$ and $f_i\in \mathcal{Z}(A'\cap pMp)$, $i=1,\cdots,n$, such that $\sum_{i=0}^{n+1} f_i=p$, $Af_0$ is properly proximal, $Af_{n+1}$ is amenable, and $Af_i$ is amenable relative to $M_i$ inside $M$ for all $i=1,\cdots,n$.
\end{thm}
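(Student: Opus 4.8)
The plan is to follow the strategy of \cite{DS24structure}, replacing its group-theoretic input (almost normal subgroups together with amenability of the boundary action) by the von Neumann–algebraic characterizations of relative amenability and proper proximality in terms of central states on the relevant boundary operator systems, and using the mixing hypothesis on the $N_i$ in the role played by almost normality in the group case. The three target properties are all \emph{corner-stable} and \emph{directed}: the set of projections $f\in\mathcal{Z}(A)$ with $Af$ amenable is closed under arbitrary suprema (the standard maximality argument for injectivity of corners), and for each $i$ the set of projections $q\in\mathcal{Z}(A'\cap pMp)$ with $Aq$ amenable relative to $N_i$ inside $M$ is closed under suprema (the standard suprema lemma for relative amenability). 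First I would extract the maximal witnesses: a maximal $f_{n+1}\in\mathcal{Z}(A)$ with $Af_{n+1}$ amenable, and for each $i$ a maximal $q_i\in\mathcal{Z}(A'\cap pMp)$ with $Aq_i$ amenable relative to $N_i$.

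Next I would arrange orthogonality. The key elementary observation is that $\mathcal{Z}(A)\subseteq\mathcal{Z}(A'\cap pMp)$, since any $z\in\mathcal{Z}(A)=A\cap(A'\cap pMp)$ lies in $A'\cap pMp$ and commutes with every element of $A'\cap pMp$. Thus all of the projections above live inside the single abelian algebra $\mathcal{Z}(A'\cap pMp)$ and may be disjointified there. Taking the $f_i$ to be successive differences of the $q_i$ with $f_{n+1}$ and with one another yields mutually orthogonal $f_1,\dots,f_n\in\mathcal{Z}(A'\cap pMp)$ with $\sum_{i=1}^{n+1}f_i\le p$ and $Af_i$ amenable relative to $N_i$; set $f_0:=p-\sum_{i=1}^{n+1}f_i\in\mathcal{Z}(A'\cap pMp)$. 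Since amenability and proper proximality are properties of $A$ alone, detected by $A$-central states whose support projections lie in $\mathcal{Z}(A)$, one checks that $f_{n+1}$ and the eventual $f_0$ in fact lie in $\mathcal{Z}(A)$, whereas relative amenability to $N_i$ is detected by $A$-$A$-central states on the basic construction and so carries support only in $\mathcal{Z}(A'\cap pMp)$; this accounts for the placement of the projections asserted in the statement. It remains only to show $Af_0$ is properly proximal.

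The crux, and the step I expect to be the main obstacle, is exactly this last point. Suppose $Af_0$ is not properly proximal. Then by the boundary characterization of proper proximality \cite{MR4675043} there is a nonzero projection $e\le f_0$ in $\mathcal{Z}(A'\cap pMp)$ and an $Ae$-central state $\varphi$ on $\mathbb{B}(L^2M)$, singular over the $M$-compact operators, witnessing the failure. The relative biexactness hypothesis — the $M$-nuclearity of the inclusion $M\hookrightarrow\mathbb{X}_\FF$ — forces $\varphi$ to factor through the boundary operator system $\mathbb{X}_\FF$, whose small-at-infinity structure splits into the $M$-compact ideal together with the boundary pieces attached to $N_1,\dots,N_n$. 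Because each $N_i$ is mixing, these pieces are mutually disjoint in the sense that their associated relative-compact ideals meet only in the compacts, so $\varphi$ must localize onto a single piece: either onto the compact ideal, yielding that $Ae$ is amenable and contradicting maximality of $f_{n+1}$, or onto the $N_i$-piece, yielding an $Ae$-central state on the basic construction $\langle M,e_{N_i}\rangle$ and hence that $Ae$ is amenable relative to $N_i$, contradicting maximality of $q_i$. Either way we reach a contradiction, so $Af_0$ is properly proximal. The technical heart is thus the localization of $\varphi$ onto a single boundary piece, where the mixing property of the $N_i$ performs the separation that almost normality provides in \cite{MR2391387} and \cite{DS24structure}; I would model this estimate directly on the corresponding argument there, combined with Popa's intertwining calculus to pass between relative hypertraces and relative amenability.
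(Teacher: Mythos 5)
Your overall architecture --- extract maximal amenable and relatively amenable summands, then derive a contradiction from an $A$-central state pulled back to the bidual boundary and localized against the projections $q_{\mathbb{K}}$ and $q_{\mathbb{X}_{N_i}}$ --- is the same one the paper runs, following \cite{DS24structure}; the paper merely packages it in two stages, first using \cite[Proposition 6.10, Lemma 6.11]{ding2023biexact} to split off the amenable summand $f_{n+1}$ and reduce to an algebra properly proximal relative to $\mathbb{X}_{\FF}^{A}$, and then proving the remaining splitting as Theorem \ref{thm: split rel prop prox}. Your elementary steps are fine: $\mathcal{Z}(A)\subset \mathcal{Z}(A'\cap pMp)$, the disjointification, stability of (relative) amenability under suprema, and the treatment of the compact part. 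The genuine gap is the single sentence in which you pass from ``$\varphi$ has mass on the $N_i$-piece'' to ``an $Ae$-central state on the basic construction $\langle M,e_{N_i}\rangle$, hence $Ae$ is amenable relative to $N_i$.'' That transition is not automatic, and it is exactly the technical heart of the paper.

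The boundary piece $\mathbb{X}_{N_i}=\overline{MJMJe_{N_i}\B(L^2M)e_{N_i}JMJM}$ is a hereditary C$^*$-subalgebra of $\B(L^2M)$ vastly larger than $\langle M,e_{N_i}\rangle$, and a state concentrated on the corner $q_{\mathbb{X}_{N_i}}\B(L^2M)^{\sharp *}_{J}q_{\mathbb{X}_{N_i}}$ gives, by itself, no $M$-normal $A$-central state on the von Neumann algebra $\langle M,e_{N_i}\rangle$: one needs a normal, $M$-bimodular u.c.p.\ map from $\langle M,e_{N_i}\rangle^{**}$ into that corner through which to compose. This is what the paper's upgrading theorem (Theorem \ref{thm:upgrading}) supplies, and it is precisely where mixing is used: by Lemma \ref{lem:qisbasicconstruction}, mixingness of $N_i$ forces
\[
q\,\iota^{\sharp}(x)\,q\,q_{\mathbb{K}}^{\perp}=\iota^{\sharp}\bigl(E_{N_i}(x)\bigr)\,q\,q_{\mathbb{K}}^{\perp},\qquad x\in M,\quad q=q_{JMJe_{N_i}JMJ},
\]
i.e.\ $q$ implements the Jones projection modulo compacts, so that $\bigl(\iota^{\sharp}(M)q_{\mathbb{X}_{N_i}}\vee\C q\bigr)q_{\mathbb{K}}^{\perp}$ becomes a copy of the basic construction receiving a normal $M^{**}$-bimodular map (Lemma \ref{lem: map identify basic constructions}). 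Your proposal instead assigns mixing the role of making the pieces $\mathbb{X}_{N_1},\dots,\mathbb{X}_{N_n}$ ``mutually disjoint,'' which is neither what mixing gives (mixing of $N_i$ inside $M$ says nothing about how $\mathbb{X}_{N_i}$ and $\mathbb{X}_{N_j}$ intersect) nor what the argument needs: the paper never localizes $\varphi$ onto a single piece, it simply picks any $i$ with $\varphi(q_{\mathbb{K}}^{\perp}q_{\mathbb{X}_{N_i}})>0$, which exists because $\vee_{i}\,q_{\mathbb{X}_{N_i}}=q_{\mathbb{X}_{\FF}}$ (Lemma \ref{lem: q_X_F is union}), and arranges the resulting support projections at the end. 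Without constructing the upgrading map --- or some substitute converting boundary-piece mass into a state on $\langle M,e_{N_i}\rangle$ --- your contradiction with the maximality of $q_i$ cannot be completed.
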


\begin{corollary} \label{cor: AFP absorbing}
    Let $(M_1,\tau_1)$ and $(M_2,\tau_2)$ be two separable tracial weakly exact von Neumann algebras with a common amenable subalgebra $B$ such that $\tau_1|_B=\tau_2|_B$, and $ M_1\overline{\ast}_{B}M_2 $ be the AFP von Neumann algebra. Assume that $B$ is mixing in $M_i$ for $i=1,2$. If $A\subset M$ is a von Neumann subalgebra with no properly proximal direct summand and $A\cap M_1 \npreceq_{M_1} B $, then $ A\subset M_1 $.
\end{corollary}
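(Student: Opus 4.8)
The plan is to feed the hypotheses into Corollary~\ref{cor: relative biexactness tracial} and Theorem~\ref{thm: split rel biex}, and then to absorb the resulting pieces into $M_1$ using the intertwining assumption. Since $B$ is amenable it is injective, and $\tau_1|_B=\tau_2|_B$ provides trace-preserving conditional expectations $E_i\colon M_i\to B$; as $M_1,M_2$ are weakly exact and tracial, Corollary~\ref{cor: relative biexactness tracial} shows that $M=M_1\overline{\ast}_B M_2$ is biexact relative to $\{M_1,M_2\}$. Because $B$ is mixing in each $M_i$, the subalgebras $M_1$ and $M_2$ are mixing in $M$, so $\mathcal F=\{M_1,M_2\}$ is an admissible family of mixing subalgebras and $M$ is biexact relative to $\mathbb X_{\mathcal F}$. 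Applying Theorem~\ref{thm: split rel biex} to $A\subseteq M$ with $p=1$ yields projections $f_0,f_3\in\mathcal Z(A)$ and $f_1,f_2\in\mathcal Z(A'\cap M)$ with $\sum_{i=0}^3 f_i=1$, where $Af_0$ is properly proximal, $Af_3$ is amenable, and $Af_i$ is amenable relative to $M_i$ for $i=1,2$. The assumption that $A$ has no properly proximal direct summand forces $f_0=0$.

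Set $Q:=A\cap M_1$, so that $Q\npreceq_{M_1}B$ by hypothesis. The Ioana--Peterson--Popa control theorem for amalgamated free products gives $Q'\cap M\subseteq M_1$, and since $A'\cap M\subseteq Q'\cap M$ all four central projections lie in $M_1$; in particular $f_2\in Q'\cap M_1$, so $Qf_2$ is a corner of $Q$ with $Qf_2\npreceq_{M_1}B$ (cutting by a projection in the relative commutant cannot create intertwining). Now $Qf_2\subseteq Af_2$ is amenable relative to $M_2$, so the standard amalgamated-free-product dichotomy---a subalgebra amenable relative to $M_2$ either intertwines into $M_2$ or is amenable relative to $B$---applies. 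The first alternative would give $Qf_2\preceq_M M_2$, hence $Qf_2\preceq_{M_1}B$ because $Qf_2\subseteq M_1$, contradicting $Qf_2\npreceq_{M_1}B$; thus $Qf_2\npreceq_M M_2$. Passing to the overalgebra via Popa's net characterization of intertwining (a witnessing net of unitaries for $Qf_2\npreceq_M M_2$ lies in $Af_2$), we get $Af_2\npreceq_M M_2$, and the dichotomy applied to $Af_2$ now forces $Af_2$ to be amenable relative to $B$, hence amenable since $B$ is. Therefore $Af_1$, $Af_2$, and $Af_3$ are each amenable relative to $M_1$, and patching over the central projections shows that $A$ itself is amenable relative to $M_1$.

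It remains to upgrade relative amenability to genuine containment, and this is the step I expect to be the main obstacle. The target is the absorption statement: if $A\subseteq M$ is amenable relative to $M_1$, $M_1\subseteq M$ is mixing, and $Q=A\cap M_1\npreceq_{M_1}B$, then $A\subseteq M_1$. I would realize the relative amenability through the Ozawa--Popa description, producing a state $\varphi$ on the basic construction $\langle M,e_{M_1}\rangle$ that is $A$-central and restricts to $\tau$ on $M$, and then analyze $\varphi$ on the decomposition of $L^2\langle M,e_{M_1}\rangle$ into its diagonal part $\overline{M_1 e_{M_1}M_1}\cong L^2 M_1$ and its complementary off-diagonal $M_1$-bimodule. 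The point is that $A$-centrality lets me average $\varphi$ by unitaries of $Q$, and since $Q\npreceq_{M_1}B$ the mixing of $M_1$ in $M$ makes these averages decay to zero on the off-diagonal summand; hence $\varphi$ concentrates on the diagonal, which is exactly the statement that $E_{M_1}$ is multiplicative on $A$ and therefore $A=E_{M_1}(A)\subseteq M_1$. Making this concentration estimate precise---reconciling the relative-amenability net with the mixing bimodule calculation while tracking the cutting projections---is the technical heart of the argument; all the preceding steps are a bookkeeping assembly of the cited structure and control theorems.
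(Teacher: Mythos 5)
Your reduction of the statement to ``$A$ is amenable relative to $M_1$ inside $M$'' is correct and runs parallel to the paper's own proof: Corollary \ref{cor: relative biexactness tracial} plus Theorem \ref{thm: split rel biex} give the decomposition (with $f_0=0$ by hypothesis), and Ioana's dichotomy \cite[Corollary 2.12]{Ioana15cartan} reduces everything to ruling out $Af_2\preceq_M M_2$. Your mechanism for that exclusion differs from the paper's but is valid: you use the Ioana--Peterson--Popa control theorem \cite{IPP08} to place the projections inside $M_1$, observe $Qf_2\npreceq_{M_1}B$, invoke the standard AFP fact that a subalgebra of $M_1$ which intertwines into $M_2$ inside $M$ must intertwine into $B$ inside $M_1$, and then pass non-intertwining up from $Qf_2$ to $Af_2$ (a witnessing net of unitaries of a unital subalgebra witnesses non-intertwining for the overalgebra). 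The paper instead rules out $Af_2\preceq_M M_2$ by a direct computation of $E_{M_2}(xu_nf_2y)$ using that $M_2$ is mixing in $M$; both routes work.

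The genuine gap is your final step. The absorption statement you need --- $A$ amenable relative to $M_1$ inside $M$ together with $A\cap M_1\npreceq_{M_1}B$ implies $A\subseteq M_1$ --- is precisely the main theorem of Boutonnet--Houdayer \cite{BH18amenableabsorption} (which needs no mixing hypothesis), and this citation is exactly how the paper finishes. Your sketch does not prove it, and not merely because estimates are missing: the strategy of showing that \emph{the} witnessing state concentrates on the diagonal is false as stated, because an $A$-central state on $\langle M,e_{M_1}\rangle$ that is normal on $M$ can give mass zero to the closed ideal generated by $e_{M_1}$. Concretely, take $M=L(\mathbb{F}_2)=L(\mathbb{Z})\,\overline{\ast}\,L(\mathbb{Z})$ with free generators $a,b$, amalgam $B=\mathbb{C}$, and $A=M_1=L(a)''$; all hypotheses of the corollary hold, yet the state $\varphi(T)=\lim_{N\to\omega}\frac{1}{N}\sum_{n=1}^{N}\langle T\delta_{a^nb},\delta_{a^nb}\rangle$ is $A$-central (Følner averaging over powers of $a$, extended from the group algebra to $A$ by the Cauchy--Schwarz bound $|\varphi(xT)|^2\leqslant\tau(xx^*)\varphi(T^*T)$), restricts to $\tau$ on $M$, but satisfies $\varphi(\lambda(g)e_{M_1}\lambda(h))=0$ for all $g,h\in\mathbb{F}_2$ since $a^nb\notin\langle a\rangle$ for every $n$; hence $\varphi$ vanishes on the whole ideal generated by $e_{M_1}$ even though $A\subseteq M_1$. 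So concentration is not a consequence of centrality, mixing, and non-intertwining for an arbitrary witnessing state; one must either construct a very particular state or argue differently, which is what \cite{BH18amenableabsorption} does.

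A secondary problem with the sketch is the pairing you propose: since $\varphi$ is already $A$-central, ``averaging by unitaries of $Q$'' changes nothing, and exploiting the identity $\varphi(T)=\varphi(u_n^*Tu_n)$ requires $u_n^*Tu_n\to 0$ in a topology for which $\varphi$ is continuous. Mixing only provides ultraweak (or $\|\cdot\|_2$-type) decay, while $\varphi$ is not normal on $\langle M,e_{M_1}\rangle$; upgrading such decay to the $M$-$M$-type topologies that a state normal on $M$ does respect is exactly the delicate work carried out in Lemma \ref{lem: not intertwine ad conv 0}, and nothing in the sketch performs it. As written the proof is therefore incomplete at its decisive step; replacing the sketch by the citation of \cite{BH18amenableabsorption} closes it, after which your argument is a correct variant of the paper's.
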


Applying Corollary \ref{cor: relative biexactness tracial} and Theorem \ref{thm: split rel biex}, we obtain a Kurosh-type decomposition for free products of weakly exact, nonamenable, non–properly proximal II$_1$ factors, generalizing \cite[Corollary 1.7]{DS24structure} from the case of group von Neumann algebras. We also adapt the flip automorphism method in \cite{IsoMarra22}, originally used for prime decompositions of tensor products, to the free product setting, which allows us to remove the assumptions on one side of the Kurosh-type result. Note that here non–proper proximality is a stronger condition than non–biexactness, since every nonamenable biexact von Neumann factor is properly proximal.

\begin{corollary} \label{cor: Kurosh-type}
    Let $M_1,\cdots,M_m$ be weakly exact nonamenable non-properly proximal $\text{II}_1$ factors, and $M = M_1 \overline{\ast}\cdots \overline{\ast}M_m$ be the free product von Neumann algebra. If
    $M = N_1 \overline{\ast}\cdots \overline{\ast}N_n$
    where each $N_j$ is also a weakly exact nonamenable non-properly proximal $\text{II}_1$ factor, then $ m=n $, and up to a permutation of indices, $M_i$ is unitarily conjugate to $N_i$ in $M$.

    Moreover, if
    $M = N_1 \overline{*} N_2$
    for some II$_1$ factors $N_1, N_2$, then there exists a partition of the index set $\{1,\cdots, m\} = I_1 \sqcup I_2$ such that, up to unitary conjugacy in $M$, we have $N_1 = \overline{*}_{i\in I_1}M_i$ and $N_2 = \overline{*}_{i\in I_2}M_i$.
\end{corollary}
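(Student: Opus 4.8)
The plan is to combine the relative structure theorem (Theorem~\ref{thm: split rel biex}) with Popa's intertwining calculus. Write $M=M_1\overline{\ast}\cdots\overline{\ast}M_m$. Since each $M_i$ is weakly exact and the amalgam $\C1$ is injective, the finitely-many-factors version of Corollary~\ref{cor: relative biexactness tracial} shows $M$ is biexact relative to $\{M_1,\dots,M_m\}$, and each $M_i$ is mixing in $M$, as is standard for free products over $\C1$; thus Theorem~\ref{thm: split rel biex} applies with $\FF=\{M_1,\dots,M_m\}$. \textbf{Part 1.} First I would show each $N_j$ embeds into a single $M_i$. Applying Theorem~\ref{thm: split rel biex} to $A=N_j$ (with $p=1$), I use that $N_j$ is a free factor of $M=N_1\overline{\ast}\cdots\overline{\ast}N_n$, so $N_j'\cap M=\C1$ and every projection produced lies in $\C1$. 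The properly proximal piece vanishes because $N_j$ is not properly proximal, the amenable piece vanishes because $N_j$ is nonamenable, so a single summand survives and $N_j$ is amenable relative to some $M_{\sigma(j)}$. Regarding $M$ as $M_{\sigma(j)}\overline{\ast}(\overline{\ast}_{i\neq\sigma(j)}M_i)$ and invoking the standard dichotomy for relative amenability in free products, a nonamenable subalgebra amenable relative to one free factor intertwines into it, so $N_j\preceq_M M_{\sigma(j)}$.

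Because both families satisfy the same hypotheses, the symmetric argument (here the $N_j$ are weakly exact, so $M$ is also biexact relative to $\{N_1,\dots,N_n\}$) yields $\tau$ with $M_i\preceq_M N_{\tau(i)}$. Chaining gives $N_j\preceq_M M_{\sigma(j)}\preceq_M N_{\tau\sigma(j)}$; since distinct free factors never intertwine (each $N_k$ is mixing and the $N_k$ are free), this forces $\tau\sigma(j)=j$, and symmetrically $\sigma\tau=\mathrm{id}$, so $\sigma$ is a bijection and $m=n$. For each $j$ the two-sided intertwining $N_j\preceq_M M_{\sigma(j)}$ and $M_{\sigma(j)}\preceq_M N_j$ between free factors with trivial relative commutants then upgrades, by the free-product conjugacy lemma of \cite{MR3594283}, to a unitary $u_j\in M$ with $u_jM_{\sigma(j)}u_j^\ast=N_j$; relabelling by $\sigma$ gives the asserted unitary conjugacy.

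\textbf{Part 2.} Now $N_1,N_2$ are arbitrary $\mathrm{II}_1$ factors, so the $N$-side input of Part~1 is unavailable. The plan is to first localize each $M_i$ on one side, namely to show that after a single unitary conjugation one has $M_i\subseteq N_1$ or $M_i\subseteq N_2$ for every $i$, defining a partition $\{1,\dots,m\}=I_1\sqcup I_2$. To secure this without weak exactness of $N_1,N_2$, I would adapt the flip-automorphism argument of \cite{IsoMarra22}: passing to a doubling $M\overline{\ast}M$ carrying the flip that exchanges the two copies, and using that biexactness of $M$ relative to $\{M_i\}$ is a property of the fixed $M$-side that survives the doubling, one transports the $N_1\overline{\ast}N_2$ decomposition across the flip to conclude, for each non–properly proximal factor $M_i$, that $M_i\preceq_M N_1$ or $M_i\preceq_M N_2$ and that $M_i$ can be unitarily moved inside the relevant $N_k$. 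Granting the localization, put $P_k=\overline{\ast}_{i\in I_k}M_i\subseteq N_k$; these are free, so $M=N_1\overline{\ast}N_2=P_1\overline{\ast}P_2$. For $x\in N_1$ the element $x-E_{P_1}(x)\in N_1$ is an $L^2$-combination of reduced words each containing a trace-zero letter from $P_2\subseteq N_2$; since $N_1$ and $N_2$ are free, $E_{N_1}$ annihilates every such word, whence $x=E_{P_1}(x)\in P_1$. Thus $N_1\subseteq P_1$, and with $P_1\subseteq N_1$ this gives $N_1=P_1$, and likewise $N_2=P_2$.

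The main obstacle is the flip-automorphism localization in Part~2: every other step reduces to the structure theorem, to elementary free-product word bookkeeping, or to the standard intertwining toolkit, whereas discarding all assumptions on $N_1,N_2$ removes the relative-biexactness input on that side and must be compensated by the doubling, whose bimodule and proper-proximality estimates are the delicate part. Within Part~1 the only subtle point is the passage from two-sided intertwining to genuine unitary conjugacy, but this is already covered by existing free-product conjugacy results.
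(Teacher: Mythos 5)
Part 1 of your proposal is correct and is essentially the paper's own argument: your two steps (the structure theorem, Theorem \ref{thm: split rel biex}, plus Ioana's relative-amenability dichotomy) are exactly what the paper packages as Proposition \ref{prop: amalg free split intertwining}, and, like you, the paper then hands the chaining of intertwinings and the upgrade to unitary conjugacy to the argument of \cite[Corollary 8.1]{Drimbe23}. Whether one applies the structure theorem to $A=N_j$ against $\{M_1,\dots,M_m\}$ (as you do) or to $A=M_1$ against $\{N_1,\dots,N_n\}$ (as the paper does) is immaterial in Part 1, where both families satisfy the hypotheses.

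Part 2, however, contains a genuine gap, which you flag yourself: the flip-automorphism localization is only a plan, and moreover the flip you describe would not work. In the double $\widehat{M}=M^{(1)}\overline{\ast}M^{(2)}$, the automorphism exchanging the two copies of $M$ wholesale carries the family $\{M_i^{(k)}\}$ to itself (up to swapping superscripts), so comparing the original decomposition with its image under this flip yields no information at all. The point of the trick from \cite{IsoMarra22}, as the paper implements it, is to flip only half of the $N$-side: one defines $\sigma_1\in\Aut(\widehat{M})$ exchanging $N_1^{(1)}\leftrightarrow N_1^{(2)}$ while fixing $N_2^{(1)}$ and $N_2^{(2)}$. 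Then $\widehat{M}$ carries two genuinely different free-product decompositions into weakly exact, nonamenable, non-properly proximal II$_1$ factors, namely $\{M_i^{(k)}\}$ and $\{\sigma_1(M_i^{(k)})\}$; the latter family is admissible because each $\sigma_1(M_i^{(k)})\cong M_i$, even though the $N_j$ themselves carry no assumptions. Part 1 applied inside $\widehat{M}$ then yields a unitary $u\in\mathcal{U}(\widehat{M})$ with $uM_1^{(1)}u^*=\sigma_1(M_i^{(k)})\subset\sigma_1(M^{(k)})$. Even at that point the localization is not automatic: the paper extracts $M_1\preceq_M N_1$ or $M_1\preceq_M N_2$ by viewing $\widehat{M}$ as an amalgamated free product over $N_2^{(1)}$ and invoking the intertwiner control of \cite[Theorem 1.1]{IPP08} --- if $M_1^{(1)}\npreceq_{\widehat{M}}N_2^{(1)}$, then $u$ must lie in $M^{(1)}$, whence $uM_1^{(1)}u^*\subset M^{(1)}\cap\sigma_1(M^{(1)})=N_2^{(1)}$, a contradiction. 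Neither the choice of the partial flip nor this IPP step --- the two actual ideas of Part 2 --- appears in your sketch; ``biexactness of $M$ relative to $\{M_i\}$ surviving the doubling'' does not by itself produce them. Your closing freeness argument, that simultaneous containments $P_k=\overline{\ast}_{i\in I_k}M_i\subseteq N_k$ together with $M=P_1\overline{\ast}P_2=N_1\overline{\ast}N_2$ force $N_k=P_k$, is correct and is a clean way to finish; but the simultaneous containment by a single unitary is again part of the \cite[Corollary 8.1]{Drimbe23} machinery, which your localization sketch does not reach.
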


In the final section, we develop a technique at the von Neumann algebra level to upgrade relative biexactness to actual biexactness by considering an inclusion map of the von Neumann algebra into the bidual of its small-at-infinity boundary. Combining with Theorem \ref{thm: relative biexactness injective}, we obtain the biexactness of the amalgamated free product of separable injective von Neumann algebras over a mixing amalgam.

\begin{corollary} \label{cor: biexact AFP}
    Let $M_1,M_2$ be separable injective von Neumann algebras admitting a common subalgebra $B$ with expectation. If $B$ is mixing in both of $M_1, M_2$, then the AFP von Neumann algebra $ M= M_1\overline{*}_B M_2 $ is biexact.
\end{corollary}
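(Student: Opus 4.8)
The plan is to take Theorem~\ref{thm: relative biexactness injective} as a black box and graft onto it a general device that upgrades relative biexactness to genuine biexactness once the amalgam is mixing. By that theorem, $M = M_1 \overline{\ast}_B M_2$ is biexact relative to $B$: writing $\mathbb{X}_B \subset \mathbb{B}(L^2 M)$ for the small-at-infinity boundary of $M$ relative to $B$ and $\mathbb{X}_{\mathbb{C}1}$ for the absolute one, the inclusion $M \hookrightarrow \mathbb{X}_B$ is $M$-nuclear, so there is a net of c.c.p.\ maps $M \xrightarrow{\psi_k} \mathbb{M}_{n_k} \xrightarrow{\phi_k} \mathbb{X}_B$ with $\phi_k \circ \psi_k$ converging to the inclusion in the locally convex topology defining $M$-nuclearity. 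Since the genuine $M$-compacts $\mathbb{K}_{\mathbb{C}1}$ sit inside the $B$-relative compacts $\mathbb{K}_{B}$, we have $\mathbb{X}_{\mathbb{C}1} \subset \mathbb{X}_B$, and the whole content is to push the approximating maps into the smaller system $\mathbb{X}_{\mathbb{C}1}$, which is exactly the assertion that $M$ is biexact.

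The device, as announced in the introduction, is to pass to the bidual $\mathbb{X}_{\mathbb{C}1}^{**}$, a von Neumann algebra carrying a canonical copy of $M$ through $M \hookrightarrow \mathbb{X}_{\mathbb{C}1} \hookrightarrow \mathbb{X}_{\mathbb{C}1}^{**}$. First I would record the reduction that $M \hookrightarrow \mathbb{X}_{\mathbb{C}1}$ is $M$-nuclear as soon as $M \hookrightarrow \mathbb{X}_{\mathbb{C}1}^{**}$ is: given c.c.p.\ maps factoring through matrices and converging into the bidual, weak-$*$ density of $\mathbb{X}_{\mathbb{C}1}$ in $\mathbb{X}_{\mathbb{C}1}^{**}$ together with the finite-dimensionality of each $\mathbb{M}_{n_k}$ lets one perturb the matricial leg $\mathbb{M}_{n_k}\to\mathbb{X}_{\mathbb{C}1}^{**}$ to land in $\mathbb{X}_{\mathbb{C}1}$ itself, and a convexity (Mazur) argument restores convergence. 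This moves the problem into the bidual, where weak-$*$ limits of bounded nets are available by Banach--Alaoglu.

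The crux is then to manufacture, out of the mixing hypothesis, an $M$-bimodular u.c.p.\ map $\Phi \colon \mathbb{X}_B \to \mathbb{X}_{\mathbb{C}1}^{**}$ that fixes $M$. The idea is to set $\Phi(T) = \lim_{j} \rho(u_j)\, T\, \rho(u_j)^{*}$, the weak-$*$ limit (taken along an ultrafilter, valued in the bidual so as to exist) of conjugations by the \emph{right} translations $\rho(u_j)$ of a net of unitaries $u_j$ in $M$ that tends $\sigma$-weakly to $0$ and goes to infinity along the free-product word length. Conjugating by the right action is what keeps $\Phi$ the identity on $M$, since the left regular copy of $M$ on $L^2 M$ commutes with every $\rho(u_j)$. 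The role of mixing is to guarantee that $\Phi$ lands in $\mathbb{X}_{\mathbb{C}1}^{**}$ and not merely $\mathbb{X}_B^{**}$: the excess of $\mathbb{K}_B$ over $\mathbb{K}_{\mathbb{C}1}$ is generated by the Jones projection $e_B$, and the matrix coefficients of $\rho(u_j) e_B \rho(u_j)^{*}$ are governed by $\|E_B(a u_j^{*})\|_2$, which mixing of $B$ in $M_1,M_2$ forces to $0$; hence $\rho(u_j) e_B \rho(u_j)^{*} \to 0$ weakly and the extra relative compacts are annihilated in the limit. Composing, $\Phi \circ \phi_k \circ \psi_k$ is a c.c.p.\ factorization through matrices converging to $M \hookrightarrow \mathbb{X}_{\mathbb{C}1}^{**}$, and the reduction of the previous paragraph upgrades this to biexactness of $M$.

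I expect the main obstacle to be precisely the last verification: that the weak-$*$ limit $\Phi$ is well defined on all of $\mathbb{X}_B$, is genuinely unital, completely positive and $M$-bimodular, and --- most delicately --- that the commutators $[\Phi(T), \rho(x)]$ for $T \in \mathbb{X}_B$ become genuine $M$-compact elements of $\mathbb{K}_{\mathbb{C}1}^{**}$ rather than merely $B$-compact ones. Turning the $L^2$-mixing estimate $\|E_B(a u_j^{*})\|_2 \to 0$ into a uniform statement about the behaviour of $\Ad(\rho(u_j))$ on the whole ideal $\mathbb{K}_B$ modulo $\mathbb{K}_{\mathbb{C}1}$ is where the free-product word-length structure and the injectivity of $B$ (which makes $B$ amenable and keeps the boundary constructions well behaved) must be combined; one also has to check that mixing of $B$ in each $M_i$ propagates to the spreading of $e_B$ needed inside the full amalgamated free product $M$.
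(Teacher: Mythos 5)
Your high-level plan --- pass to the bidual and build an $M$-bimodular u.c.p.\ map that fixes $M$ and collapses the $B$-relative boundary onto the absolute one --- has the same shape as the paper's upgrading argument (Section \ref{sec: upgrading theorem}), but the concrete mechanism you propose has a fatal gap, and one can see it must be fatal because your argument never genuinely uses mixing. The only point where you invoke the hypothesis is the estimate $\|E_B(a u_j^*)\|_2 \to 0$ for a net of unitaries $u_j \in \mathcal{U}(M)$; that is an intertwining statement ($M \npreceq_M B$), not mixing, and such a net exists for any nondegenerate amalgamated free product (take alternating words). Consequently your proof would apply verbatim to $(L(\mathbb{Z})\otimes \mathcal{R})\,\overline{*}_{\mathbb{C}1\otimes \mathcal{R}}\,(L(\mathbb{Z})\otimes \mathcal{R}) = L(\mathbb{F}_2)\otimes \mathcal{R}$ (with $u_j = g_j \otimes 1$, $g_j \to \infty$ in $\mathbb{F}_2$), which the introduction of the paper points out is not biexact. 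So some step must fail, and it is exactly the one you flagged as delicate: that $\Phi(T)$ lands in $\tilde{\mathbb{S}}(M)$. Writing $[\Ad(\rho(u_j))(T),JxJ]=\Ad(\rho(u_j))\bigl([T,\,Ju_jxu_j^*J]\bigr)$, the element $k_j \coloneqq [T,\,Ju_jxu_j^*J]\in \mathbb{K}^{\infty,1}_{\mathbb{X}_B}(M)$ moves with $j$; knowing $\Ad(\rho(u_j))(k)\to 0$ for each \emph{fixed} relative compact $k$ gives no control on $\Ad(\rho(u_j))(k_j)$, and there is no uniformity in the $M$-$M$ / $M'$-$M'$ topologies that would let you exchange the two limits. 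A second, independent problem is topological: your limit is taken weak$^*$ in the bidual, where $\iota$ is not normal, but all your estimates (matrix coefficients against $\widehat{x},\widehat{y}\in L^2M$) test only \emph{normal} functionals; a bounded net tending to $0$ ultraweakly need not tend to $0$ against singular functionals, so ``$\rho(u_j)e_B\rho(u_j)^*\to 0$ weakly'' annihilates nothing in $\mathbb{B}(L^2M)^{**}$.

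The paper's device circumvents both issues and uses mixing in an essentially different way. Instead of a limit of conjugations, it works with the single projection $q = \vee_{u\in\mathcal{U}(M)}\iota^{\sharp}(JuJ e_B Ju^*J)$ and shows (Lemma \ref{lem:qisbasicconstruction}) that \emph{mixing} forces $q\,\iota^{\sharp}(x)\,q\,q_{\mathbb{K}}^{\perp} = \iota^{\sharp}(E_B(x))\,q\,q_{\mathbb{K}}^{\perp}$, i.e.\ $q$ implements $E_B$ modulo the compact ideal; this yields a \emph{normal} $*$-homomorphic copy of the basic construction and an $M^{**}$-bimodular normal u.c.p.\ map $\Phi\colon \langle M,e_B\rangle^{**} \to \tilde{\mathbb{S}}(M)q_{\mathbb{K}}^{\perp}$ (Theorem \ref{thm:upgrading}). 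The piece $\Ad(q_{\mathbb{X}_B}q_{\mathbb{K}}^{\perp})\circ\iota^{\sharp}|_M$ is then weak$^*$-nuclear because it factors through $\langle M,e_B\rangle$, whose embedding $M\hookrightarrow \langle M,e_B\rangle$ is $M$-nuclear by injectivity of $B$ and weak exactness of $M$ (Lemma \ref{lem: weak exactness into smaller injective}) --- an input your argument never uses but which is indispensable in the paper's proof, and the two remaining pieces $\Ad(q_{\mathbb{K}})\circ\iota^{\sharp}|_M$ and $\Ad(q_{\mathbb{X}_B}^{\perp})\circ\iota^{\sharp}|_M$ come from relative biexactness (Lemma \ref{lem:Upgrading biexactness}). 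Finally, the reduction you mention only in passing --- that mixing of $B$ in each $M_i$ propagates to mixing of $B$ in $M$ --- is a real lemma (Corollary \ref{cor: multiple biexact rel B}) that your proposal leaves unproved.
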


For example, if $\Gamma = (\Z/2\Z) \wr \Z$ is the lamplighter group, $(X_0, \mu_0)$ is a non-trivial standard probability measure space, and $\Gamma\curvearrowright (X,\mu) \coloneqq (X_0, \mu_0)^\Gamma$ is the Bernoulli action, then the crossed product von Neumann algebra $L^\infty(X,\mu)\rtimes\Gamma$ is injective and $L\Gamma \subset L^\infty(X,\mu)\rtimes\Gamma$ is a mixing subalgebra since the Bernoulli action is mixing. Therefore, by the above corollary,  $(L^\infty(X,\mu)\rtimes\Gamma)\overline{*}_{L\Gamma}(L^\infty(X,\mu)\rtimes\Gamma)   $ is biexact.

\subsection*{Acknowledgments} The authors thank Changying Ding for helpful comments, and Srivatsav Kunnawalkam Elayavalli for drawing their attention to the upgrading proper proximality method in \cite{DS24structure} and to the flip automorphism method in \cite{IsoMarra22}. They are also grateful to Amine Marrakchi for kindly explaining the ideas of the flip automorphism method for free products.

\section{Amalgamated free product of von Neumann algebras}
In this section, we briefly recall the construction of AFP C$^*$-algebras and von Neumann algebras, originating from Voiculescu’s free probability theory \cite{MR1217253}. Given two unital C$^*$-algebra $A_1,A_2$ with a common unital subalgebra $B$ and two conditional expectation $E:A_i\to B$, the reduced AFP C$^*$-algebra $ (A_1,E)\ast_B (A_2,E) $ is defined using the free product of right Hilbert $B$-module \cite{voiculescu2006symmetries}: Let $X_i $ be the GNS-Hilbert $B$-module for $(A_i,E)$ and $X_i^o \coloneqq X_i\ominus B$, then the free product right Hilbert $B$-module is
$$ X \coloneqq B\oplus \bigoplus_{n\geqslant 1} \bigoplus_{i_1\neq\cdots \neq i_n}X_{i_1}^o\otimes_B \cdots \otimes_B X_{i_n}^o. $$
Each $A_i$ has a natural left action on $X$, and $(A_1,E)\ast_B (A_2,E)$ is defined as the subalgebra $C^*(A_1,A_2)$ of the C$^*$-algebra of left adjointable operators $\mathcal{L}(X)$ generated by the left action of $A_1$ and $A_2$.

Similarly, if $A_i=M_i$ are von Neumann algebras and $B$ is a common von Neumann subalgebra with normal conditional expectations, then we can also define the AFP von Neumann algebra $ (M_1,E)\overline{\ast}_B (M_2,E) $ by considering the self-dual completion $\widehat{X}$ of $X$ \cite{paschke1973inner}. Since $\widehat{X}$ is self-dual, $\mathcal{L}(\widehat{X})$ is a $W^*$-algebra, and thus $  (M_1,E)\overline{\ast}_B (M_2,E) $ can be defined as $ W^*( M_1,M_2 )\subset \mathcal{L}(\widehat{X}) $.

In this paper, however, we focus on constructing AFP von Neumann algebras through the framework of amalgamated free products of $W^*$-correspondences (i.e., Hilbert spaces with normal left and right actions), which is equivalent to using self-dual right Hilbert $B$-modules (see, for example, \cite{anantharaman1995amenable}). The reason why we choose to stick to $W^*$-correspondence is that, given a von Neumann subalgebra $B \subset M$ equipped with a faithful normal conditional expectation, the category of right Hilbert $B$-modules over $B \subset M$ does not admit the natural right action of $M$ as morphisms, since such actions are not right $B$-linear. However, in the context of analyzing biexactness, it is essential to study both left and right actions of a von Neumann algebra. Therefore, we begin by reviewing the construction of the standard representation of the AFP von Neumann algebra, which involves the use of relative tensor products of $W^*$-correspondence. We note that our construction of AFP von Neumann algebras agrees with that of \cite{MR1198815} in the case when the involved von Neumann algebras are tracial, and with \cite{MR1738186} when the von Neumann algebras are $\sigma$-finite.

\subsection{Relative tensor product of $W^*$-correspondence}

Since we do not assume the $\sigma$-finiteness of our von Neumann algebras, we begin by briefly discussing the tensor product of modules (correspondence) over von Neumann algebras in the absence of a faithful normal state. Although this construction is standard, it is rarely explicitly addressed in the context of AFP von Neumann algebras. For the sake of completeness and to maintain a self-contained exposition, we recall the relevant definitions. For details on this construction, see for example \cite[Chapter IX]{takesaki2003theory} and \cite[Appendix B]{connes1994noncommutative}.

Fix a von Neumann algebra $M$. We first recall the definition of n.s.f.\ weight $ \varphi $ on $M$:
\begin{defn}
    A weight on a von Neumann algebra $M$ is a map $ \varphi:M_+\to [0,\infty] $ such that $ \varphi(\lambda x+y)=\lambda \varphi(x)+\varphi(y) $ for all $ x,y\in M_+ $ and $\lambda\geqslant 0$. We say $ \varphi $ is
			\begin{itemize}
			\item faithful if $ \varphi(x)\neq 0 $ for all positive $x\neq 0$;
			\item semifinite if the span of the set of elements with finite weight $ \{ x\in M_+: \varphi(x)<\infty \} $ generate $ M $;
			\item normal if for every bounded increasing net $ (x_i)_i $ in $ M_+ $, $ \varphi(\sup x_i)=\sup \varphi(x_i) $.
			\item a n.s.f.\ weight if $\varphi$ is normal, semifinite, and faithful.
			\end{itemize}
\end{defn}
For a n.s.f.\ weight $\varphi$, we denote $$ \Dom(\varphi) \coloneqq \text{span}\{ x\in M_+: \varphi(x)<\infty \},\quad \Dom_{1/2}(\varphi) \coloneqq \{x\in M: \varphi(x^*x)<\infty \}. $$
We also denote $L^2M=L^2(M,\varphi)$ the Hilbert space obtained by the completion of $ \Dom_{1/2}(\varphi) $ with respect to the inner product $ \langle x,y\rangle\coloneqq  \varphi(x^*y) $. The canonical inclusion $\eta_\varphi:\Dom_{1/2}(\varphi)\to L^2M  $ is formally denoted by $\eta_\varphi(x)=x\varphi^{1/2}\in L^2M $. The map $ \eta_\varphi $ is also called a semicyclic representation of $M$ since $ y\eta_\varphi(x)= yx\varphi^{1/2}= \eta_\varphi(yx) $ for $x\in \Dom_{1/2}(\varphi)$ and $y\in M$.

Tomita's involution operator $S_\varphi$ is the closable conjugate linear operator on $L^2M$ given by $ S_\varphi\colon x\varphi^{1/2}\mapsto x^*\varphi^{1/2}  $ for $x\in \Dom_{1/2}(\varphi)^*\cap \Dom_{1/2}(\varphi)$. The polar decomposition of $S_\varphi$ is denoted by $ S_\varphi= J\Delta_\varphi^{1/2} $, where $ J=J^{-1}$ is anti-unitary and $\Delta_\varphi$ is positive (and unbounded if $S_{\varphi}$ is unbounded).

Note that if we identify $ M^{\opp}\simeq M' $ via the isomorphism $x^{\opp}\mapsto Jx^*J$, then the opposite n.s.f.\ weight $\varphi^{\opp}$ on $M^{\opp}$ given by $ \varphi^{\opp}(x^{\opp}) = \varphi(x)$ coincides with the weight $ \varphi(J\cdot J) $ on $M'$. One can check that in fact the map $$ x^{\opp}\mapsto Jx^*J\varphi^{1/2}\coloneqq Jx^*\varphi^{1/2},\quad x^{\opp}\in \Dom_{1/2}(\varphi^{\opp})= (\Dom_{1/2}(\varphi)^*)^{\opp} $$ is also a semicyclic representation for $M^{\opp}$ equivalent to the one given by $\varphi^{\opp}$. In particular, we have $ L^2M\simeq L^2M' $ as $ Jx^*\varphi^{1/2}$ is dense in $L^2M$. For this reason, we identify $ L^2M $ and $L^2M'$, and simply denote the canonical inclusion $\eta_{\varphi^{\opp}}\colon \Dom_{1/2}(\varphi^{\opp})= (\Dom_{1/2}(\varphi)^*)^{\opp}\to L^2M $,
$$ \eta_{\varphi^{\opp}}(x^{\opp}) = Jx^*\varphi^{1/2} \eqqcolon \varphi^{1/2}x,\quad x\in \Dom_{1/2}(\varphi)^*,  $$
where the notation $ Jx^*\varphi^{1/2}=\varphi^{1/2}x $ is due to the formal relation $ \varphi^{1/2}x\varphi^{-1/2}\varphi^{1/2} = \Delta_{\varphi}^{1/2}x\varphi^{1/2}=Jx^*\varphi^{1/2} $.

Suppose the Hilbert space $ \HH_1$ is a normal right $M$-module, and similarly $\HH_2$ is a normal left $M$-module, i.e., we have two normal representations $ \pi_1:M^{\opp}\to B(\HH_1) $ and $ \pi_2:M\to B(\HH_2) $. In short, we write
$$ x\xi = \pi_2(x)\xi,\quad \zeta x=\pi_1(x^{\opp})\zeta,\quad\forall x\in M, \xi\in \HH_2,\zeta\in \HH_1.$$

We now define the tensor product $ \HH_1\otimes_M \HH_2 $. We consider the following dense subspace of the right module $\HH_1$:
\[ D( \HH_1,\varphi )\coloneqq \{ \xi\in \HH_1: \exists C_\xi>0,\forall x\in \Dom_{1/2}(\varphi)^*, \| \xi x \|^2\leqslant C_\xi^2\varphi(xx^*) \}. \]
Each $\xi\in D( \HH_1,\varphi )$ gives rise to an right $M$-linear operator $L_\varphi(\xi)\in \BB( L^2M,\HH_1 )$ by
$$ L_\varphi(\xi)\varphi^{1/2}x \coloneqq \xi x,\quad x\in \Dom_{1/2}(\varphi)^{*}. $$

Symmetrically, we also denote
$$ D'(\HH_2,\varphi) \coloneqq \{ \zeta\in \HH_2:\exists C_\zeta>0 \text{ such that }\forall x\in \Dom_{1/2}(\varphi), \| x\zeta \|^2\leqslant C_\zeta^2\varphi(x^*x) \}, $$
and define $ R_\varphi(\xi)\in \BB( L^2M,\HH_2) $ to be the left $M$-linear map given by $R_\varphi(\xi)x\varphi^{1/2}\coloneqq  x\xi$ for $x\in \Dom_{1/2}(\varphi)$.

\begin{defn}
    The relative tensor product $ \HH_1\otimes_M \HH_2 $ is the Hilbert space formed by separation and completion of $ D(\HH_1,\varphi)\odot\HH_2 $ with respect to the positive bilinear form
    $$ \langle \xi_1\otimes \zeta_1,\xi_2\otimes \zeta_2 \rangle\coloneqq  \langle \zeta_1,\pi_2(L^*_{\varphi}(\xi_1)L_\varphi(\xi_2))\zeta_2 \rangle. $$
    Equivalently, $ \HH_1\otimes_M \HH_2 $ can also be obtained from $ \HH_1\odot D'(\HH_2,\varphi) $ with respect to the positive bilinear form
    $$ \langle \xi_1\otimes \zeta_1,\xi_2\otimes \zeta_2\rangle \coloneqq \langle \xi_1,\pi_1( R^*_\varphi(\xi_1)R_\varphi(\xi_2) ) \xi_1\rangle.$$
\end{defn}

In fact, the above definition does not depend on the choice of $\varphi$ (\cite[Chapter IX, Theorem 3.21]{takesaki2003theory}, \cite[Appendix B, Proposition 12]{connes1994noncommutative}). Since every von Neumann algebra admits a n.s.f.\ weight, $ \HH_1\otimes_M \HH_2 $ is well-defined for any von Neumann algebra $M$.

\begin{exmp}\label{exmp: notation for relative tensor product}
    If $ \HH_1 = L^2M $, then we have precisely $ D(L^2M,\varphi) = \eta_\varphi(\Dom_{1/2}(\varphi)) = \{x\varphi^{1/2} : x\in\Dom_{1/2}(\varphi)\} $. In particular, the inner product on $L^2M\otimes_M \HH_2$ coincides with the inner product on $ \HH_2 $ because
    \[ \langle x_1\varphi^{1/2}\otimes \zeta_1, x_2\varphi^{1/2}\otimes \zeta_2\rangle = \langle  \zeta_1,x_1^*x_2\zeta_2 \rangle= \langle x_1\zeta_1,x_2\zeta_2\rangle, \quad \forall x_1, x_2 \in \Dom_{1/2}(\varphi), \zeta_1,\zeta_2 \in \HH_2.\]
    Therefore, we actually have $ L^2M\otimes_M \HH_2 \simeq \HH_2$ via the identification $ x\varphi^{1/2}\otimes \zeta \mapsto x\zeta$. On the other hand, the inner product $\langle x_1\varphi^{1/2}\otimes \zeta_1, x_2\varphi^{1/2}\otimes \zeta_2\rangle = \langle  \zeta_1,x_1^*x_2\zeta_2 \rangle $ can be (formally) extended to $x_1,x_2\in M$. In particular, the identification $ L^2M\otimes_M \HH_2 =\HH_2 $ can be represented as
    $$ L^2M\otimes_B\HH_2 \ni 1_M\varphi^{1/2}\otimes \zeta = \varphi^{1/2}\otimes\zeta \mapsto \zeta\in \HH_2. $$
    In general, under this isomorphism, a vector $ \xi\otimes \zeta \in L^2M\otimes_M \HH_2$ with $\zeta\in D'(\HH_2,\varphi)$ is identified with $ R_\zeta \xi \in \HH_2 $.
    
    Similarly, we also have $  \HH_1\otimes_M L^2M = \HH_1$ via the isomorphism $ \xi\otimes \varphi^{1/2}\mapsto \xi\in \HH_2$.
\end{exmp}

\subsection{Amalgamated free product} \label{sec: Amalgamated free product}
Let $M_1,M_2$ be two von Neumann algebra having a fixed unital von Neumann subalgebra $B$ with faithful normal conditional expectations $E_i: M_i\to B $. Let $L^2B$ be the standard Hilbert space for $B$, $ L^2M_i $ be the standard Hilbert space for $M_i$, and $ L^2M_i^o = L^2M_i \ominus L^2B$.  We also denote by $ N_i = \langle M_i,e_B \rangle = J_iBJ_i\subset \mathbb{B}(L^2M_i)$ the (Jones') basic construction for $B\subset M$ is the von Neuamm subalgebra generated by $M_{i}$ and the projection $e_{B}$ in $\mathbb{B}(L^{2}M_{i})$. Note that if we let $i_B: L^2B\to L^2M_i$ be the canonical inclusion, then we also have a (not necessarily faithful) conditional expectation $ E_i\colon  N_i\to B $ given by $ E_i(x)\coloneqq i^*_B xi_B $. Sometimes we omit the subscript $i$ and simply denote $ E_i(x)=E(x) $ when it is clear that $x\in N_i$.

We denote the free product Hilbert space \[ \mathcal{F} = L^2B\oplus \bigoplus_{n\geqslant 1} \bigoplus_{i_1\neq\cdots \neq i_n}L^2M_{i_1}^o\otimes_B \cdots \otimes_B L^2M_{i_n}^o, \]
where $ \otimes_B $ is the relative tensor product.

Let $ \mathcal{F}(i)\coloneqq  L^2B\oplus \bigoplus_{n\geqslant 1} \bigoplus_{\substack{i_1\neq i\\i_1\neq\cdots \neq i_n}}L^2M_{i_1}^o\otimes_B \cdots \otimes_B L^2M_{i_n}^o $, then we have the isomorphism $ \mathcal{F} \simeq L^2M_i\otimes_B \mathcal{F}(i) $ (cf.\ \cite[Section 2]{MR1738186}). Note that the tensor product $ L^2M_i\otimes_B \mathcal{F}(i) $ is also a $ N_i $-$B$ bimodule as $ L^2M_i $ is a $ N_i $-$B$ bimodule and $ \mathcal{F}(i) $ is a $B$ bimodule. Therefore, we have the right $B$-linear $*$-representation of $ N_i $ on $ \mathcal{F}\simeq L^2M_i\otimes_B \FF(i) $ by acting on the first tensor components
$$ \lambda_i:  N_i \coloneqq \langle M_i,e_B\rangle \to \mathbb{B}(\mathcal{F}),$$
and the AFP von Neumann algebra $M_1\overline{\ast}_{B}M_2 $ is defined as
\[ M= M_1\overline{\ast}_{B}M_2 \coloneqq (\lambda_1(M_1)\cup\lambda_2(M_2))'' .\]
Essentially, this construction is the same as in \cite{MR1738186} except that we do not assume that $\varphi$ is a state when taking the relative tensor products.

The faithful normal conditional expectation from $ M_1\overline{\ast}_{B}M_2 $ onto $ B $ is then given by
\[ E(x) = i_B^{*}xi_B, \quad \forall x\in M_1\ast_{B}M_2, \]
where $ i_B\colon  L^2B\to \mathcal{F} $ is the canonical embedding.

We also consider the reduced AFP C$^{*}$-algebra \[ M_r = M_1\ast_{B}M_2 \coloneqq  C^*(\lambda_1(M_1),\lambda_2(M_2)) \subseteq B(\mathcal{F}) ,\]
which is a C$^{*}$-subalgebra of the larger reduced AFP C$^{*}$-algebra
\[ N_r= N_1\ast_{B}N_2 \coloneqq C^*( \lambda_1(N_1),\lambda_2(N_2)).\]
Similarly, $ E(x)=i^*_Bxi_B $ also gives a conditional expectation from $ N_r $ onto $B$.

The right action of $M$ on $\FF$ can be defined in a similar way. Let $  \mathcal{F}'(i)\coloneqq  L^2B\oplus \bigoplus_{n\geqslant 1} \bigoplus_{\substack{i_n\neq i\\i_1\neq\cdots \neq i_n}}L^2M_{i_1}^o\otimes_B \cdots \otimes_B L^2M_{i_n}^o  $, then we have the isomorphism $\FF \simeq \FF'(i)\otimes_B L^2M_i$ as a $ B $-$M_i$ (in fact $ B $-$N_i$) bimodule. Therefore, we have the left $B$-linear $*$-representation $\rho_i$ of $ M_i^{op} $ on $ \mathcal{F}\simeq \FF'(i)\otimes_B L^2M_i $ by acting on the second tensor components. $M'\in \BB(\FF)$ can now be identified with $ \{\rho_i(M_i^{op}):i=1,2\}'' $.

\section{A C$^{*}$-bimodule characterization of weak exactness and biexactness}
In this prelimnary section we collect some basic facts of a locally convex topology relative to von Neumann algebras on C$^{*}$-bimodules introduced by Magajna in \cite{MR1616512} and \cite{MR1750836}, and later adapted and generalized in \cite{MR4675043} and \cite{ding2023biexact}. This topology has been used to characterize weakly exact von Neumann algebras \cite[Theorem 5.1]{ding2023biexact} and biexact von Neumann algebras \cite[Definition 6.1]{ding2023biexact}. We refer the reader to \cite{MR1750836}, \cite{MR4675043}, and \cite{ding2023biexact} for the details on this topology.

\subsection{C$^{*}$-bimodule topologies associated with von Neumann algebras}

Throughout this section, we assume that $M$ is a von Neumann algebra and $A$ is a unital, ultraweakly dense C$^{*}$-subalgebra of $M$. A \textit{(concrete) operator $A$-system} $X$ consists of a concrete embedding of an operator system $X\subset \mathbb{B}(\mathcal{H})$ into the space of bounded linear operators on a Hilbert space $\mathcal{H}$ and a faithful non-degenerate representation $\pi\colon A\to \mathbb{B}(\mathcal{H})$ such that $X$ is a $\pi(A)$-bimodule. We call such an embedding of $X\subset \mathbb{B}(\mathcal{H})$ together with $\pi$ \textit{a concrete realization of $X$ as an $A$-system}. We say the operator $A$-system $X$ is \textit{$(A\subset M)$-normal} if the concrete realization can be made so that $\pi$ extends to a normal representation of $M$. If $X$ is moreover a unital C$^{*}$-algebra, we will say that it is an \textit{operator $A$-C$^{*}$-algebra} or an \textit{$(A\subset M)$-normal} $A$-C$^{*}$-algebra if $X$ is so as an operator system. We will often drop $\pi$ in the notation.

We let $A^{\sharp}$ denote the subspace of $A^{*}$ consisting of linear functionals that can be extended to normal linear functionals on $M$. Given positive linear functionals $\omega,\rho \in A^{\sharp}$, we consider the seminorm on $X$ as in \cite[Section 1]{MR1750836} and \cite[Section 3]{ding2023biexact} given by
\[s^{\rho}_{\omega}(x) = \inf\{\rho(a^{*}a)^{\frac{1}{2}}\left\Vert y \right\Vert\omega(b^{*}b)^{\frac{1}{2}} : x= a^{*}yb, a,b\in A, y\in X\}, \quad x\in X.\]
We call the topology on $X$ induced by the seminorms $\{s^{\rho}_{\omega}:\omega,\rho \in (A^{\sharp})_{+}\}$ the \textit{$(A\subset M)$-topology} on $X$, or simply the \textit{$M$-topology} when $A=M$.

We denote by $X^{A\sharp A}$, or just by $X^{\sharp}$ if no confusion will arise, the space of linear functionals $\varphi \in X^{*}$ such that for any $x\in X$, the map $A\times A \ni (a,b)\mapsto \varphi(axb)$ extends to a separately ultraweakly continuous bilinear form on $M$, i.e., the linear functionals $A\ni a\mapsto \varphi(ax)$ and $A\ni b\mapsto \varphi(xb)$ are normal. We call the $\sigma(X, X^{A\sharp A})$-topology the \textit{weak $(A\subset M)$-topology}, or simply the \textit{weak $M$-topology} when $A=M$. By \cite[Thoerem 3.7]{MR1750836} or \cite[Proposition 3.3]{ding2023biexact}, a functional $\varphi\in X^{*}$ is continuous in the $(A\subset M)$-topology if and only if $\varphi\in X^{A\sharp A}$.

In the sequel, we will also need another topology on $X$, called the \textit{$(A\subset M)$-$\mathbb{C}$-topology}, or simply the \textit{$M$-$\mathbb{C}$-topology} when $A = M$, which is the locally convex topology on $X$ generated by the seminorms of the form
\[s^{\rho}(x) = \inf\{\rho(a^{*}a)^{\frac{1}{2}}\left\Vert y \right\Vert : x= a^{*}y, a\in A, y\in X\}, \quad x\in X, \rho \in A^{\sharp}.\]
We similarly define the seminorms $s_{\omega}$ on $X$ for $\omega \in A^{\sharp}$ by
\[s_{\omega}(x) = \inf\{\left\Vert y \right\Vert\omega(b^{*}b)^{\frac{1}{2}} : x= yb, b\in A, y\in X\}, \quad x\in X\]
and define the \textit{$\mathbb{C}$-$(A\subset M)$-topology}, or the \textit{$\mathbb{C}$-$M$-topology} when $A = M$, on $X$ by these seminorms $s_{\omega}$.


Now, suppose $B$ is a unital $(A\subset M)$-normal $A$-C$^{*}$-algebra. We let $p_{\nor}\in A^{**}\subset B^{**}$ denote the projection corresponding to the support of the identity representation $A\to M$. As explained in \cite[Section 2]{MR4675043} and \cite[Section 3.1]{ding2023biexact}, we may identify $(p_{\nor}B^{**}p_{\nor})_{*} \simeq B^{\sharp}$ by considering the restriction map to $B$, so the dual map naturally gives $B^{\sharp *}$ a von Neumann algebra structure so that 
\[B^{\sharp *}\simeq p_{\nor}B^{**}p_{\nor}\]
as von Neumann algebras. Since $p_{\nor}$ commutes with $A\subset B^{**}$, the above isomorphism preserves the natural $A$-bimodule structures on $B^{\sharp *}$ and $p_{\nor}B^{**}p_{\nor}$, so we can view $M = A^{\sharp *}$ as a von Neumann subalgebra of $B^{\sharp *}$. 

If $i_{B}\colon B\to B^{\sharp *}$ denotes the canonical inclusion map of $B$, then $i_{B}$ is an $A$-bimodular complete order isomorphism, but it is not a $*$-homomorphism in general because $p_{\nor}$ might not be central in $B^{**}$.

\subsection{Weakly exact von Neumann algebra} \label{sec: weakly exact von Neumann algebra}
We now recall a definition of weak exactness of ultraweakly dense C$^{*}$-algebras, \cite[Definition 3.1.1]{MR3004955}, which generalizes the original definition of weakly exact von Neumann algebra in \cite{MR1403994}. If $M$ is a von Neumann algebra and $A\subset M$ is a (unital) ultraweakly dense C$^{*}$-algebra, then we say \textit{$A$ is weakly exact in $M$} if for any unital C$^{*}$-algebra $B$ with a closed two-sided ideal $J$ and any representation $\pi\colon A\otimes B\to \mathbb{B}(\mathcal{K})$ with $A\otimes J\subset \ker\pi$ such that $\restr{\pi}{A\otimes \mathbb{C}}$ extends to an ultraweakly continuous representation of $M$, the induced representation $\widetilde{\pi}\colon A\odot B/J\to \mathbb{B}(\mathcal{K})$ is min-continuous. 

In particular, when $A=M$, $M$ is weakly exact in $M$ if and only if $M$ is weakly exact in the sense of Kirchberg \cite{MR1403994}. Also, if $M = A^{**}$, then $A$ is weakly exact in $A^{**}$ if and only if $A$ is an exact C$^{*}$-algebra.

Recall that if $E$ is an operator system or a C$^{*}$-algebra and if $F$ is an operator $A$-system or $A$-C$^{*}$-algebra for a (unital) ultraweakly dense C$^{*}$-subalgebra $A$ of a von Neumann algebra $M$, then as defined in \cite[Section 4.1]{ding2023biexact}, we say a c.c.p.\ map $\phi\colon E\to F$ is \textit{$(A\subset M)$-nuclear} (or just \textit{$M$-nuclear} when $A = M$) if there exist nets of c.c.p.\ maps $\phi_{i}\colon E\to \mathbb{M}_{n(i)}(\mathbb{C})$ and $\psi_{i}\colon \mathbb{M}_{n(i)}(\mathbb{C})\to F$ such that $\psi_{i}\circ \phi_{i}(x)$ converges to $\phi(x)$ in the $(A\subset M)$-topology for every $x\in E$. 

The result \cite[Theorem 5.1]{ding2023biexact} characterize weak exactness in these terms:  if $M\subset \mathbb{B}(\mathcal{H})$ is a von Neumann algebra and $A\subset M$ is a (unital) ultraweakly dense C$^{*}$-algebra, then $A$ is weakly exact in $M$ if and only if the inclusion map $A\subset \mathbb{B}(\mathcal{H})$ is $(A\subset M)$-nuclear. 

As next lemma shows, in certain cases, one can replace the ambient space $\mathbb{B}(\mathcal{H})$ with a smaller operator system in the characterization of nuclear embeddings arising from weak exactness. 

\begin{lemma} \label{lem: weak exactness into smaller injective}
    Suppose $M$ is a von Neumann algebra and $A\subset M$ is an ultraweakly dense C$^{*}$-subalgebra. If $B\subset M$ is an injective von Neumann subalgebra with a normal conditional expectation $E\colon M\to B$, then $A$ is weakly exact in $M$ if and only if the inclusion map $A\hookrightarrow N= \langle M, e_{B}\rangle$ is $(A\subset M)$-nuclear.
\end{lemma}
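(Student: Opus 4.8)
The plan is to reduce both directions to the characterization of weak exactness recorded just above, \cite[Theorem 5.1]{ding2023biexact}: realizing $M$ in its standard form $M\subset\mathbb{B}(L^2M)$, the algebra $A$ is weakly exact in $M$ if and only if the inclusion $A\hookrightarrow\mathbb{B}(L^2M)$ is $(A\subset M)$-nuclear. The backward implication I would dispatch immediately. If $A\hookrightarrow N$ is $(A\subset M)$-nuclear, witnessed by c.c.p.\ maps $\phi_i\colon A\to\mathbb{M}_{n(i)}(\mathbb{C})$ and $\psi_i\colon\mathbb{M}_{n(i)}(\mathbb{C})\to N$, I compose each $\psi_i$ with the inclusion $N\hookrightarrow\mathbb{B}(L^2M)$. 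For $x\in N$ the seminorm $s^\rho_\omega(x)$ computed in $\mathbb{B}(L^2M)$ ranges over strictly more decompositions $x=a^*yb$ than the one computed in $N$, so it is dominated by the latter; hence $(A\subset M)$-convergence inside $N$ forces $(A\subset M)$-convergence inside $\mathbb{B}(L^2M)$, and \cite[Theorem 5.1]{ding2023biexact} yields weak exactness.

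The real content is the forward implication, and the key point I would isolate first is that $N=\langle M,e_B\rangle$ is itself injective. In the standard representation with modular conjugation $J$, the basic construction is the commutant $\langle M,e_B\rangle=(JBJ)'=JB'J$; since $JBJ$ is anti-isomorphic to the injective algebra $B$ it is injective, and a von Neumann algebra is injective exactly when its commutant is, so $N$ is injective. By the very definition of injectivity this furnishes a conditional expectation $\Phi\colon\mathbb{B}(L^2M)\to N$, which fixes $N$ pointwise, so in particular $\Phi(a)=a$ for every $a\in A\subset M\subset N$. The one delicate point outside the tracial or $\sigma$-finite setting is the identity $\langle M,e_B\rangle=(JBJ)'$ itself, which I would justify using only that $E_i$ is a faithful normal conditional expectation.

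With $\Phi$ in hand the forward direction becomes a transport argument. Assuming $A$ is weakly exact in $M$, \cite[Theorem 5.1]{ding2023biexact} gives c.c.p.\ maps $\phi_i\colon A\to\mathbb{M}_{n(i)}(\mathbb{C})$ and $\psi_i\colon\mathbb{M}_{n(i)}(\mathbb{C})\to\mathbb{B}(L^2M)$ with $\psi_i\circ\phi_i(a)\to a$ in the $(A\subset M)$-topology, and I set $\psi_i'\coloneqq\Phi\circ\psi_i\colon\mathbb{M}_{n(i)}(\mathbb{C})\to N$, which are again c.c.p. By Tomiyama's theorem $\Phi$ is $N$-bimodular, hence $A$-bimodular, so for any decomposition $x=a^*yb$ with $a,b\in A$ and $y\in\mathbb{B}(L^2M)$ one has $\Phi(x)=a^*\Phi(y)b$ with $\Phi(y)\in N$ and $\|\Phi(y)\|\le\|y\|$; taking infima over decompositions gives $s^\rho_\omega(\Phi(x))\le s^\rho_\omega(x)$ for all $\rho,\omega\in(A^\sharp)_+$, i.e.\ $\Phi$ is continuous from the $(A\subset M)$-topology of $\mathbb{B}(L^2M)$ to that of $N$. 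Since $\Phi(a)=a$, I then conclude $\psi_i'\circ\phi_i(a)=\Phi(\psi_i\circ\phi_i(a))\to a$ in the $(A\subset M)$-topology of $N$, so $A\hookrightarrow N$ is $(A\subset M)$-nuclear.

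I expect the only genuine obstacle to be establishing the injectivity of $N$ and producing the expectation $\Phi$; everything afterward is essentially forced, since the $A$-bimodularity of a conditional expectation makes it automatically contractive for each defining seminorm, and the comparison of the two $(A\subset M)$-topologies on $N$ and on $\mathbb{B}(L^2M)$ is elementary.
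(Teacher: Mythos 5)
Your proof is correct, and its skeleton is the same as the paper's: $B$ injective implies $N=\langle M,e_B\rangle$ injective, so there is a conditional expectation $\Phi\colon \mathbb{B}(L^{2}M)\to N$ fixing $N$, through which one pushes the approximating maps furnished by weak exactness. Where you genuinely diverge is in how the convergence is transported into $N$. The paper never claims that $\Phi$ is continuous for the $(A\subset M)$-topologies; instead it invokes the refined characterization of weak exactness \cite[Lemma 4.3]{ding2023biexact}, which produces u.c.p.\ maps with $\widetilde{\psi}_{i}\circ\phi_{i}(x)-x=T_{i}-a_{i}$, where $\Vert T_{i}\Vert\to 0$ and $(a_{i})\subset A$ is bounded with $a_{i}\to 0$ ultrastrongly; applying the expectation and testing against functionals whose restriction to $A$ is normal, it obtains convergence only in the point-\emph{weak}-$(A\subset M)$-topology, which suffices for $(A\subset M)$-nuclearity after the standard convexity upgrade. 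You instead prove outright that $\Phi$ is $(A\subset M)$-continuous: being a Tomiyama conditional expectation it is $A$-bimodular and contractive, so every decomposition $x=a^{*}yb$ in $\mathbb{B}(L^{2}M)$ yields the decomposition $\Phi(x)=a^{*}\Phi(y)b$ in $N$, whence $s^{\rho}_{\omega}(\Phi(x))\leqslant s^{\rho}_{\omega}(x)$; this lets you quote only \cite[Theorem 5.1]{ding2023biexact} and obtain point-$(A\subset M)$ convergence directly, with no refined lemma and no weak-to-strong passage. Your route is more elementary and self-contained; the paper's route stays entirely within approximation statements already established in \cite{ding2023biexact} and yields u.c.p.\ (rather than merely c.c.p.) factorizations. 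Two further points in your favor: you handle the converse explicitly (seminorms computed in $\mathbb{B}(L^{2}M)$ are dominated by those computed in $N$, then \cite[Theorem 5.1]{ding2023biexact}), which the paper leaves implicit, and your caveat about justifying $\langle M,e_B\rangle=(JBJ)'$ for a general faithful normal expectation is the right thing to flag, since the paper takes that identification for granted.
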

\begin{proof}
    If $A$ is weakly exact in $M$, then by \cite[Lemma 4.3]{ding2023biexact}, there exist nets of u.c.p.\ maps $\phi_{i}\colon E\to \mathbb{M}_{n(i)}(\mathbb{C})$ and $\widetilde{\psi}_{i}\colon \mathbb{M}_{n(i)}(\mathbb{C})\to \mathbb{B}(L^{2}M)$ such that $\widetilde{\psi}_{i}\circ \phi_{i}$ converges to the canonical inclusion map $A\subset \mathbb{B}(L^{2}M)$ in the point-$(A\subset M)$-topology, and such that for each $x\in A$ there exist $T_{i}\in \mathbb{B}(L^{2}M)$ and $a_{i}\in A$ such that $\widetilde{\psi}_{i}\circ \phi_{i}(x) - x = T_{i}-a_{i}$, $\left\Vert T_{i}\right\Vert \to 0$, $\sup_{i}\left\Vert a_{i}\right\Vert_{i} < \infty$, and $a_{i}\to 0$ ultrastrongly as $i\to \infty$.

    Since $B$ is injective, the basic construction $N$ is injective as well, so there exists a conditional expectation $E\colon \mathbb{B}(L^{2}M)\to N$ such that the restriction $\restr{E}{N}$ is the identity map on $N$. In particular, $E$ is normal on $M$. We let $\psi_{i} = E\circ \widetilde{\psi}_{i}\colon \mathbb{M}_{n(i)}(\mathbb{C})\to N$. For every $f\in \mathbb{B}(L^{2}M)^{A\sharp A}$, the restriction $\restr{f}{A}$ to $A$ is ultraweakly continuous, so for every $x\in A$,
    \[\lim_{i\to\infty}f(\psi_{i}\circ \phi_{i}(x) - x) = \lim_{i\to\infty}f(E(\widetilde{\psi}_{i}\circ \phi_{i}(x)-x)) = \lim_{i\to\infty}f(E(T_{i}-a_{i})) = \lim_{i\to\infty}f(E(T_{i})) - f(a_{i}) = 0.\]
    This shows that $\psi_{i}\circ \phi_{i}\colon A\to N$ is a net of c.c.p.\ maps that factors through the matrix algebras and converges to the canonical inclusion map $A\subset N$ in the point-weak-$(A\subset M)$-topology. Hence the inclusion $A\subset N$ is $(A\subset M)$-nuclear.
\end{proof}

\subsection{Biduals $\B(L^2M)^{**}$, $  \B(L^2M)^{\sharp *} $, and $ \B(L^2M)^{\sharp *}_J $} \label{sec: biduals}
To define proper proximality and biexactness for a von Neumann algebra $M\subset \B(L^2M)$, it is necessary to consider the normal $M$-$M$ and $JMJ$-$JMJ$ operator bimodules in $ \B(L^2M)$. These properties are often more naturally formulated in terms of biduals, so we also work within the bidual von Neumann algebra $ \B(L^2M)^{**} $. Let $ \iota \colon \B(L^2M)\to \B(L^2M)^{**}$ denote the canonical unital (non-normal) inclusion. 

We again denote by $ p_{\nor} \in \mathcal{P}(\mathcal{Z}( M^{**} ))$ the support projection of the identity map $M\to M$. Equivalently, $p_{\nor}$ is the projection onto the orthogonal complement of the kernel of the $*$-homomorphism 
\[(\id_M^*\big|_{M_*})^*\colon M^{**} \to M \]
which arises from the univseral property of the bidual $M^{**}$ of the von Neumann algebra $M$. If $ \B(L^2M)^{**}\subset \B(\K) $ is a normal unital representation, then $ p_{\nor} \K $ can be identified with the space of vectors $\xi\in \K$ such that the vector functional $ \langle \xi,\,\cdot\,\xi\rangle $ is normal on $ M$. 

We similarly define $p_{\nor}^{J} \in \mathcal{P}(\mathcal{Z}( (JMJ)^{**} ))$ as the support projection corresponding to the identity map on $JMJ$. Since $\iota(M)$ and $\iota(JMJ)$ are weak$^*$ dense in $ M^{**} $ and $JMJ^{**}$, respectively, and commute in $\B(L^2M)^{**}$, we have $[M^{**},JMJ^{**}] =0$, so $ p_{\nor}p_{\nor}^{J}= p_{\nor}^{J}p_{\nor} $. We thus define
\[ p_{\nor}^{\sharp} \coloneqq p_{\nor}p_{\nor}^{J} \in (M^{**})'\cap (JMJ^{**})'\subset \B(L^2M)^{**}.\] 
The predual of the von Neumann subalgebra $ p_{\nor}^{\sharp} \B(L^2M)^{**} p_{\nor}^{\sharp} $ is identified with
\[p_{\nor}^{\sharp} \B(L^2M)^{*} p_{\nor}^{\sharp}\simeq \{ f\in \B(L^2M)^{*}: f|_{M} \text{ and } f|_{JMJ}\text{ are normal}\}.\] 
We denote by 
\[ \iota^{\sharp}\colon \B(L^2M)\to p_{\nor}^{\sharp} \B(L^2M)^{**} p_{\nor}^{\sharp}, \quad  \iota^{\sharp}(x)\coloneqq p_{\nor}^{\sharp}\iota(x)p_{\nor}^{\sharp}\] 
the canonical compression map on the bidual. Note that since $ p_{\nor}^{J} \in (M^{**})'$, the restriction $ \iota^{\sharp}|_{M}\colon M\to p_{\nor}^{J}p_{\nor} M p_{\nor}p_{\nor}^{J} $ is a normal $*$-isomorphism because  $ M\simeq p_{\nor}^{\sharp} M p_{\nor}^{\sharp} $. Therefore, we may regard $ M\simeq \iota^{\sharp}(M) $ and similarly $ JMJ\simeq \iota^{\sharp}(JMJ)$ as von Neumann subalgebras of $ p_{\nor}^{\sharp} \B(L^2M)^{**} p_{\nor}^{\sharp}$. 

Even though $ \iota $ and $\iota^{\sharp}$ are not normal maps on all of $\B(L^2M)$, the fact that $ p_{\nor}^{\sharp}\in (M^{**})'\cap (JMJ^{**})' $ guarantees that both $ M $ and $JMJ$ lie in the multiplicative domain of $\iota^{\sharp}$, and $\iota^{\sharp}$ is normal on both $M $ and $JMJ$.

\begin{remark}
    Although $\iota$ and $ \iota^{\sharp}$ are not normal maps, they are still completely order preserving since they are completely positive. In particular, for any finite family of projections $p_1,\cdots,p_n\in \B(L^2M)$, we have $ \iota( \vee_{i=1}^n p_i ) = \vee_{i=1}^n \iota( p_i ) $. Indeed, we trivially have $ \iota( \vee_{i=1}^n p_i ) \geqslant \vee_{i=1}^n \iota( p_i ) $ as $  \iota( \vee_{i=1}^n p_i ) \geqslant \iota( p_i )$; on the other hand, we also have $ \iota( \vee_{i=1}^n p_i ) \leqslant \iota( \sum_{i=1}^n p_i ) = \sum_{i=1}^n \iota(p_i) \leqslant n \vee_{i=1}^n \iota(p_i)$. This property, however, does not hold for infinite family of projections in general due to the lack of normality. Similarly, if each $ p_i $ is in the multiplicative domain of $\iota^{\sharp}$ (equivalently, if $ [\iota(p_i),p_{\nor}^{\sharp}]=0$), then since $\iota^{\sharp}(p_i) $ is still a projection, we also have
    \[ \iota^{\sharp}( \vee_{i=1}^n p_i ) = \vee_{i=1}^n \iota^{\sharp}( p_i ). \]
\end{remark}

For any C$^*$-subalgebra $A\subset \B(L^2M) $ (with $ \overline{AL^2M}=L^2M $) such that $ M,JMJ\subset \M(A) \subset \B(L^2M)$, recall that $ A^{M\sharp M} $ (resp.\ $A^{JMJ\sharp JMJ}$) is the space of $f\in A^*$ such that the map $ M\times M \ni (a,b)\mapsto  f(axb) $ (resp.\ $JMJ\times JMJ \ni (a,b)\mapsto f(axb)$) is separately normal in each variable. We set $ A_J^{\sharp} \coloneqq A^{M\sharp M}\cap A^{JMJ\sharp JMJ} $. When $\id_{L^2M}\in A$, we can identify $ (A_J^{\sharp})^{*} = p_{\nor}^{\sharp}A^{**}p_{\nor}^{\sharp} $ as a von Neumann algebra. In general, since $ M,JMJ\subset \M(A) $, we have $ p_{\nor}^{\sharp}\in  \M(A)^{**} $, and thus we can also consider $ M,JMJ $ as von Neumann subalgebras of $ p_{\nor}^{\sharp}\M(A)p_{\nor}^{\sharp} $. As $ A $ is an ideal of $\M(A)$, $ A^{**}= q_{A}\M(A)^{**} $ for some $ q_{A}\in \mathcal{P}(  \mathcal{Z}(\M(A)^{**})) $. Therefore, we can also identify $  (A_J^{\sharp})^{*} \simeq q_{A}p_{\nor}^{\sharp}{\M(A)}^{**}p_{\nor}^{\sharp} = q_{A}p_{\nor}^{\sharp}{A}^{**}p_{\nor}^{\sharp} $ as a von Neumann algebra.

While for a unital $*$-subalgebra $A \subset \B(L^2M)$, the bidual $A^{**}$ is always a unital von Neumann subalgebra of $\B(L^2M)^{**}$, in the non-unital case, the unit $\overline{q}_A \coloneqq \mathbf{1}_{A^{**}}$ of $A^{**}$ is, in general, only a nontrivial projection in $\B(L^2M)^{**}$. In the case when $[\overline{q}_{A},p_{\nor}^{\sharp}]=0$, we define $ q_{A} \coloneqq p^{\sharp}_{\nor}\overline{q}_A$.

If $ A \subset \B(L^2M) $ is a C$^*$-subalgebra such that $\iota(A)$ commutes with $p^{\sharp}_{\nor}$, then for any approximate identity $(e_i)$ of $A$, we have
\[ q_{A} = \overline{q}_Ap^{\sharp}_{\nor} = \lim_{i}\iota^{\sharp}(e_i),\]
and $q_{A}$ is the identity of $ p^{\sharp}_{\nor}A^{**}p^{\sharp}_{\nor}=A^{**}p^{\sharp}_{\nor}\subset  \B(L^2M)^{\sharp*}_{J}  $. Indeed, the identity of $A^{**}$ in $ \B(L^2M) $ is $ \overline{q}_A=\lim \iota(e_i) $. Since $ \Ad(p^{\sharp}_{\nor}) $ is normal, we have $$ \lim_{i}\iota^{\sharp}(e_i) = \Ad(p^{\sharp}_{\nor})( \lim_{i}\iota(e_i)  ) = p^{\sharp}_{\nor}\overline{q}_A p^{\sharp}_{\nor} = p^{\sharp}_{\nor}\overline{q}_A=q_{A}.$$

In particular, if $\mathbb{X} = \overline{A\B(L^2M)A}$ is the hereditary $C^*$-subalgebra of $\B(L^2M)$ generated by $ A $, then $(e_i)$ is also an approximate identity for $\mathbb{X}$ since $ A\B(L^2M)A $ is norm dense in $\mathbb{X}$. Hence we also have
\[ \overline{q}_{\mathbb{X}} = \overline{q}_{A}\in \{p^{\sharp}_{\nor}\}',\quad {q}_{\mathbb{X}} = {q}_{A} = \overline{q}_{A}p^{\sharp}_{\nor}. \]

\subsection{(Relative) proper proximality and biexactness}

We recall the notions of properly proximal von Neumann algebras in \cite{MR4675043} and biexact von Neumann algebras in \cite{ding2023biexact} which are generalizations of properly proximal countable groups in \cite{MR4258166} and relatively biexact countable groups in \cite{MR2391387}, respectively.

Let $M\subset \mathbb{B}(L^{2}M)$ be a von Neumann algebra. A hereditary C$^{*}$-subalgebra $\mathbb{X}$ of $\mathbb{B}(L^{2}M)$ is called an \textit{$M$-boundary piece} if $\mathbb{M}(\mathbb{X})\cap M$ and $\mathbb{M}(\mathbb{X})\cap M'$ are ultraweakly dense in $M$ and $M'$, respectively, where $\mathbb{M}(\mathbb{X})$ is the multiplier algebra of $\mathbb{X}$ and $M' = M'\cap \mathbb{B}(L^{2}M)$ is the commutant of $M$ in $\mathbb{B}(L^{2}M)$. To avoid pathological examples, we always assume $\mathbb{X} \neq \{0\}$, so then $\mathbb{K}(L^2M) \subset \mathbb{X}$ by the assumption of $\mathbb{X}$. Denote by
\[ \mathbb{K}_{\mathbb{X}}^{L}(M) = \overline{\overline{\mathbb{B}(L^{2}M)\mathbb{X}}^{\mathbb{C}\text{-}M}}^{\mathbb{C}\text{-}M'} = \overline{\overline{\mathbb{B}(L^{2}M)\mathbb{X}}^{\mathbb{C}\text{-}M'}}^{\mathbb{C}\text{-}M}\]
the closure of $\mathbb{B}(L^{2}(M)\mathbb{X}$ in both $\mathbb{C}$-$M$ and $\mathbb{C}$-$M'$-topology in $\mathbb{B}(L^{2}M)$, so then $\mathbb{K}_{\mathbb{X}}^{L}(M)$ is a left ideal of $\mathbb{B}(L^{2}M)$ containing $M$ and $M'$ in its space of right multipliers. Note that by \cite[Proposition 2.3]{MR4675043} (\cite[Proposition 2.2]{MR1616512}), an operator $T \in \mathbb{B}(L^{2}M)$ is contained in $\mathbb{K}_{\mathbb{X}}^{L}(M)$ if and only if there exist orthogonal families of projections $\{f_{i}\}_{i\in I}, \{e_{j}\}_{j\in J}\subset M$ such that $TJe_{j}Jf_{i} \in \mathbb{X}$ for every $i\in I, j\in J$, so the order of taking the $\mathbb{C}$-$M$ and $\mathbb{C}$-$M'$-topology closure does not matter. Thus $ \mathbb{K}_{\mathbb{X}}^{L}(M) $ is also the closure of $\mathbb{B}(L^2M)\mathbb{X}$ under the finest locally convex topology on $\BB(L^2M)$ contained in both $\mathbb{C}$-$M$ and $\mathbb{C}$-$M'$-topologies, which is given by the seminorms
$$  r^r_{\omega}(T)\coloneqq  \inf\{ \|Z\|(\omega(Jc^*cJ)+\omega(d^*d))^{1/2}\;\big|\; T= Z  \begin{pmatrix}
    c\\
    d
\end{pmatrix}, c\in M',d\in M,Z\in R_2(\BB(L^2M))\} , \quad \omega \text{ normal state on } M.$$

Let
\[\mathbb{K}_{\mathbb{X}}(M) = (\mathbb{K}_{\mathbb{X}}^{L}(M))^{*} \cap \mathbb{K}_{\mathbb{X}}^{L}(M) = (\mathbb{K}_{\mathbb{X}}^{L}(M))^{*}\mathbb{K}_{\mathbb{X}}^{L}(M) \subset \mathbb{B}(L^{2}M)\]
be the hereditary C$^{*}$-subalgebra of $\mathbb{B}(L^{2}M)$ associated with $\mathbb{K}_{\mathbb{X}}(M)$, and note that both $M$ and $M'$ are in its multiplier algebra. We also define
\[\mathbb{K}_{\mathbb{X}}^{\infty,1}(M) \coloneqq  \overline{\overline{\mathbb{K}_{\mathbb{X}}(M)}^{M\text{-}M}}^{M'\text{-}M'}\]
to be the closure of $\mathbb{K}_{\mathbb{X}}(M)$ in $\mathbb{B}(L^{2}M)$ in (the finest locally convex topology contained in) the $M$-$M$ and $M'$-$M'$-topology.

In fact, since $  \mathbb{K}_{\mathbb{X}}(M)$ is a linear subspace, $ \overline{\overline{\mathbb{K}_{\mathbb{X}}(M)}^{M\text{-}M}}^{M'\text{-}M'} $ is also the closure of $ \mathbb{K}_{\mathbb{X}}(M) $ with respect to the weak topology given by the functionals $\varphi\in \BB(L^2M)^{ M\sharp M}\cap \BB(L^2M)^{ M'\sharp M'}$. Since $ \BB(L^2M)^{ M\sharp M}\cap \BB(L^2M)^{ M'\sharp M'} = \BB(L^2M)^{ M\sharp M'}\cap \BB(L^2M)^{ M'\sharp M} $ by \cite[Proposition 2.1]{MR4675043},  $ \overline{\overline{\mathbb{K}_{\mathbb{X}}(M)}^{M\text{-}M}}^{M'\text{-}M'} $ is also the closure of $ \mathbb{K}_{\mathbb{X}}(M) $ with respect to the finest locally convex topology on $\mathbb{X}$ contained in the $M$-$M$, $M$-$M'$, $M'$-$M$, and $M'$-$M'$-topology, which is given by seminorms of the form
\begin{multline*} r_\omega(T)\coloneqq  \inf\{ (\omega(Ja^*aJ)+\omega(b^*b))^{1/2}\|Z\|(\omega(Jc^*cJ)+\omega(d^*d))^{1/2} \\ \;\big|\; T= \begin{pmatrix}
    a\\
    b
\end{pmatrix}^* Z  \begin{pmatrix}
    c\\
    d
\end{pmatrix}, a,c\in M',b,d\in M,Z\in \mathbb{M}_2(\BB(L^2M)\}, 
\end{multline*}
where $\omega$ is a normal state on $M$.

The space $\mathbb{K}_{\mathbb{X}}^{\infty,1}(M)$ can also be obtained by taking the closure of $ \mathbb{X}$ under the $M$-$M$ and $M'$-$M'$-topology. The case when $M$ is tracial has been proven in \cite[Proposition 3.6]{MR4675043}, and we prove for the general case in the following lemma.

\begin{lemma}[cf.\ \protect{\cite[Proposition 3.6]{MR4675043}}]\label{lem:density of X}
    $ \mathbb{K}_{\mathbb{X}}^{\infty,1}(M) = \overline{\overline{\mathbb{X}}^{M\text{-}M}}^{M'\text{-}M'} $.
\end{lemma}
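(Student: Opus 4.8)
The plan is to prove the two inclusions separately, the nontrivial one being $\mathbb{K}_{\mathbb{X}}^{\infty,1}(M) \subseteq \overline{\overline{\mathbb{X}}^{M\text{-}M}}^{M'\text{-}M'}$. Since $\mathbb{X}$ is a self-adjoint subspace contained in $\mathbb{B}(L^2M)\mathbb{X}$, we have $\mathbb{X}\subseteq (\mathbb{K}_{\mathbb{X}}^L(M))^*\cap \mathbb{K}_{\mathbb{X}}^L(M)=\mathbb{K}_{\mathbb{X}}(M)$, which gives $\overline{\overline{\mathbb{X}}^{M\text{-}M}}^{M'\text{-}M'}\subseteq \mathbb{K}_{\mathbb{X}}^{\infty,1}(M)$ immediately. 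For the reverse inclusion I will use the description recorded just above the lemma: on a linear subspace the iterated closure $\overline{\overline{\,\cdot\,}^{M\text{-}M}}^{M'\text{-}M'}$ coincides with the closure in the single locally convex topology generated by the seminorms $r_\omega$ ($\omega$ a normal state on $M$). Since $\mathbb{K}_{\mathbb{X}}(M)$ is a hereditary $C^*$-subalgebra (hence a linear subspace), it therefore suffices to show $\mathbb{K}_{\mathbb{X}}(M) \subseteq \overline{\mathbb{X}}^{\{r_\omega\}}$.

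Two reductions make this tractable. First, $r_\omega(S)\le \|S\|$ for all $S$ (take $a=c=0$, $b=d=1$, $Z=\begin{pmatrix}0&0\\0&S\end{pmatrix}$), so the $r_\omega$-topology is coarser than the norm topology and $\overline{\mathbb{X}}^{\{r_\omega\}}$ is a norm-closed subspace. As $\mathbb{K}_{\mathbb{X}}(M)=(\mathbb{K}_{\mathbb{X}}^L(M))^*\mathbb{K}_{\mathbb{X}}^L(M)$ is the norm-closed span of products $S^*R$ with $S,R\in \mathbb{K}_{\mathbb{X}}^L(M)$, it is enough to treat one such product $T=S^*R$. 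Second, I invoke the projection characterization \cite[Proposition 2.3]{MR4675043}: choose partitions of unity $\{f'_k\},\{e'_l\}\subset M$ with $SJe'_lJf'_k\in \mathbb{X}$, and $\{f_i\},\{e_j\}\subset M$ with $RJe_jJf_i\in \mathbb{X}$. For finite index subsets put $p=\sum f'_k$, $q'=\sum e'_l$, $q=\sum e_j$, $p'=\sum f_i$, and set $P=pJq'J$ and $Q=JqJp'$, which are projections. Then
\[ PTQ=\sum_{k,l,i,j}(SJe'_lJf'_k)^*(RJe_jJf_i)\in \mathbb{X}^*\mathbb{X}\subseteq \mathbb{X}, \]
so every approximant lies in $\mathbb{X}$. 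This is precisely where the product structure is essential: a generic element of the hereditary algebra $\mathbb{K}_{\mathbb{X}}(M)$ cannot be compressed into $\mathbb{X}$ by one-sided multiplication, since $\mathbb{X}\mathbb{B}(L^2M)\not\subseteq \mathbb{X}$ in general, whereas $\mathbb{X}^*\mathbb{X}\subseteq\mathbb{X}$.

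It remains to show $PTQ\to T$ in every $r_\omega$ as the finite index sets increase. Writing $T-PTQ=(I-P)T+PT(I-Q)$ and using the commuting-projection decompositions $I-P=(I-p)+pJ(I-q')J$ and $I-Q=J(I-q)J+JqJ(I-p')$, each of the four resulting terms has a single $M$- or $M'$-factor that can be placed in one slot of the $r_\omega$-factorization while the remaining operator is absorbed into $Z$ with norm $\le\|T\|$; here one uses $J(I-q')J\in M'$ together with $J\bigl(J(I-q')J\bigr)^*\bigl(J(I-q')J\bigr)J=I-q'$, and similarly for the others. This yields
\[ r_\omega(T-PTQ)\le \|T\|\left(\omega(I-p)^{1/2}+\omega(I-q')^{1/2}+\omega(I-q)^{1/2}+\omega(I-p')^{1/2}\right). \]
Because $\omega$ is \emph{normal} and the four families are partitions of unity, each $\omega$-tail tends to $0$ along the directed set of finite subsets, giving convergence simultaneously in all $r_\omega$. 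I expect this tail estimate, and the fact that a single net works for every normal state, to be the main obstacle; it is exactly the point where normality of $\omega$ replaces the trace-based Hilbert–Schmidt estimates of the tracial case in \cite[Proposition 3.6]{MR4675043}. Concluding, $T\in\overline{\mathbb{X}}^{\{r_\omega\}}$ for each product $T=S^*R$, hence $\mathbb{K}_{\mathbb{X}}(M)\subseteq\overline{\mathbb{X}}^{\{r_\omega\}}$, and taking closures establishes $\mathbb{K}_{\mathbb{X}}^{\infty,1}(M)=\overline{\overline{\mathbb{X}}^{M\text{-}M}}^{M'\text{-}M'}$.
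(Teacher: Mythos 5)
Your proof is correct, but it takes a genuinely different route from the paper's. The paper never touches the projection characterization: it fixes $S^*R$ with $S,R\in\mathbb{K}_{\mathbb{X}}^{L}(M)$, chooses nets $S_n,R_n\in\mathbb{B}(L^2M)\mathbb{X}$ converging to $S,R$ in the $\mathbb{C}$-$M$ and $\mathbb{C}$-$M'$ topologies (i.e.\ in the one-sided seminorms $r^r_\omega$), notes that $S_n^*R_n\in\mathbb{X}\mathbb{B}(L^2M)\mathbb{X}\subseteq\mathbb{X}$ by heredity, and controls $S_n^*R_n-S^*R$ through the expansion $(S_n-S)^*(R_n-R)+(S_n-S)^*R+S^*(R_n-R)$, the only real work being the multiplicative estimate $r_\omega(x^*y)\leqslant r^r_\omega(x)\,r^r_\omega(y)$ applied to the first term. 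You instead compress the fixed element $T=S^*R$ by finite partial sums of the partitions of unity supplied by \cite[Proposition 2.3]{MR4675043}, so that the approximants $PTQ=\sum(SJe'_lJf'_k)^*(RJe_jJf_i)$ lie visibly in $\mathbb{X}$, and the error is controlled by the $\omega$-mass of the tails of the partitions; this is essentially the strategy of the tracial proof of \cite[Proposition 3.6]{MR4675043}, with normality of $\omega$ doing the job of the trace estimates, which is precisely the generalization the lemma is after. Both arguments are complete; yours buys concrete approximants built from the element itself and a transparent role for normality, at the cost of importing the external characterization, while the paper's stays entirely within the defining closures and factor-wise approximation. Two small points about your write-up: your reading of \cite[Proposition 2.3]{MR4675043} requires the orthogonal families to sum to $1$ (otherwise your tail estimates $\omega(I-p)\to 0$ have no content), which is indeed what that proposition provides but should be stated explicitly; and your displayed bound for $r_\omega(T-PTQ)$ uses subadditivity of $r_\omega$ — if one does not wish to verify that these infima are honest seminorms, it suffices to observe that each of the four terms in your decomposition tends to $0$ in every $r_\omega$ separately and then invoke continuity of addition in the locally convex topology they generate.
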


\begin{proof}

Since $\mathbb{X}\subset \mathbb{K}_{\mathbb{X}}(M)$, it suffices to show that $ \mathbb{K}_{\mathbb{X}}(M) \subset \overline{\overline{\mathbb{X}}^{M\text{-}M}}^{M'\text{-}M'}  $. Fix an element $S^*R\in \mathbb{K}_{\mathbb{X}}(M)$ with $S,R\in \mathbb{K}_{\mathbb{X}}^L(M)$. Choose nets $ S_n$ and $R_n \in  \mathbb{B}(L^2M)\mathbb{X} $ converging to $S$ and $R$, respectively, in the (finest topology contained in) $\mathbb{C}$-$M$ and $\mathbb{C}$-$M'$ topology. We must prove that $ S_n^* R_n $ converges to $S^*R$ in the $M$-$M$ and $M'$-$M'$-topology. 

Let $\omega_1,\cdots, \omega_k$ be normal states on $M$. Since $ S_n $ converges to $S$ in the $\mathbb{C}$-$M$ and $\mathbb{C}$-$M'$ topology, we have $ r^r_{\omega_i}(S_n-S)\to 0 $ for each $i$. 
Similarly, $ r^r_{\omega_i}(R_n-R)\to 0 $. Hence, for any $\varepsilon >0$, there exists $n_0$ such that for all $n>n_0$ and for every $i$, we can find $Z_n, Z_n' \in R_2(\BB(L^2M))$, $a_n, c_n \in M'$, $b_n, d_n \in M$ satisfying
\[S_n - S = Z_n \begin{pmatrix}
    c_n\\
    d_n
\end{pmatrix}, \quad R_n-R= Z'_n \begin{pmatrix}
    a_n\\
    b_n
\end{pmatrix},\]
with
\[\|Z_n\|(\omega_i(Jc_n^*c_nJ)+\omega_i(d_n^*d_n))^{1/2}<\varepsilon, \quad  \|Z'_n\|(\omega_i(Ja_n^*a_nJ)+\omega_i(b_n^*b_n))<\varepsilon.\]
It follows that
\[ r_{\omega_i}( (S_n-S)^*(R_n-R) ) \leqslant (\omega_i(Jc_n^*c_nJ)+\omega_i(d_n^*d_n))^{1/2}\|Z^*_nZ'_n\|(\omega_i(Ja_n^*a_nJ)+\omega_i(b_n^*b_n))^{1/2}< \varepsilon^2, \]
so $ (S_n-S)^*(R_n-R)\to 0 $ in the $M$-$M$ and $M'$-$M'$-topology. Finally, since 
\[ S_n^*R_n-S^*R =(S_n-S)^*(R_n-R)+ (S_n^*-S^*)R+S^*(R_n-R), \]
we also obtain $ S_n^*R_n-S^*R  \to 0$ in the $M$-$M$ and $M'$-$M'$-topology, because $ (S_n^*-S^*)R\to 0 $ in the $M$-$\mathbb{C}$ and $M'$-$\mathbb{C}$-topology, which is stronger than the $M$-$M$ and $M'$-$M'$-topology, and similarly $ S^*(R_n-R) \to 0$ in the $\mathbb{C}$-$M$ and $\mathbb{C}$-$M'$-topology.
\end{proof}

We now recall the definition of small-at-infinity boundary and proper proximality in \cite{MR4675043} and relative biexactness in \cite{ding2023biexact}.
\begin{defn}[\protect{\cite{MR4675043}}]
    The \textit{small-at-infinity boundary $\mathbb{S}_{\mathbb{X}}(M)$ of $M$ with respect to the $M$-boundary piece $\mathbb{X}$} is
\[\mathbb{S}_{\mathbb{X}}(M) = \{T\in \mathbb{B}(L^{2}M): [T,x] \in \mathbb{K}_{\mathbb{X}}^{\infty,1}(M) ,\  \forall x \in M'\}.\]
In general, $\mathbb{S}_{\mathbb{X}}(M)$ is only an operator system containing $M$; it does not necessarily form a C$^{*}$-algebra.
\end{defn}

\begin{defn}[\protect{\cite[Theorem 6.2]{MR4675043}}]
     The von Neumann algebra $M$ is \textit{properly proximal relative to the boundary piece $\mathbb{X}$} if there does not exist an $M$-central state on $\mathbb{S}_{\mathbb{X}}(M)$ that is normal on $M$.

     $M$ is called \textit{properly proximal} if $M$ is properly proximal relative to $\mathbb{K}(L^2M)$.
\end{defn}

We have a bidual characterization of tracial properly proximal von Neumann algebras.

\begin{lemma}[\protect{\cite[Theorem 8.5]{MR4675043}}]
    Let $M$ be a separable tracial von Neumann algebra with an $M$-boundary piece $\mathbb{X}$. Then M is properly proximal relative to $\mathbb{X}$ if and only if there does not exist $M$-central state $\phi$ on
    \[\widetilde{\mathbb{S}}_{\mathbb{X}}(M) \coloneqq \bigg\{T \in \bigg(\mathbb{B}(L^2M)_{J}^{\sharp}\bigg)^{*} : [T,a]\in \bigg(\mathbb{K}_{\mathbb{X}}(M)_{J}^{\sharp}\bigg)^{*} \text{ for all }a\in JMJ\bigg\}\]
    such that $\restr{\phi}{M}$ is normal.
\end{lemma}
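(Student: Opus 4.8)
The plan is to use the canonical compression $\iota^{\sharp}\colon \mathbb{B}(L^{2}M)\to p_{\nor}^{\sharp}\mathbb{B}(L^{2}M)^{**}p_{\nor}^{\sharp}=(\mathbb{B}(L^{2}M)_{J}^{\sharp})^{*}$ from Section~\ref{sec: biduals} as the bridge between the two state spaces. First I would record that $\iota^{\sharp}$ carries $\mathbb{S}_{\mathbb{X}}(M)$ into $\widetilde{\mathbb{S}}_{\mathbb{X}}(M)$. Since $M$ and $JMJ$ lie in the multiplicative domain of $\iota^{\sharp}$ and $\iota^{\sharp}$ is normal on each, for $T\in\mathbb{S}_{\mathbb{X}}(M)$ and $a\in JMJ$ one has $[\iota^{\sharp}(T),a]=\iota^{\sharp}([T,a])$; and because in the tracial case $M'=JMJ$, the element $[T,a]$ lies in $\mathbb{K}_{\mathbb{X}}^{\infty,1}(M)$, so it remains only to check $\iota^{\sharp}(\mathbb{K}_{\mathbb{X}}^{\infty,1}(M))\subset(\mathbb{K}_{\mathbb{X}}(M)_{J}^{\sharp})^{*}$. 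This follows from the description of $\mathbb{K}_{\mathbb{X}}^{\infty,1}(M)$ as the $\sigma(\mathbb{B}(L^{2}M),\mathbb{B}(L^{2}M)_{J}^{\sharp})$-closure of $\mathbb{K}_{\mathbb{X}}(M)$ recorded before Lemma~\ref{lem:density of X}, together with the fact that $\iota^{\sharp}$ is continuous from this weak topology to the weak$^{*}$ topology of the corner $(\mathbb{B}(L^{2}M)_{J}^{\sharp})^{*}$, whose weak$^{*}$-closure of $\iota^{\sharp}(\mathbb{K}_{\mathbb{X}}(M))$ is exactly $(\mathbb{K}_{\mathbb{X}}(M)_{J}^{\sharp})^{*}$. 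Thus $\iota^{\sharp}(T)\in\widetilde{\mathbb{S}}_{\mathbb{X}}(M)$.

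Granting this, one implication is immediate by pullback. If $\phi$ is an $M$-central state on $\widetilde{\mathbb{S}}_{\mathbb{X}}(M)$ with $\restr{\phi}{M}$ normal, then $\phi\circ\iota^{\sharp}$ is a state on $\mathbb{S}_{\mathbb{X}}(M)$; it is normal on $M$ because $\restr{\iota^{\sharp}}{M}$ is a normal embedding, and it is $M$-central because $\iota^{\sharp}$ is $M$-bimodular on its multiplicative domain, so $\phi(\iota^{\sharp}(mT))=\phi(\iota^{\sharp}(m)\iota^{\sharp}(T))=\phi(\iota^{\sharp}(T)\iota^{\sharp}(m))=\phi(\iota^{\sharp}(Tm))$. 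Hence the existence of such a $\phi$ forces $M$ to fail proper proximality relative to $\mathbb{X}$, which is one direction of the equivalence.

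The substance is the converse: from an $M$-central state $\phi$ on $\mathbb{S}_{\mathbb{X}}(M)$ with $\restr{\phi}{M}$ normal I must produce one on $\widetilde{\mathbb{S}}_{\mathbb{X}}(M)$. The natural route is to show that $\phi$ factors through $\iota^{\sharp}$, i.e.\ that it vanishes on $\mathbb{S}_{\mathbb{X}}(M)\cap\ker\iota^{\sharp}$, so that it pushes forward to a state on $\iota^{\sharp}(\mathbb{S}_{\mathbb{X}}(M))\subset\widetilde{\mathbb{S}}_{\mathbb{X}}(M)$ that is automatically $M$-central and normal on $M$ by the same bimodularity computation, and which then extends to all of $\widetilde{\mathbb{S}}_{\mathbb{X}}(M)$ by Arveson's theorem inside the ambient von Neumann algebra $(\mathbb{B}(L^{2}M)_{J}^{\sharp})^{*}$. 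Normality of $\restr{\phi}{M}$ already controls the left support: realizing $\phi$ as a vector functional $\langle\xi,\,\cdot\,\xi\rangle$ on $\mathbb{B}(L^{2}M)^{**}$, normality on $M$ places $\xi$ under $p_{\nor}$, so that $\phi(T)=\langle\xi,p_{\nor}\iota(T)p_{\nor}\xi\rangle$ and only the $p_{\nor}$-corner is seen.

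The main obstacle is the passage from $p_{\nor}$ to the full two-sided support $p_{\nor}^{\sharp}=p_{\nor}p_{\nor}^{J}$, i.e.\ controlling the $JMJ$-side. One cannot simply extend $\phi$ to an $M$-central state on all of $\mathbb{B}(L^{2}M)$ and compress, because no such state exists when $M$ is non-amenable; this is precisely why the argument must remain inside $\mathbb{S}_{\mathbb{X}}(M)$ and exploit its defining property. The key is that for $T\in\mathbb{S}_{\mathbb{X}}(M)$ the commutators $[\iota(T),a]$ with $a\in JMJ$ lie in $(\mathbb{K}_{\mathbb{X}}(M)_{J}^{\sharp})^{*}$, so $\iota(T)$ commutes with $p_{\nor}^{J}$ modulo that ideal; combined with $M$-centrality and normality on $M$ this should force $p_{\nor}\iota(T)p_{\nor}$ and $\iota^{\sharp}(T)$ to carry the same $\phi$-value and hence $\phi$ to vanish on $\mathbb{S}_{\mathbb{X}}(M)\cap\ker\iota^{\sharp}$. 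Making this exact is the technical heart, and it is where separability of $M$ and the trace enter, exactly as in \cite[Theorem 8.5]{MR4675043}; I would carry out the descent by approximating along a countable weak-$M$-dense family and passing to a weak$^{*}$-cluster point of the induced functionals on $(\mathbb{K}_{\mathbb{X}}(M)_{J}^{\sharp})^{*}$, which converts the modulo-ideal commutation into an exact vanishing.
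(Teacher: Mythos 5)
You should first note that this paper contains no proof of this lemma at all: it is imported verbatim from \cite[Theorem 8.5]{MR4675043}, so your attempt can only be measured against the proof in that reference. Your pullback direction is correct and is the standard one: $\iota^{\sharp}$ carries $\mathbb{S}_{\mathbb{X}}(M)$ into $\widetilde{\mathbb{S}}_{\mathbb{X}}(M)$ because $M$ and $JMJ$ lie in its multiplicative domain and $\iota^{\sharp}(\mathbb{K}_{\mathbb{X}}^{\infty,1}(M))\subset (\mathbb{K}_{\mathbb{X}}(M)_{J}^{\sharp})^{*}$, and composing an $M$-central state on $\widetilde{\mathbb{S}}_{\mathbb{X}}(M)$ with this $M$-bimodular map preserves centrality and normality on $M$.

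The converse, which is the entire content of the lemma, is where your proposal breaks, in two places. First, the factorization claim: you need $\phi$ to vanish on $\mathbb{S}_{\mathbb{X}}(M)\cap\ker\iota^{\sharp}$, and your mechanism is that $[T,a]\in\mathbb{K}_{\mathbb{X}}^{\infty,1}(M)$ for $a\in JMJ$ should make $\iota(T)$ ``commute with $p_{\nor}^{J}$ modulo $(\mathbb{K}_{\mathbb{X}}(M)_{J}^{\sharp})^{*}$.'' This does not typecheck: the ideal lives inside the corner $p_{\nor}^{\sharp}\mathbb{B}(L^{2}M)^{**}p_{\nor}^{\sharp}$ (it is $\iota^{\sharp}([T,a])$, not $\iota([T,a])$, that lies in it), and inside that corner the compression $p_{\nor}^{\sharp}[\,y,p_{\nor}^{J}]\,p_{\nor}^{\sharp}$ vanishes identically for \emph{every} $y$, since $p_{\nor}^{J}p_{\nor}^{\sharp}=p_{\nor}^{\sharp}p_{\nor}^{J}=p_{\nor}^{\sharp}$; so the assertion is vacuous rather than useful. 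The actual obstruction is the one you name but never resolve: $\phi$ is normal only on $M$, so a state extension of $\phi$ to $\mathbb{B}(L^{2}M)$ gives mass $1$ to $p_{\nor}$ but possibly mass $<1$ to $p_{\nor}^{J}$, and the cross terms $\omega(p_{\nor}\iota(T)(1-p_{\nor}^{J})p_{\nor})$ are completely uncontrolled by anything in your sketch. Second, the extension step: even granting a factorized $M$-central state on $\iota^{\sharp}(\mathbb{S}_{\mathbb{X}}(M))$, a Krein--Arveson extension to $\widetilde{\mathbb{S}}_{\mathbb{X}}(M)$ is only a state; ``the same bimodularity computation'' proves centrality of pullbacks along bimodular maps, never of extensions. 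If extensions of $M$-central states were automatically $M$-central, the trace of a nonamenable $\mathrm{II}_{1}$ factor would extend to a hypertrace on $\mathbb{B}(L^{2}M)$. The issue is acute here because $\widetilde{\mathbb{S}}_{\mathbb{X}}(M)$ is far larger than $\iota^{\sharp}(\mathbb{S}_{\mathbb{X}}(M))$: it contains the entire weak$^{*}$-closed ideal $(\mathbb{K}_{\mathbb{X}}(M)_{J}^{\sharp})^{*}$, which maps \emph{onto} all of $\mathbb{B}(L^{2}M)$ under the canonical normal epimorphism $\mathbb{B}(L^{2}M)^{\sharp *}_{J}\to \mathbb{B}(L^{2}M)$, so the extension problem is genuinely of hypertrace type. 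Symptomatically, your outline never actually uses separability or the trace except by assertion. Any correct proof must: traciality is what converts an $M$-central state normal on $M$ into an $M$-bimodular u.c.p.\ map $\Phi\colon\mathbb{S}_{\mathbb{X}}(M)\to M$ via $\langle \Phi(T)\hat{x},\hat{y}\rangle=\phi(y^{*}Tx)$ (centrality is exactly what places $\Phi(T)$ in $JMJ'=M$), and it is from this map, together with separability-driven approximation, that the state on $\widetilde{\mathbb{S}}_{\mathbb{X}}(M)$ is built in the cited source --- not by extending a pushforward of $\phi$. As written, the hard implication remains unproved.
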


\begin{defn}[\protect{\cite[Definition 6.1]{ding2023biexact}}]
    The von Neumann algebra $M$ is \textit{biexact relative to the boundary piece $\mathbb{X}$} if the inclusion $M \subset \mathbb{S}_{\mathbb{X}}(M)$ is $M$-nuclear. $M$ is called \textit{biexact} if $M$ is biexact relative to $\mathbb{K}(L^2M)$.
\end{defn}
Note that a relatively biexact von Neumann algebra $M$ is necessarily weakly exact because the small-at-infinity boundaries are always contained in $\mathbb{B}(L^{2}M)$.

We recall following example of boundary pieces in \cite{ding2023biexact} which is related to our main results.

\begin{exmp}[boundary piece generated by subalgebras]
    Suppose $\mathcal{B} = \{(B_{i}, E_{i})\}_{i\in I}$ is a collection of unital von Neumann subalgebras $B_{i}$ of $M$ with normal faithful conditional expectations $E_{i}\colon M\to B_{i}$, and $e_{B_{i}} \in \mathbb{B}(L^{2}M)$ is the orthogonal projection onto the space $L^{2}B_{i} \subset L^{2}M$ corresponding to $E_{i}$. We let $\mathbb{X}_{\mathcal{B}}$ denote the boundary piece in $\mathbb{B}(L^{2}M)$ generated by $\{xJyJ e_{B_{i}}: x,y\in M, i\in I\}$, and we say that \textit{$M$ is biexact relative to $\mathcal{B}$} if it is biexact relative to boundary piece $\mathbb{X}_{\mathcal{B}}$. If the conditional expectations are understood from the context, then we may simply write the boundary piece by $\mathbb{X}_{\{B_{i}\}_{i\in I}}$ and say $M$ is biexact relative to $\{B_{i}\}_{i \in I}$ instead. If moreover $\{B_{i}\}_{i\in I} = \{B\}$ consists of only one subalgebra $B$ of $M$, then we further omit the curly brackets in the notation.

    As noted in the introduction, this generalizes the notion of relative biexactness in the group case defined in \cite[Definition 15.1.2]{MR2391387}, as it is proved in \cite[Theorem 6.2]{ding2023biexact} that a countable discrete group $\Gamma$ is biexact relative to a collection $\mathcal{G}$ of subgroups of $\Gamma$ if and only if the group von Neumann algebra $L\Gamma$ is biexact relative to $\{L\Lambda:\Lambda \in \mathcal{G}\}$ in the above sense.

    When $M = M_{1}\overline{*}_{B}M_{2}$ is the AFP von Neumann algebras over a common unital von Neumann subalgebra $B$, we let $e_i\in \mathbb{B}(L^2M) $ be the projection onto the subspace $L^2M_i\subseteq L^2M$. Then the boundary piece generated by $e_{1}\mathbb{B}(L^2 M_1)e_1$ and $e_{2}\mathbb{B}(L^2 M_2)e_2$ is the boundary piece $\mathbb{X}_{\{M_{1},M_{2}\}}$.
\end{exmp}

As mentioned in the introduction, using \cite[Proposition 6.13]{ding2023biexact} we can easily obtain a structural indecomposability result for subalgebras of the biexact von Neumann algebras relative to subalgebras. In the remaining of this section, we outline a sketch of proof of this result for the reader's convenience. We first recall following celebrated intertwining-by-bimodule theorem for tracial von Neumann algebras due to Popa \cite[Theorem 2.1]{MR2231961}, which was then extended by Houdayer and Isono \cite[Theorem 4.3]{MR3570140} to the type III case.

\begin{thm}[\protect{\cite[Theorem 2.1]{MR2231961}, \cite[Theorem 4.3]{MR3570140}}] \label{thm: intertwining by bimodule}
    Let $M$ be a von Neumann algebra, $B \subset M$ a von Neumann subalgebra with expectation, and $N\subset M$ a finite von Neumann subalgebra with expectation. The following are equivalent:
    \begin{enumerate}
        \item There exist projections $e\in\mathcal{P}(N), f \in \mathcal{P}(B)$, a nonzero partial isometry $v \in eMf$, and a unital normal $*$-homomorphism $\phi\colon eNe \to fBf$ such that the inclusion $\phi(eNe) \subset fBf$ is with expectation and $xv = v\phi(x)$ for every $x\in N$.
        \item There does not exist a net of unitaries $(u_{j}) \subset \mathcal{U}(N)$ such that $E_{B}(au_{i}b) \to 0$ in the ultrastrong-$*$ topology for all $a,b\in M$.
    \end{enumerate}
    If one of the above equivalent statements holds, we say \textit{$N$ embeds with expectation into $B$ inside $M$} and write $N \preceq_{M} B$.
\end{thm}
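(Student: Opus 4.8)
The statement is the classical intertwining-by-bimodule criterion of Popa (with the Houdayer–Isono extension to the type III case), so the plan is to reprove the equivalence by the basic-construction and convexity argument, indicating at the end the modifications forced by the absence of a trace. The implication $(1)\Rightarrow\neg(2)$ is the soft direction. Assuming $(1)$, I would fix the partial isometry $v\in eMf$ and the homomorphism $\phi\colon eNe\to fBf$ with $xv = v\phi(x)$ for all $x\in N$. Taking $a=v^*$ and $b=v$, for any unitary $u\in\mathcal{U}(eNe)$ I compute
\[ E_B(v^* u v) = E_B(v^* v\,\phi(u)) = E_B(v^*v)\,\phi(u), \]
using the $B$-bimodularity of $E_B$ together with $\phi(u)\in B$. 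Since $\phi(u)$ is a unitary in $fBf$ and $E_B(v^*v)$ is a fixed nonzero positive element of $fBf$, the quantity $\|E_B(v^*uv)\|_2$ is bounded below uniformly in $u$, so no net can force $E_B(v^*u_jv)\to 0$; this contradicts $(2)$ and establishes the contrapositive.

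The content lies in $(2)\Rightarrow(1)$. Condition $(2)$ is equivalent, by the standard net/convexity reformulation on the unitary group, to the existence of finitely many $a_1,\dots,a_n\in M$ and a constant $\delta>0$ with $\sum_{k,l}\|E_B(a_l^* u a_k)\|_2^2\geqslant\delta$ for every $u\in\mathcal{U}(N)$. In the case where $M$ is tracial I would pass to the basic construction $\langle M,e_B\rangle$ equipped with its canonical semifinite trace $\operatorname{Tr}$, normalized by $\operatorname{Tr}(x e_B y)=\tau(xy)$, and set $d_0=\sum_k a_k e_B a_k^*\in\langle M,e_B\rangle_+$, a positive element of finite trace lying in $L^2(\langle M,e_B\rangle,\operatorname{Tr})$. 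I then consider the weak closure $K$ of the convex hull of the orbit $\{u d_0 u^* : u\in\mathcal{U}(N)\}$ inside this Hilbert space. Being bounded, closed, and convex, $K$ is weakly compact, and it is invariant under conjugation by $\mathcal{U}(N)$; hence its unique element $d$ of minimal $\|\cdot\|_{2,\operatorname{Tr}}$-norm is fixed by every $u\in\mathcal{U}(N)$, so $d\in(N'\cap\langle M,e_B\rangle)_+$. A direct computation gives $\langle u d_0 u^*, d_0\rangle_{\operatorname{Tr}}=\sum_{k,l}\|E_B(a_l^* u a_k)\|_2^2\geqslant\delta$, and by weak continuity and convexity this lower bound persists on all of $K$, so $\langle d,d_0\rangle_{\operatorname{Tr}}\geqslant\delta$ and in particular $d\neq 0$.

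It remains to extract the concrete intertwining data from $d$. For small $\varepsilon>0$ the spectral projection $p=\mathbf{1}_{[\varepsilon,\infty)}(d)$ is a nonzero $N$-central projection of finite trace in $\langle M,e_B\rangle$. Since $\langle M,e_B\rangle$ is the commutant of the right $B$-action and $p\in N'$, the subspace $H=p\,L^2M$ is an $N$-$B$ subbimodule of $L^2M$; finiteness of $\operatorname{Tr}(p)$ makes $H$ finitely generated as a right $B$-module. The left $N$-action on such a right-finite bimodule yields, after cutting by a projection $e\in\mathcal{P}(N)$, a unital normal $*$-homomorphism $\phi\colon eNe\to fBf$ into a corner of a matrix amplification of $B$, and a suitable vector (frame) in $H$ produces the partial isometry $v\in eMf$ with $xv=v\phi(x)$. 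I expect the two places demanding the most care—confirming $d\neq 0$ through the reformulated inequality and carrying out the bimodule-to-intertwiner dictionary—to be the main obstacle.

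Finally, for the type III statement of Houdayer–Isono the trace $\operatorname{Tr}$ is unavailable, so the entire convexity argument must be recast using a faithful normal semifinite weight on $\langle M,e_B\rangle$ (equivalently the operator-valued weight dual to $E_B$) together with the relative-tensor-product framework recalled in Section~\ref{sec: Amalgamated free product}. The weak-compactness/minimal-norm step and the extraction of $(\phi,v)$ are then performed in the associated weighted $L^2$-space, using that $N$ is finite to keep the fixed-point argument intact; this non-tracial bookkeeping is where the genuine additional difficulty of the type III case resides.
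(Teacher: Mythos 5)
You should first be aware that the paper offers no proof of this statement: it is quoted as a black box from Popa (tracial case) and Houdayer--Isono (general case), so your attempt can only be compared with the standard proofs in those references. Measured against those, your sketch of the hard direction $(2)\Rightarrow(1)$ is indeed Popa's argument (convex hull of $\{ud_0u^*\}$ in $L^2(\langle M,e_B\rangle,\mathrm{Tr})$, minimal-norm element, spectral cut, bimodule dictionary), and it is correct in outline for tracial $M$, modulo the dictionary details you flag. The genuine gap is in what you call the soft direction. Your computation $E_B(v^*uv)=E_B(v^*v)\phi(u)$, with $\|E_B(v^*v)\phi(u)\|_2=\|E_B(v^*v)\|_2>0$ uniformly, is valid only for $u\in\mathcal{U}(eNe)$, whereas condition (2) quantifies over nets in $\mathcal{U}(N)$. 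For $u\in\mathcal{U}(N)$ all you get is $E_B(v^*uv)=E_B(v^*v)\phi(eue)$, and there is no lower bound on $\phi(eue)$: the compression $eue$ is not unitary, and for instance when $N$ is a II$_1$ factor with $\tau(e)\leqslant 1/2$ there exist unitaries $u\in\mathcal{U}(N)$ with $eue=0$. So a net $(u_j)\subset\mathcal{U}(N)$ with $E_B(au_jb)\to 0$ yields no contradiction in your argument; you have only excluded nets in $\mathcal{U}(eNe)$, and the implication ``no net in $\mathcal{U}(eNe)$ implies no net in $\mathcal{U}(N)$'' is essentially a case of the theorem itself, so invoking it would be circular. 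The standard repair is genuinely more work: one first cuts $e$ by a central projection of $N$ so that the central support of (the new) $e$ is covered by finitely many partial isometries $w_1,\dots,w_n\in N$ with $w_j^*w_j\leqslant e$, and then runs the estimate on the finite family $\{w_jv\}$; equivalently, one produces from $v$ a nonzero positive element of $N'\cap\langle M,e_B\rangle$ of finite trace (e.g.\ the projection onto $\overline{NvL^2B}$, which is right-$B$-finite only after this central cutting) and plays it against the net via Cauchy--Schwarz.

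A second, separate shortfall: the statement in the paper is the general Houdayer--Isono version, where $M$ is an arbitrary von Neumann algebra, and your closing paragraph does not meet it. Replacing $\mathrm{Tr}$ by an arbitrary n.s.f.\ weight on $\langle M,e_B\rangle$ does not ``keep the fixed-point argument intact'': $\Ad(u)$ for $u\in\mathcal{U}(N)$ is an isometry of the weighted $L^2$-space only if $\mathcal{U}(N)$ lies in the centralizer of the chosen weight. The crux of the non-tracial case is exactly the choice that makes this true --- take the state $\varphi=\tau_N\circ E_N$ on $M$, so that $N\subset M_\varphi$ by Takesaki's theorem, and average with respect to the dual weight of $\varphi$ on the basic construction --- together with the modular-theoretic care needed in the inner-product computation and in producing the expectation onto $\phi(eNe)\subset fBf$. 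Dismissing this as ``bookkeeping'' conceals the actual content of the cited theorem.
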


\begin{prop}[\protect{\cite[Proposition 6.13]{ding2023biexact}}]\label{prop DP23 6 13}
    Let $ M $ be a $\sigma$-finite von Neumann algebra, and $ \{B_j\}_{j\in J} $ a countable family of von Neumann subalgebras with expectation. If $M$ is biexact relative to $\mathbb{X}_{\{B_j\}_{j\in J}}$ and $N\subset M$ is a finite von Neumann subalgebra with expectation, then either $N \preceq_{M} B_j$ for some $j\in J$, or $ N'\cap M $ is amenable.
\end{prop}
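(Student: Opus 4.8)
The plan is to prove the contrapositive dichotomy: assuming $N\not\preceq_{M}B_j$ for every $j\in J$, I will manufacture a hypertrace for $P\coloneqq N'\cap M$, whence $P$ is amenable. Since $N$ is finite and sits with expectation $E_N\colon M\to N$ inside the $\sigma$-finite $M$, fix the faithful normal state $\varphi=\tau_N\circ E_N$ and write $\widehat 1\in L^2M$ for its cyclic vector. By Theorem~\ref{thm: intertwining by bimodule}, the failure $N\not\preceq_M B_j$ provides, for each $j$, a net of unitaries in $\mathcal U(N)$ along which $E_{B_j}(a\,u\,b)\to 0$ ultrastrongly-$\ast$ for all $a,b\in M$; since $J$ is countable and $M$ is $\sigma$-finite, a diagonal argument extracts a single net $(u_n)\subset\mathcal U(N)$ with $E_{B_j}(a u_n b)\to 0$ for all $j$ and all $a,b$. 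Unwinding the generators $xJyJe_{B_j}$ of $\mathbb X=\mathbb X_{\{B_j\}}$, this says exactly that $(u_n)$ \emph{escapes to infinity relative to $\mathbb X$}: for every $k\in\mathbb K_{\mathbb X}(M)$ one has $\langle k\,u_n\widehat 1,\,u_n\widehat 1\rangle\to 0$, and since the functional $T\mapsto\langle T u_n\widehat 1,u_n\widehat 1\rangle$ lies in $\mathbb B(L^2M)^{M\sharp M}\cap\mathbb B(L^2M)^{M'\sharp M'}$, the same convergence holds for all $k\in\mathbb K_{\mathbb X}^{\infty,1}(M)$ by Lemma~\ref{lem:density of X}.

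The key computation is that the limiting vector state is already $P$-central on the boundary system. Consider the states $\Omega_n(T)=\langle T u_n\widehat 1, u_n\widehat 1\rangle$ on $\mathbb B(L^2M)$ and a weak$^\ast$ cluster point $\Omega=\lim_{n\to\mathcal U}\Omega_n$ along a free ultrafilter $\mathcal U$. Because $u_n\in N$ commutes with every $a\in P=N'\cap M$ and $(Ja^*J-a)\widehat 1=0$, for $a\in P$ and $T\in\mathbb S_{\mathbb X}(M)$ a direct manipulation gives
\[ \Omega_n(aT-Ta)=\big\langle\,[\,Ja^*J,\,T\,]\,u_n\widehat 1,\ u_n\widehat 1\,\big\rangle. \]
By the definition of the small-at-infinity boundary, $[\,Ja^*J,T\,]\in\mathbb K_{\mathbb X}^{\infty,1}(M)$ since $Ja^*J\in M'$, so the escaping property of the previous paragraph forces $\Omega_n(aT-Ta)\to 0$. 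Hence $\Omega$ restricts to a $P$-central state on $\mathbb S_{\mathbb X}(M)$ that is normal on $M$; indeed $\Omega|_M=\varphi$, because $\varphi(u_n^* x u_n)=\tau_N(u_n^* E_N(x) u_n)=\varphi(x)$ by the $N$-bimodularity of $E_N$.

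It remains to upgrade this to a hypertrace on all of $\mathbb B(L^2M)$, and this is where relative biexactness enters. The $M$-nuclearity of the inclusion $M\hookrightarrow\mathbb S_{\mathbb X}(M)$ yields nets of c.c.p.\ maps $\phi_i\colon M\to\mathbb M_{n_i}(\mathbb C)$ and $\psi_i\colon\mathbb M_{n_i}(\mathbb C)\to\mathbb S_{\mathbb X}(M)$ with $\psi_i\phi_i\to\mathrm{id}_M$ in the $M$-topology. Extending each $\phi_i$ to a u.c.p.\ map $\widetilde\phi_i\colon\mathbb B(L^2M)\to\mathbb M_{n_i}(\mathbb C)$ by Arveson's theorem and setting $\Phi_i=\Omega\circ\psi_i\circ\widetilde\phi_i$, a second ultralimit $\Phi=\lim_{i\to\mathcal U}\Phi_i$ produces a state on $\mathbb B(L^2M)$ with $\Phi|_M=\Omega|_M=\varphi$. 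Using that $P\subset M$ lies asymptotically in the multiplicative domain of $\widetilde\phi_i$ (so that $\widetilde\phi_i(aT)\approx\widetilde\phi_i(a)\widetilde\phi_i(T)$ for $a\in P$), the $P$-centrality of $\Omega$ on $\mathbb S_{\mathbb X}(M)$ transfers to $P$-centrality of $\Phi$ on $\mathbb B(L^2M)$. Thus $\Phi$ is a $P$-central state on $\mathbb B(L^2M)$ that is normal on $P$, i.e.\ a hypertrace for $P=N'\cap M$, and Connes' characterization of injectivity gives that $N'\cap M$ is amenable.

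The main obstacle is this final transfer step: the maps $\psi_i\phi_i$ are only completely positive, not multiplicative, so moving the $P$-centrality of $\Omega$ on the operator system $\mathbb S_{\mathbb X}(M)$ across to a genuine $P$-central state on $\mathbb B(L^2M)$ requires a careful multiplicative-domain estimate together with the interchange of the two ultralimits — over the escaping index $n$ and the nuclearity index $i$. A subsidiary technical point, needed to make the first paragraph rigorous in the non-tracial $\sigma$-finite setting, is to verify that the escaping net annihilates all of $\mathbb K_{\mathbb X}^{\infty,1}(M)$ in the correct $M$-$M$ and $M'$-$M'$ weak topology; for this one uses precisely the identification of Lemma~\ref{lem:density of X} together with the uniform boundedness and separate normality of the functionals $\langle\,\cdot\,u_n\widehat 1,u_n\widehat 1\rangle$.
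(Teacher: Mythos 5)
Your proposal has a genuine gap, and it is concentrated exactly where the proposition goes beyond the tracial case. The identity $(Ja^*J-a)\widehat{1}=0$, on which your whole $P$-centrality computation rests, holds for $a\in M$ if and only if $\Delta_\varphi^{1/2}a\widehat{1}=a\widehat{1}$, i.e.\ if and only if $a$ lies in the centralizer $M^{\varphi}$ of $\varphi=\tau_N\circ E_N$ (since $Ja^*J\widehat{1}=\Delta_\varphi^{1/2}a\widehat{1}$). Takesaki's theorem gives $N\subset M^\varphi$ — which is why the paper may use the unitaries $u_i\in\mathcal U(N)$ — but there is no reason whatsoever that $P=N'\cap M\subset M^\varphi$; the modular group $\sigma^\varphi$ merely preserves $N'\cap M$ globally, it does not fix it pointwise. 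Worse, the object you are trying to build cannot exist in the stated generality: any $P$-central state on $\mathbb{B}(L^2M)$ (or on $\mathbb{S}_{\mathbb{X}}(M)$) restricts to a tracial state on $P$, whereas your construction forces $\Phi|_P=\varphi|_P$, which is typically not tracial on $N'\cap M$; indeed $N'\cap M$ need not even be a finite von Neumann algebra, so the hypertrace characterization of injectivity you invoke at the end (a theorem about \emph{finite} von Neumann algebras) is simply unavailable. Two further steps are glossed: passing from $\langle k\,u_n\widehat{1},u_n\widehat{1}\rangle\to 0$ on $\mathbb{K}_{\mathbb{X}}(M)$ to all of $\mathbb{K}^{\infty,1}_{\mathbb{X}}(M)$ requires an equicontinuity estimate of the form $|\langle x\varphi^{1/2},u_n^*Su_n\,y\varphi^{1/2}\rangle|\leqslant C\, r_\varphi(S)$ uniformly in $n$ (uniform boundedness plus separate normality of the vector functionals is not enough — pointwise convergence on a dense set does not pass to the closure without it), and your final "transfer of centrality" across the non-multiplicative approximants $\psi_i\circ\phi_i$ is asserted, not proved.

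The paper's proof of Proposition \ref{prop DP23 6 13} avoids all of this by never passing to states. One takes the point-ultraweak limit $\theta=\lim_i\Ad(u_i)$ of the conjugation maps themselves (Lemma \ref{lem: not intertwine ad conv 0} supplies the net, via the direct-sum trick $\oplus_j B_j\subset\oplus_j M$ rather than a diagonal argument, and contains precisely the uniform $r_\varphi$-estimate mentioned above). Then $\theta$ is u.c.p., vanishes on $\mathbb{K}^{\infty,1}_{\mathbb{X}}(M)$, is the identity on $N'\cap M$ because each $u_i$ commutes with $N'\cap M$, and consequently maps $\mathbb{S}_{\mathbb{X}}(M)$ into $M''=M$; moreover $\theta|_M$ is normal because the $u_i$ lie in $M^\varphi$. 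Relative biexactness then makes the composition $N'\cap M\subset M\subset\mathbb{S}_{\mathbb{X}}(M)\xrightarrow{\theta}M$ weakly nuclear, and composing with the faithful normal conditional expectation $E\colon M\to N'\cap M$ (which exists by Takesaki precisely because $N'\cap M$ is globally $\sigma^\varphi$-invariant) exhibits the identity map of $N'\cap M$ as weakly nuclear, hence $N'\cap M$ is amenable. This certifies amenability without any tracial state on $N'\cap M$ and entirely bypasses the centrality-transfer problem you flagged as your main obstacle. If you want to salvage your approach, it works (essentially as Ozawa's original argument) when $M$ is tracial, but it cannot be repaired to cover the $\sigma$-finite case as stated.
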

\begin{proof}
     Suppose $N \npreceq_{M} B_j$ for any $j\in J$.  By Lemma \ref{lem: not intertwine ad conv 0} below, there exists a net $(u_i)\subset \mathcal{U}(N)$ such that $ \Ad(u_i)(S)=u_i^*Su_i\to 0 $ ultraweakly for all $ S\in \mathbb{K}_{\mathbb{X}_\mathcal{B}}^{\infty,1}(M)$. Up to passing to a subnet, we may assume the net $ \Ad(u_i) \colon \BB(L^2M)\to \BB(L^2M) $ admits a limit $ \theta\coloneqq  \lim_{i} \Ad(u_i) $ in the point-ultraweak topology on $ \mathbb{B}(L^2M)_1 $. Then $ \theta $ is a u.c.p.\ map such that $ \theta|_{\mathbb{K}_{\mathbb{X}_\mathcal{B}}^{\infty,1}  } = 0 $ and $\theta|_{N'\cap M} = \id$. Therefore, from the definition of $\mathbb{S}_{\mathbb{X}_\mathcal{B} 
 }(M)$, we have $ \theta(\mathbb{S}_{\mathbb{X}_\mathcal{B} 
 }(M)  ) \subset M$.

 Let $\varphi$ be a faithful normal state on $M$ such that $\restr{\varphi}{N}$ is tracial and preserves the conditional expectation from $M$ onto $N$. As $u_i$ lies in the centralizer of $\varphi$, the u.c.p.\ map $\theta|_M$ preserves $\varphi$ and is therefore normal from $M$ to itself. Now consider
 \[ N'\cap M\subset M\subset \mathbb{S}_{\mathbb{X}_\mathcal{B} 
 }(M) \xrightarrow{\theta} M. \]
 Since the inclusion $M \subset \mathbb{S}_{\mathbb{X}_\mathcal{B}}(M)$ is $M$-nuclear and $\theta$ is weak-$M$ continuous because $\restr{\theta}{M}$ is normal (see \cite[Lemma 3.7]{ding2023biexact}), the inclusion $N' \cap M \subset M$ is weakly nuclear, as the weak-$M$ topology on $M$ coincides with the ultraweak topology. Moreover, since $N' \cap M$ is globally invariant under the modular automorphism group $\sigma^\varphi$, there exists a faithful normal conditional expectation $E\colon M \to N' \cap M$. Consequently, the composition $N' \cap M \xrightarrow{\theta} M \xrightarrow{E} N' \cap M$ is weakly nuclear, which implies that $N' \cap M$ is amenable.
\end{proof}

\begin{lemma}[\protect{{\cite[Lemma 6.6, Lemma 6.12]{ding2023biexact}}}] \label{lem: not intertwine ad conv 0}
     Let $ M $ be a $\sigma$-finite von Neumann algebra and $ \mathcal{B}=\{B_j\}_{j\in J} $ be a countable family of von Neumann subalgebras with expectation. If $N\subset M$ is a finite von Neumann subalgebra with expectation and either $N \npreceq_{M} B_j$ for all $j\in J$, then there exists a net $(u_i) \subset \mathcal{U}(N)$ such that 
     \[ \Ad (u_i)(S)=u_i^*Su_i\to 0 \] 
     ultraweakly for all $ S\in \mathbb{K}_{\mathbb{X}_\mathcal{B}}^{\infty,1}(M) $.
\end{lemma}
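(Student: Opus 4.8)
The plan is to produce a single net $(u_i)\subset\mathcal U(N)$ that simultaneously obstructs intertwining into every $B_j$, to verify $\Ad(u_i)(S)\to 0$ ultraweakly first on the generators of the boundary piece $\mathbb X_{\mathcal B}$, and then to bootstrap to all of $\mathbb K^{\infty,1}_{\mathbb X_{\mathcal B}}(M)$ by a closure argument. Throughout I would fix a faithful normal state $\varphi$ on $M$ whose restriction to $N$ is a trace and which is preserved by the conditional expectation of $M$ onto $N$; this exists because $N$ is finite and sits in $M$ with expectation, and it guarantees $\mathcal U(N)\subset M_\varphi$, so that every unitary of $N$ preserves $\varphi$.

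First I would construct the joint net. Given a finite subfamily $\{B_{j_1},\dots,B_{j_n}\}$, form $\widetilde M=\bigoplus_{k=1}^n M$ with subalgebra $\widetilde B=\bigoplus_k B_{j_k}$, expectation $\widetilde E=\bigoplus_k E_{B_{j_k}}$, and the diagonal embedding $\delta$ of $N$. If the diagonal copy of $N$ embedded into $\widetilde B$ inside $\widetilde M$, then restricting the intertwining partial isometry to a coordinate on which it is nonzero would produce $N\preceq_M B_{j_k}$ for some $k$, contradicting the hypothesis; hence condition $(1)$ of Theorem \ref{thm: intertwining by bimodule} fails for $\delta(N)\subset\widetilde M$, so the equivalent condition $(2)$ also fails, furnishing a net of unitaries $w_\alpha=\delta(u_\alpha)$ whose common entry $u_\alpha\in\mathcal U(N)$ satisfies $E_{B_{j_k}}(a u_\alpha b)\to 0$ ultrastrongly-$\ast$ for every $k$ and all $a,b\in M$ at once. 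Since $J$ is countable, a diagonal argument over the directed set of triples $(J_0,F,\varepsilon)$ with $J_0\subset J$ and $F\subset M$ finite and $\varepsilon>0$ then produces a single net $(u_i)\subset\mathcal U(N)$ with $E_{B_j}(a u_i b)\to 0$ ultrastrongly-$\ast$ for all $j\in J$ and $a,b\in M$.

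Next I would check convergence on generators. By Lemma \ref{lem:density of X}, $\mathbb K^{\infty,1}_{\mathbb X_{\mathcal B}}(M)=\overline{\overline{\mathbb X_{\mathcal B}}^{\,M\text{-}M}}^{\,M'\text{-}M'}$, so it suffices to treat $S\in\mathbb X_{\mathcal B}$; and since $u_i\in M$ commutes with $JMJ=M'$, the commuting factor $JyJ$ in a generator $xJyJe_{B_j}$ can be absorbed into the test vectors, reducing matters to $S=x e_{B_j}y$ with $x,y\in M$. Testing against $\widehat\Omega=\varphi^{1/2}$ and using the Jones relation $e_{B_j}\widehat m=\widehat{E_{B_j}(m)}$ (valid verbatim when $\varphi\circ E_{B_j}=\varphi$, in particular whenever $M$ is tracial), Cauchy--Schwarz gives
\[
\bigl|\langle \Ad(u_i)(S)\,\widehat\Omega,\widehat\Omega\rangle\bigr|
=\bigl|\langle \widehat{E_{B_j}(y u_i)},\,\widehat{x^\ast u_i}\rangle\bigr|
\le \|E_{B_j}(y u_i)\|_{2,\varphi}\,\|x^\ast u_i\|_{2,\varphi},
\]
and the right-hand side tends to $0$ because the first factor vanishes by the previous step (with $a=y$, $b=1$) while the second equals $\varphi(xx^\ast)^{1/2}$, as $u_i$ preserves $\varphi$. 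Running the same estimate on the total set $M'\widehat\Omega$ and using the uniform bound $\|\Ad(u_i)(S)\|=\|S\|$ then yields $\Ad(u_i)(S)\to 0$ ultraweakly for every $S\in\mathbb X_{\mathcal B}$.

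Finally I would pass to the closure, and I expect this interchange of limits to be the main obstacle. The key point is that, because $u_i\in M_\varphi$, expanding a decomposition $T=\bigl(\begin{smallmatrix} a\\ b\end{smallmatrix}\bigr)^{\!\ast} Z\bigl(\begin{smallmatrix} c\\ d\end{smallmatrix}\bigr)$ (with $a,c\in M'$, $b,d\in M$) and evaluating against $u_i\widehat\Omega$ produces the terms $\varphi(u_i^\ast d^\ast d\,u_i)=\varphi(d^\ast d)$ and $\varphi(u_i^\ast b^\ast b\,u_i)=\varphi(b^\ast b)$, which are independent of $i$; this gives the uniform bound $|\langle \Ad(u_i)(T)\,\widehat\Omega,\widehat\Omega\rangle|\le r_\varphi(T)$. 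Given $S\in\mathbb K^{\infty,1}_{\mathbb X_{\mathcal B}}(M)$, one approximates $S$ by some $S_0\in\mathbb X_{\mathcal B}$ in the seminorm $r_\varphi$ and combines this uniform estimate with the generator convergence in an $\varepsilon/3$ argument to conclude $\Ad(u_i)(S)\to 0$ ultraweakly. The only remaining subtlety, transparent when $M$ is tracial (where $\varphi$ may be taken to be the trace, so that both the centralizer condition and every $\varphi\circ E_{B_j}=\varphi$ hold automatically) but requiring the weight bookkeeping of \cite{ding2023biexact} in the general $\sigma$-finite setting, is reconciling the $N$-centralizing state $\varphi$ with the Jones projections $e_{B_j}$ used in the generator estimate and extending the uniform bound from $\widehat\Omega$ to all vectors by the $M$-$M'$ bimodularity of the seminorms.
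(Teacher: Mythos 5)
Your proposal follows the same three-step skeleton as the paper (a joint net from a direct-sum intertwining argument, Cauchy--Schwarz on generators, then bootstrapping through Lemma \ref{lem:density of X}), but it has two genuine gaps, both concentrated exactly in the case the lemma is about, namely $\sigma$-finite non-tracial $M$. First, the generator step is incomplete: you reduce to $S = x e_{B_j} y$ with $x,y \in M$, absorbing the $JMJ$ factors into test vectors. But $\mathbb{X}_{\mathcal{B}}$ is by definition a \emph{hereditary} C$^*$-subalgebra, so its dense subspace consists of words of the form $a_1 J b_1 J\, e_{B_{j_1}} T e_{B_{j_2}}\, J b_2 J a_2$ with $T \in \mathbb{B}(L^2M)$ arbitrary; such elements are not of your reduced form, and Lemma \ref{lem:density of X} requires convergence on all of $\mathbb{X}_{\mathcal{B}}$ before one can pass to $\mathbb{K}^{\infty,1}_{\mathbb{X}_{\mathcal{B}}}(M)$. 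The fix is the paper's two-sided estimate
\[
\lvert \langle u_i^* a_1 e_{B_{j_1}} T e_{B_{j_2}} a_2 u_i\, x_2\varphi_2^{1/2},\, x_1\varphi_1^{1/2}\rangle\rvert
\leqslant \lVert T\rVert\, \lVert E_{j_1}(a_1^* u_i x_1)\varphi_1^{1/2}\rVert\, \lVert E_{j_2}(a_2 u_i x_2)\varphi_2^{1/2}\rVert,
\]
which uses one expectation factor on \emph{each} side of $T$; your one-sided version cannot see the middle operator.

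Second, and more seriously, your state bookkeeping does not close. You posit a single faithful normal state $\varphi$ that simultaneously restricts to a trace on $N$ (so $\mathcal{U}(N)\subset M_\varphi$) and satisfies $\varphi\circ E_{B_j}=\varphi$ (so the Jones relation $e_{B_j}\widehat{m}=\widehat{E_{B_j}(m)}$ holds); such a state need not exist when $M$ is merely $\sigma$-finite. The paper decouples the two roles: in the generator step it uses states $\varphi_l$ adapted to $E_{j_l}$ (no centralizer property is needed there, since the decay comes from $E_j(au_ib)\to 0$ ultrastrongly and density of $\{x\varphi_l^{1/2}\}$), while in the closure step it uses a \emph{different} state with $\varphi|_N$ tracial. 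Moreover, your uniform bound is only verified against $\widehat{\Omega}$, and the proposed extension ``to the total set $M'\widehat{\Omega}$ by bimodularity'' fails: pairing against $a'\widehat{\Omega}$, $b'\widehat{\Omega}$ with $a',b'\in M'$ produces terms like $\lVert a a'\widehat{\Omega}\rVert$ ($a\in M'$ a decomposition component), which cannot be controlled by $r_\varphi(S)$ since $a$ and $a'$ need not commute; one only gets operator-norm bounds on $a$, which the seminorm $r_\varphi$ does not dominate, so the $\varepsilon/3$ argument against $S_n\to S$ collapses. The paper instead pairs against $x\varphi^{1/2}, y\varphi^{1/2}$ with $x,y\in M$ \emph{analytic} for $\sigma^\varphi$, and uses the identity $x\varphi^{1/2}=J\sigma^\varphi_{-i/2}(x)^*J\varphi^{1/2}$ together with $u_i\in M_\varphi$ to obtain the bound $r_\varphi(S)\max\{\lVert x\rVert,\lVert y\rVert,\lVert\sigma^\varphi_{-i/2}(x^*)\rVert,\lVert\sigma^\varphi_{-i/2}(y^*)\rVert\}$. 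This modular-theoretic computation is precisely the content you defer in your final sentence; in the non-tracial setting it is not bookkeeping but the heart of the closure step.
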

\begin{proof}
     Since $N \npreceq_{M} B_j$ for any $j\in J$, by \cite[Theorem 4.3]{MR3570140}, considering the direct sum $ \tilde{B}\coloneqq \oplus_{j\in J} B_j \subset   \tilde{M}= \oplus_{j\in J} M $ so that $ N \npreceq_{\tilde{M}} \tilde{B}$ under the identification $N\simeq \{(x,\cdots,x,\cdots):x\in N\}\subset \tilde{M}$, we can choose a net of unitaries $ (u_i)\subset N$ such that $ E_j(au_ib)\to 0 $ strongly for all $a,b\in M$ and $j\in J$.
     
     We first claim that $ \Ad(u_i)(S)\to 0 $ ultraweakly for all $S\in \mathbb{X}_{ \mathcal{B} }$. Indeed, for any $ T\in \mathbb{B}(L^2M) $, $x_1, x_2,a_1,a_2\in M$, and $j_1,j_2\in J$, choose faithful normal states $\varphi_l$ on $M$ with $ \varphi_l\circ E_{j_l}=\varphi_l $ for $l=1,2$. Then
     \[ | \langle u_i^*a_1e_{B_{j_1}}  Te_{B_{j_2}}a_2u_i x_2\varphi_2^{1/2},x_1\varphi_1^{1/2}\rangle |\leqslant \|T\|\| E_{j_1}(a_1^*u_ix_1)\varphi_1^{1/2} \|\| E_{j_2}(a_2u_ix_2)\varphi_2^{1/2} \|\to 0. \]
    Since $\{x_l\varphi_{l}^{1/2}:x_l \in M\}$ is dense in $L^2M$ for each $l=1,2$, it follows that $ u_i^*a_1e_{B_{j_1}}  Te_{B_{j_2}}a_2u_i \to 0 $ ultraweakly as it forms a uniformly bounded net. Moreover, since each $ u_i $ commutes with $M'$, for any $ b_1,b_2\in M' $, we also have $ \Ad(u_i)( a_1b_1e_{B_{j_1}}  Te_{B_{j_2}}b_2a_2 ) \to 0 $ ultraweakly. Since the span of elements $ a_1b_1e_{B_{j_1}}  Te_{B_{j_2}}b_2a_2 $ is norm dense in $ \mathbb{X}_{\mathcal{B} } $, we conclude that $ \Ad(u_i)(S)\to 0 $ ultraweakly for any $ S\in \mathbb{X}_{ \mathcal{B} } $.

    We now extend this to $ S\in \mathbb{K}_{\mathbb{X}_\mathcal{B}}^{\infty,1}(M) = \overline{\overline{\mathbb{X}_{\mathcal{B}}}^{M\text{-}M}}^{M'\text{-}M'} $. Fix a faithful normal state  $\varphi$ on $M$ with $ \varphi|_{N} $ tracial. Again, since $\Ad(u_i)(S)$ is uniformly bounded, it suffices to show that $ \langle x\varphi^{1/2},\Ad(u_i)(S)y\varphi^{1/2} \rangle \to 0 $ for all $x,y\in M$ analytic with respect to  $\sigma^{\varphi}$. Suppose $S$ has the decomposition \[S= \begin{pmatrix}
    a\\
    b
\end{pmatrix}^* Z  \begin{pmatrix}
    c\\
    d
\end{pmatrix}, \quad a,c\in M',b,d\in M,Z\in \mathbb{M}_2(X), \]
then
\begin{equation*}
\begin{aligned}
    &|\langle x\varphi^{1/2},\Ad(u_i)(S)y\varphi^{1/2} \rangle| = \langle \begin{pmatrix}
    a\\
    b
\end{pmatrix}u_ix\varphi^{1/2},  Z  \begin{pmatrix}
    c\\
    d
\end{pmatrix}u_iy\varphi^{1/2} \rangle \\
\leq& \left[ \langle \varphi^{1/2},x^*u_i^*(a^*a+b^*b)u_ix \varphi^{1/2}\rangle \right]^{1/2} \|Z\|\left[ \langle \varphi^{1/2},x^*u_i^*(c^*c+d^*d)u_ix \varphi^{1/2}\rangle \right]^{1/2}.
\end{aligned}
\end{equation*}
Since $ \langle \varphi^{1/2},x^*u_i^*a^*au_ix \varphi^{1/2}\rangle \leqslant \|x\|^2\langle \varphi^{1/2},a^*a \varphi^{1/2}\rangle $ and $$ \langle \varphi^{1/2},x^*u_i^*b^*bu_ix \varphi^{1/2}\rangle = \langle J\sigma_{-\tfrac{i}{2}}(x^*)J\varphi^{1/2},u_i^*b^*bu_i J\sigma_{-\tfrac{i}{2}}(x^*)J\varphi^{1/2}\rangle \leqslant \|\sigma_{-\tfrac{i}{2}}(x^*)\|^2 \langle \varphi^{1/2},b^* b\varphi^{1/2} \rangle$$ as $ u_i $ is in the centralizer, repeating the same estimates for $c$ and $d$, we obtain
\[ |\langle x\varphi^{1/2},\Ad(u_i)(S)y\varphi^{1/2} \rangle|\leqslant r_{\varphi}(S)\max\{\|x\|,\|y\|,\|\sigma_{-\tfrac{i}{2}}(x^*)\|,\|\sigma_{-\tfrac{i}{2}}(y^*) \| \}. \]
By Lemma \ref{lem:density of X}, there exists a sequence $S_n \in \mathbb{X}_{\mathcal{B}}$ such that $ r_\varphi( S_n-S )\to 0 $, so
\[ \limsup_{i} |\langle x\varphi^{1/2},\Ad(u_i)(S)y\varphi^{1/2} \rangle|\leqslant \limsup_{i} |\langle x\varphi^{1/2},\Ad(u_i)(S-S_n)y\varphi^{1/2} \rangle|\leqslant  \lim_{i} Cr_\varphi(S-S_n)=0.\]
\end{proof}

\section{Relative biexactness of amalgamated free product}

In this section, we will always assume that $M_{i}$, $i=1,2,\cdots,n$, are von Neumann algebras containing a copy of an injective von Neumann subalgebra $B$ with faithful normal conditional expectation $E_{i}\colon M_{i}\to B$. Moreover, we assume that each $M_{i}$ contains a unital ultraweakly dense C$^{*}$-subalgebra $A_{i}$ and a fixed unital C$^{*}$-subalgebra $D\subset A_{i}$ such that $E_{i}(A_{i}) = D$. For each $i$, we let $N_{i} = \langle M_{i}, e_{i}\rangle$ denote the basic construction for $B\subset M_{i}$. We denote by $M = M_{1}\overline{*}_{B}\cdots \overline{*}_{B} M_{n}$ the AFP von Neumann algebra of $M_i$'s and $A = A_{1}*_{D}\cdots *_{D} A_{n}$ the (reduced) AFP C$^{*}$-algebra of $A_i$'s. 

By Lemma \ref{lem: weak exactness into smaller injective}, the inclusion $A_i \subset N_i$ is $(A_i \subset M_i)$-nuclear. However, this condition is not always sufficient, and in certain cases, a stronger assumption is required: for each $i = 1, \dots, n$, there exists a C$^*$-subalgebra $N_{i,0} \subset N_i$ such that $C^*(M_i, e_B) \subset N_{i,0} \subset N_i$, and the inclusion $A_i \subset N_{i,0}$ is $(A_i \subset M_i)$-nuclear.

Our main goal in the next section is to prove the following theorem.

\begin{thm}\label{thm: main 3}
     For $i = 1,\cdots,n$, let $M_i$ be von Neumann algebras with a common von Neumann subalgebra $B\subset M_i$ and faithful normal conditional expectations $ E_{i}:M_i\to B $. For each $i$, let also $A_i\subset M_i$ be an ultraweakly dense unital C$^*$-subalgebra such that $ E_{i}(A_i)=D $ is a common C$^*$-subalgebra of $A_i$. Suppose that $ N_{i,0} $ is a C$^*$-subalgebra of $N_i=\langle M_i,e_B \rangle$ such that $C^*(A_i,e_B)\subset N_{i,0}\subset N_i$.

    If the inclusion $A_i\subset N_{i,0}$ is $(A_i\subset M_i)$-nuclear for $i=1,\cdots,n$, then the inclusion $$ A=A_1\ast_D\cdots \ast_D A_n \subset N_{1,0}\ast_{B}\cdots \ast_B N_{n,0}$$ is $ (A\subset M) $-nuclear.
\end{thm}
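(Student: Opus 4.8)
The plan is to realize the inclusion $\iota\colon A\hookrightarrow N_{1,0}\ast_B\cdots\ast_B N_{n,0}$ through a Toeplitz--Pimsner algebra and then transport the local nuclearity hypotheses through the universal property of that algebra. By the definition of $(A\subset M)$-nuclearity, it suffices to produce nets of c.c.p.\ maps $\Phi_\lambda\colon A\to\mathbb{M}_{N(\lambda)}(\mathbb{C})$ and $\Psi_\lambda\colon\mathbb{M}_{N(\lambda)}(\mathbb{C})\to N_{1,0}\ast_B\cdots\ast_B N_{n,0}$ with $\Psi_\lambda\circ\Phi_\lambda(x)\to\iota(x)$ in the $(A\subset M)$-topology for each $x\in A$. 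The obstruction flagged in the introduction is that the maps $\psi_i\circ\phi_i\colon A_i\to N_{i,0}$ supplied by the hypothesis are not $D$- (nor $B$-) bimodular, so one cannot simply free-product them. The Toeplitz--Pimsner formalism is designed precisely to bypass this: it encodes the algebra through creation operators on a Fock module, where one only needs compatibility of completely positive data with the $B$-valued inner product, not honest bimodularity.

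Concretely, I would introduce the $B$--$B$ correspondence $X=\bigoplus_{i=1}^n X_i^o$, where $X_i^o = L^2N_{i,0}\ominus L^2B$ carries its right Hilbert $B$-module structure and left $B$-action, and consider the full Fock module $\mathcal{F}_X = B\oplus\bigoplus_{m\ge1}X^{\otimes_B m}$ together with its Toeplitz--Pimsner algebra $\mathcal{T}_X\subset\mathcal{L}(\mathcal{F}_X)$ generated by the creation operators $T_\xi$ ($\xi\in X$) and the left action of $B$. The free Fock space $\mathcal{F}$ of Section~\ref{sec: Amalgamated free product} is the sub-correspondence of alternating words inside $\mathcal{F}_X$, and the generators of each $N_{i,0}$ on $\mathcal{F}$ are assembled from the compressions of $T_\xi$, $T_\xi^*$ and the left $B$-action. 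In this picture one obtains a factorization $A\xrightarrow{\ \jmath\ }\mathcal{T}_X\xrightarrow{\ \pi\ }N_{1,0}\ast_B\cdots\ast_B N_{n,0}$ of $\iota$, where $\pi$ is a $\ast$-homomorphism restricting to the identity on (the copy of) $A$; being a $\ast$-homomorphism that intertwines the $A$-bimodule structures, $\pi$ is contractive for the seminorms $s^\rho_\omega$ and hence continuous for the $(A\subset M)$-topology. Consequently it is enough to show that $\jmath\colon A\to\mathcal{T}_X$ is $(A\subset M)$-nuclear, and the convergence may then be pushed forward through $\pi$.

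To nuclearly approximate $\jmath$ I would use functoriality of the Toeplitz--Pimsner construction under completely positive morphisms of correspondences. Feeding the local hypotheses $A_i\xrightarrow{\phi_i^\lambda}\mathbb{M}_{k_i(\lambda)}(\mathbb{C})\xrightarrow{\psi_i^\lambda}N_{i,0}$ into the fibers, and an approximation $B\to\mathbb{M}_{m(\lambda)}(\mathbb{C})\to B$ of $\mathrm{id}_B$ coming from the standing injectivity of $B$ into the coefficient algebra, produces completely positive correspondence morphisms $X\to Y_\lambda\to X$ with $Y_\lambda$ a finite-dimensional correspondence over a finite-dimensional algebra, whose composite converges to $\mathrm{id}_X$ in the relevant relative topology. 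Functoriality then yields c.c.p.\ maps $\mathcal{T}_X\to\mathcal{T}_{Y_\lambda}\to\mathcal{T}_X$ whose composite approximates $\jmath$ on $A$. Finally, since a fixed $x\in A$ is a finite sum of reduced words of bounded length, its image lives on finitely many Fock levels, so one may truncate $\mathcal{T}_{Y_\lambda}$ to operators on $\bigoplus_{m\le L}Y_\lambda^{\otimes m}$---a genuine matrix algebra $\mathbb{M}_{N(\lambda)}(\mathbb{C})$---while controlling the truncation error in the $(A\subset M)$-topology. This furnishes the required factorization through matrices.

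The main obstacle I anticipate is the last convergence verification: one must show that the composite correspondence morphism $X\to Y_\lambda\to X$ tends to the identity and that this passes to $\mathcal{T}_X$ in the $(A\subset M)$-topology, rather than merely in norm or pointwise. This requires relating the $(A\subset M)$-topology on $\mathcal{T}_X$ to the constituent $(A_i\subset M_i)$-topologies across relative tensor powers $X^{\otimes_B m}$, using that normal functionals on $M$ restrict to normal functionals on each $M_i$ and controlling how the non-bimodular approximations interact with the $B$-balanced tensor products. Establishing this estimate---together with checking that the finite-dimensional correspondence morphisms are genuinely completely positive (so that the functoriality applies and the intermediate algebras are matricial)---is where the essential technical work lies.
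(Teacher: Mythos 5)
Your high-level theme --- factor the inclusion through a Toeplitz--Pimsner algebra and exploit its universal property --- is indeed the paper's strategy, but the mechanism you propose for transporting the nuclearity hypotheses has a genuine gap, and it is exactly the obstruction that the Toeplitz detour is designed to circumvent. Your plan hinges on ``functoriality of the Toeplitz--Pimsner construction under completely positive morphisms of correspondences'': you want the c.p.\ data $A_i\to\mathbb{M}_{k_i(\lambda)}(\mathbb{C})\to N_{i,0}$, together with finite-dimensional approximations of $\mathrm{id}_B$, to induce c.c.p.\ maps $\mathcal{T}_X\to\mathcal{T}_{Y_\lambda}\to\mathcal{T}_X$. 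No such functoriality exists. To push a map down to the $B$-balanced tensor powers $X^{\otimes_B m}$ --- let alone to the Toeplitz algebra, where the relations $T_\xi^*T_\eta=\langle\xi,\eta\rangle_B$ and $T_{\xi b}=T_\xi b$ must be respected --- the fiber maps have to intertwine the bimodule actions over a \emph{multiplicative} map of coefficient algebras. The approximants supplied by your hypotheses (and by semidiscreteness of $B$) are u.c.p.\ but not $B$- or $D$-bimodular, so they do not even descend to $X^{\otimes_B m}$; this is precisely the ``lack of $B$-linearity'' that, as the introduction notes, already prevents taking free products of u.c.p.\ maps in the amalgamated setting. Your construction relocates that obstruction into the Fock-module picture rather than resolving it, and your closing sentence deferring the verification that ``the finite-dimensional correspondence morphisms are genuinely completely positive'' is not residual technical work but the missing core of the argument. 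A secondary problem: your fibers $X_i^o=L^2N_{i,0}\ominus L^2B$ are built from the GNS construction of the expectation $E_i\colon N_i\to B$, which is \emph{not} faithful on $N_i$, so the resulting Fock picture need not contain a faithful copy of $N_{i,0}$, whereas the target algebra $N_{1,0}\ast_B\cdots\ast_B N_{n,0}$ in the statement is defined via the faithful representations $\lambda_i$ on $\mathcal{F}$.

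The paper's proof avoids approximating the correspondence altogether, and this is where it genuinely differs from your outline. It takes a correspondence $\mathcal{H}$ over the full coefficient algebra $A_1\oplus A_2$ (not over $B$ or $D$), induced by the swap-and-expect u.c.p.\ map $(a_1,a_2)\mapsto(E_2(a_2),E_1(a_1))$, and identifies $\mathcal{T}(\mathcal{H})\cong C^*(A_1\oplus A_2,T)$ for a concrete shift $T$ on an auxiliary Hilbert space $\mathcal{K}$. The nuclearity hypothesis then enters only at the level of the coefficient algebra: by Lemma \ref{Lem: Lemma 2.4 of Toy25} (imported from \cite{MR4833744}), a $*$-homomorphism of $\mathcal{T}(\mathcal{H})$ whose restriction to $A_1\oplus A_2$ is an $(A_1\oplus A_2\subset M_1\oplus M_2)$-nuclear inclusion extends to a \emph{weakly nuclear} $*$-homomorphism $\pi$ into the bidual-type object $(C^*(N_{1,0}\oplus N_{2,0},T)^{(A_1\oplus A_2)\sharp(A_1\oplus A_2)})^*$. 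This $\pi$ is sandwiched between a Boca-type u.c.p.\ map $\Psi\colon A\to p\,\mathcal{T}(\mathcal{H})\,p$ (built from $u=p(T+T^*)p$, $p=1-T^2(T^*)^2$) and a normal map $\Phi^{\sharp*}$ induced by the compression $\Phi(x)=\frac14\sum_{i,j,k,l}V_{i,j}xV_{k,l}^*$; multiplicative-domain computations show $\Phi^{\sharp*}\circ\pi\circ\Psi$ equals the canonical inclusion of $A$, giving weak nuclearity of $A\hookrightarrow((N_{1,0}\ast_BN_{2,0})^{A\sharp A})^*$ and hence $(A\subset M)$-nuclearity, with the case $n>2$ following by induction (Proposition \ref{thm: main} and the proof of Theorem \ref{thm: main 3}). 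To repair your approach you would need a substitute for this weak-nuclearity extension lemma, since bimodular finite-dimensional approximations of the correspondence itself are simply not available.
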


\subsection{Toeplitz-Pimsner algebra and $M$-nuclear embedding}

We begin by treating the case of the AFP of two von Neumann algebras, that is, when $n = 2$. Let again $M = M_{1}\overline{*}_{B} M_{2}$ be the AFP von Neumann algebra, and $A = A_{1}*_{D}A_{2} = C^*(\lambda_1(A_1),\lambda_2(A_2))\subset \mathbb{B}(\FF)$ be the (reduced) AFP C$^{*}$-algebra, where $\mathcal{F}$ is the associated free product Hilbert space in Section \ref{sec: Amalgamated free product}.

The following constructions are inspired by \cite[Section 4.8]{MR2391387} and \cite{MR4833744}. We choose to formulate the construction without referring to the n.s.f.\ weight (or state) on $B$ so that the arguments still hold for non $\sigma$-finite $M_i$.

We consider the Hilbert space $ \KK \coloneqq  \KK_1\oplus \KK_2$, where
$$ \KK_i \coloneqq  \bigoplus_{n=1}^{\infty} \bigoplus_{\substack{i_1\neq \cdots\neq i_n\\ i_1 = i}} L^2M_{i_1}\otimes_B \cdots \otimes_B L^2M_{i_n}.$$
Since each $L^2M_{i_1}\otimes_B \cdots \otimes_B L^2M_{i_n}$ is a left $ N_{i_1}$-module, we consider the left action of $ N_{i_1} $ on $ L^2M_{i_1}\otimes_B \cdots \otimes_B L^2M_{i_n} $ as the natural action if $i=i_1$ and as $0$ if $i\neq i_1$. In particular, we obtain a left representation $$ \iota:N_1\oplus N_2 \to \mathbb{B}( \KK) .$$ In the sequel, we will identify $N_{1}\oplus 0$ with $N_{1}$ and $0 \oplus N_{2}$ with $N_{2}$, and often simply write $n_{1}$ instead of $n_{1}\oplus 0$ for $n_{1}\in N_{1}$, and similarly write $n_{2}$ instead of $0\oplus n_{2}$ for $n_{2}\in N_{2}$.

Let $T\in \BB(\KK) $ be the isometry sending $ L^2M_{i_1}\otimes_B \cdots \otimes_B L^2M_{i_n} $ to $ L^2M_j\otimes_B L^2M_{i_1}\otimes_B \cdots \otimes_B L^2M_{i_n} $ via the embedding
$$ L^2M_{i_1}\otimes_B \cdots \otimes_B L^2M_{i_1}\simeq L^2B\otimes_B L^2M_{i_1}\otimes_B \cdots \otimes_B L^2M_{i_n}\hookrightarrow L^2M_j\otimes_B L^2M_{i_1}\otimes_B \cdots \otimes_B L^2M_{i_n}, $$
where $j=1,2$ such that $j\neq i_1$. Namely, $T$ is a ‘‘creation operator'' that exchanges $\KK_1$ and $\KK_2$. One can directly check that $T^*\in \BB(\KK)$ is the composition
\[L^2M_{i_1}\otimes_B \cdots \otimes_B L^2M_{i_n}\to   L^2B\otimes_B L^2M_{i_2}\otimes_B\cdots \otimes_B L^2M_{i_n} \simeq L^2M_{i_2}\otimes_B\cdots \otimes_B L^2M_{i_n},\]
where the first map is the projection onto $ L^2B\otimes_B L^2M_{i_2}\otimes_B\cdots \otimes_B L^2M_{i_n} $. Also, when $n=1$, $T^*(L^2M_i) = 0$. Using the notation in Example \ref{exmp: notation for relative tensor product}, we can write $T$ and $T^*$ as
\[ T\Xi = \varphi^{1/2}\otimes \Xi,\quad T^* (\varphi^{1/2}\otimes \Xi) = \Xi,\quad \forall \Xi\in  L^2M_{i_1}\otimes_B \cdots \otimes_B L^2M_{i_n},\]
and
\[ T^*(v\otimes \Xi) =R_{ \Xi }e_B(v),\quad \forall \Xi\in D'( L^2M_{i_1}\otimes_B \cdots \otimes_B L^2M_{i_n},\varphi ).\]
This leads to the following formula, with the $\sigma$-finite case proven in \cite{MR4833744}:
\begin{lemma}\label{lemma T*aT} 
Keeping the same notations as above, we have
    \[ T^*(a_1\oplus a_2)T =  E_2(a_2)\oplus E_1(a_1),\quad \forall a_i\in N_i,\]
    where $E_i\colon N_i\to B$ is the canonical conditional expectation introduced in Section \ref{sec: Amalgamated free product}.
\end{lemma}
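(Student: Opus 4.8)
The plan is to verify the identity directly on vectors, using the explicit formulas $T\Xi = \varphi^{1/2}\otimes \Xi$ and $T^*(v\otimes \Xi) = R_\Xi\, e_B(v)$ derived above, together with the defining relations $e_B = i_B i_B^*$ and $E_i(a_i) = i_B^* a_i i_B$ on $N_i$. Since both sides are bounded operators and $\KK = \KK_1\oplus \KK_2$, it suffices to compute the action of $T^*(a_1\oplus a_2)T$ separately on $\KK_1$ and on $\KK_2$; by symmetry I would only spell out the computation on $\KK_1$. Because the stated formula for $T^*$ is available on the dense subspace of $B$-bounded vectors $\Xi\in D'(L^2M_{i_2}\otimes_B\cdots\otimes_B L^2M_{i_n},\varphi)$, I would carry out the calculation there and then extend to all of $\KK_1$ by boundedness.

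First I would record the two structural facts: $T$ exchanges the summands, sending $\KK_1$ into $\KK_2$ and $\KK_2$ into $\KK_1$, while $\iota(a_1\oplus a_2)$ preserves each $\KK_i$ and acts by left multiplication of $a_i$ on the first tensor leg (and by $0$ on the other summand). Thus for $\Xi\in L^2M_{i_1}\otimes_B\cdots\otimes_B L^2M_{i_n}\subset \KK_1$ (so $i_1=1$), we have $T\Xi = \varphi^{1/2}\otimes \Xi \in L^2M_2\otimes_B L^2M_{i_1}\otimes_B\cdots \subset \KK_2$, whence $(a_1\oplus a_2)T\Xi = a_2\varphi^{1/2}\otimes \Xi$, the $a_1$-component contributing nothing because $T\Xi\in \KK_2$. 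Applying the adjoint formula gives $T^*\big(a_2\varphi^{1/2}\otimes \Xi\big) = R_\Xi\, e_B(a_2\varphi^{1/2})$.

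The crux is then the identity $e_B(a_2\varphi^{1/2}) = i_B i_B^*\, a_2\, i_B\varphi^{1/2} = i_B\big(E_2(a_2)\varphi^{1/2}\big) = E_2(a_2)\varphi^{1/2}$, where I use that $\varphi^{1/2}$ denotes the image under $i_B$ of the canonical $B$-vector and that $E_2 = i_B^*(\,\cdot\,)i_B$ on $N_2$. Since $E_2(a_2)\in B$, the identification $L^2B\otimes_B \HH\simeq \HH$ of Example \ref{exmp: notation for relative tensor product} turns $R_\Xi\big(E_2(a_2)\varphi^{1/2}\big)$ into the left $B$-action $E_2(a_2)\cdot \Xi$. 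This is exactly the action on $\Xi$ of the $\KK_1$-component of $E_2(a_2)\oplus E_1(a_1)$, viewing $E_2(a_2)\in B\subset N_1$. Running the mirror-image argument on $\KK_2$ produces $E_1(a_1)\cdot \Xi$ there, and assembling the two summands yields $T^*(a_1\oplus a_2)T = E_2(a_2)\oplus E_1(a_1)$ on the dense subspace, which extends to all of $\KK$ by boundedness of both sides.

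The main technical point to handle with care is the domain issue: the formula for $T^*$ is directly justified only on the dense set of $B$-bounded vectors, so I must confirm that the intermediate vector $a_2\varphi^{1/2}\otimes \Xi$ remains within the scope of that formula and then invoke continuity. A secondary, though routine, subtlety is keeping the identifications $\FF\simeq L^2M_i\otimes_B \FF(i)$ and $L^2B\otimes_B \HH\simeq \HH$ mutually consistent, so that ``left multiplication by $E_i(a_i)\in B$'' denotes the same operation on both sides of the claimed equality; once these bookkeeping conventions are fixed, the verification reduces to the direct substitution above.
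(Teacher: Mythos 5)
Your outline follows the same route as the paper's proof: verify the identity on elementary tensors whose right legs are $B$-bounded, use the formulas $T\Xi=\varphi^{1/2}\otimes\Xi$ and $T^*(v\otimes\Xi)=R_\Xi e_B(v)$, reduce the crux to $e_B a e_B=E_i(a)e_B$, and extend by boundedness; your component bookkeeping (the $\KK_1$ summand picks up $E_2(a_2)$, the $\KK_2$ summand picks up $E_1(a_1)$) is also correct. In the case where $\varphi$ is a \emph{state}, your argument is complete.

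However, there is a genuine gap in exactly the generality this lemma is meant to cover. The paper works with an n.s.f.\ \emph{weight} $\varphi$ on $B$ precisely so that the construction applies to non-$\sigma$-finite algebras, and it notes that the $\sigma$-finite case of this formula was already proven in \cite{MR4833744}; the content of the lemma is the weight case. When $\varphi(1)=\infty$, the symbol $\varphi^{1/2}$ is \emph{not} a vector of $L^2B$ --- there is no ``canonical $B$-vector'' whose image under $i_B$ it denotes, contrary to what your crux identity assumes. Consequently $a_2\varphi^{1/2}$ is not a vector either, and the expression $R_\Xi\,e_B(a_2\varphi^{1/2})$ on which your computation pivots has no meaning; your plan to ``confirm that the intermediate vector $a_2\varphi^{1/2}\otimes\Xi$ remains within the scope of the formula'' cannot succeed, because the obstruction is not a domain issue for $T^*$ but the nonexistence of that vector. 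The paper repairs precisely this point with an approximation: choose an increasing net $(e_j)\subset B_+$ with $e_j\to 1$ strongly and $\varphi(e_j)<\infty$, so that each $e_j\varphi^{1/2}$ \emph{is} a genuine vector of $L^2B$ (indeed $\varphi(e_j^2)\leqslant\varphi(e_j)<\infty$), and $T\Xi=\lim_j e_j\varphi^{1/2}\otimes\Xi$. Then $a_2 e_j\varphi^{1/2}$ is a genuine vector, the operator identity $e_B a_2 e_B=E_2(a_2)e_B$ applies rigorously to it, and
\[ T^*(a_1\oplus a_2)T\Xi \;=\; \lim_j R_\Xi\bigl(E_2(a_2)e_j\varphi^{1/2}\bigr) \;=\; \lim_j E_2(a_2)e_j\Xi \;=\; E_2(a_2)\Xi \]
by normality of the left $B$-action and boundedness. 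Inserting this approximation step is what turns your outline into a proof valid for weights.
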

\begin{proof}
    Let $ \{e_j\}_{j\in J} $ be an increasing net in $B$ strongly converging to $1$ such that $ \varphi(e_j)<\infty $. Then for any $ \Xi\in  D'(L^2M_{i_1}\otimes_B \cdots \otimes_B L^2M_{i_n},\varphi) $,
    $ T\Xi = \lim_{j}e_j\varphi^{1/2}\otimes \Xi$. Therefore,
    \begin{equation*}
    \begin{aligned}
        T^*(a_1\oplus 0)T\Xi &= \delta_{2,i_1}\lim_{j}T^*a_1e_j\varphi^{1/2}\otimes\Xi =\delta_{2,i_1}\lim_{j}R_\Xi\left(E_1(a_1)e_j\varphi^{1/2}\right)\\ &=  \delta_{2,i_1}\lim_{j}E_1(a_1) e_j\Xi = \delta_{2,i_1}E(a_1)\Xi = (0\oplus E_1(a_1))\Xi.
    \end{aligned}
    \end{equation*}
    Similarly, we have $ T^*(0\oplus a_2)T= E_2(a_2)\oplus 0 $.
\end{proof}

The normal u.c.p.\ map
\[\phi\colon M_{1}\oplus M_{2} \ni (m_{1},m_{2})\mapsto (E_{2}(m_{2}),E_{1}(m_{1})) \in B\oplus B \subset M_{1}\oplus M_{2}\]
restricts to a u.c.p.\ map $A_{1}\oplus A_{2} \ni (a_{1} \oplus a_{2})\mapsto (E_{2}(a_{2}) \oplus E_{1}(a_{1})) \in D\oplus D \subset A_{1}\oplus A_{2}$. Since the algebraic tensor product $(A_{1}\oplus A_{2})\odot (A_{1}\oplus A_{2})$ admits the left and right $A_{1}\oplus A_{2}$-actions given by $a\cdot (x\otimes y)\cdot b = ax\otimes yb$ for $a,b,x,y\in A_{1}\oplus A_{2}$ and the $A_{1}\oplus A_{2}$-valued semi-inner product 
\[\langle a_{1}\otimes b_{1}, a_{2}\otimes b_{2}\rangle = b_{1}^{*}\phi(a_{1}^{*}a_{2})b_{2}, \quad a_{1},a_{2},b_{1},b_{2}\in A_{1}\oplus A_{2},\]
we let $\mathcal{H} = \mathcal{H}^{\phi}_{A_{1}\oplus A_{2}}$ denote the Hilbert C$^{*}$-correspondence over $A_{1}\oplus A_{2}$ associated with $\phi$ obtained from $(A_{1}\oplus A_{2})\odot (A_{1}\oplus A_{2})$ by separation and completion with respect to the inner product mentioned above. As in \cite[Example 2.2]{MR4833744}, $\mathcal{H}$ is $(A_{1}\oplus A_{2}\subset M_{1}\oplus M_{2})$-normal, i.e., for any $\xi,\eta\in \mathcal{H}$ and $f\in (A_1\oplus A_2)^{(M_1\oplus M_2)\sharp(M_1\oplus M_2)}$, the map $A_1\oplus A_2\ni a\mapsto f(\langle \xi,a\eta\rangle)$ extends to a normal linear functional on $M_1\oplus M_2$.

Let $\tau\colon \mathcal{H} \to \mathbb{B}(\mathcal{K})$ be the linear map:
\[\tau((a_{1}\oplus a_{2})\otimes (b_{1} \oplus b_{2})) = (a_{1}\oplus a_{2})T(b_{1}\oplus b_{2}), \quad (a_{1}\oplus a_{2})\otimes (b_{1} \oplus b_{2})\in \mathcal{H}.\]
By \cite[Lemma 3.1]{MR4833744} and Lemma \ref{lemma T*aT}, $(\iota, \tau)$ is a representation of the C$^{*}$-correspondence $\mathcal{H}$ on $\mathbb{B}(\mathcal{K})$, and applying the gauge-invariant uniqueness theorem (see e.g., \cite[Theorem 4.6.18]{MR2391387} or \cite{MR1986889}) one can check that the Toeplitz-Pimsner C$^{*}$-algebra $\mathcal{T}(\mathcal{H})$ associated with $\mathcal{H}$ is canonically $*$-isomorphic to the C$^{*}$-subalgebra $C^{*}(A_{1}\oplus A_{2},T)$ of $\mathbb{B}(\mathcal{K})$:
\begin{lemma}\cite[Lemma 3.1]{MR4833744}
    $(\iota,\tau)$ is a universal representation of the C$^*$-correspondence $\HH$ on $\mathbb{B}(\KK)$. In particular, we have the isomorphism $$ C^*(A_1\oplus A_2,T) = \mathcal{T}(\HH) .$$
\end{lemma}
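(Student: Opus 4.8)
The plan is to recognize $(\iota,\tau)$ as a Toeplitz representation of the C$^*$-correspondence $\HH$ and then invoke the gauge-invariant uniqueness theorem to promote the canonical surjection $\mathcal{T}(\HH)\twoheadrightarrow C^*(A_1\oplus A_2,T)$ to an isomorphism. First I would check the two covariance identities. The bimodule relations $\iota(a)\tau(\xi)=\tau(a\cdot\xi)$ and $\tau(\xi\cdot a)=\tau(\xi)\iota(a)$ are immediate from the formula $\tau((a_1\oplus a_2)\otimes(b_1\oplus b_2))=(a_1\oplus a_2)T(b_1\oplus b_2)$, as left and right multiplication by $\iota(a)$ simply absorb into the outer factors. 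The isometry relation $\tau(\xi)^*\tau(\eta)=\iota(\langle\xi,\eta\rangle)$ is where Lemma \ref{lemma T*aT} enters: for $\xi=a\otimes b$ and $\eta=c\otimes d$ with $a,b,c,d\in A_1\oplus A_2$, one has
\[\tau(\xi)^*\tau(\eta)=\iota(b)^*T^*\iota(a^*c)T\iota(d)=\iota(b)^*\iota(\phi(a^*c))\iota(d)=\iota\big(b^*\phi(a^*c)d\big)=\iota(\langle\xi,\eta\rangle),\]
using $T^*\iota(a^*c)T=\iota(\phi(a^*c))$ from Lemma \ref{lemma T*aT} together with the defining $\HH$-inner product $\langle a\otimes b,c\otimes d\rangle=b^*\phi(a^*c)d$. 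Hence $(\iota,\tau)$ is a genuine Toeplitz representation, and by the universal property it integrates to a surjective $*$-homomorphism onto $C^*(\iota(A_1\oplus A_2),\tau(\HH))$; since $T=\tau((1\oplus1)\otimes(1\oplus1))$ and each $\tau(a\otimes b)=\iota(a)T\iota(b)$, the image is precisely $C^*(A_1\oplus A_2,T)$.

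To show this surjection is injective I would verify the two hypotheses of the gauge-invariant uniqueness theorem. Faithfulness of $\iota$ on $A_1\oplus A_2$ is inherited from faithfulness on $N_1\oplus N_2$: the subalgebra $N_1$ already acts faithfully on the length-one summand $L^2M_1\subset\KK_1$ and $N_2$ on $L^2M_2\subset\KK_2$, so $\iota$ is injective. For the gauge action, I would grade $\KK$ by tensor length, assigning $L^2M_{i_1}\otimes_B\cdots\otimes_B L^2M_{i_n}$ degree $n$, and define a strongly continuous unitary representation $z\mapsto U_z$ of $\mathbb{T}$ acting as the scalar $z^n$ on the degree-$n$ summand. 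Since $\iota(A_1\oplus A_2)$ preserves the degree while $T$ raises it by one, $\Ad(U_z)$ fixes $\iota(A_1\oplus A_2)$ and satisfies $\Ad(U_z)(T)=zT$; restricting $\Ad(U_z)$ to $C^*(A_1\oplus A_2,T)$ therefore furnishes the required gauge action.

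With faithfulness of $\iota$ and the gauge action established, the gauge-invariant uniqueness theorem for Toeplitz-Pimsner algebras (e.g.\ \cite[Theorem 4.6.18]{MR2391387} or \cite{MR1986889}) yields that the integrated map is an isomorphism $\mathcal{T}(\HH)\cong C^*(A_1\oplus A_2,T)$, which is the assertion. The points that warrant care are minor. One is the bookkeeping in the isometry relation, namely matching the coordinate swap built into $\phi(m_1,m_2)=(E_2(m_2),E_1(m_1))$ against the $\HH$-inner product and applying Lemma \ref{lemma T*aT} to the inner element $a^*c\in A_1\oplus A_2$. The other is the observation that it is the Toeplitz version of the uniqueness theorem that applies here, for which faithfulness of $\iota$ together with a gauge action is exactly the required hypothesis, with no additional Cuntz-Pimsner covariance condition; the gauge-action and faithfulness steps themselves are routine once the length grading is fixed.
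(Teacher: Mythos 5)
Your route is the same as the paper's (which defers the details to \cite[Lemma 3.1]{MR4833744}): verify via Lemma \ref{lemma T*aT} that $(\iota,\tau)$ is a Toeplitz representation of $\HH$, then apply the gauge-invariant uniqueness theorem using the tensor-length grading and faithfulness of $\iota$. Your covariance computations, the construction of the gauge action $U_z$, and the faithfulness argument are all correct. The genuine gap is in your closing remark that for the Toeplitz version ``faithfulness of $\iota$ together with a gauge action is exactly the required hypothesis, with no additional\dots condition.'' That is a misstatement of the theorem, and as you state it the theorem is false. The gauge-invariant uniqueness theorem for $\mathcal{T}(\HH)$ (Fowler--Raeburn/Katsura, and this is also how the hypotheses of \cite[Theorem 4.6.18]{MR2391387} read) requires a third condition:
\[ \iota(A_1\oplus A_2)\cap \overline{\operatorname{span}}\{\tau(\xi)\tau(\eta)^*:\xi,\eta\in\HH\}=\{0\}. \]
This cannot be dropped: take the correspondence $\HH=\C$ over $A=\C$ and represent it by a unitary $u$ generating $C(\T)$; this representation is faithful on $A$ and admits a gauge action, yet $C^*(u)=C(\T)$ is a proper quotient of the Toeplitz algebra $\mathcal{T}(\C)$. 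The extra hypothesis is exactly what rules out representations satisfying some Cuntz--Pimsner-type covariance, which is the phenomenon your phrasing dismisses.

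Fortunately the missing hypothesis holds in this setting and takes one paragraph to check, so your proof is repairable rather than wrong in spirit. Every operator $\tau(\xi)\tau(\eta)^*=\iota(a)T\iota(bd^*)T^*\iota(c^*)$ annihilates the length-one subspace $L^2M_1\oplus L^2M_2\subset\KK$, since $T^*$ vanishes on length-one tensors; hence so does every element of $\overline{\operatorname{span}}\{\tau(\xi)\tau(\eta)^*\}$. On the other hand, the restriction of $\iota$ to the length-one subspace is already faithful (it is the direct sum of the identity representations $N_i\subset\mathbb{B}(L^2M_i)$), so any $a\in A_1\oplus A_2$ with $\iota(a)$ in that closed span is zero. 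Inserting this verification before invoking the uniqueness theorem completes your argument, and with that addition it coincides with the proof the paper relies on.
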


We define $p \coloneqq 1- T^{2}(T^{*})^{2}\in C^{*}(A_{1}\oplus A_{2},T)$, which is the orthogonal projection onto
\[\bigoplus_{i\neq j} \bigg(L^{2}M_{i} \oplus (L^{2}M_{i}\otimes_{B} L^{2}M_{j}) \oplus \big(\big((L^{2}M_{i}\otimes_{B} L^{2}M_{j}) \ominus (L^2B \otimes_{B} L^2B)\big)\otimes_{B} \mathcal{K}_{i}\big)\bigg).\]

For $i,j=1,2$, we define following subspaces of $p\mathcal{K}$:
\begin{equation*}
  \begin{aligned}
    \mathcal{K}_{1,1} &= L^{2}M_{1} \oplus \bigoplus L^{2}M_{1}\otimes_{B} L^{2}M_{2}^{o} \otimes_{B} \cdots \otimes_{B} L^{2}M_{2}^{o} \otimes_{B} L^{2}M_{1}\\
    \mathcal{K}_{1,2} &= (L^{2}M_{1}\otimes_{B} L^{2}M_{2}) \oplus \bigoplus L^{2}M_{1}\otimes_{B} L^{2}M_{2}^{o} \otimes_{B} \cdots \otimes_{B} L^{2}M_{1}^{o} \otimes_{B} L^{2}M_{2}\\
    \mathcal{K}_{2,1} &= (L^{2}M_{2}\otimes_{B} L^{2}M_{1}) \oplus \bigoplus L^{2}M_{2}\otimes_{B} L^{2}M_{1}^{o} \otimes_{B} \cdots \otimes_{B} L^{2}M_{2}^{o} \otimes_{B} L^{2}M_{1}\\
    \mathcal{K}_{2,2} &= L^{2}M_{2} \oplus \bigoplus L^{2}M_{2}\otimes_{B} L^{2}M_{1}^{o} \otimes_{B} \cdots \otimes_{B} L^{2}M_{1}^{o} \otimes_{B} L^{2}M_{2}
  \end{aligned}
\end{equation*}
Namely, $\mathcal{K}_{i,j}$ is the subspace consisting of the tensors beginning with vectors from $L^{2}M_{i}$, ending with those from $L^{2}M_{j}$, and in the middle vectors are from the orthogonal complements of $L^2B$, if there are any. Via the standard identifications $L^2B \otimes_{B} L^{2}M_{2}^{o} \simeq L^{2}M_{2}^{o}$ and $L^{2}M_{2}^{o} \otimes_{B} L^2B \simeq L^{2}M_{2}^{o}$, we have a natural left $ N_1 $-linear isomorphism $V_{1,1}\colon \mathcal{K}_{1,1}\to L^{2}M$. Similarly, by the identification $L^2B \otimes_{B} L^{2}M_{1}^{o} \simeq L^{2}M_{1}^{o}$ and $L^{2}M_{1}^{o} \otimes_{B} L^2B \simeq L^{2}M_{2}^{o}$, we have natural left $N_i$-linear identification maps $V_{i,j}\colon \mathcal{K}_{i,j}\to L^{2}M$ for $i,j=1,2$. 

Recall that for each $i=1,2$, we denote by $e_B \colon L^2M_i \to L^2B$ the orthogonal projection onto $L^2B$. In what follows, we will primarily work with their images $\lambda_{i}(e_B) \in \B(\FF)$ under the $*$-homomorphisms $\lambda_i\colon N_i = \langle M_i, e_B\rangle \to \B(\FF)$ introduced in Section \ref{sec: Amalgamated free product}, so no confusion will arise from this abuse of notation of $e_B$. The following lemma about $\lambda_{i}(e_B)$ is straightforward but will be frequently used.
\begin{lemma}
     $\lambda_i(e_B) $ is the orthogonal projection from $ \FF $ onto the subspace of vectors consisting of those in $L^2B$ or tensors beginning with $ L^2M_i^o$, where the indices are interpreted as in $\mathbb{Z}/2\mathbb{Z}$. In particular, we have $ \lambda_1(e_B)\lambda_2(e_B)  = \lambda_2(e_B)\lambda_1(e_B) = e_B$, where the last operator $e_B$ in the equation denotes the orthogonal projection from $\mathcal{F}$ onto $L^2 B$.
\end{lemma}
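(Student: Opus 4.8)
The plan is to compute $\lambda_i(e_B)$ directly from the isomorphism $\FF \simeq L^2M_i \otimes_B \FF(i)$ recalled in Section \ref{sec: Amalgamated free product}, under which $\lambda_i$ acts on the first tensor leg. Since $\lambda_i$ is a $*$-representation of $N_i$ and $e_B \in N_i$ is a projection, $\lambda_i(e_B)$ is automatically an orthogonal projection in $\BB(\FF)$, so the only content is to identify its range. Because $e_B \in N_i \subset \BB(L^2M_i)$ is the orthogonal projection of $L^2M_i$ onto $L^2B$ and $\lambda_i$ acts on the first leg of $L^2M_i \otimes_B \FF(i)$, I would first record that $\lambda_i(e_B) = e_B \otimes_B \id_{\FF(i)}$.

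Next, I would split the first leg along the range and kernel of $e_B$, i.e.\ $L^2M_i = L^2B \oplus L^2M_i^o$, which after tensoring over $B$ with $\FF(i)$ yields
\[ \FF \simeq L^2M_i \otimes_B \FF(i) = \big(L^2B \otimes_B \FF(i)\big) \oplus \big(L^2M_i^o \otimes_B \FF(i)\big), \]
and $\lambda_i(e_B) = e_B \otimes_B \id$ is exactly the orthogonal projection onto the first summand. To read this back inside $\FF$, I would invoke the canonical identifications of Example \ref{exmp: notation for relative tensor product}: applying $L^2B \otimes_B \HH \simeq \HH$ carries $L^2B \otimes_B \FF(i)$ onto the subspace $\FF(i) \subset \FF$, consisting of $L^2B$ together with all reduced tensors whose first leg lies in $L^2M_{i_1}^o$ for some $i_1 \neq i$ (equivalently, the tensors not beginning with $L^2M_i^o$); dually, $L^2M_i^o \otimes_B \FF(i)$ is carried onto the reduced tensors beginning with $L^2M_i^o$, which $\lambda_i(e_B)$ annihilates. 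This identifies the range of $\lambda_i(e_B)$ and proves the main assertion.

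Finally, for the product formula I would record the orthogonal decomposition $\FF = L^2B \oplus W_1 \oplus W_2$, where $W_k$ is the closed span of the reduced tensors beginning with $L^2M_k^o$. The previous step gives $\lambda_1(e_B) = P_{L^2B} + P_{W_2}$ and $\lambda_2(e_B) = P_{L^2B} + P_{W_1}$, so because $L^2B, W_1, W_2$ are mutually orthogonal all cross terms vanish and
\[ \lambda_1(e_B)\lambda_2(e_B) = P_{L^2B} = \lambda_2(e_B)\lambda_1(e_B), \]
which is precisely the projection $e_B$ of $\FF$ onto $L^2B$. The whole argument is routine once $\lambda_i(e_B) = e_B \otimes_B \id$ is in hand; the one place demanding care is the bookkeeping of the module identifications $L^2B \otimes_B \HH \simeq \HH$ and $L^2M_i^o \otimes_B L^2B \simeq L^2M_i^o$ needed to match $L^2B \otimes_B \FF(i)$ with the correct subspace of $\FF$, which is also where one must be careful to track which of the two indices labels the surviving tensors.
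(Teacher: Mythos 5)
Your argument is correct. The paper offers no proof of this lemma at all---it is introduced as ``straightforward''---so there is nothing to compare against beyond the construction of $\lambda_i$ in Section \ref{sec: Amalgamated free product}, and your proof is exactly the argument that construction suggests: $\lambda_i(e_B) = e_B\otimes_B \id_{\FF(i)}$ under $\FF\simeq L^2M_i\otimes_B\FF(i)$ (legitimate because $e_B\in N_i=(J_iBJ_i)'$ commutes with the right $B$-action), the orthogonal splitting $(L^2B\oplus L^2M_i^o)\otimes_B\FF(i)$, and the identifications of Example \ref{exmp: notation for relative tensor product}.

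The one substantive point, which you flag and resolve correctly, is the index bookkeeping. Your computation gives that the range of $\lambda_i(e_B)$ is $L^2B$ together with the reduced tensors \emph{not} beginning with $L^2M_i^o$, i.e.\ $\lambda_1(e_B)=P_{L^2B}+P_{W_2}$ and $\lambda_2(e_B)=P_{L^2B}+P_{W_1}$ in your notation. This is the correct and intended statement, even though a literal reading of the lemma's wording (``tensors beginning with $L^2M_i^o$'') says the opposite: the clause ``indices are interpreted as in $\mathbb{Z}/2\mathbb{Z}$'' encodes the shift $i\mapsto i+1$, and your reading is the one used throughout the rest of the paper---in the proof of Proposition \ref{prop: N is in S}, where $\lambda_i(e_B)$ is identified as the projection onto $L^2B\oplus\bigoplus_{i_1\neq i}L^2M_{i_1}^o\otimes_B\cdots\otimes_B L^2M_{i_n}^o$, and in Lemma \ref{lem: relation between T and Vij}, via the identity $\lambda_{i+1}(e_B^{\perp})\lambda_i(e_B)=\lambda_{i+1}(e_B)^{\perp}$. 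Note that the final product formula $\lambda_1(e_B)\lambda_2(e_B)=P_{L^2B}=e_B$ would come out the same under either reading, so it cannot disambiguate; but the literal reading would break identities used later, such as $\lambda_i(e_B)^{\perp}\lambda_i(x)\lambda_i(e_B)=\lambda_i(x)\lambda_i(e_B)$ for $x\in M_i^o$ in Proposition \ref{prop: inside S_B}, whereas your version makes them immediate. With your identification in hand, the mutual orthogonality of $L^2B$, $W_1$, $W_2$ finishes the proof exactly as you say.
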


\begin{lemma}\label{lem: relation between T and Vij}
Keeping the same notations as above, we have
    \begin{equation*}
        T^*V^*_{i,j} = V^*_{i+1,j}V_{i+1,j}T^*V^*_{i,j},\quad V_{i,j}T = V_{i,j}TV^*_{i+1,j}V_{i+1,j},
    \end{equation*}
    where the indices are interpreted as in $\mathbb{Z}/2\mathbb{Z}$. Moreover, we have
    \[ V_{i+1,j}T^*V^*_{i,j} = \begin{cases}
        \lambda_{i+1}(e_B^\perp)\lambda_i(e_B) = \lambda_{i+1}(e_B)^\perp,\quad i=j\\
        \lambda_i(e_B),\quad i\neq j
    \end{cases} \]
\end{lemma}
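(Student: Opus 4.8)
The plan is to extract all three relations directly from the concrete action of $T$, $T^*$, and the identification maps $V_{i,j}$ on reduced tensors, rather than from any abstract operator manipulation.

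First I would dispose of the two intertwining relations. The content of the first is simply that $T^*$ carries $\mathcal{K}_{i,j}$ into $\mathcal{K}_{i+1,j}$: applied to a reduced word beginning in $L^2M_i$ and ending in $L^2M_j$, the annihilation operator $T^*$ projects the leading factor onto $L^2B$ and deletes it, producing a word whose new leading factor carries the next index $i+1$ while the terminal index $j$ is unchanged; for a length-one word (no second factor) $T^*$ returns $0\in\mathcal{K}_{i+1,j}$. Since $V_{i,j}^*$ has range $\mathcal{K}_{i,j}$ while $V_{i+1,j}$ is an isometry whose initial projection $V_{i+1,j}^*V_{i+1,j}$ is the orthogonal projection of $\mathcal{K}$ onto $\mathcal{K}_{i+1,j}$, left multiplication by this projection fixes $T^*V_{i,j}^*$, which is exactly the first relation. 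The second relation is its adjoint, using $(V_{i+1,j}^*V_{i+1,j})^*=V_{i+1,j}^*V_{i+1,j}$ and $(T^*)^*=T$.

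For the main formula I would compute $V_{i+1,j}T^*V_{i,j}^*$ directly on a reduced tensor $\eta\in L^2M$. The map $V_{i,j}^*$ realizes $\eta$ as an element of $\mathcal{K}_{i,j}$, retaining the leading $L^2M_i$-factor and re-introducing a trailing $L^2B$ only when $\eta$ does not already terminate in index $j$; the operator $T^*$ then projects that leading factor onto $L^2B$ and deletes it, so it annihilates $\eta$ precisely when the leading factor lies in $L^2M_i^o$, i.e.\ when $\eta$ begins in $L^2M_i^o$; finally $V_{i+1,j}$ reads the result back into $L^2M$. The crucial point is that $\mathcal{K}_{i,j}$ and $\mathcal{K}_{i+1,j}$ share the same terminal index $j$, so the trailing $L^2B$-adjustment introduced by $V_{i,j}^*$ is undone by $V_{i+1,j}$; hence only the leading factor is affected and the composite is the orthogonal projection onto a span of reduced words singled out by their leading index. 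Comparing this projection with the description of $\lambda_i(e_B)$ from the preceding lemma identifies it with the stated operator.

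It then remains to separate the two cases, and this is where the scalar summand $L^2B\subset L^2M$ must be watched. When $i=j$ the words of $\mathcal{K}_{i,i}$ have odd length with minimal length one, and the copy of $L^2B$ inside $L^2M$ is realized by such a length-one word in $L^2M_i$, which $T^*$ annihilates since $T^*(L^2M_i)=0$; thus the scalar summand is killed and the projection carries no $L^2B$-part, matching $\lambda_{i+1}(e_B)^\perp$. The asserted identity $\lambda_{i+1}(e_B^\perp)\lambda_i(e_B)=\lambda_{i+1}(e_B)^\perp$ is then purely formal: from the preceding lemma $\lambda_i(e_B)$ and $\lambda_{i+1}(e_B)$ are commuting projections with $\lambda_i(e_B)\lambda_{i+1}(e_B)=e_B$ and $\lambda_i(e_B)+\lambda_{i+1}(e_B)=1+e_B$, whence $\lambda_{i+1}(e_B^\perp)\lambda_i(e_B)=\lambda_i(e_B)-e_B=1-\lambda_{i+1}(e_B)$. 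When $i\neq j$ the minimal words of $\mathcal{K}_{i,j}$ have length two, so the scalar summand is realized by a length-two word and survives $T^*$; the projection then retains the $L^2B$-part and equals $\lambda_i(e_B)$. I expect the main obstacle to be the bookkeeping in the second step: making precise that the trailing-$L^2B$ adjustments of $V_{i,j}^*$ and $V_{i+1,j}$ genuinely cancel, so that the composite depends only on the leading factor, together with the careful treatment of the scalar summand that produces exactly the case distinction $i=j$ versus $i\neq j$.
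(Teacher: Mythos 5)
Your proposal is correct and takes essentially the same route as the paper's proof: the intertwining relations follow from the observation that $T^*$ carries $\mathcal{K}_{i,j}$ into $\mathcal{K}_{i+1,j}$ (the second relation being the adjoint), and the projection formula is obtained by evaluating $V_{i+1,j}T^*V^*_{i,j}$ on the three kinds of reduced vectors---$L^2B$, words beginning in $L^2M_{i+1}^o$, and words beginning in $L^2M_i^o$---with the case distinction $i=j$ versus $i\neq j$ governed by what happens to the scalar summand. The only detail the paper adds beyond your argument is the care needed in the general n.s.f.\ weight (non-$\sigma$-finite) setting, where the elementary tensors used in the computation must have components taken from $D'(\,\cdot\,,\varphi)$; your bookkeeping is otherwise identical.
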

\begin{proof}
    The proof for the first claim follows directly from the definition of $T$. Indeed, for $ T^*V^*_{1,1} $, we notice that $ T( \KK_{1,1} )\subseteq \KK_{1,2} $. Since $V^{*}_{1,1}$ maps onto $ \KK_{1,1} $, and $ V^{*}_{i,j}V_{i,j} $ is the orthogonal projection onto $ \KK_{i,j} $, we must have $TV^*_{1,1} = V^{*}_{2,1}V_{2,1}TV^*_{1,1} $. The other cases follow in a similar way.

    The second claim is also straightforward if one assumes $B$ admits a faithful normal state. The proof for a general n.s.f.\ weight $\varphi$ follows as in the state case, provided one carefully represents vectors in  $D(\HH,\varphi) $ for a given $B$-module $\HH$. To see this, we check for $ V_{2,1}T^*V^*_{1,1} $. 
    
    For a vector $ \xi\in L^2B $, we have $ V^{*}_{1,1}\xi = \xi\in L^2M_1\subset\KK_1 $ is a vector with tensor length $1$. Since $T^*$ vanish on vectors with tensor length $1$, we have $T^*V^{*}_{1,1}\xi =0$.
    
    For a vector with tensor beginning with $ L^2M^o_2 $, it is of the form $ \xi^o \otimes \Xi \in L^2M^o_{2}\otimes_B \cdots\otimes_B L^2M^o_{i_n}$, where $ \xi^o\in L^2M_2^o $ and $ \Xi\in D'( L^2M^o_{1}\otimes_B \cdots\otimes_B L^2M^o_{i_n},\varphi ) $. Then
    \[V^*_{1,1}\xi^o\otimes \Xi = \varphi^{1/2}\otimes \xi^o\otimes \Xi\in L^2M_1\otimes_B L^2M_2\otimes_B \cdots \otimes_B L^2M_{i_n},\] so $ T^* V^*_{1,1}\xi^o\otimes \Xi = \xi^o\otimes \Xi\in L^2M_2\otimes_B \cdots \otimes_B L^2M_{i_n} $ and $ V_{2,1}T^* V^*_{1,1}\xi^o\otimes \Xi = \xi^o\otimes \Xi $. This shows that $ V_{2,1}T^* V^*_{1,1}$ is identity on $ \lambda_{2}(e_B^{\perp})\lambda_{1}(e_B)\FF $. 
    
    On the other hand, a vector beginning with $ L^2M_1^o $ is of the form $\xi^o\otimes \Xi = \xi^o\otimes \Xi\in L^2M_1^o\otimes_B L^2M_2^o\otimes_B \cdots \otimes_B L^2M_{i_n}^o$, where $ \xi^o\in L^2M_1^o $ and $ \Xi\in D'( L^2M^o_{2}\otimes_B \cdots\otimes_B L^2M^o_{i_n},\varphi) $. We have $ V^*_{1,1}\xi^o\otimes \Xi = \xi^o\otimes\Xi \in L^2M_1\otimes_B L^2M_2\otimes_B \cdots \otimes_B L^2M_{i_n} $. Since $ \xi^o\in L^2M_1^o $, we have $ T^*V^*_{1,1}\xi^o\otimes \Xi = 0 $. Hence $ V_{2,1}T^*V_{1,1}^* $ vanishes on $\lambda_{2}(e_B)$. 
    
    The same computation applies for general indices $i,j$.
\end{proof}

We define a u.c.p.\ map $\Phi\colon \mathbb{B}(\mathcal{K})\to \mathbb{B}(L^{2}M)$ by $\Phi(x) = \frac{1}{4}\sum_{i,j,k,l=1}^{2} V_{i,j}xV_{k,l}^{*}$, so that the support projection of $\Phi$ is no larger than the orthogonal projection from $\mathcal{K}$ onto $\bigoplus_{i,j=1,2}\mathcal{K}_{i,j}$. 
Therefore, $p\geqslant \supp(\Phi)$ and thus $\Phi(p) = 1$.

By Lemma \ref{lemma T*aT}, each nonzero word in $ N_1\cup N_2\cup \{T\} $ can be reduced to the form
\[ c_1T\cdots Tc_nTaT^*d_1T^*\dots T^* d_m,\quad c_i,a,d_i\in N_1\cup N_2. \]
This fact is used in the next corollary to show that $ \Phi( C^*(N_1\oplus N_2,T)) \subset N_1 {*}_B N_2\subseteq \BB(\FF) $.

\begin{corollary}\label{cor: image of Phi 1}
    For each $i=1,2$, let $ N_{i,0} $ be a C$^*$-subalgebra of $N_i=\langle M_i,e_B\rangle$ such that $  C^*(A_i,e_B)\subset N_{i,0}\subset N_i  $, then
    $$\Phi( C^*(N_{1,0}\oplus N_{2,0},T)) = N_{1,0} {*}_B N_{2,0}\subseteq \BB(\FF).$$
\end{corollary}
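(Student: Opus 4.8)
The plan is to prove the two inclusions separately, reducing everything to computations on \emph{reduced words} and then invoking continuity of $\Phi$. First I would record that, by Lemma~\ref{lemma T*aT} together with $T^{*}T=1$, every element of $C^{*}(N_{1,0}\oplus N_{2,0},T)$ is a norm limit of linear combinations of reduced words
\[ w = \iota(c_{1})T\cdots T\iota(c_{n})\,T\,\iota(a)\,T^{*}\iota(d_{1})T^{*}\cdots T^{*}\iota(d_{m}), \qquad c_{r},a,d_{s}\in N_{1,0}\oplus N_{2,0}, \]
since any occurrence of $T^{*}\iota(\cdot)T$ in an arbitrary word collapses, via Lemma~\ref{lemma T*aT}, to an element of $B\oplus B\subset N_{1,0}\oplus N_{2,0}$. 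It is also convenient to observe that $\Phi(x)=W^{*}xW$ for the isometry $W=\tfrac12\sum_{i,j}V_{i,j}^{*}\colon L^{2}M\to\KK$ (indeed $W^{*}W=\tfrac14\sum_{i,j}V_{i,j}V_{i,j}^{*}=1$), so $\Phi$ is automatically contractive and it suffices to control it on reduced words.

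For the inclusion $\Phi(C^{*}(N_{1,0}\oplus N_{2,0},T))\subseteq N_{1,0}*_{B}N_{2,0}$ I would compute $V_{i,j}\,w\,V_{k,l}^{*}$ for a reduced word $w$. The two ingredients are the left $N_{i}$-linearity $V_{i,j}\iota(c)=\lambda_{i}(c^{(i)})V_{i,j}$ (writing $c=c^{(1)}\oplus c^{(2)}$) and Lemma~\ref{lem: relation between T and Vij}, which gives $V_{i,j}T=P_{i,j}V_{i+1,j}$ with $P_{i,j}:=V_{i,j}TV_{i+1,j}^{*}$ equal to $\lambda_{i+1}(e_B)^{\perp}$ when $i=j$ and to $\lambda_{i}(e_B)$ when $i\ne j$; dually $T^{*}V_{i,j}^{*}=V_{i+1,j}^{*}P_{i,j}$. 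Pushing each $V_{i,j}$ through the $T$'s from the left, and each $V_{k,l}^{*}$ through the $T^{*}$'s from the right, advances the first index one step at a time, and the two sides meet in the middle where $V_{p,j}V_{q,l}^{*}=\delta_{pq}\delta_{jl}$ forces the indices to match. The outcome is that $V_{i,j}wV_{k,l}^{*}$ is a single alternating product of operators $\lambda_{r}(\cdot)$ and projections $P_{r,j}$. Since $e_B\in C^{*}(A_{i},e_B)\subseteq N_{i,0}$, each $P_{r,j}\in\{\lambda_{r}(e_B),\,\lambda_{r}(1-e_B)\}$ lies in $N_{1,0}*_{B}N_{2,0}$, hence so does the whole product; summing over $i,j,k,l$ gives $\Phi(w)\in N_{1,0}*_{B}N_{2,0}$. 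As $N_{1,0}*_{B}N_{2,0}$ is norm closed and $\Phi$ is continuous, the inclusion follows.

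For the reverse inclusion I would first hit the generators: the same computation yields $\Phi(\iota(a\oplus0))=\tfrac12\lambda_{1}(a)$ and $\Phi(\iota(0\oplus a))=\tfrac12\lambda_{2}(a)$, so $\lambda_{1}(N_{1,0})$ and $\lambda_{2}(N_{2,0})$ lie in the (linear) image. I would then induct on word length: given an alternating word $\lambda_{i_{1}}(x_{1})\cdots\lambda_{i_{k}}(x_{k})$, choose each $b_{r}$ to be supported in the single summand $N_{i_{r+1},0}$ dictated by the target (with $b_{r}$ equal to $x_{r+1}$ there) and the interior letters $x_{r}$ in $\ker E$, and compute $\Phi(\iota(b_{0})T\iota(b_{1})T\cdots T\iota(b_{k-1}))$. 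Supporting $b_{0}$ in a single summand kills all terms in the sum over leading indices except the one starting with $i_{1}$; using $\lambda_{i}(e_B)^{\perp}=\lambda_{i}(1-e_B)$ together with the amalgamation relation $\lambda_i(e_B)\lambda_i(x)\lambda_i(e_B)=\lambda_i(E(x))\lambda_i(e_B)$ then reduces the interspersed projections $P_{r,j}$ so that the result is a nonzero scalar multiple of the target word plus strictly shorter alternating words. The shorter words already lie in the image by the inductive hypothesis, and since the image is a linear subspace the target word lies in it too. Thus the dense $*$-subalgebra spanned by reduced words is contained in $\Phi(C^{*}(N_{1,0}\oplus N_{2,0},T))$, which combined with the first inclusion and a density argument yields the claimed equality.

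I expect the surjectivity step to be the main obstacle: controlling the interspersed projections $P_{r,j}$ and the combinatorial coefficients produced when $\Phi$ is applied to a length-$k$ word, and organizing the induction so that precisely the top-length term survives after the shorter corrections are absorbed. By contrast, the containment direction is a purely bookkeeping computation once the intertwining relations $V_{i,j}\iota(c)=\lambda_{i}(c^{(i)})V_{i,j}$, $V_{i,j}T=P_{i,j}V_{i+1,j}$, and $V_{i,j}V_{k,l}^{*}=\delta_{ik}\delta_{jl}$ are in hand.
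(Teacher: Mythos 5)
Your containment direction $\Phi\bigl(C^{*}(N_{1,0}\oplus N_{2,0},T)\bigr)\subseteq N_{1,0}*_{B}N_{2,0}$ is correct and is essentially the paper's own proof: reduce words via Lemma \ref{lemma T*aT}, push the $V_{i,j}$'s through using left $N_{i}$-linearity and Lemma \ref{lem: relation between T and Vij}, and note that the resulting projections lie in $N_{1,0}*_{B}N_{2,0}$ because $e_{B}\in C^{*}(A_{i},e_{B})\subseteq N_{i,0}$; the observation that $\Phi=W^{*}(\cdot)W$ for the isometry $W=\tfrac12\sum_{i,j}V_{i,j}^{*}$ is also correct.

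The reverse inclusion is where your proposal has genuine gaps. First, the inductive step fails: the correction terms are \emph{not} strictly shorter. Concretely, for the length-two target $\lambda_{1}(x_{1})\lambda_{2}(x_{2})$ with $x_{1},x_{2}$ centered, your word gives
\begin{align*}
\Phi\bigl(\iota(x_{1}\oplus 0)\,T\,\iota(0\oplus x_{2})\bigr) &= \tfrac14\bigl[\lambda_{1}(x_{1})\lambda_{2}(e_{B})^{\perp}\lambda_{2}(x_{2}) + \lambda_{1}(x_{1})\lambda_{1}(e_{B})\lambda_{2}(x_{2})\bigr]\\
&= \tfrac14\bigl[\lambda_{1}(x_{1})\lambda_{2}(x_{2}) - \lambda_{1}(x_{1})\lambda_{2}(e_{B}x_{2}) + \lambda_{1}(x_{1}e_{B})\lambda_{2}(x_{2})\bigr],
\end{align*}
and the corrections $\lambda_{1}(x_{1})\lambda_{2}(e_{B}x_{2})$ and $\lambda_{1}(x_{1}e_{B})\lambda_{2}(x_{2})$ are again alternating words of length two whose letters are still centered (since $i_{B}^{*}e_{B}=i_{B}^{*}$, one has $E(e_{B}x_{2})=E(x_{2})=0$), so the induction hypothesis never applies to them. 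The relation $\lambda_{i}(e_{B})\lambda_{i}(x)\lambda_{i}(e_{B})=\lambda_{i}(E(x))\lambda_{i}(e_{B})$ only annihilates letters hit by $e_{B}$ on \emph{both} sides, which is not what occurs here. Second, and more structurally, even if you managed to place the entire dense $*$-subalgebra of words inside the image, your closing "density argument" is invalid: $\Phi$ is merely u.c.p., not a $*$-homomorphism, so its image is a linear subspace that need not be norm closed; density of the image in $N_{1,0}*_{B}N_{2,0}$ together with the first inclusion yields only a dense subspace, not the asserted set equality.

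The paper's route avoids both problems by producing an explicit u.c.p.\ section rather than approximating. With $u=p(T+T^{*})p$ one has $\Phi\bigl(u(n_{1}\oplus n_{2})u\bigr)=\tfrac12\lambda_{1}(n_{1})+\tfrac12\lambda_{2}(n_{2})$ and $\Phi(p\,\cdot\,p)=\Phi(\cdot)$, so the maps $\psi_{i}(n)=pnp+unu$ satisfy $\Phi(\psi_{i}(n))=\lambda_{i}(n)$; the free product of u.c.p.\ maps (\cite[Theorem 4.8.2]{MR2391387}, exactly as used for $\Psi$ later in the section) assembles these into a u.c.p.\ map $\Psi_{N}\colon N_{1,0}*_{B}N_{2,0}\to pC^{*}(N_{1,0}\oplus N_{2,0},T)p$, and a multiplicative-domain argument as in the proof of Proposition \ref{thm: main} shows $\Phi\circ\Psi_{N}$ is the canonical inclusion. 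Then every $x\in N_{1,0}*_{B}N_{2,0}$ equals $\Phi(\Psi_{N}(x))$ exactly, which is what surjectivity onto a closed algebra requires.
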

\begin{proof}
    Since $ \Phi(u(n_1\oplus n_2)u) = \frac{1}{2}\lambda_1(n_1)+\frac{1}{2}\lambda_2(n_2) $ for $n_i\in N_i$, $\Phi( C^*(N_{1,0}\oplus N_{2,0},T))\supset  N_{1,0}\ast_B N_{2,0} $. It remains to show that  $\Phi( C^*(N_{1,0}\oplus N_{2,0},T))\subset  N_{1,0}\ast_B N_{2,0}$. As $ \Phi(x) = \frac{1}{4}\sum_{i,j,k,l=1}^{2} V_{i,j}xV_{k,l}^{*} $, it suffices to show that $$ V_{i,j}c_1T\cdots Tc_mTaT^*d_1T^*\dots T^* d_nV_{k,l}^{*}\in N_{1,0}\ast_B N_{2,0},\quad \forall c_s,a,d_s\in N_{1,0}\cup N_{2,0},\;i,j,k,l\in\{1,2\}.$$
    For this, we first note that for $ d\in N_i $, $ dV^*_{k,l} = \delta_{i,k} dV^*_{k,l} = \delta_{i,k} V^*_{k,l}\lambda_{i}(d) $ as $V_{k,l}$ is left $N_k$-linear. Therefore, as long as $ dV^*_{k,l} $ is nonzero, $ d $ commutes with $ V^*_{k,l} $. By Lemma \ref{lem: relation between T and Vij}, for a nonzero term $ d_1T^*d_2V^*_{k,l} $, we also have
    \[ d_1T^*d_2V^*_{k,l} = d_1V^*_{k+1,l}V_{k+1,l}T^*V^*_{k,l}\lambda_{k}(d_2) = V^*_{k+1,l}\lambda_{k+1}(d_1)\left(V_{k+1,l}T^*V^*_{k,l}\right)\lambda_{k}(d_2). \]
    Repeating the same process, we obtain that for any nonzero term $ V_{i,j}c_1T\cdots Tc_mTaT^*d_1T^*\dots T^* d_nV_{k,l}^{*}$,
    \begin{align*}
        &V_{i,j}c_1T\cdots Tc_mTaT^*d_1T^*\dots T^* d_nV_{k,l}^{*}\\ =& \lambda_{i}(c_1)\left( V_{i,j}TV_{i+1,j}^* \right)\cdots \left( V_{i+m-1,j}TV_{i+m,j}^* \right)V_{i+m,j}aV_{k+n,l}^*\left( V_{k+n,l}T^*V^*_{k+n-1,l}
 \right)\cdots\left( V_{k+1,l}T^*V^*_{k,l}
 \right)\lambda_{k}(d_n).
    \end{align*}
    Finally, since $ V_{i,j}TV_{i+1,j}^*\in N_{1,0}\ast_B N_{2,0} $ by Lemma \ref{lem: relation between T and Vij}, and $ V_{i+m,j}aV_{k+n,l}^* = 0 $ if $ a\in N_{k+n+1,0} $ and $ V_{i+m,j}aV_{k+n,l}^* = V_{i+m,j}V_{k+n,l}^*\lambda_{k+n}(a) = \delta_{ i+m,k+m }\delta_{j,l}\lambda_{k+n}(a)\subset N_{1,0}\ast_B N_{2,0} $, we have
    \[V_{i,j}c_1T\cdots Tc_mTaT^*d_1T^*\dots T^* d_nV_{k,l}^{*}\in N_{1,0}\ast_B N_{2,0}.\]
\end{proof}

From the computation above, we have $ \Phi(T) = \frac{1}{4}(\lambda_1(e_B)+ \lambda_1(e_B)^{\perp}+\lambda_2(e_B)+\lambda_2(e_B)^\perp) = \frac{1}{2} $. Similarly,
\begin{equation}\label{eq: Phi of T2T*2}
    4\Phi(T^2(T^*)^2) = \sum_{i=1,2}\lambda_i(e_B)\lambda_i(e_B)^\perp\lambda_i(e_B)^\perp\lambda_i(e_B) +\sum_{i=1,2}\lambda_i(e_B)^\perp\lambda_i(e_B)\lambda_i(e_B)\lambda_i(e_B)^{\perp} = 0.
\end{equation}
In fact, $\Phi(T^m(T^*)^n) = 0$ whenever $ \max\{m,n\}\geqslant 2 $.

We also have 
\begin{equation*}
	\Phi((n_{1}\oplus n_{2})) = \frac{1}{2}\lambda_{1}(n_{1})+\frac{1}{2}\lambda_{2}(n_{2}), \qquad (n_{1}\oplus n_{2})\in N_{1}\oplus N_{2},
\end{equation*}
and in particular when we restrict to $A_{1}\oplus A_{2}$,
\begin{equation} \label{eqn: Phi(a_1 oplus a_2)}
	\Phi((a_{1}\oplus a_{2})) = \frac{1}{2}\lambda_{1}(a_{1})+\frac{1}{2}\lambda_{2}(a_{2}) \in A\,, \qquad (a_{1}\oplus a_{2})\in A_{1}\oplus A_{2}.
\end{equation}
Therefore, $\Phi$ is a normal u.c.p.\ map that maps $A_{1}\oplus A_{2}$ into $A$, and hence by \cite[Lemma 2.1]{MR4833744}, $\Phi$ extends to a normal u.c.p.\ map 
\[\Phi^{\sharp *}\colon (C^*(N_{1,0}\oplus N_{2,0},T)^{(A_{1}\oplus A_{2})\sharp (A_{1}\oplus A_{2})})^{*} \to ((N_{1,0} {*}_B N_{2,0})^{A\sharp A})^{*}\]
satisfying
\[i_{N_{1,0} {*}_B N_{2,0}} \circ \Phi = \Phi^{\sharp *} \circ i_{C^*(N_{1,0}\oplus N_{2,0},T)},\]
where $i_{C^*(N_{1,0}\oplus N_{2,0},T)}\colon C^*(N_{1,0}\oplus N_{2,0},T)\to  (C^*(N_{1,0}\oplus N_{2,0},T))^{(A_{1}\oplus A_{2})\sharp (A_{1}\oplus A_{2})})^{*}$ and $i_{N_{1,0} {*}_B N_{2,0}}\colon N_{1,0} {*}_B N_{2,0}\to  ((N_{1,0} {*}_B N_{2,0})^{A\sharp A})^{*}$ are the canonical inclusion maps. Hence, if we let $p_{\nor} \in C^*(N_{1,0}\oplus N_{2,0},T)^{**}$ denote the projection corresponding to the identity representation $A_{1}\oplus A_{2}\subset M_{1}\oplus M_{2}$ and $q_{\nor} \in (N_{1,0} {*}_B N_{2,0})^{**}$ denote the projection corresponding to the identity representation $A\subset M$, then $\Phi^{\sharp *}(p_{\nor}) = \Phi^{\sharp *} \circ i_{C^*(N_{1,0}\oplus N_{2,0},T)}(1) = i_{N_{1,0} {*}_B N_{2,0}} \circ \Phi(1) = q_{\nor}$.

Let $u = p(T+ T^{*})p \in C^{*}(A_1\oplus  A_2,T)$, where again $p = 1-T^2(T^*)^2$. For $i=1,2$, we define u.c.p.\ maps $\psi_{i}\colon A_{i}\to pC^{*}(A_{1}\oplus A_{2},T)p$ by
\[\psi_{i}(a) = pap+ uau, \quad a\in A_{i}.\]
It is straightforward to see that $\psi_{1}(b) = \psi_{2}(b)$ for $b\in B$. By \cite[Theorem 4.8.2]{MR2391387}, there exists a u.c.p.\ map $\Psi\colon A \to pC^{*}(A_{1}\oplus A_{2},T)p\subset \mathcal{T}(\mathcal{H})$ such that $\Psi(b) = \psi_{1}(b) = \psi_{2}(b)$ for $b\in B$ and
\[\Psi(a_{1}\cdots a_{n}) = \psi_{i_{1}}(a_{1})\cdots \psi_{i_{n}}(a_{n})= \Psi(a_{1})\cdots \Psi(a_{n}), \quad a_{k}\in A^{o}_{i_{k}}\textit{ with }i_{1}\neq \cdots \neq i_{k}.\]

As shown in \cite{MR4833744}, the u.c.p.\ map $\Phi\circ \Psi$ is the canonical inclusion map when restricted to $A_{1}\cup A_{2}$, and since $A_{1}$ and $A_{2}$ generate $A = A_{1}*_{D}A_{2}$ as a C$^{*}$-algebra, $\Phi\circ \Psi\colon A\to N_1 \overline{*}_B N_2$ is in fact the canonical inclusion map.

Recall that the canonical inclusion $A_{1}\oplus A_{2}\subset N_{1,0}\oplus N_{2,0}$ is $(A_{1}\oplus A_{2}\subset M_{1}\oplus M_{2})$-nuclear by assumption. This allows us to apply the following Lemma in \cite{MR4833744}. 

\begin{lemma}\label{Lem: Lemma 2.4 of Toy25}\cite[Lemma 2.3, 2.4]{MR4833744}
    Suppose $M$ is a von Neumann algebra and $M_{0}$ is a unital, ultraweakly dense C$^*$-subalgebra of $M$. Let $\HH$ be a C$^*$-correspondence over $M_0$ and let $B$ be a unital $(M_{0}\subset M)$-normal $M_{0}$-C$^*$-algebra.

    If $\phi: \mathcal{T}(\HH)\to B$ is a $*$-homomorphism whose restriction to $M_{0}$, $\restr{\phi}{M_{0}}:M_{0}\to B$ is the canonical inclusion and is $(M_{0}\subset M)$-nuclear, then there exists  a weakly nuclear $*$-homomorphism
    $$ \pi: \mathcal{T}(\HH)\to (B^{M_{0}\sharp M_{0} })^* $$
    such that $ \pi(x)= (i_B\circ \phi)(x) = p_{\nor}\phi(x)p_{\nor} $ for all $x\in M_{0}\cup \tau(\HH)\cup \tau(\HH)^*$.
    Moreover, we have $$ \pi(\tau(\xi)) =  p_{\nor}\phi( \tau(\xi) )p_{\nor} = \phi( \tau(\xi) )p_{\nor},\quad \forall \xi\in \HH. $$
    Here $p_{\nor} \in B^{**}$ denotes the projection corresponding to the support of the identity representation $M_0\to M$.
\end{lemma}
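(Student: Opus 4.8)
The plan is to build $\pi$ from the universal property of the Toeplitz--Pimsner algebra $\mathcal{T}(\HH)$: I propose the candidate Toeplitz representation $(\sigma,t)$ with $\sigma=i_B\circ\restr{\phi}{M_0}$ and $t(\xi)=\phi(\tau(\xi))p_{\nor}$, landing in the von Neumann algebra $(B^{M_0\sharp M_0})^{*}\simeq p_{\nor}B^{**}p_{\nor}$, verify that it is a genuine Toeplitz representation, and then propagate the $(M_0\subset M)$-nuclearity of $\restr{\phi}{M_0}$ to weak nuclearity of $\pi$ through the Fock structure. Everything hinges on the ``moreover'' identity $\phi(\tau(\xi))p_{\nor}=p_{\nor}\phi(\tau(\xi))p_{\nor}$, equivalently $(1-p_{\nor})\phi(\tau(\xi))p_{\nor}=0$, which is what makes $t$ land in the normal corner and makes the left-module relation hold.

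First I would establish this identity. Realize $B^{**}\subset\mathbb{B}(\mathcal{K})$ in a normal unital representation, so that $p_{\nor}\mathcal{K}$ is precisely the set of vectors $\zeta$ for which $a\mapsto\langle\zeta,a\zeta\rangle$ is normal on $M_0$. Writing $v=\phi(\tau(\xi))$, the Toeplitz relations $\iota(a)\tau(\xi)=\tau(a\cdot\xi)$ and $\tau(\xi)^{*}\tau(\eta)=\iota(\langle\xi,\eta\rangle)$, together with the fact that $\phi$ is a $*$-homomorphism restricting to the identity on $M_0$, give $v^{*}av=\phi(\tau(\xi)^{*}\iota(a)\tau(\xi))=\langle\xi,a\cdot\xi\rangle\in M_0$ for every $a\in M_0$. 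Hence for $\zeta\in p_{\nor}\mathcal{K}$ we have $\langle v\zeta,a\,v\zeta\rangle=\langle\zeta,\langle\xi,a\cdot\xi\rangle\zeta\rangle$; since $\langle\zeta,\,\cdot\,\zeta\rangle\in M_0^{\sharp}$, the defining $(M_0\subset M)$-normality of the correspondence $\HH$ shows $a\mapsto\langle v\zeta,a\,v\zeta\rangle$ is normal on $M_0$. Thus $v\zeta\in p_{\nor}\mathcal{K}$ for every $\zeta\in p_{\nor}\mathcal{K}$, i.e.\ $(1-p_{\nor})vp_{\nor}=0$, which yields the identity and shows $t(\xi)=\phi(\tau(\xi))p_{\nor}\in p_{\nor}B^{**}p_{\nor}=(B^{M_0\sharp M_0})^{*}$.

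With the identity in hand, checking that $(\sigma,t)$ is a Toeplitz representation is routine. Because $p_{\nor}$ is central in $M_0^{**}$, it commutes with $\iota(M_0)$, so $\sigma(a)=p_{\nor}\iota(a)p_{\nor}$ is the canonical normal inclusion $M_0\subset M\subset (B^{M_0\sharp M_0})^{*}$, in particular a $*$-homomorphism. One then computes $t(\xi)^{*}t(\eta)=p_{\nor}\phi(\tau(\xi)^{*}\tau(\eta))p_{\nor}=p_{\nor}\langle\xi,\eta\rangle p_{\nor}=\sigma(\langle\xi,\eta\rangle)$ and $\sigma(a)t(\xi)=\iota(a)\,p_{\nor}\phi(\tau(\xi))p_{\nor}=\iota(a)\phi(\tau(\xi))p_{\nor}=\phi(\tau(a\cdot\xi))p_{\nor}=t(a\cdot\xi)$, the third equality being the identity just proved. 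The universal property of $\mathcal{T}(\HH)$ then produces a $*$-homomorphism $\pi=\sigma\times t\colon\mathcal{T}(\HH)\to(B^{M_0\sharp M_0})^{*}$, and reading off values on generators gives $\pi(x)=p_{\nor}\phi(x)p_{\nor}=i_B(\phi(x))$ for $x\in M_0\cup\tau(\HH)\cup\tau(\HH)^{*}$, together with $\pi(\tau(\xi))=\phi(\tau(\xi))p_{\nor}$, as claimed.

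The remaining and main difficulty is the weak nuclearity of $\pi$. By hypothesis $\restr{\phi}{M_0}$ is $(M_0\subset M)$-nuclear, so there are nets of c.c.p.\ maps $\mathtt{a}_i\colon M_0\to\mathbb{M}_{k_i}(\mathbb{C})$ and $\mathtt{b}_i\colon\mathbb{M}_{k_i}(\mathbb{C})\to B$ with $i_B\circ\mathtt{b}_i\circ\mathtt{a}_i\to\sigma$ in the point-weak-$*$ topology of $(B^{M_0\sharp M_0})^{*}$. I would propagate this up $\mathcal{T}(\HH)$ via its Fock representation $\pi_{\mathcal F}$ on $\mathcal{F}(\HH)=\bigoplus_{n\ge0}\HH^{\otimes_{M_0}n}$: compressing by the projection onto finitely many levels and by a finite sub-frame of the (countably generated) truncated module realizes $\pi_{\mathcal F}(x)$, up to an error controllable on any prescribed finite set of words in $M_0\cup\tau(\HH)\cup\tau(\HH)^{*}$, as a matrix over $M_0$; amplifying $\mathtt{a}_i$ and $i_B\circ\mathtt{b}_i$ to these matrix sizes and reassembling through the frame produces c.c.p.\ maps $\mathcal{T}(\HH)\to\mathbb{M}_{N}(\mathbb{C})\to(B^{M_0\sharp M_0})^{*}$. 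I expect the hard part to be organizing this double approximation (truncation level together with matricial index) into a single net that converges to $\pi$ in the weak topology induced by $B^{M_0\sharp M_0}$, and in particular verifying that the reassembled maps stay in and converge inside the compressed corner $p_{\nor}B^{**}p_{\nor}$, rather than toward the non-multiplicative map $\Ad(p_{\nor})\circ i_B\circ\phi$. This is exactly where the factorization of $\pi$ through $p_{\nor}$ and the $(M_0\subset M)$-normality established above are used decisively.
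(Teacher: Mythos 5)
Your construction of $\pi$ is correct, and it is worth saying up front that the paper itself does not prove this lemma: it is quoted from \cite[Lemma 2.3, 2.4]{MR4833744}, and the first two thirds of your argument reproduce what Lemma 2.3 and the construction half of Lemma 2.4 there do. The key computation $v^{*}av=\phi(\tau(\xi)^{*}\iota(a)\tau(\xi))=\langle\xi,a\cdot\xi\rangle\in M_{0}$ for $v=\phi(\tau(\xi))$, followed by the observation that $v$ preserves the subspace $p_{\nor}\mathcal{K}$ of vectors with normal vector states, is exactly the right way to get $(1-p_{\nor})\phi(\tau(\xi))p_{\nor}=0$, and the verification that $(\sigma,t)$ with $t(\xi)=\phi(\tau(\xi))p_{\nor}$ is a Toeplitz representation (so that the universal property of $\mathcal{T}(\HH)$ yields $\pi$ with the stated values on $M_{0}\cup\tau(\HH)\cup\tau(\HH)^{*}$) is sound. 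One caveat you handled well: this step genuinely requires the correspondence $\HH$ to be $(M_{0}\subset M)$-normal, a hypothesis that the statement as transcribed in this paper omits but which is assumed in the source and is verified in the present paper for $\mathcal{H}^{\phi}_{A_{1}\oplus A_{2}}$ just before the lemma is applied; you correctly identified this as the place where normality of $\HH$ enters, rather than the normality of $B$.

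The genuine gap is the weak nuclearity of $\pi$, which is the entire payload of the lemma for this paper (it is what makes $\Phi^{\sharp *}\circ\pi\circ\Psi$ weakly nuclear in the proof of Proposition \ref{thm: main}), and your treatment of it is a plan rather than a proof. Two concrete obstacles are left open. First, your truncation-and-frame scheme produces c.c.p.\ maps $\mathcal{T}(\HH)\to\mathbb{M}_{N}(\mathbb{C})\to(B^{M_{0}\sharp M_{0}})^{*}$ that must be shown to converge point-weak$^{*}$ to the multiplicative map $\pi$; since $\pi$ and the non-multiplicative u.c.p.\ map $\Ad(p_{\nor})\circ i_{B}\circ\phi$ agree precisely on $M_{0}\cup\tau(\HH)\cup\tau(\HH)^{*}$ and differ on longer words, the convergence at words of length at least two is exactly the assertion to be proved, and your sketch acknowledges this rather than resolving it (in the source this is where the Fock grading and careful multiplicative-domain bookkeeping carry the load). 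Second, the lemma imposes no separability or countable-generation hypothesis on $\HH$, so the frame you invoke via Kasparov stabilization need not exist; one must instead work with finite approximate frames adapted to each finite subset of each truncated level $\HH^{\otimes_{M_{0}}\leqslant n}$ and fold this third index into the net, which changes the bookkeeping nontrivially. Neither obstacle shows your strategy is wrong --- it is essentially the strategy of \cite[Lemma 2.4]{MR4833744} --- but as written the proof of the one property the paper actually needs from this lemma is missing.
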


Applying this lemma to $M_{0} = A_{1}\oplus A_{2}$, $B = N_{1,0}\oplus N_{2,0}$, and $\phi$ being the canonical inclusion $\mathcal{T}(\mathcal{H}) = C^*(A_1\oplus A_2,T) \subset C^*(N_{1,0}\oplus N_{2,0},T)$, we obtain

\begin{lemma}
    Suppose that the inclusion $A_{i}\subset N_{i,0}$ is $(A_{i}\subset M_{i})$-nuclear. There exists a weakly nuclear $*$-homomorphism $$\pi\colon \mathcal{T}(\mathcal{H})\to (C^*(N_{1,0}\oplus N_{2,0},T)^{(A_{1}\oplus A_{2})\sharp (A_{1}\oplus A_{2})})^{*}$$ such that $\pi(x) = i_{C^*(N_{1,0}\oplus N_{2,0},T)}(x)$ for all $x\in (A_1\oplus A_2)\cup \tau(\mathcal{H})\cup \tau(\mathcal{H})^{*}$.
\end{lemma}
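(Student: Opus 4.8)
The plan is to apply the preceding Lemma \ref{Lem: Lemma 2.4 of Toy25} verbatim, taking $M = M_1 \oplus M_2$, the ultraweakly dense subalgebra $M_0 = A_1 \oplus A_2$, the correspondence $\HH = \mathcal{H}$, the target algebra $B = C^*(N_{1,0} \oplus N_{2,0}, T)$, and for $\phi$ the canonical inclusion $\mathcal{T}(\mathcal{H}) = C^*(A_1 \oplus A_2, T) \hookrightarrow C^*(N_{1,0} \oplus N_{2,0}, T)$. This inclusion makes sense because $A_i \subset C^*(A_i, e_B) \subset N_{i,0}$ and $T \in C^*(N_{1,0} \oplus N_{2,0}, T)$, and it is a $*$-homomorphism restricting to the identity on $A_1 \oplus A_2$. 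Granting the hypotheses, the lemma yields a weakly nuclear $*$-homomorphism $\pi$ into $(C^*(N_{1,0} \oplus N_{2,0}, T)^{(A_1 \oplus A_2)\sharp (A_1 \oplus A_2)})^{*}$ with $\pi(x) = p_{\nor}\phi(x)p_{\nor}$ on $(A_1 \oplus A_2) \cup \tau(\mathcal{H}) \cup \tau(\mathcal{H})^{*}$; since $\phi$ is the inclusion, this coincides with $i_{C^*(N_{1,0} \oplus N_{2,0}, T)}(x)$ there, which is exactly the asserted identity.

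Two hypotheses must be checked. First, that $B = C^*(N_{1,0} \oplus N_{2,0}, T)$ is a unital $(A_1 \oplus A_2 \subset M_1 \oplus M_2)$-normal $(A_1 \oplus A_2)$-C$^*$-algebra. It is a unital C$^*$-subalgebra of $\mathbb{B}(\mathcal{K})$, with $\iota(1) = \id_{\mathcal{K}}$ the sum of the projections onto $\mathcal{K}_1$ and $\mathcal{K}_2$, and its $A_1 \oplus A_2$-bimodule structure is inherited from $\iota$. Since $\iota\colon N_1 \oplus N_2 \to \mathbb{B}(\mathcal{K})$ is assembled from the natural normal left actions of the $N_{i_1}$ on the relative tensor products $L^2 M_{i_1} \otimes_B \cdots \otimes_B L^2 M_{i_n}$, its restriction to $M_1 \oplus M_2$ is a normal representation, so $B$ is $(A_1 \oplus A_2 \subset M_1 \oplus M_2)$-normal as required.

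The key step is the second hypothesis, namely that the canonical inclusion $A_1 \oplus A_2 \hookrightarrow B$ is $(A_1 \oplus A_2 \subset M_1 \oplus M_2)$-nuclear. By assumption each $A_i \subset N_{i,0}$ is $(A_i \subset M_i)$-nuclear, so forming the direct sum of the two approximating nets of c.c.p.\ factorizations through matrix algebras exhibits $A_1 \oplus A_2 \hookrightarrow N_{1,0} \oplus N_{2,0}$ as $(A_1 \oplus A_2 \subset M_1 \oplus M_2)$-nuclear. Post-composing the second legs with the $(A_1 \oplus A_2)$-bimodular $*$-homomorphic inclusion $N_{1,0} \oplus N_{2,0} \hookrightarrow B$, which is contractive and hence continuous for the $(A_1 \oplus A_2 \subset M_1 \oplus M_2)$-topology, transfers the nuclearity to $A_1 \oplus A_2 \hookrightarrow B$ and completes the verification. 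I do not expect a genuine obstacle here: all of the analytic content is carried by Lemma \ref{Lem: Lemma 2.4 of Toy25}, and the remaining work is the routine confirmation that the ambient representation $\iota$ is normal on $M_1 \oplus M_2$ and that nuclearity survives the direct sum and the inclusion into $B$.
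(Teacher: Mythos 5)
Your proposal is correct and takes essentially the same route as the paper: the paper also obtains $\pi$ by applying Lemma \ref{Lem: Lemma 2.4 of Toy25} with $M_{0} = A_{1}\oplus A_{2}$ and $\phi$ the canonical inclusion $\mathcal{T}(\mathcal{H}) = C^{*}(A_{1}\oplus A_{2},T)\subset C^{*}(N_{1,0}\oplus N_{2,0},T)$, treating the nuclearity of $A_{1}\oplus A_{2}\subset N_{1,0}\oplus N_{2,0}$ as immediate from the hypothesis. Your two verifications (normality and faithfulness of $\iota$ on $M_{1}\oplus M_{2}$, and transferring nuclearity through the direct sum and the bimodular inclusion $N_{1,0}\oplus N_{2,0}\subset C^{*}(N_{1,0}\oplus N_{2,0},T)$) are exactly the routine facts the paper leaves implicit, and your choice of target algebra $B = C^{*}(N_{1,0}\oplus N_{2,0},T)$ is the reading consistent with the lemma's conclusion, where the paper's phrase ``$B = N_{1,0}\oplus N_{2,0}$'' is loose shorthand.
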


Combining all the maps derived above, we obtain the following diagram, which is commutative only in solid arrows:

\begin{equation*}
\begin{tikzcd}[column sep=tiny]
& &(C^{*}(N_{1,0}\oplus N_{2,0},T)^{(A_{1}\oplus A_{2})\sharp (A_{1}\oplus A_{2})})^{*} \arrow[rr, "\Phi^{\sharp *}"]  & & ((N_{1,0}{*}_{B}N_{2,0})^{A\sharp A})^{*} \\
A \arrow[r, "\Psi"] \arrow[rrrr, bend right=17, "\subset", "\text{canonical}" swap] &\mathcal{T}(\mathcal{H})= C^{*}(A_{1}\oplus A_{2},T)\arrow[rr, hook, "\subset", "\text{canonical}" swap] \arrow[ur, dashed, "\pi"]& & C^{*}(N_{1,0}\oplus N_{2,0},T) \arrow[r, "\Phi"]  \arrow[ul, "i_{C^*(N_{1,0}\oplus N_{2,0},T)}", labels=above right] & N_{1,0}{*}_B N_{2,0} \arrow[u, "i_{N_{1}{*}_{B}N_{2}}", labels=right]
\end{tikzcd}
\end{equation*}

Applying the same argument as in the proof of \cite[Theorem 3.2]{MR4833744}, we can check that although the diagram does not commute, we still have $ \Phi^{\sharp *}\circ \pi \circ \Psi = i_{N_{1,0} \ast_B N_{2,0} }\circ\Phi\circ \Psi = i_{N_{1,0}\ast_B N_{2,0}}\big|_{A} $ by showing that $A$ is in the multiplicative domain of $ \Phi^{\sharp *}\circ \pi \circ \Psi $. As $\pi$ is weakly nuclear and $ \Phi^{\sharp *} $ is normal, the inclusion $ i_{N_{1,0}\ast_B N_{2,0}}\big|_{A}: A\to ((N_{1,0}\ast_B N_{2,0})^{A\sharp A})^* $ is weakly nuclear. This then implies that the inclusion $A=A_1\ast_D A_2 \subset N_{1,0}\ast_B N_{2,0}$ is $ (A\subset M)  $-nuclear by \cite[Lemma 4.4]{ding2023biexact}. To summarize, we obtain the following proposition, which is precisely the desired statement in Theorem \ref{thm: main 3} for $n=2$. 

\begin{prop}\label{thm: main}
    For $i = 1,2$, let $M_i$ be von Neumann algebras with a common von Neumann subalgebra $B\subset M_i$ and faithful nomral conditional expectations $ E_{i}:M_i\to B $. For each $i=1,2$, let also $A_i\subset M_i$ be an ultraweakly dense unital C$^*$-algebra such that $ E_{1}(A_1)=E_{2}(A_2)=D $ is a common C$^*$-subalgebra of $A_i$. Suppose also that $ N_{i,0} $ is a C$^*$-subalgebra of $N_i=\langle M_i,e_B \rangle$ such that $C^*(A_i,e_B)\subset N_{i,0}\subset N_i$.

    If the inclusion $A_i\subset N_{i,0}$ is $(A_i\subset M_i)$-nuclear for $i=1,2$, then the inclusion $ A=A_1\ast_D A_2 \subset N_{1,0}\ast_{B} N_{2,0}$ is $ (A\subset M) $-nuclear.
\end{prop}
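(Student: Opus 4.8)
The plan is to factor the canonical inclusion $A\hookrightarrow N_{1,0}\ast_B N_{2,0}$ through the Toeplitz--Pimsner algebra $\mathcal{T}(\mathcal{H}) = C^{*}(A_1\oplus A_2, T)$ and its biduals, and to push the hypothesized nuclearity of $A_i\subset N_{i,0}$ along this factorization. The maps already constructed give
\[ A \xrightarrow{\Psi} \mathcal{T}(\mathcal{H}) \hookrightarrow C^{*}(N_{1,0}\oplus N_{2,0}, T) \xrightarrow{\Phi} N_{1,0}\ast_B N_{2,0}, \]
and, as recorded before the statement, the composite $\Phi\circ\Psi$ is exactly the canonical inclusion $A\subset N_{1,0}\ast_B N_{2,0}$ (it restricts to $a\mapsto \lambda_i(a)$ on each $A_i$, and $A_1,A_2$ generate $A$). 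The whole point is therefore to promote this norm-level factorization to one witnessing $(A\subset M)$-nuclearity.

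First I would upgrade the hypothesis: since $A_i\subset N_{i,0}$ is $(A_i\subset M_i)$-nuclear for each $i$, the direct sum $A_1\oplus A_2\subset N_{1,0}\oplus N_{2,0}$ is $(A_1\oplus A_2\subset M_1\oplus M_2)$-nuclear, and post-composing with the inclusion $N_{1,0}\oplus N_{2,0}\subset C^{*}(N_{1,0}\oplus N_{2,0},T)$ preserves this. This is exactly the input needed to invoke Lemma \ref{Lem: Lemma 2.4 of Toy25} with $M_0 = A_1\oplus A_2$, $M = M_1\oplus M_2$, and $\phi$ the canonical inclusion of $\mathcal{T}(\mathcal{H})$ into $C^{*}(N_{1,0}\oplus N_{2,0},T)$, producing a weakly nuclear $*$-homomorphism $\pi$ into $(C^{*}(N_{1,0}\oplus N_{2,0},T)^{(A_1\oplus A_2)\sharp(A_1\oplus A_2)})^{*}$ that agrees with the canonical bidual inclusion on $(A_1\oplus A_2)\cup\tau(\mathcal{H})\cup\tau(\mathcal{H})^{*}$. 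Combined with the normal extension $\Phi^{\sharp *}$ of $\Phi$ (satisfying $\Phi^{\sharp *}\circ i_{C^{*}(N_{1,0}\oplus N_{2,0},T)} = i_{N_{1,0}\ast_B N_{2,0}}\circ\Phi$) these assemble into the displayed diagram.

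The crux, and what I expect to be the main obstacle, is to verify that $\Phi^{\sharp *}\circ\pi\circ\Psi = i_{N_{1,0}\ast_B N_{2,0}}\big|_A$, even though the diagram commutes only along its solid arrows. The difficulty is that $\pi$ coincides with $i_{C^{*}(N_{1,0}\oplus N_{2,0},T)}$ only on the generating set $(A_1\oplus A_2)\cup\tau(\mathcal{H})\cup\tau(\mathcal{H})^{*}$ and not on products, because the compression underlying $\pi$ is not multiplicative (the support projection $p_{\nor}$ need not be central). Following the proof of \cite[Theorem 3.2]{MR4833744}, I would show instead that $A$ lies in the multiplicative domain of the composite u.c.p.\ map $\Theta := \Phi^{\sharp *}\circ\pi\circ\Psi$. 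It suffices to check this on the generators $A_1\cup A_2$: using that $\pi$ is a genuine $*$-homomorphism, that $\Psi$ respects the reduced-word structure of the free product, and the explicit formulas for $\Phi$ on words in $T$ and $T^{*}$ (in particular $\Phi(\Psi(a)) = \lambda_i(a)$ and $\Phi(T^m(T^{*})^n)=0$ for $\max\{m,n\}\geqslant 2$), one verifies the Choi--Schwarz identity $\Theta(a^{*}a) = \Theta(a)^{*}\Theta(a)$ for $a\in A_i$. Once $A_1,A_2$ lie in the multiplicative domain, $\Theta$ is a $*$-homomorphism on $A$ agreeing on generators with $i_{N_{1,0}\ast_B N_{2,0}}\circ\Phi\circ\Psi = i_{N_{1,0}\ast_B N_{2,0}}\big|_A$, hence the two maps coincide on all of $A$.

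With this identity in hand, the map $i_{N_{1,0}\ast_B N_{2,0}}\big|_A$ is weakly nuclear, since $\pi$ is weakly nuclear, $\Phi^{\sharp *}$ is normal, and $\Psi$ is u.c.p. By \cite[Lemma 4.4]{ding2023biexact}, the weak nuclearity of the inclusion $A\to ((N_{1,0}\ast_B N_{2,0})^{A\sharp A})^{*}$ is equivalent to the inclusion $A = A_1\ast_D A_2\subset N_{1,0}\ast_B N_{2,0}$ being $(A\subset M)$-nuclear, which is the assertion.
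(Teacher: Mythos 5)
Your proposal is correct and follows essentially the same route as the paper's own proof: factoring through $\mathcal{T}(\mathcal{H})=C^{*}(A_1\oplus A_2,T)$, invoking Lemma \ref{Lem: Lemma 2.4 of Toy25} to get the weakly nuclear $\pi$, showing $A$ lies in the multiplicative domain of $\Phi^{\sharp *}\circ\pi\circ\Psi$ by checking on the generators $A_1\cup A_2$ (which the paper carries out via the explicit identities $\Phi^{\sharp *}(\pi(p))=\Phi^{\sharp *}(\pi(u))=q_{\nor}$, using $\Phi(T^{2}(T^{*})^{2})=0$), and concluding with \cite[Lemma 4.4]{ding2023biexact}.
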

\begin{proof}
    As explained in the previous paragraph, one only need to check that $ \Phi^{\sharp *}\circ \pi \circ \Psi = i_{N_{1,0} \ast_B N_{2,0}}\circ\Phi\circ \Psi = i_{N_{1,0}\ast_B N_{2,0}}\big|_{A} $.

    We first recall that by equation \eqref{eq: Phi of T2T*2}, $\Phi(T^{2}(T^{*})^{2}) = 0$. Applying Lemma \ref{Lem: Lemma 2.4 of Toy25} to the embedding $\mathcal{T}(\mathcal{H}) = C^{*}(A_{1}\oplus A_{2},T) \subset C^*(N_{1,0}\oplus N_{2,0},T)$, we obtain $ \pi(T) = i_{C^*(N_{1,0}\oplus N_{2,0},T)}( T ) = p_{\nor}Tp_{\nor} = Tp_{\nor} $. Therefore
  \begin{equation*}
  	\begin{aligned}
  		0\leqslant \Phi^{\sharp *}(\pi(T^{2}(T^{*})^{2})) &= \Phi^{\sharp *}(\pi(T)^{2}\pi(T^{*})^{2})= \Phi^{\sharp *}((p_{\nor}T p_{\nor})^{2}(p_{\nor}T^{*}p_{\nor})^{2})\\
  		&=\Phi^{\sharp *}(p_{\nor}T^2 p_{\nor}(T^{*})^2p_{\nor}) \leqslant \Phi^{\sharp *}(p_{\nor}T^{2} (T^{*})^{2}p_{\nor})= \Phi^{\sharp *}(i_{C^*(N_{1,0}\oplus N_{2,0},T)}(T^{2}(T^{*})^{2}))\\
  		&= i_{N_{1,0}\ast_B N_{2,0}}(\Phi(T^{2}(T^{*})^{2}))= 0,
  	\end{aligned}
  \end{equation*}
  so $\Phi^{\sharp *}(\pi(T^{2}(T^{*})^{2})) = 0$. Thus we have
  \begin{equation} \label{eqn: Phi(phi(p)) = q_nor}
    \Phi^{\sharp *}(\pi(p)) = \Phi^{\sharp *}(\pi (1-T^2 (T^{*})^{2})) = \Phi^{\sharp *}(\pi(1)) -  \Phi^{\sharp *}(\pi(T^{2}(T^{*})^{2})) = \Phi^{\sharp *}(\pi(1)) = q_{\nor}.
  \end{equation}
	In particular, $\pi(p)$ is in the multiplicative domain of $\Phi^{\sharp *}$. Also, since $u= p(T+T^*)p$ and $\Phi(T)= \Phi(T^*)=\frac{1}{2}$,
  \begin{equation} \label{eqn: Phi(pi(u)) = q_nor}
  	\begin{aligned}
    	\Phi^{\sharp *}(\pi (u)) &= \Phi^{\sharp *}(\pi(p(T+T^{*})p)) = \Phi^{\sharp *}(\pi(p)\pi(T+T^{*})\pi(p)) = \Phi^{\sharp *}(\pi(T+T^{*})) \\
    	&= \Phi^{\sharp *}(i_{C^*(N_{1,0}\oplus N_{2,0},T)}(T+T^{*})) = i_{N_{1,0}\ast_B N_{2,0}}\circ \Phi(T+ T^{*}) = q_{\nor},
  	\end{aligned}
  \end{equation}
  so $\pi(u)$ also lies in the multiplicative domain of $\Phi^{\sharp *}$.

 Now, for any $a\in A_{i}$, by (\ref{eqn: Phi(phi(p)) = q_nor}), (\ref{eqn: Phi(pi(u)) = q_nor}), and (\ref{eqn: Phi(a_1 oplus a_2)}),
  \begin{equation*} 
  	\begin{aligned}
    	(\Phi^{\sharp *}\circ \pi\circ \Psi)(a) &= \Phi^{\sharp *}(\pi(pap+uau)) 
    	= 2 \Phi^{\sharp *}(\pi(a))
    	=  2 \Phi^{\sharp *}(i_{C^*(N_{1,0}\oplus N_{2,0},T)}(a))
    	= 2 i_{N_{1,0}\ast_B N_{2,0}}(\Phi(a))\\
    	&= 2 i_{N_{1,0}\ast_B N_{2,0}}(\frac{1}{2}\lambda_{i}(a))
    	= i_{N_{1,0}\ast_B N_{2,0}}(\lambda_{i}(a))
    	= (i_{N_{1,0}\ast_B N_{2,0}} \circ \Phi \circ \Psi)(a).
    \end{aligned}
  \end{equation*}
  In particular, $a$ is in the multiplicative domain of $\Phi^{\sharp *}\circ \pi\circ \Psi$. Since $A_{1}$ and $A_{2}$ generate $A = A_{1}*_{D}A_{2}$ as C$^{*}$-algebra, we conclude that equation holds on $A$.
\end{proof}

We can now prove Theorem \ref{thm: main 3} by induction.
\begin{proof}[Proof of Theorem \ref{thm: main 3}]
    We already showed the case when $n=2$. Now, assume that the inclusion \[ A_1\ast_D\cdots \ast_D A_{n-1} \subset N_{1,0}\ast_{B}\cdots \ast_B N_{n-1,0}\] is $ (A_1\ast_D\cdots \ast_D A_{n-1}\subset M_1\overline{*}_B\cdots \overline{*}_B M_{n-1}) $-nuclear. We view $M$ as the AFP von Neumann algebra of $ M_1\overline{*}_B\cdots \overline{*}_B M_{n-1} $ and $M_{n}$, so that $ A_1\ast_D\cdots \ast_D A_{n-1} $ is an ultraweakly dense subalgebra of $ M_1\overline{*}_B\cdots \overline{*}_B M_{n-1}$, and $ C^*(A_1{*}_D\cdots {*}_D A_{n-1},e_B )\subset N_{1,0}\ast_{B}\cdots \ast_B N_{n-1,0}\subset \langle  M_1\overline{*}_B\cdots \overline{*}_B M_{n-1},e_B\rangle $. Applying Proposition \ref{thm: main}, it follows that the inclusion
    \[A=A_1\ast_D\cdots \ast_D A_n \subset N_{1,0}\ast_{B}\cdots \ast_D N_{n,0}\] 
    is $ (A\subset M) $-nuclear.
\end{proof}

We have two notable applications of Theorem \ref{thm: main 3}: the case where $M_i$'s are weakly exact, and the case where they are injective.

\begin{corollary}\label{cor: nuclear embedding of free product of M_i}
    If $ M_i $ is weakly exact for each $i=1,\cdots, n$ and $B$ is an injective common von Neumann subalgebra with expectation, then the inclusion
    $$ M_r=M_1\ast_B \cdots \ast_B M_n \subset N_1\ast_B \cdots \ast_B N_n \subset \mathbb{B}(\FF)$$
    is $ (M_r\subset M) $-nuclear. If moreover each $M_i$ is injective, then the inclusion
    $$ M_r=M_1\ast_B \cdots \ast_B M_n \subset C^*(M_1,e_B)\ast_B \cdots \ast_B C^*(M_n,e_B) = C^*(M_i,\lambda_i(e_B):i=1,\cdots,n) \subset \mathbb{B}(\FF)$$
    is $ (M_r\subset M) $-nuclear.
\end{corollary}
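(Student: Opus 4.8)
The plan is to obtain both assertions as direct applications of Theorem \ref{thm: main 3}, taking in each case $A_i = M_i$ and $D = B$; then $E_i(A_i) = E_i(M_i) = B = D$, the reduced AFP C$^*$-algebra $A = A_1 *_D \cdots *_D A_n$ is exactly $M_r = M_1 *_B \cdots *_B M_n$, and $(A \subset M)$-nuclearity is $(M_r \subset M)$-nuclearity. Thus the only thing to choose is the intermediate algebra $N_{i,0}$, and the only thing to verify is that the inclusion $A_i = M_i \hookrightarrow N_{i,0}$ is $(M_i \subset M_i)$-nuclear.

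For the first statement I would take $N_{i,0} = N_i = \langle M_i, e_B\rangle$, so the hypothesis $C^*(A_i, e_B) \subset N_{i,0} \subset N_i$ of Theorem \ref{thm: main 3} holds trivially. The required nuclearity of $M_i \hookrightarrow N_i$ is then precisely the content of Lemma \ref{lem: weak exactness into smaller injective}, applied with the von Neumann algebra $M_i$, its (ultraweakly dense) C$^*$-subalgebra $M_i$ itself, and the injective subalgebra $B$ with normal conditional expectation $E_i$: since $M_i$ is weakly exact---equivalently, $M_i$ is weakly exact in $M_i$---that lemma gives that $M_i \hookrightarrow N_i$ is $M_i$-nuclear. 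Feeding this into Theorem \ref{thm: main 3} yields that $M_r \hookrightarrow N_1 *_B \cdots *_B N_n$ is $(M_r \subset M)$-nuclear.

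For the second statement, with each $M_i$ now injective, I would instead take the smaller algebra $N_{i,0} = C^*(M_i, e_B) = C^*(A_i, e_B)$, which again satisfies $C^*(A_i, e_B) \subset N_{i,0} \subset N_i$. The task is to upgrade the nuclearity so that the approximating maps take values in $C^*(M_i, e_B)$ rather than all of $N_i$, and here I would appeal to the semidiscreteness of injective von Neumann algebras (Connes, Choi--Effros): there are nets of normal u.c.p.\ maps $\phi_j \colon M_i \to \mathbb{M}_{n(j)}(\mathbb{C})$ and $\psi_j \colon \mathbb{M}_{n(j)}(\mathbb{C}) \to M_i$ with $\psi_j \circ \phi_j \to \id_{M_i}$ in the point-ultraweak topology. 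Since each $\psi_j$ already takes values in $M_i \subset C^*(M_i, e_B)$, the composites $\psi_j \circ \phi_j$ are c.c.p.\ maps $M_i \to C^*(M_i, e_B)$ factoring through matrix algebras, whose difference from the inclusion lies in $M_i$. To see that they converge to the inclusion in the point-weak-$(M_i \subset M_i)$-topology, note that any $f \in C^*(M_i, e_B)^{M_i \sharp M_i}$ restricts to a normal functional on $M_i$ (take the variables equal to $1$ in the defining separately-normal condition), so $f(\psi_j \circ \phi_j(x) - x) \to 0$ for every $x \in M_i$; as in the proof of Lemma \ref{lem: weak exactness into smaller injective}, point-weak-$M_i$-topology convergence of a net of such c.c.p.\ maps suffices to conclude $(M_i \subset M_i)$-nuclearity. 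Theorem \ref{thm: main 3} then gives that $M_r \hookrightarrow C^*(M_1, e_B) *_B \cdots *_B C^*(M_n, e_B)$ is $(M_r \subset M)$-nuclear.

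It remains only to identify the target: by definition the reduced AFP of the algebras $N_{i,0} = C^*(M_i, e_B)$ is $C^*(\lambda_1(N_{1,0}), \dots, \lambda_n(N_{n,0})) = C^*(\lambda_i(M_i), \lambda_i(e_B) : i = 1, \dots, n)$, which under the identification $M_i \simeq \lambda_i(M_i)$ is the stated $C^*(M_i, \lambda_i(e_B) : i)$. I expect the genuinely substantive step to be the second one---converting semidiscreteness into nuclearity of the inclusion into the smaller algebra $C^*(M_i, e_B)$, and in particular checking that convergence tested only against the normal restrictions of functionals in $C^*(M_i, e_B)^{M_i \sharp M_i}$ is enough to feed Theorem \ref{thm: main 3}; the first statement is essentially immediate once Lemma \ref{lem: weak exactness into smaller injective} is invoked.
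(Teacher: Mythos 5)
Your proposal is correct and follows essentially the same route as the paper: both statements are obtained from Theorem \ref{thm: main 3} with $A_i = M_i$, $D = B$, taking $N_{i,0} = N_i$ (via Lemma \ref{lem: weak exactness into smaller injective}) in the weakly exact case and $N_{i,0} = C^*(M_i, e_B)$ in the injective case. The paper's second step is stated more tersely---injectivity makes the identity $M_i \subset M_i$ $M_i$-nuclear, hence so is $M_i \subset C^*(M_i,e_B)$---but this is exactly the semidiscreteness argument you spell out, including the implicit reduction from point-weak to point-norm-topology convergence already used in the proof of Lemma \ref{lem: weak exactness into smaller injective}.
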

\begin{proof}
    If each $M_i$ is weakly nuclear, then the inclusion $ M_i\subset N_i $ is $M_i$-nuclear by Lemma \ref{lem: weak exactness into smaller injective}. The statement now follows from applying Theorem \ref{thm: main 3} to $ A_i=M_i $ and $N_{i,0}=N_i$.

    If each $M_i$ is injective, then the identity inclusion $ M_i\subset M_i $ is $M_i$-nuclear. In particular, the inclusion $ M_i\subset C^*(M_i,e_B) $ is $ M_i $-nuclear, and the statement follows again by applying Theorem \ref{thm: main 3} to $ A_i=M_i $ and $N_{i,0}=C^*(M_i,e_B)$.
\end{proof}

\subsection{Relative biexactness: weakly exact case}
We will focus on von Neumann subalgebras satisfying the following condition. 
\begin{defn}[Condition (A)]
    For a von Neumann subalgebra $B\subseteq M$ with a fixed faithful normal conditional expectation $E:M\to B$, we say $ (B\subseteq M,E) $ satisfies \textbf{Condition (A)} if the $\mathbb{C}$-$M$-closure of $ e_BM $ contains $ e_B\langle M,e_B\rangle $:
    $$ e_B\langle M,e_B\rangle\subseteq \overline{ e_BM}^{\mathbb{C}-M}.$$
\end{defn}

While we believe the condition above holds for a significantly broader class of inclusions $(B \subseteq M, E)$ regardless of whether $M$ is tracial or not, we are currently able to establish it only in the case where $M$ admits a faithful normal tracial state $\tau$ satisfying $\tau \circ E = \tau$, or when $B$ is finite-dimensional.

\begin{lemma}\label{lem: tracial implies condition A}
    If $(M,\tau)$ is a tracial von Neumann algebra with a faithful normal tracial state $\tau$ such that $ \tau\circ E  = \tau $, then $ (B\subseteq M,E) $ satisfies condition (A).
\end{lemma}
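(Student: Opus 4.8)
The plan is to reduce the statement to the following claim: for every $X\in\langle M,e_B\rangle$, every normal state $\omega$ on $M$, and every $\varepsilon>0$, there is $m\in M$ with $s_\omega(e_B(X-m))<\varepsilon$. This suffices, since any basic $\mathbb{C}$-$M$-neighborhood is controlled by a single state (combine a finite family $\omega_1,\dots,\omega_k$ into $\omega=\sum_i\omega_i$ and use the monotonicity $s_{\omega_i}\le s_\omega$), so the claim yields $e_B\langle M,e_B\rangle\subseteq\overline{e_BM}^{\mathbb{C}-M}$. Throughout I write $\widehat 1\in L^2M$ for the canonical trace vector, $\widehat x=x\widehat 1$, and $\zeta_W:=W^*\widehat 1\in L^2M$ for $W\in\langle M,e_B\rangle$. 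Since $\langle M,e_B\rangle=(JBJ)'$ and $\widehat c=Jc^*J\widehat 1$ for $c\in B$, I would first record the identity $W^*\widehat c=\zeta_W c$ $(c\in B)$, so that $e_BW$ is determined by $\zeta_W$ and $\operatorname{ran}\big((e_BW)^*\big)\subseteq\overline{\{\zeta_W c:c\in B\}}$.

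The engine of the argument is a \emph{support factorization}. If $V\in\langle M,e_B\rangle$ and $e\in M$ is the left support projection of the $L^2$-vector $\zeta_V$, then $\operatorname{ran}\big((e_BV)^*\big)\subseteq eL^2M$, whence $e_BV=(e_BV)e$; taking $y=e_BV$ and $b=e$ in the definition of $s_\omega$ gives
\[ s_\omega(e_BV)\le\|e_BV\|\,\omega(e)^{1/2}\le\|V\|\,\omega(e)^{1/2}. \]
Because $\tau\circ E=\tau$, left and right support projections of $\zeta_V$ have equal trace, so $\tau(e)$ is small exactly when $\zeta_V$ is concentrated on a projection of small trace. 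The task thus becomes to approximate $e_BX$ by an element of $e_BM$ up to an operator whose symbol is both \emph{bounded in norm} and \emph{supported on a small-trace projection}.

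I would produce such an approximant in two truncation steps. First, fix a Pimsner--Popa basis $\{\xi_j\}\subset M$ for $B\subseteq M$ (available since $\tau\circ E=\tau$), with $\sum_j\xi_je_B\xi_j^*=1$ strongly, and set $P_F=\sum_{j\in F}\xi_je_B\xi_j^*$ over finite $F$. Using $e_BYe_B=E(Y)e_B$, one computes $e_BXP_F=e_Bm_F$ with $m_F=\sum_{j\in F}E(X\xi_j)\xi_j^*\in M$, so that $e_BX-e_Bm_F=e_BW_F$ where $W_F=X(1-P_F)$ satisfies $\|W_F\|\le\|X\|$ and $\zeta_{W_F}=(1-P_F)\zeta_X\to 0$ in $L^2M$. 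Second, I truncate the symbol: writing $\zeta_{W_F}=|\zeta_{W_F}^*|\,u$ in polar form and $p_F:=\mathbf 1_{(1,\infty)}(|\zeta_{W_F}^*|)$, the bounded part $(1-p_F)\zeta_{W_F}$ corresponds to some $e_Bm_F'\in e_BM$ with $\|m_F'\|\le 1$, while the remainder $e_B(X-m_F-m_F')$ has symbol $p_F\zeta_{W_F}$, left support $\le p_F$, and operator norm $\le\|X\|+1$. The support factorization then gives
\[ s_\omega\big(e_B(X-m_F-m_F')\big)\le(\|X\|+1)\,\omega(p_F)^{1/2}, \]
and since $\tau(p_F)\le\|\zeta_{W_F}\|_2^2\to 0$, the normality of $\omega$ (splitting its density $h$ as $h\mathbf 1_{\le R}+h\mathbf 1_{>R}$) forces $\omega(p_F)\to 0$, so the right-hand side tends to $0$.

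The crux I expect is the \emph{simultaneous} control of norm and support of the error term, which is precisely why a single truncation fails: truncating the symbol to a bounded operator directly forces the approximant to have norm growing with the cutoff, whereas cutting by $L^2$-smallness alone leaves the support, hence the $\omega$-mass, uncontrolled. The Pimsner--Popa step is what pins the operator norm to $\|X\|$ independently of the cutoff, and the finiteness of $e_B$ (equivalently $\tau\circ E=\tau$) is what makes the left-support projection have small trace; reconciling the two is the heart of the matter. A secondary technical point is the passage from $\tau(p_F)\to 0$ to $\omega(p_F)\to 0$ for a general, possibly unbounded, density, which is exactly what lets the estimate run uniformly over all normal states rather than only for $\tau$.
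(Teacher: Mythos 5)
Your argument is built on the same two pillars as the paper's proof: (i) an initial approximation producing $m\in M$ with $e_B X - e_B m = e_B V$ where $V\in\langle M,e_B\rangle$ has controlled operator norm and $L^2$-symbol $V^*\widehat{1}\to 0$, and (ii) a spectral truncation of that symbol, viewed as an operator affiliated with $M$, to convert $L^2$-smallness into seminorm smallness. Your pillar (ii) is correct: the support factorization $s_\omega(e_BV)\le \|V\|\,\omega(e)^{1/2}$, the level-one cut $p_F=\mathbf{1}_{(1,\infty)}(|\zeta_{W_F}^*|)$, the bound $\tau(p_F)\le\|\zeta_{W_F}\|_2^2$, and the splitting of the density of $\omega$ to get $\omega(p_F)\to 0$ all check out, and they let you handle an arbitrary normal state directly, whereas the paper reduces to the single seminorm $s^r_\tau$ by invoking Magajna's metrizability of the $\mathbb{C}$-$M$-topology on bounded sets. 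That part is a legitimate, slightly more self-contained variant.

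The gap is pillar (i). You ``fix a Pimsner--Popa basis $\{\xi_j\}\subset M$ (available since $\tau\circ E=\tau$)'', but the standard existence theorem only produces basis vectors $\xi_j\in L^2M$ (right-$B$-bounded vectors), \emph{not} elements of $M$, and the strengthening $\xi_j\in M$ is not an available fact for a general trace-preserving inclusion: Gram--Schmidt over $B$ requires multiplying by the generally unbounded $E(m^*m)^{-1/2}$, and a Zorn-type maximal orthogonal family inside $M$ need not be complete, because a cyclic right-$B$-submodule of $L^2M$ can contain no nonzero bounded vectors at all (for $B=L^\infty[0,1]\subset M=L^\infty([0,1]^2)$, the vector $\xi(x,y)=|x-y|^{-1/4}$ generates such a submodule, since $f(x)|x-y|^{-1/4}$ is essentially bounded only when $f=0$). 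Your formula $m_F=\sum_{j\in F}E(X\xi_j)\xi_j^*$ genuinely needs $\xi_j\in M$: with $\xi_j$ merely in $L^2M$, the element $m_F$ is only an $L^2$-vector, $e_BXP_F\notin e_BM$, and the step collapses. The repair is exactly the paper's move: apply Kaplansky density to the ultraweakly dense $*$-subalgebra $Me_BM\subseteq\langle M,e_B\rangle$ to get a net $T_i\in Me_BM$ with $\|T_i\|\le\|X\|$ and $T_i\to X$ $*$-strongly; writing $T_i=\sum_k x_ke_By_k$ gives $e_BT_i=e_Bm_i$ with $m_i=\sum_k E(x_k)y_k\in M$, while $(X-T_i)^*\widehat{1}\to 0$ in $L^2M$. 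Substituting this for the Pimsner--Popa step (the constant $\|X\|+1$ becomes $2\|X\|+1$), the rest of your proof goes through verbatim.
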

\begin{proof}
    By \cite[Proposition 5.4]{MR1616512}, the $ \mathbb{C} $-$M$ topology on bounded subsets of $\B(\HH)$ is induced by the metric $s^r_\tau$. Therefore, it suffices to show that for any $T\in \langle M,e_B\rangle$ with $\|T\|\leqslant 1$, $e_BT^*$ can be approximated by elements in $e_BM$ in the norm induced by $s^r_\tau$. Since $ (M,\tau)$ is tracial, we can consider $T\tau^{1/2} = T\widehat{1}\in L^2M$ as an unbounded operator $ L_{T\widehat{1}} $ affiliated with $M$. Let $ P_{n} $ be the spectral projection $ P_n\coloneqq E_{[0,n]}(L^*_{T\widehat{1}}L_{T\widehat{1}}) $, so that we can decompose $e_BT^*$ as
    \[e_BT^* = e_BT^*P_n+e_BT^*P_n^{\perp}. \]
    We claim that $e_BT^*P_n = e_BL_{T\widehat{1}}^*P_n$, or equivalently, that $P_nTe_B = P_nL_{T\widehat{1}}e_B$. To see this, note that since $P_nL_{T\widehat{1}}$ is bounded, it suffices to verify the equality on a dense subset of $L^2(B)$. For any $b\in B$, we have \[P_nTJbJ\widehat{1} = P_nJbJT\widehat{1}= P_nJbJL_{T\widehat{1}}\widehat{1}=P_nL_{T\widehat{1}}JbJ\widehat{1},\] and $\{JbJ\widehat{1}:b\in B\}$ is dense in $L^2B$, so the desired equality follows. Therefore,
    \begin{align*}
        s^r_\tau(e_BT^*)&\leqslant s^r_\tau( e_BL_{T\widehat{1}}^*P_n )+s^r_\tau( e_BT^*P_n^{\perp}) \\
        &\leqslant \|e_B\|\tau( P_nL_{T\widehat{1}}L_{T\widehat{1}}^*P_n)^{1/2} + \|e_BT^*\|\tau(P_n^{\perp})^{1/2}\\
        &\leqslant \tau(L_{T\widehat{1}}^*L_{T\widehat{1}}  )^{1/2}+\|T\|\tau(P_n^{\perp})^{1/2}.
    \end{align*}
    Letting $n\to \infty$, we obtain $ s^r_\tau( e_BT^* )\leqslant \|T\widehat{1}\|^{1/2} $.
    As the subalgebra $Me_BM$ is ultraweakly dense in $\langle M,e_B \rangle$, by Kaplansky density theorem, we can pick a net $(T_n)\in Me_BM$ with $ \|T_n\|\leqslant \|T\| $ and $T_n$ converging $*$-strongly to $T$. Since $T_ne_B = m_ne_B$ for some $m_n\in M$ for each $n$, we have $ \lim m_n\widehat{1} = \lim T_n\widehat{1} = T\widehat{1}$. Therefore, by the estimate above,
    \[ s^r_\tau( e_BT^*-e_Bm_n^* )\leqslant \| 
    (T-m_n)\widehat{1} \|\to 0,\]
    and this proves the lemma.
\end{proof}

\begin{remark}
    If $B$ is finite-dimensional, then we can also directly show that $ 
(B\subseteq M,E) $ satisfies condition (A) even when $M$ is not tracial. Indeed, since $ e_B $ is now a finite dimensional projection, for any $T\in \langle M,e_B\rangle$, we can choose a net $ m_n\in M $ such that $e_Bm_n$ converges strongly to $e_BT$. Then $e_Bm_n$ converges in norm, and hence also in the $\mathbb{C}$-$M$ topology, to $ e_BT $.
\end{remark}

\begin{prop}\label{prop: N is in S}
    If $(B\subseteq M_i,E) $ satisfies condition (A) for each $i=1,\cdots,n$, then $ N_1\ast_B \cdots \ast_B N_n \subseteq \mathbb{S}_{\{M_1,\cdots,M_n\}}(M)$.
\end{prop}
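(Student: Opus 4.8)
The plan is to show that every element of the reduced AFP C$^*$-algebra $N_1\ast_B\cdots\ast_B N_n=C^*(\lambda_i(N_i))$ commutes with $M'=\big(\bigcup_j\rho_j(M_j^{\opp})\big)''$ modulo $\mathbb{K}_{\mathbb{X}}^{\infty,1}(M)$, where $\mathbb{X}=\mathbb{X}_{\{M_1,\ldots,M_n\}}$ is generated by the $e_k\mathbb{B}(L^2M_k)e_k$. Since $M\subseteq\mathbb{S}_{\mathbb{X}}(M)$ automatically, the only genuinely new directions come from the basic-construction projections $\lambda_i(e_B)$. For a fixed $x\in M'$ the map $T\mapsto[T,x]$ is a derivation, so I would reduce the problem to two ingredients evaluated on the generators $\lambda_i(a)$ ($a\in N_i$) and on the generators $\rho_j(m)$ of $M'$, and then reassemble via the Leibniz rule. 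The two ingredients are: (B) a commutator computation, and (A) a multiplier-type statement that is exactly where Condition (A) is consumed.

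For Step (B) I would use the two standard identifications $\mathcal{F}\cong L^2M_i\otimes_B\mathcal{F}(i)$, under which $\lambda_i(a)=a\otimes_B 1$ acts on the \emph{first} leg, and $\mathcal{F}\cong\mathcal{F}'(j)\otimes_B L^2M_j$, under which $\rho_j(m)$ acts on the \emph{last} leg. On words of length $\geqslant 2$ these act on distinct tensor legs and therefore commute; the only subtlety is that $\rho_j(m)$ can shorten or lengthen a word, but right $B$-linearity of $a$ makes the two bookkeepings agree, so that $[\lambda_i(a),\rho_j(m)]$ vanishes on $\mathcal{F}^{\geqslant 2}$. Passing to adjoints (replacing $a,m$ by $a^*,m^*$) shows it also maps into $\mathcal{F}^{\leqslant 1}$, i.e. $[\lambda_i(a),\rho_j(m)]=P\,[\lambda_i(a),\rho_j(m)]\,P$ with $P$ the projection onto $\mathcal{F}^{\leqslant 1}=L^2B\oplus\bigoplus_k L^2M_k^o$. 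Since $P\leqslant\sum_k e_k$ and each $e_k\in\mathbb{X}$ with $\mathbb{X}$ hereditary, one gets $P\in\mathbb{X}$ and hence $[\lambda_i(a),\rho_j(m)]\in\mathbb{X}\subseteq\mathbb{K}_{\mathbb{X}}^{\infty,1}(M)$. I expect this step to require no hypothesis on $B$.

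Step (A) is where I would use Condition (A). The claim is $\lambda_i(a)e_k\in\mathbb{K}_{\mathbb{X}}^{\infty,1}(M)$ for all $a\in N_i$ and all $k$. When $k=i$, the operator $a$ preserves $L^2M_i=L^2M_i\otimes_B L^2B$, so $\lambda_i(a)e_i=e_i\lambda_i(a)e_i\in e_i\mathbb{B}(L^2M)e_i\subseteq\mathbb{X}$ directly. When $k\neq i$ one has $e_k\leqslant\lambda_i(e_B)$, hence $\lambda_i(a)e_k=\lambda_i(ae_B)e_k$; but for general $a\in N_i$ the operator $ae_B$ creates long tensors (its ``symbol'' is the possibly unbounded $L^2$–vector $(a\widehat 1)^o$), so $\lambda_i(a)e_k$ need \emph{not} lie in $\mathbb{X}$. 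Condition (A) rescues this: it gives $ae_B\in\overline{M_i e_B}^{\,\mathbb{C}\text{-}M_i}$, so I would pick $m_\nu\in M_i$ with $m_\nu e_B\to ae_B$ in the $\mathbb{C}$-$M_i$ topology; then $\lambda_i(m_\nu)e_k=\lambda_i(m_\nu e_B)e_k\in Me_k\subseteq\mathbb{X}$, and transporting the convergence through $\lambda_i(\,\cdot\,)e_k$ places the limit $\lambda_i(ae_B)e_k$ in $\overline{\mathbb{X}}=\mathbb{K}_{\mathbb{X}}^{\infty,1}(M)$ by Lemma \ref{lem:density of X}. Taking adjoints yields $e_k\lambda_i(a)\in\mathbb{K}_{\mathbb{X}}^{\infty,1}(M)$ as well.

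Finally I would assemble: for a reduced word $T=\lambda_{i_1}(a_1)\cdots\lambda_{i_r}(a_r)$ and $x=\rho_j(m)$, Leibniz gives $[T,x]=\sum_\ell w_{<\ell}\,[\lambda_{i_\ell}(a_\ell),x]\,w_{>\ell}$, where each inner commutator is $P\,C_\ell\,P$ supported on $\mathcal{F}^{\leqslant1}$; Step (A) (together with $P\leqslant\sum_k e_k$, the left-ideal property of $\mathbb{K}_{\mathbb{X}}^{L}(M)$, and continuity of one-sided multiplications in the relevant topologies) lets the surrounding words $w_{<\ell},w_{>\ell}$ absorb the boundary factors while keeping the sandwich inside $\mathbb{K}_{\mathbb{X}}^{\infty,1}(M)$. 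I would then extend from generators $\rho_j(m)$ to all $x\in M'$ by Kaplansky density and closedness of $\mathbb{K}_{\mathbb{X}}^{\infty,1}(M)$, and from reduced words to all of $N_1\ast_B\cdots\ast_B N_n$ by norm density, concluding $N_1\ast_B\cdots\ast_B N_n\subseteq\mathbb{S}_{\{M_1,\ldots,M_n\}}(M)$. The hardest part will be the topological bookkeeping in Step (A): matching the $\mathbb{C}$-$M_i$ convergence supplied by Condition (A) on $\mathbb{B}(L^2M_i)$ to the ambient $M$-$M$/$M'$-$M'$ topology on $\mathbb{B}(\mathcal{F})$ after applying $\lambda_i(\,\cdot\,)e_k$, and verifying the precise module/closure properties of $\mathbb{K}_{\mathbb{X}}^{\infty,1}(M)$ that legitimize the reassembly—this is exactly the point at which the injectivity of $B$, via Condition (A), becomes indispensable, since for general $a\in N_i$ the operator $\lambda_i(a)e_k$ genuinely escapes $\mathbb{X}$ and only its closure-membership can be recovered.
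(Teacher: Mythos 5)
Your Steps (A) and (B) are correct, and they are precisely the opening moves of the paper's own proof: for $a\in N_i$ and $y\in M_j$ the commutator $[\lambda_i(a),J\lambda_j(y)J]$ vanishes when $i\neq j$ and equals $e_i[\lambda_i(a),J\lambda_j(y)J]e_i$ when $i=j$ (your weaker localization $P[\,\cdot\,]P$ suffices, since $0\leqslant P\leqslant\sum_k e_k\in\mathbb{X}$ and $\mathbb{X}$ is hereditary; incidentally the paper's displayed case distinction has the two cases interchanged, and your version is the correct one), and the one-letter identity $\lambda_i(a)e_k=\lambda_i(ae_B)e_k$ combined with Condition (A) does place $\lambda_i(a)e_k$ in $\mathbb{K}^{\infty,1}_{\mathbb{X}}(M)$, exactly as in the paper.

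The genuine gap is in the assembly step, and it is not mere bookkeeping. The words $w_{<\ell},w_{>\ell}$ are products of $\lambda_{i_s}(a_s)$ with $a_s\in N^o_{i_s}$, and these operators lie in neither $M$ nor $M'$. But $\mathbb{K}^{\infty,1}_{\mathbb{X}}(M)$ is only an $M,M'$-bimodule (it is the closure of $\mathbb{X}$ in the $M$-$M$ and $M'$-$M'$ topologies), and $\mathbb{K}^{L}_{\mathbb{X}}(M)$, although a left ideal of $\mathbb{B}(L^2M)$, admits only $M$ and $M'$ as right multipliers; so none of the properties you invoke lets the surrounding words ``absorb'' anything: from $\lambda_{i_{\ell-1}}(a_{\ell-1})e_{i_\ell}\in\mathbb{K}^{\infty,1}_{\mathbb{X}}(M)$ nothing follows about $\lambda_{i_{\ell-2}}(a_{\ell-2})\lambda_{i_{\ell-1}}(a_{\ell-1})e_{i_\ell}$. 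Two further ideas, which constitute the heart of the paper's proof, are required. First, the \emph{iterated} insertion identity
\[
\lambda_{i_1}(a_1)\cdots\lambda_{i_{\ell-1}}(a_{\ell-1})e_{i_\ell}=\lambda_{i_1}(a_1e_B)\cdots\lambda_{i_{\ell-1}}(a_{\ell-1}e_B)e_{i_\ell}.
\]
Unlike your one-letter version, this is not formal: it holds because the letters are centered ($a_s\in N_{i_s}^o$, so $a_se_B$ maps $L^2B$ into $L^2M^o_{i_s}$, whence each partial product has range inside the words beginning with the current leftmost index, hence under $\lambda_{i_{s-1}}(e_B)$); with non-centered letters the range acquires length-one components whose index can coincide with $i_{s-1}$ and the identity fails. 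Second, Condition (A) must be applied to \emph{every} letter, one at a time, starting from the letter adjacent to $e_{i_\ell}$ and moving outward, so that at each stage the not-yet-treated letters on the outer side are honest elements of $M$; this is forced by the one-sidedness of the topologies ($M$-$\mathbb{C}$ convergence survives left multiplication by $M$ and right multiplication by arbitrary bounded operators, but not left multiplication by arbitrary bounded operators, and symmetrically for $\mathbb{C}$-$M$). This inside-out, one-letter-at-a-time induction is exactly Steps 1--4 of the paper's proof. With these two ingredients your route does close---one shows $e_{i_\ell}w_{>\ell}$ and $e_{i_\ell}w_{<\ell}^*$ lie in $\mathbb{K}^{L}_{\mathbb{X}}(M)$ and then uses $\bigl(\mathbb{K}^{L}_{\mathbb{X}}(M)\bigr)^*\,\mathbb{B}(L^2M)\,\mathbb{K}^{L}_{\mathbb{X}}(M)\subseteq\mathbb{K}_{\mathbb{X}}(M)\subseteq\mathbb{K}^{\infty,1}_{\mathbb{X}}(M)$---but as written your proposal contains neither ingredient.
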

\begin{proof}
    Since polynomials in $ \bigcup_{i=1}^n \lambda_1({N_i}) $ is norm dense in $ N_1\ast_B \cdots \ast_B N_n $, it suffices to show that for each $ T_j\in N^o_{i_j} $ with $ i_1\neq i_2\neq \cdots \neq i_n $, $$ [\lambda_{i_1}(T_1)\cdots \lambda_{i_n}(T_n), a ]\in \mathbb{K}^{\infty,1}_{\{M_1,M_2\}}(M),\quad \forall a\in \bigcup_{i=1}^n J \lambda_{1}(M_1) J \subseteq M'. $$

    For this, we begin with a computation similar to \cite[Proposition 6.16]{ding2023biexact} and \cite[Proposition 3.2]{MR2211141}. Let $e_i\in \mathbb{B}(L^2M) $ be the projection onto the subspace $L^2M_i\subseteq L^2M$. For $ T\in N_i= \langle M_i,e_B \rangle $, $a\in J \lambda_{j}(M_j) J \subseteq M'$, we have
    $$ [\lambda_i(T),a] = \begin{cases}
        0,&i=j;\\
        e_i[\lambda_i(T),a]e_i\in e_i \BB(L^2M)e_i,&i\neq j.
    \end{cases} $$
     Notice that $\lambda_i(e_B) \in \BB(L^2M)$ is the projection onto $ L^2B\oplus\bigoplus_{\substack{i_1\neq \cdots \neq i_n\\ i_1 \neq i,n\geqslant 1}}L^2M^o_{i_1}\otimes_B \cdots \otimes_B L^2M^o_{i_n} $, so $ \lambda_i(e_B)^{\perp}\lambda_j(e_B)^{\perp} =0$ for all $i\neq j$ and $ \sum_{i=1}^n \lambda_i(e_B)^{\perp} = 1-e_B\in \mathbb{B}(\FF) $. Since $  [\lambda_{i_1}(T_1)\cdots \lambda_{i_n}(T_n), a ]$ is the sum of terms of the form \begin{align}
     \begin{split}\label{equation for N1 ast N2 inside S}
         &\lambda_{i_1}(T_1)\cdots \lambda_{i_{k-1}}(T_{k-1})[\lambda_{i_k}(T_k),a]\lambda_{i_{k+1}}(T_{k+1}) \cdots\lambda_{i_n}(T_n)\\
         =&\lambda_{i_1}(T_1)\cdots \lambda_{i_{k-1}}(T_{k-1})e_{i_k}[\lambda_{i_k}(T_k),a]e_{i_k}\lambda_{i_{k+1}}(T_{k+1}) \cdots\lambda_{i_n}(T_n),
     \end{split}
     \end{align}
     it remains to check that such terms indeed belong to $ \mathbb{K}^{\infty,1}_{\{M_1,M_2\}}(M) $.
     
     For $T\in N^o_i$, we have $ \lambda_i(T)e_{i+1} = \lambda_i(T)\lambda_i(e_B)e_{i+1} = \lambda_i(Te_B)e_{i+1}$. Similarly, we have
    \[ \lambda_{i_1}(T_1)\cdots \lambda_{i_k}(T_n)e_{i_{k+1}} = \lambda_{i_1}(T_1e_B)\cdots \lambda_{i_n}(T_ne_B)e_{i_{k+1}}. \]
    Therefore, the term in \eqref{equation for N1 ast N2 inside S} is equal to
    \begin{equation}\label{equation 2 for N1 ast N2 inside S}
        \lambda_{i_1}(T_1e_B)\cdots \lambda_{i_{k-1}}(T_{k-1}e_B)e_{i_k}[\lambda_{i_k}(T_ke_B),a]e_{i_k}\lambda_{i_{k+1}}(e_BT_{k+1}) \cdots\lambda_{i_n}(e_BT_n)
    \end{equation}

    We now show that the term \eqref{equation 2 for N1 ast N2 inside S} belongs to $ \mathbb{K}^{\infty,1}_{\{M_1,M_2\}}(M) $ by induction.
    \begin{steps}
        \item If $ T_j\in M^{o}_{i_j} $ for all $ j\neq k $, then the term in \eqref{equation 2 for N1 ast N2 inside S} is already inside $ \mathbb{K}^{\infty,1}_{\{M_1,M_2\}}(M) $ because it equals to the term in \eqref{equation for N1 ast N2 inside S} which belongs to $ Me_{i_k}\BB(L^2M)e_{i_k}  M\subseteq \mathbb{X}_{M_1,M_2}\subseteq \mathbb{K}^{\infty,1}_{\{M_1,M_2\}}(M) $.
        \item If $ T_j \in M^{o}_{i_j} $ for all $ j\neq k-1,k $, choose a net $ m_t$ in $M_{i_{k-1}}^o $ with $ m_te_B \to T_{k-1}e_B $ in the $M_{i_{k-1}}$-$\mathbb{C}$ topology. By Step 1, we have
        \begin{equation}
            \label{equation 3 for N1 ast N2 inside S}
            \lambda_{i_1}(T_1e_B)\cdots \lambda_{i_{k-1}}(m_t e_B)e_{i_k}[\lambda_{i_k}(T_ke_B),a]e_{i_k}\lambda_{i_{k+1}}(e_BT_{k+1}) \cdots\lambda_{i_n}(e_BT_n)\in \mathbb{K}^{\infty,1}_{\{M_1,M_2\}}(M), \quad \forall t. 
        \end{equation} 
        Since $N_{i_{k-1}}$ is a strong $\mathbb{C}$-$M_{i_{k-1}}$-module and the left representation $ \lambda_{i_{k-1}}: N_{i_{k-1}} \to \mathbb{B}(\FF)$ decomposes as an infinite direct sum of the standard representation of $N_{i_{k-1}}$ on $L^2M_{i_{k-1}}$, the convergence $ m_te_B \to T_{k-1}e_B$ in $M_{i_{k-1}}$-$\mathbb{C}$ topology also implies $ \lambda_{i_{k-1}}(m_te_B) \to  \lambda_{i_{k-1}}(T_{k-1}e_B)$ in the $M_{i_{k-1}}$-$\mathbb{C}$ topology, and hence also in the $M$-$\mathbb{C}$ topology. Since by assumption $ \lambda_{i_1}(T_1)\cdots \lambda_{i_{k-2}}(T_{k-2})\in M $, the terms in both \eqref{equation for N1 ast N2 inside S} and \eqref{equation 2 for N1 ast N2 inside S} can be approximated by the terms in \eqref{equation 3 for N1 ast N2 inside S} in $M$-$\mathbb{C}$ topology.
        \item Repeating the first two steps, we deduce that the term in \eqref{equation 2 for N1 ast N2 inside S} belongs to $\mathbb{K}^{\infty,1}_{\{M_1,M_2\}}(M)$ whenever $ T_j\in M_{i_j}^o $ for all $ j>k $.
        \item Consecutively approximating $e_BT_{j}$ with $j>k$ by elements of $e_BM $ under the $\mathbb{C}$-$M$ topology and repeating all the above steps, we finally conclude that the term in \eqref{equation 2 for N1 ast N2 inside S} belongs to $\mathbb{K}^{\infty,1}_{\{M_1,M_2\}}(M)$ for all $T_j$.
    \end{steps}
\end{proof}

Therefore, we obtain the relative biexactness of $M$ for $(B \subset M_i,E)$ satisfying condition (A). In particular, this covers Corollary \ref{cor: relative biexactness tracial}.
\begin{prop} \label{prop: amalg free prod rel biex}
    Assume that $ M_i $ is weakly exact for each $i$, and $B \subset M_i$ is a common injective von Neumann subalgebra with faithful normal conditional expectations $E_i\colon M_i\to B$.
    If $(B\subseteq M_i,E_i) $ satisfies condition (A) for every $i=1,\cdots,n$, then $ M=M_1\overline{\ast}_B\cdots \overline{*}_B M_n $ is biexact relative to $ \{M_1,\cdots,M_n\} $.

    In particular, $M=M_1\overline{\ast}_B\cdots \overline{*}_B M_n$ is biexact relative to $\{M_1,\cdots,M_n\}$ if one of the following conditions holds:
    \begin{itemize}
        \item For each $i$, $M_i$ has a faithful tracial state $\tau_i$ such that $\tau_i\circ E_i = \tau_i$.
        \item $B$ is finite-dimensional.
    \end{itemize}
\end{prop}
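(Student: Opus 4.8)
The plan is to verify the definition of relative biexactness directly: it suffices to show that the inclusion $M \hookrightarrow \mathbb{S}_{\{M_1,\cdots,M_n\}}(M)$ is $M$-nuclear. I would assemble this from the two ingredients already in place. First, since each $M_i$ is weakly exact and $B$ is injective, Corollary \ref{cor: nuclear embedding of free product of M_i} gives that the inclusion of the reduced AFP C$^*$-algebra
\[ M_r = M_1 \ast_B \cdots \ast_B M_n \hookrightarrow N_r = N_1 \ast_B \cdots \ast_B N_n \]
is $(M_r \subset M)$-nuclear. Second, since $(B \subseteq M_i, E_i)$ satisfies condition (A) for every $i$, Proposition \ref{prop: N is in S} places the larger algebra inside the small-at-infinity boundary, $N_r \subseteq \mathbb{S}_{\{M_1,\cdots,M_n\}}(M)$. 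Composing these, the inclusion $M_r \hookrightarrow \mathbb{S}_{\{M_1,\cdots,M_n\}}(M)$ is $(M_r \subset M)$-nuclear.

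The remaining, and genuinely delicate, step is to upgrade this C$^*$-level statement, which concerns the ultraweakly dense subalgebra $M_r$, to the von Neumann-algebraic statement that $M \hookrightarrow \mathbb{S}_{\{M_1,\cdots,M_n\}}(M)$ is $M$-nuclear. I would do this by normalizing the approximants. Writing the $(M_r \subset M)$-nuclear inclusion as a net $\psi_i \circ \phi_i$ of c.c.p. maps factoring through matrix algebras and converging to the inclusion in the point-$(M_r \subset M)$-topology, I would pass to the biduals and apply the $\sharp\ast$-normalization of \cite[Lemma 2.1]{MR4833744} to obtain normal c.c.p. maps $M \to \big(\mathbb{S}_{\{M_1,\cdots,M_n\}}(M)^{M\sharp M}\big)^{\ast}$ that still factor through the same matrix algebras. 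Here one uses that the dual spaces defining the weak topologies for the pair $(M_r \subset M)$ and for $M$ coincide on $\mathbb{S}_{\{M_1,\cdots,M_n\}}(M)$ (because $M_r$ is ultraweakly dense and the boundary is an $M$-normal $M$-bimodule), so that weak convergence is detected by the same functionals. Finally I would invoke the convexity upgrade of \cite[Lemma 4.4]{ding2023biexact} to convert point-weak convergence into convergence in the point-$M$-topology, yielding $M$-nuclearity.

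I expect this last upgrade to be the main obstacle, because uniform boundedness together with pointwise convergence on the dense subalgebra $M_r$ does not by itself force convergence on all of $M$ (there is no automatic equicontinuity). The way around this is to exploit the additional structure carried by the approximants arising from weak exactness: by \cite[Lemma 4.3]{ding2023biexact} one may arrange the error $\psi_i\circ\phi_i(x) - x$ to split as $T_i - a_i$ with $\|T_i\| \to 0$, $a_i \to 0$ ultrastrongly, and $\sup_i\|a_i\| < \infty$, and it is precisely this decomposition, rather than bare ultraweak density, that controls the passage from $M_r$ to $M$ after normalization.

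The two special cases then follow by checking condition (A). When each $M_i$ carries a faithful normal tracial state $\tau_i$ with $\tau_i \circ E_i = \tau_i$, condition (A) holds by Lemma \ref{lem: tracial implies condition A}; when $B$ is finite-dimensional, it holds by the remark following that lemma, which does not require traciality. In either situation the standing hypotheses of the proposition, namely weak exactness of the $M_i$ and injectivity of $B$ (automatic when $B$ is finite-dimensional), remain in force, so the main assertion applies and $M = M_1 \overline{\ast}_B \cdots \overline{\ast}_B M_n$ is biexact relative to $\{M_1,\cdots,M_n\}$.
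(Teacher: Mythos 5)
Your assembly of the main statement is exactly the paper's: Corollary \ref{cor: nuclear embedding of free product of M_i} gives $(M_r\subset M)$-nuclearity of $M_r\hookrightarrow N_1\ast_B\cdots\ast_B N_n$, Proposition \ref{prop: N is in S} places $N_1\ast_B\cdots\ast_B N_n$ inside $\mathbb{S}_{\{M_1,\cdots,M_n\}}(M)$ under condition (A), and the two special cases follow, as in the paper, from Lemma \ref{lem: tracial implies condition A} and the remark after it. The divergence is the final upgrade from $(M_r\subset M)$-nuclearity of $M_r\hookrightarrow \mathbb{S}_{\{M_1,\cdots,M_n\}}(M)$ to $M$-nuclearity of $M\hookrightarrow \mathbb{S}_{\{M_1,\cdots,M_n\}}(M)$: the paper disposes of this in one line by citing \cite[Corollary 4.9]{ding2023biexact}, whereas you attempt to reprove it, and your argument for that step has a genuine gap.

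The gap is in the ``normalization of the approximants.'' The maps $\phi_i\colon M_r\to\mathbb{M}_{n(i)}(\mathbb{C})$ are defined only on $M_r$; to obtain maps defined on all of $M$ you must extend them, and this is precisely where the difficulty sits. Extending through the bidual, i.e.\ setting $\tilde\phi_i=\phi_i^{**}\circ\theta$ where $\theta\colon M\simeq M_r^{**}p_{\nor}\hookrightarrow M_r^{**}$ is the normal embedding, changes the values on $M_r$: for $a\in M_r$ one has $\phi_i(a)-\tilde\phi_i(a)=\phi_i^{**}(\hat a\,p_{\nor}^{\perp})$ (with $\hat a$ the canonical image of $a$ in $M_r^{**}$), and nothing makes this small, so the convergence $\psi_i\circ\phi_i(a)\to a$ gives no information about $\psi_i\circ\tilde\phi_i(a)$. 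Moreover, \cite[Lemma 2.1]{MR4833744} is not the right tool: it extends a \emph{normal} u.c.p.\ map already defined on the whole algebra (such as $\Phi\colon\mathbb{B}(\mathcal{K})\to\mathbb{B}(L^2M)$ in Section 4) to the $\sharp$-duals; it does not produce extensions of c.c.p.\ maps defined only on an ultraweakly dense C$^*$-subalgebra. Finally, the $T_i-a_i$ decomposition of \cite[Lemma 4.3]{ding2023biexact} controls the error only at points of $M_r$, so it cannot repair the equicontinuity problem you yourself identify: when $a_\lambda\to x$ ultrastrongly with $x\in M\setminus M_r$, there is no uniform-in-$i$ control of $\psi_i\circ\tilde\phi_i(x-a_\lambda)$. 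The statement you need at this point is exactly \cite[Corollary 4.9]{ding2023biexact}, whose proof proceeds by a different mechanism (via the equivalence of $(A\subset M)$-nuclearity with weak nuclearity of the induced map into $(X^{A\sharp A})^{*}$ and a bidual argument), not by re-normalizing the $\phi_i$; citing it, as the paper does, closes the gap, and the rest of your proof then stands.
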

\begin{proof}
    By Corollary \ref{cor: nuclear embedding of free product of M_i} and Proposition \ref{prop: N is in S}, we have that the inclusion
    $$ M_r=M_1\ast_B \cdots \ast_B M_n \subset \mathbb{S}_{\{M_1,\cdots,M_n\}}(M) $$
    is $ ( M_r\subset M  ) $-nuclear. By \cite[Corollary 4.9]{ding2023biexact}, this implies that the inclusion $ M \subset \mathbb{S}_{\{M_1,\cdots,M_n\}}(M) $ is $M$-nuclear as $ M_r $ is ultraweakly dense in $ M $.
\end{proof}

\subsection{Relative biexactness: injective case}

\begin{prop}\label{prop: inside S_B}
    For $ N_{i,0}=C^*(M_i,e_B) $, we have
    $N_{1,0}\ast_{B} \cdots\ast_{B} N_{n,0} = C^*( M_i,\lambda_i(e_B):i=1,\cdots ,n )\subseteq \mathbb{S}_{B}(M) $.
\end{prop}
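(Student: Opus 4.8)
The plan is to mirror the proof of Proposition \ref{prop: N is in S}, but to exploit the smaller generating set $\{M_i,\lambda_i(e_B)\}$ of $N_{1,0}\ast_B\cdots\ast_B N_{n,0}$ in order to land in the smaller boundary piece $\mathbb{X}_B$ rather than in $\mathbb{X}_{\{M_1,\dots,M_n\}}$. The equality $N_{1,0}\ast_B\cdots\ast_B N_{n,0}=C^*(M_i,\lambda_i(e_B):i)$ is immediate: since $N_{i,0}=C^*(M_i,e_B)$, its image is $\lambda_i(N_{i,0})=C^*(\lambda_i(M_i),\lambda_i(e_B))$, and the reduced AFP $C^*$-algebra is by definition generated by the $\lambda_i(N_{i,0})$. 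For the containment, since $\mathbb{S}_B(M)$ is norm closed and contains $M$, and since the reduced words $w=\lambda_{i_1}(T_1)\cdots\lambda_{i_m}(T_m)$ with $T_j\in N_{i_j,0}^o=\ker(\restr{E}{N_{i_j,0}})$ and $i_1\neq\cdots\neq i_m$ span a norm dense subalgebra, it suffices to show $[w,a]\in\mathbb{K}^{\infty,1}_{\mathbb{X}_B}(M)$ for every $a\in M'$. By the $M$-$M$, $M'$-$M'$-bimodule structure of $\mathbb{K}^{\infty,1}_{\mathbb{X}_B}(M)$ and the fact that $M,M'\subset\mathbb{M}(\mathbb{X}_B)$, I may restrict to $a\in\bigcup_j J\lambda_j(M_j)J$, exactly as in Proposition \ref{prop: N is in S}.

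I would then use two structural facts. First, $\lambda_{i_j}(M_{i_j})\subset M$ commutes with $M'$, so in the Leibniz expansion of $[w,a]$ only the occurrences of $\lambda_{i_j}(e_B)$ inside the $T_j$ contribute, and the essential computation becomes $[\lambda_i(e_B),a]$. A direct calculation on $\mathcal{F}$ (in the spirit of \cite[Proposition 6.16]{ding2023biexact}) shows that this commutator is supported on $L^2M_i$ and, writing $q_i=e_i-e_B$ for the projection onto $L^2M_i^o$, has the form $e_B\,a\,q_i-q_i\,a\,e_B$ for the matching index and vanishes otherwise. Second — and this is the point that upgrades $\mathbb{X}_{\{M_1,\dots,M_n\}}$ to $\mathbb{X}_B$ — one has the localization identities $e_i\,\lambda_i(e_B)=\lambda_i(e_B)\,e_i=e_B$ and $\lambda_i(e_B)\,e_{i'}=e_{i'}$ for $i'\neq i$, together with the standard basic-construction identity $\langle M_i,e_B\rangle\,e_B=\overline{M_i e_B}$ (norm closure). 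The latter lets me approximate $T_j e_B$ and $e_B T_j$ in norm by $m_j e_B$ and $e_B m_j$ with $m_j\in M_{i_j}$; this plays the role of Condition (A) from Proposition \ref{prop: N is in S}, but here it holds automatically and in norm precisely because $N_{i,0}=C^*(M_i,e_B)$.

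Combining these, I would run the reduction of Proposition \ref{prop: N is in S} to write each Leibniz term in the form $\lambda_{i_1}(T_1 e_B)\cdots e_{i_k}[\lambda_{i_k}(\cdot),a]e_{i_k}\cdots\lambda_{i_m}(e_B T_m)$, then approximate every flanking factor $T_j e_B$, $e_B T_j$ in norm by $M_{i_j}$-multiples of $e_B$; telescoping the resulting $\lambda(e_B)$'s via the localization identities collapses the left flank to $x\,e_B$ and the right flank to $e_B\,y$ with $x,y\in M$. The surviving core carries the factor $e_B a q_i-q_i a e_B$, and since $e_B a q_i=e_B a-e_B a e_B$ lies in $\mathbb{X}_B$ (using $e_B\in\mathbb{X}_B$ and $a\in M'\subset\mathbb{M}(\mathbb{X}_B)$), each term is of the shape (element of $M$)$\cdot$(element of $\mathbb{X}_B$)$\cdot$(element of $M$), hence in $\mathbb{X}_B$. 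As $\mathbb{X}_B$ is norm closed, the norm approximation shows the original term is itself in $\mathbb{X}_B\subset\mathbb{K}^{\infty,1}_{\mathbb{X}_B}(M)$, giving $w\in\mathbb{S}_B(M)$; the case $n>2$ follows by the same reduced-word analysis.

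The main obstacle is the bookkeeping in the third paragraph: verifying that the genuine amalgam projection $e_B$ (onto $L^2B$), rather than merely the larger $e_i$ (onto $L^2M_i$), is produced at the commutator site and survives multiplication by the flanking words. This is exactly where the identity $e_i\lambda_i(e_B)=e_B$ and the norm density $\langle M_i,e_B\rangle e_B=\overline{M_i e_B}$ do the work, and where one must check that the tails $q_i=e_i-e_B$ are absorbed into $\mathbb{X}_B$ through the multiplier property $M,M'\subset\mathbb{M}(\mathbb{X}_B)$ instead of escaping into the larger boundary piece. I expect the remaining details — the precise telescoping and the passage from the generators $J\lambda_j(M_j)J$ to all of $M'$ via Lemma \ref{lem:density of X} — to be routine once this localization is established.
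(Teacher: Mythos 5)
Your core computations are sound, and they are in fact the same engine that drives the paper's own proof: the reduction to reduced words and to $a\in\bigcup_j J\lambda_j(M_j)J$; the identity $[\lambda_i(e_B),a]=\delta_{i,j}[e_B,a]=\delta_{i,j}\left(e_B a q_i-q_i a e_B\right)\in\mathbb{X}_B$; and the observation that for $N_{i,0}=C^*(M_i,e_B)$ one has $N_{i,0}\,e_B\subseteq\overline{M_ie_B}$ in norm (since $e_BM_ie_B=E_i(M_i)e_B$ forces $C^*(M_i,e_B)=\overline{M_i+M_ie_BM_i}$), which is exactly what makes Condition (A) unnecessary in this proposition. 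Two corrections before the main point: the identity you invoke, $\langle M_i,e_B\rangle e_B=\overline{M_ie_B}$ in norm, is \emph{false} for the von Neumann algebraic basic construction --- its failure is precisely why the paper needs Condition (A) and the $\mathbb{C}$-$M$-topology in Proposition \ref{prop: N is in S} --- it holds only with $C^*(M_i,e_B)$ in place of $\langle M_i,e_B\rangle$, which is the version you actually use. Also, your telescoping collapses a flank only if the approximants are chosen centered, $m_j\in M_{i_j}^o$; with non-centered $m_j$ the range of the partial word acquires components beginning with indices further down the word, which can coincide with $i_s$ and escape $\lambda_{i_s}(e_B)$. Both points are repairable, but they must be said.

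The genuine gap is the final claim that each Leibniz term, after telescoping, has the shape $(\text{element of }M)\cdot(\text{element of }\mathbb{X}_B)\cdot(\text{element of }M)$. This fails whenever the flank on the far side of the commutator itself carries $e_B$-content, which is the generic case. Concretely, take $n=2$, $w=\lambda_1(T_1)\lambda_2(T_2)\lambda_1(m_3'e_Bm_3'')$ and $a\in J\lambda_1(M_1)J$. The Leibniz term at the first site is, after norm approximation of $T_1$, a sum of terms $\lambda_1(m')[e_B,a]\,\lambda_1(m'')\lambda_2(T_2)\lambda_1(m_3')\lambda_1(e_B)\lambda_1(m_3'')$. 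The right factor contains $\lambda_1(e_B)$, so it is not an element of $M$, and nothing in your argument makes it a multiplier of $\mathbb{X}_B$; since a hereditary C$^*$-subalgebra absorbs only two-sided multiplication, the term a priori lies in $M\,\mathbb{X}_B\,\mathbb{B}(L^2M)$, which is not contained in $\mathbb{X}_B$ or in $\mathbb{K}^{\infty,1}_B(M)$ by any step you have established. Your collapse cannot be applied directly because $a$ sits between the $e_B$ produced by the commutator and the rest of the word: to clear it you must commute $a$ past the $\lambda_1(e_B)$ in the third letter, and this produces exactly another commutator $[\lambda_1(e_B),a]$ attached to a strictly shorter word. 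So the argument does not close in one pass; it can be closed by strengthening the statement and inducting on the length of the reduced word (the extra terms involve shorter words), but that induction is a missing idea, not routine bookkeeping.

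This is also where your route genuinely diverges from the paper's. The paper never Leibniz-expands a long word. Instead it first runs a Wick-type reduction: each generator is decomposed as $\lambda_i(x)=\lambda_i^+(x)+\lambda_i^0(x)+\lambda_i^-(x)$ relative to $\lambda_i(e_B)$, and the relations $\lambda_i^-(x)\lambda_j^+(y)=\delta_{i,j}E(xy)\lambda_i(e_B)$ and $\lambda_i^0(x)\lambda_j^+(y)=\delta_{i,j}\lambda_j^+(xy-E(xy))$ are used to rewrite every word into a canonical form in which the entire $e_B$-content is concentrated at a single site ($e_Bbe_B$, a single $\lambda_l^0(z)$, or a single pair $\lambda_i(e_B)\lambda_j(e_B)$) flanked only by elements of $M$. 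The commutator with $a$ then has a single surviving term, flanked by multipliers of $\mathbb{X}_B$, and lands in $M[e_B,a]M\subseteq\mathbb{X}_B$ immediately. If you want to keep your Leibniz-first organization, supply the induction on word length; otherwise the canonical-form reduction is the cleaner repair.
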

\begin{proof}
    Recall that $ \lambda_i(e_B) $ is the projection onto the space of vectors consisting of $L^2B$ and the tensors beginning with $ L^2M_i^o $. In particular, we have $ \sum_{i=1}^n \lambda_i(e_B)^\perp = 1-e_B $ and $e_B\in C^*( M_i,\lambda_i(e_B):i=1,\cdots ,n )$. We need to show that a word in $\{\lambda_{i}(M_i^o),B,\lambda_{i}(e_B)^{\perp},e_B\}$ always belongs to $\mathbb{S}_{B}(M)$. 

    For a  $x\in M_i^o$, we consider the following decomposition
    \[ \lambda_i(x) = \lambda_i^+(x)+\lambda_i^0(x)+\lambda^{-}_i(x)\coloneqq \lambda_{i}(x)\lambda_i(e_B)+ \lambda_i(e_B)^{\perp}\lambda_i(x)\lambda_i(e_B)^{\perp} + \lambda_i(e_B)\lambda_i(x), \]
    which holds because 
    \[\lambda_i(e_B)^\perp \lambda_i(x)\lambda_i(e_B) =  \lambda_i(x)\lambda_i(e_B), \quad  \lambda_i(e_B) \lambda_i(x)\lambda_i(e_B)^\perp =  \lambda_i(e_B)\lambda_i(x) .\]Note also that $ \lambda_i^+(x) = (\lambda_i^{-}(x^*))^* $. Since the right support of $ \lambda_i^+(x) $ is contained in $ e_B+\sum_{k\neq i}\lambda_k(e_B)^\perp $, we can further decompose $ \lambda_i^+(x) $ (and similarly for $\lambda_i^-(x)$) as
    \[ \lambda_i^+(x) = \lambda_i^+(x)e_B+ \sum_{k\neq i}\lambda^+_i(x)\lambda_k(e_B)^\perp = \lambda_i(x)e_B+ \sum_{k\neq i}\lambda_i(x)\lambda_k(e_B)^\perp.\]
    Therefore, we only need to check for the words in
    \[ \{ B,\lambda_i^+(x)\lambda_j(e_B)^{\perp},  \lambda_j(e_B)^{\perp}\lambda_i^-(x), \lambda_i^+(x)e_B, e_B\lambda_i^-(x), \lambda_i^0(x):x\in M^o_i, i\neq j \}, \]
    where we can intuitively regard $ \lambda_i^+(x)\lambda_j(e_B)^{\perp}, \lambda_i^+(x)e_B$ as creation operators, $ \lambda_j(e_B)^{\perp}\lambda_i^-(x), e_B\lambda_i^-(x) $ as annihilation operators, and $B, \lambda_i^0(x) $ as preservation operators.
    
    It is readily verified that the following relations hold:
    \begin{equation*}
        \lambda^-_{i}(x)\lambda_j^+(y) = 
            \delta_{i,j}E(xy)\lambda_i(e_B),\quad \lambda^0_{i}(x)\lambda_j^+(y) =\delta_{i,j}\lambda_j^+(xy-E(xy)).
    \end{equation*}
    Similarly, taking the adjoint for the second equation, we obtain $\lambda^-_{i}(x)\lambda_j^0(y) =
            \delta_{i,j}\lambda_j^-(xy-E(xy))$.
    Using the relations above, we can rewrite any word as a sum of words of in one of the following three forms:
    \[\begin{cases}
        \lambda_{i_1}^+(x_{1})\lambda_{k_1}(e_B)^{\perp}\cdots \lambda_{i_n}^+(x_n) e_Bbe_B\lambda_{j_1}^-(y_{1})\lambda_{l_2}(e_B)^{\perp}\cdots \lambda_{l_m}(e_B)^{\perp}\lambda_{j_m}^-(y_m),& b\in B,x_k,y_k\in \bigcup_{i=1}^n M_i^{o},\\
        \lambda^+_{i_1}(x_1)\lambda_{k_1}(e_B)^{\perp}\cdots \lambda^+_{i_n}(x_n)\lambda_{k_n}(e_B)^{\perp}\lambda^0_{l}(z)\lambda_{l_1}(e_B)^{\perp}\lambda^-_{j_1}(y_1)\cdots\lambda_{l_m}(e_B)^{\perp}\lambda^-_{j_m}(y_m), &x_k,y_k,z\in \bigcup_{i=1}^n M_i^{o},\\
        \lambda^+_{i_1}(x_1)\lambda_{k_1}(e_B)^{\perp}\cdots \lambda^+_{i_n}(x_n)\lambda_{k_n}(e_B)^{\perp}\lambda_{l_1}(e_B)^{\perp}\lambda^-_{j_1}(y_1)\cdots\lambda_{l_m}(e_B)^{\perp}\lambda^-_{j_m}(y_m), &x_k,y_k\in \bigcup_{i=1}^n M_i^{o},
    \end{cases}, \]
    which is nonzero only when $ i_1\neq i_2\neq \cdots \neq i_n \neq l\neq j_1\neq \cdots \neq j_m $, $ k_s=i_{s+1} $, and $ l_{s+1}=j_s $. In fact, if a word of the above form is nonzero, then all the $ \lambda_k(e_B)^\perp $ may be omitted, since $ k_s=i_{s+1} $ implies that $ \lambda_{k_s}(e_B)^\perp \lambda_{i_s}^+(x_i) = \lambda_{i_s}^+(x_i) $, and an analogous argument holds in the case of annihilation. Therefore, without loss of generality, we may assume that a reduced word has one of the following forms:
    \[ \begin{cases}
        \lambda_{i_1}^+(x_{1})\cdots \lambda_{i_n}^+(x_n) e_Bbe_B\lambda_{j_1}^-(y_{1})\cdots \lambda_{j_m}^-(y_m),& b\in B,x_k,y_k\in \bigcup_{i=1}^n M_i^{o},\\
        \lambda^+_{i_1}(x_1)\cdots \lambda^+_{i_n}(x_n)\lambda^0_{l}(z)\lambda^-_{j_1}(y_1)\cdots\lambda^-_{j_m}(y_m), &x_k,y_k,z\in \bigcup_{i=1}^n M_i^{o},\\
        \lambda^+_{i_1}(x_1)\cdots \lambda^+_{i_n}(x_n)\lambda^-_{j_1}(y_1)\cdots\lambda^-_{j_m}(y_m), &x_k,y_k\in \bigcup_{i=1}^n M_i^{o},
    \end{cases} \]
    which, whenever nonzero, also has the equivalent expression
    \[ \begin{cases}
        \lambda_{i_1}(x_{1})\cdots \lambda_{i_n}(x_n) e_Bbe_B\lambda_{j_1}(y_{1})\cdots \lambda_{j_m}(y_m),& b\in B,x_k,y_k\in \bigcup_{i=1}^n M_i^{o},\\
        \lambda_{i_1}(x_1)\cdots \lambda_{i_n}(x_n)\lambda^0_{l}(z)\lambda_{j_1}(y_1)\cdots\lambda_{j_m}(y_m), &x_k,y_k,z\in \bigcup_{i=1}^n M_i^{o},\\
        \lambda_{i_1}(x_1)\cdots \lambda_{i_n}(x_n)\lambda_{i_n}(e_B)\lambda_{j_1}(e_B)\lambda_{j_1}(y_1)\cdots\lambda_{j_m}(y_m), &x_k,y_k\in \bigcup_{i=1}^n M_i^{o},
    \end{cases} \]

    Now, to show that $[N_{1,0}\ast_B \cdots \ast_B N_{n,0},M' ]\subset \mathbb{K}^{\infty,1}_B(M) $, it suffices to check for any word $w$ above, $[w,a']\in \mathbb{X}_B\subset \mathbb{K}^{\infty,1}_B(M)$ for all $a'\in \bigcup_{i=1}^n JM_iJ$ as $ M' $ is generated by $ \bigcup_{i=1}^n JM_iJ $. For this, we first notice 
    $$ [ \lambda_{i_1}(x_{1})\cdots \lambda_{i_n}(x_n) e_Bbe_B\lambda_{j_1}(y_{1})\cdots \lambda_{j_m}(y_m),a' ]\subset M[e_B,a']M\subset \mathbb{X}_B ,\quad\forall a'\in M'. $$
    Similarly, for words of the third form, we have
$$ [ \lambda_{i_1}(x_1)\cdots \lambda_{i_n}(x_n)\lambda_{i_n}(e_B)\lambda_{j_1}(e_B)\lambda_{j_1}(y_1)\cdots\lambda_{j_m}(y_m),a' ]\subseteq M[\lambda_{i_n}(e_B)\lambda_{j_1}(e_B),a']M = M[\sum_{i\neq i_n,j_1}\lambda_{i}(e_B)^\perp,a']M. $$
We claim that $ [\lambda_{i}(e_B)^\perp,a']=-\delta_{i,j}[e_B,a'] $ if $ a'\in J\lambda_j(M_j)J $, and therefore words of the third form also belong to $ \mathbb{X}_B $. Indeed, since $ \lambda_{i}(N_i) $ commutes with $ J\lambda_j(M_j)J $ for $i\neq j$, we have $ [\lambda_{i}(e_B)^\perp,a'] = 0 $ if $i\neq j$. If $i=j$, then we have $ [\lambda_{i}(e_B)^\perp,a']= [1-e_B-\sum_{k\neq i}\lambda_k(e_B)^\perp ,a']=-[e_B,a'] $.

    Finally, for the words of the second form, we have
    $$ [ \lambda_{i_1}(x_1)\cdots \lambda_{i_n}(x_n)\lambda^0_{l}(z)\lambda_{j_1}(y_1)\cdots\lambda_{j_m}(y_m),a' ]\subseteq M[\lambda_{l}(e_B)^{\perp}\lambda_{l}(z)\lambda_{l}(e_B)^{\perp},a']M. $$
    Thus, it only remains to show $ [\lambda_{l}(e_B)^{\perp}\lambda_{l}(z)\lambda_{l}(e_B)^{\perp},a']\in \mathbb{X}_B $ for any $z\in M_l$ and $a'\in J\lambda_j(M_j) J$ for some $j$. If $ l\neq j $, we have $  [ \lambda_{l}(e_B)^{\perp}\lambda_{l}(z)\lambda_{l}(e_B)^{\perp},a' ]=0 $. If $ l=j $, then we also have
    \begin{align*}
        [ \lambda_{l}(e_B)^{\perp}\lambda_{l}(z)\lambda_{l}(e_B)^{\perp},a' ] &= [ \lambda_l(z)-\lambda_l(e_B)\lambda_l(z)-\lambda_l(z)\lambda_l(e_B)+E(z)\lambda_l(e_B),a' ]\\
        &=-[\lambda_l(e_B),a']\lambda_l(z)-\lambda_l(z)[\lambda_l(e_B),a' ]+E(z)[\lambda_l(e_B),a' ]\in M[\lambda_{l}(e_B),a']M\subset \mathbb{X}_B,
    \end{align*}
    where the last inclusion follows from the fact that $ [\lambda_{l}(e_B),a'] = [e_B+\sum_{k\neq l}\lambda_k(e_B),a' ] = [e_B,a'] $ for all $a'\in JM_lJ$.
\end{proof}

\begin{thm}[\protect{Theorem \ref{thm: relative biexactness injective}}]\label{cor:injectiveAFPbiexact}
    If $M_1,\cdots, M_n$ are injective von Neumann algebras admitting a common subalgebra $B$ with expectation, then $ M=M_1\overline{*}_B \cdots \overline{*}_B M_n $ is biexact relative to $B$.
\end{thm}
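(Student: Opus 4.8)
The plan is to combine the two main technical results already established in this section, exactly paralleling the structure of the proof of Proposition \ref{prop: amalg free prod rel biex} in the weakly exact case. Recall that $M$ being biexact relative to $B$ means, by definition, that the inclusion $M \subset \mathbb{S}_{B}(M)$ is $M$-nuclear. The strategy is to first establish the corresponding nuclearity statement at the level of the ultraweakly dense reduced AFP C$^*$-algebra $M_r = M_1 \ast_B \cdots \ast_B M_n$, and then upgrade it to $M$ via the density/automatic-extension machinery from \cite{ding2023biexact}.

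The first step is to invoke the injective case of Corollary \ref{cor: nuclear embedding of free product of M_i}: since each $M_i$ is injective, the identity inclusion $M_i \subset M_i$ is $M_i$-nuclear, hence $M_i \subset C^*(M_i, e_B)$ is $M_i$-nuclear, and Theorem \ref{thm: main 3} (applied with $A_i = M_i$ and $N_{i,0} = C^*(M_i,e_B)$) yields that
\[ M_r = M_1 \ast_B \cdots \ast_B M_n \subset C^*(M_i, \lambda_i(e_B) : i=1,\cdots,n) \]
is $(M_r \subset M)$-nuclear. The second step is to apply Proposition \ref{prop: inside S_B}, which identifies the target of this embedding as $C^*(M_i,\lambda_i(e_B):i=1,\cdots,n) \subseteq \mathbb{S}_{B}(M)$. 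Composing these two facts shows that the inclusion $M_r \subset \mathbb{S}_{B}(M)$ is $(M_r \subset M)$-nuclear.

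The final step is the upgrade from the dense C$^*$-subalgebra to the ambient von Neumann algebra. Since $M_r$ is ultraweakly dense in $M$ and the inclusion $M_r \hookrightarrow \mathbb{S}_{B}(M)$ is $(M_r \subset M)$-nuclear, \cite[Corollary 4.9]{ding2023biexact} promotes this to the $M$-nuclearity of $M \subset \mathbb{S}_{B}(M)$, which is precisely the assertion that $M$ is biexact relative to $B$.

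I do not expect any genuine obstacle here, as both nontrivial ingredients have already been proven: the hard analytic content lives in Theorem \ref{thm: main 3} (the Toeplitz--Pimsner factorization establishing the nuclear embedding, which crucially does not require weak exactness once injectivity of the $M_i$ forces the identity map to be nuclear) and in Proposition \ref{prop: inside S_B} (the word-reduction argument placing the generated C$^*$-algebra inside the small-at-infinity boundary $\mathbb{S}_B(M)$). The only point requiring care is purely bookkeeping: one must check that the \emph{same} operator system $C^*(M_i,\lambda_i(e_B):i=1,\cdots,n)$ serves simultaneously as the target of the nuclear embedding in Corollary \ref{cor: nuclear embedding of free product of M_i} and as the subsystem of $\mathbb{S}_B(M)$ in Proposition \ref{prop: inside S_B}, which is immediate from the explicit identification $C^*(M_1,e_B)\ast_B\cdots\ast_B C^*(M_n,e_B) = C^*(M_i,\lambda_i(e_B):i=1,\cdots,n)$ recorded in both statements.
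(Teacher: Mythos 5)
Your proposal is correct and follows essentially the same route as the paper's own proof: the paper likewise combines Corollary \ref{cor: nuclear embedding of free product of M_i} (injective case) with Proposition \ref{prop: inside S_B} to get $(M_r\subset M)$-nuclearity of $M_r \hookrightarrow C^*(M_1,e_B)\ast_B\cdots\ast_B C^*(M_n,e_B)\subset \mathbb{S}_B(M)$, and then uses ultraweak density of $M_r$ in $M$ (via \cite[Corollary 4.9]{ding2023biexact}) to conclude that $M\subset \mathbb{S}_B(M)$ is $M$-nuclear. Your bookkeeping remark about the two descriptions of the target C$^*$-algebra matching is also exactly how the paper records it.
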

\begin{proof}
    By Corollary \ref{cor: nuclear embedding of free product of M_i} and Proposition \ref{prop: inside S_B}, we obtain that the inclusion
    $$M_r= M_1\ast_B \cdots \ast_B M_n\xhookrightarrow{} C^*(M_1,e_B)\ast_B\cdots \ast_B C^*(M_n,e_B) \subset \mathbb{S}_B(M)$$
    is $ (M_r\subset M) $-nuclear, thus the inclusion
    $ M\subset \mathbb{S}_B(M) $
    is $M$-nuclear as $M_r$ is ultraweakly dense in $M$.
\end{proof}

The following corollary slightly generalizes \cite[Theorem 4.4]{MR3594283} under the additional assumption that $M_i$'s are injective.

\begin{corollary}
    If $M_1, \cdots, M_n$ are $\sigma$-finite injective von Neumann algebras admitting a common unital von Neumann subalgebra $B$ with normal conditional expectations, then for every finite von Neumann subgalgebra $N\subset M =  M_1\overline{\ast}_B \cdots \overline{\ast}_B M_n$ with expectation, either $N \preceq_{M} B$, or the relative commutant $N'\cap M$ is amenable. 
    
    In particular, every separable finite subfactor $N\subset M$ with expectation is either McDuff or prime.
\end{corollary}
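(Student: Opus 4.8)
The plan is to deduce both assertions from the relative biexactness of $M$ together with the intertwining dichotomy of Proposition~\ref{prop DP23 6 13}.

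First I would invoke Theorem~\ref{cor:injectiveAFPbiexact} to conclude that $M = M_1\overline{\ast}_B\cdots\overline{\ast}_B M_n$ is biexact relative to $B$, i.e.\ relative to the boundary piece $\mathbb{X}_{\{B\}}$. To feed this into Proposition~\ref{prop DP23 6 13} I must check that $M$ is $\sigma$-finite: since $M_1$ is $\sigma$-finite and $B$ is the range of the normal conditional expectation $E_1\colon M_1\to B$, the algebra $B$ is $\sigma$-finite, and if $\omega$ is a faithful normal state on $B$ then $\omega\circ E$ is a faithful normal state on $M$. Applying Proposition~\ref{prop DP23 6 13} with the one-element family $\mathcal{B}=\{B\}$ then yields the first dichotomy at once: for every finite von Neumann subalgebra $N\subset M$ with expectation, either $N\preceq_M B$ or $N'\cap M$ is amenable.

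For the ``in particular'' statement, the observation I would record first is that $B$ is \emph{injective}, being the range of the normal conditional expectation $E_1$ out of the injective algebra $M_1$. From this I would derive the implication that any finite subfactor $P\subset M$ with expectation satisfying $P\preceq_M B$ is in fact amenable: by Theorem~\ref{thm: intertwining by bimodule} there are projections $e\in\mathcal{P}(P)$, $f\in\mathcal{P}(B)$ and a unital normal $\ast$-homomorphism $\phi\colon ePe\to fBf$ with $\phi(ePe)\subset fBf$ with expectation; as $fBf$ is injective, so is $\phi(ePe)\cong ePe$, and since $P$ is a factor this forces $P\cong\mathcal{R}$. Assuming now that a separable finite subfactor $N\subset M$ with expectation is not prime (the type~I case being trivially prime), I would write $N\cong N_1\overline{\otimes}N_2$ with $N_1,N_2$ infinite-dimensional II$_1$ factors, observe that each $N_k$ is again a finite subfactor of $M$ with expectation (compose the expectation onto $N$ with a slice expectation), and apply the dichotomy to $N_1$. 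In the case $N_1\preceq_M B$ the implication above gives $N_1\cong\mathcal{R}$, so $N\cong\mathcal{R}\overline{\otimes}N_2$ is McDuff; in the complementary case $N_1'\cap M$ is amenable, so $N_2\subset N_1'\cap M$ is amenable, $N_2\cong\mathcal{R}$, and $N\cong N_1\overline{\otimes}\mathcal{R}$ is again McDuff.

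The only genuinely delicate step is the amenability transfer in the case $P\preceq_M B$; every other step is a direct application of an already-established result. The crux is that the injectivity hypothesis on the $M_i$ (hence on $B$) is precisely what promotes a partial corner embedding into $B$ to full injectivity of the subfactor, so that for a non-amenable subfactor an intertwining into $B$ is impossible and the dichotomy collapses to amenability of the relative commutant.
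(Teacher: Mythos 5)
Your proposal is correct and takes essentially the same route as the paper's proof: relative biexactness of $M$ with respect to $B$ (Theorem \ref{cor:injectiveAFPbiexact}) fed into Proposition \ref{prop DP23 6 13}, followed by the same two-case tensor-splitting argument in which injectivity of $B$ upgrades $N_1\preceq_M B$ to amenability of $N_1$, while the other case transfers amenability of $N_1'\cap M$ to $N_2$. The one step you leave implicit — that $N_2\subset N_1'\cap M$ inherits amenability — is not automatic for arbitrary subalgebras and requires restricting your expectation $E_2\circ E\colon M\to N_2$ to a conditional expectation $N_1'\cap M\to N_2$, which is precisely the point the paper spells out; since you already constructed that expectation, this is a presentational omission rather than a gap.
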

\begin{proof}
    By the above theorem, $M$ is biexact relative to $B$. The first statement then follows from  Proposition \ref{prop DP23 6 13}. In particular, since $B$ is injective, any finite subalgebra $N$ with no amenable direct summand must have amenable relative commutant $N'\cap M$ as $N \npreceq_{M} B$.

    Now suppose $N \subset M$ is a separable finite subfactor with a conditional expectation $E\colon M\to N$. If $N$ is not prime, then $N = N_{1} \overline{\otimes} N_{2}$ for some diffuse tracial factors $N_{1}, N_{2}$, and there exist normal conditional expectations $E_{i}\colon N \to N_{i}$. If $N_{1}$ is amenable, then $N_{1}$ is isomorphic to the hyperfinite II$_{1}$ factor $\mathcal{R}$ and thus $N\simeq \mathcal{R} \overline{\otimes} N_{2}$ is McDuff. If $N_{1}$ is not amenable, then we know $N_{1}'\cap M$ is amenable by the first paragraph and contains $N_{2}$. The composition $E_{2}\circ \restr{E}{N_{1}'\cap M} \colon N_{1}'\cap M \to N_{2}$ gives a conditional expectation from $N_{1}'\cap M$ onto $N_{2}$, so $N_{2}$ is amenable. Hence $N_{2} \simeq \mathcal{R}$ and again we conclude that $N \simeq N_{1}\overline{\otimes} \mathcal{R}$ is McDuff.

\end{proof}

\section{Upgrading theorem} \label{sec: upgrading theorem}

We recall the notations from Section \ref{sec: biduals}. Given a (not necessarily unital) $*$-subalgebra $A \subset \B(L^2M)$, we let $\overline{q}_A \coloneqq \mathbf{1}_{A^{**}}$ denote the unit of $A^{**} \subset \B(L^2M)^{**}$. In the case when $[\overline{q}_{A},p_{\nor}^{\sharp}]=0$, we define $ q_{A} \coloneqq p^{\sharp}_{\nor}\overline{q}_A$.

\subsection{Identities for bidual of nonunital $*$-subalgebras}

Suppose $N\subset M$ is a von Neumann subalgebra with expectation. We recall from \cite[Lemma 3.8]{DS24structure} that $[p_{\nor}^{\sharp},\iota(e_N)]=0$. In particular, $ C^*( M,JMJ,e_N ) $ always lies inside the multiplicative domain of $\iota^{\sharp}$.

Using the basic construction, we can build several non-unital C$^*$-subalgebras of $ \B(L^2M) $ commuting with $p^{\sharp}_{\nor}$ in the bidual: 
\[ \overline{Me_NM},\quad \overline{JMJe_NJMJ},\quad  \overline{MJMJe_NMJMJ e_NMJMJ}, \] 
(here we are taking the norm closure) and similarly their respective hereditary version 
\[\overline{Me_N \B(L^2M)e_NM},\quad \overline{JMJe_N \B(L^2M)e_NJMJ},\quad  \mathbb{X}_{N}\coloneqq \overline{MJMJe_N \B(L^2M)e_NMJMJ}.\] 
It is then natural to identify the identities of the biduals of these subalgebras, respectively.

\begin{lemma}\label{lem:identityofbidual}
    We have
    \[\overline{q}_{{Me_NM} }= \overline{q}_{ {Me_N \B(L^2M)e_NM} } = \vee_{u\in \mathcal{U}(M)} \iota(ue_Mu^*),\quad \overline{q}_{ {JMJe_NJMJ} }= \overline{q}_{ {JMJe_N \B(L^2M)e_NJMJ} } = \vee_{u\in \mathcal{U}(M)} \iota(JuJe_MJu^*J), \]
    and similarly,
    \[ \overline{q}_{\mathbb{X}_{N}}= \overline{q}_{ {MJMJe_NMJMJ e_NMJMJ} } = \vee_{ u,v\in \mathcal{U}(M) } \iota( uJvJe_NJv^*Ju^* ). \]
\end{lemma}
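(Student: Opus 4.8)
The plan is to prove all three identities from a single template, since they differ only in which family of unitaries does the conjugating (unitaries $w$ drawn from $\mathcal{U}(M)$, from $J\mathcal{U}(M)J$, or products $uJvJ$) and in whether the ambient bimodule structure is $M$-$M$, $M'$-$M'$, or two-sided. Throughout I write $q$ for the relevant supremum $\bigvee_w \iota(we_Nw^*)$ and use the characterization of the support projection: $\overline{q}_A=\mathbf 1_{A^{**}}$ is the smallest projection in $\B(L^2M)^{**}$ acting as a two-sided unit on $\iota(A)$, and for a projection $r\in\B(L^2M)^{**}$ one has $r\ge\overline{q}_A$ if and only if $r\iota(a)=\iota(a)r=\iota(a)$ for all $a\in A$. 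The easy inequality $q\le\overline{q}_A$ is immediate: each conjugate $we_Nw^*=(we_N)(we_N)^*$ is a projection lying in the algebra in question (e.g.\ $ue_Nu^*\in Me_NM$, and $we_Nw^*=we_N\cdot 1\cdot e_Nw^*$ exhibits membership in each hereditary version), so $\iota(we_Nw^*)\le\overline{q}_A$ and hence $q\le\overline{q}_A$.

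For the reverse inequality I would show that $q$ is itself a two-sided unit for $\iota(A)$ on a generating set of words $x=p_0e_Np_1e_N\cdots e_Np_k$, where each $p_i$ ranges over the appropriate $*$-algebra ($M$, $JMJ$, or $P=MJMJ$). The reduction proceeds in three moves. First, using the compression inequality $e_NZe_N\le\|Z\|e_N$ I can peel all interior factors, obtaining $xx^*\le\|Z\|\,p_0e_Np_0^*$ and hence that the left support projection of $\iota(x)$ is dominated by that of $\iota(p_0e_N)$. Second, writing the coefficient $p_0$ as a finite combination $\sum_k c_kw_k$ of unitaries from the relevant group, I get $p_0e_N=\sum_k c_k\,w_ke_N$, and by subadditivity of left supports in the von Neumann algebra $\B(L^2M)^{**}$ the left support of $\iota(p_0e_N)$ is at most $\bigvee_k$ of the left supports of $\iota(w_ke_N)$. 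Third, for a unitary $w$ the left support of $\iota(we_N)$ is the support of $\iota(we_Nw^*)$, which equals $\iota(we_Nw^*)$ itself because $we_Nw^*$ is a projection and $\iota$ is a $*$-homomorphism, so it is $\le q$. Chaining these, the left support of $\iota(x)$ is $\le q$, whence $q\iota(x)=\iota(x)$; extending by linearity and norm-density, and then by self-adjointness of the algebra to the right action, $q$ is a two-sided unit and $\overline{q}_A\le q$.

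To pass between the "thin" algebras and their hereditary companions I would invoke the standard principle that a $C^*$-subalgebra $A$ and the hereditary subalgebra $\overline{A\B(L^2M)A}$ it generates share the same support projection, which holds because $\overline{q}_A$, being a two-sided unit for $\iota(A)$, is automatically a two-sided unit for $\iota(A\B(L^2M)A)$. Here I would record the algebraic identifications $Me_N\B(L^2M)e_NM=\overline{(Me_NM)\,\B(L^2M)\,(Me_NM)}$ and, with $P=MJMJ$, $\mathbb{X}_N=\overline{Pe_N\B(L^2M)e_NP}=\overline{C^*(Pe_N)\,\B(L^2M)\,C^*(Pe_N)}$, so that these equal the hereditary algebras generated by $Me_NM$ and by $C^*(Pe_N)$ respectively; the $JMJ$ case is identical after replacing $M$ by $M'=JMJ$. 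Combined with the two inequalities above, this yields all three chains of equalities.

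The main obstacle is the non-normality of $\iota$: range and support projections live in $\B(L^2M)^{**}$ and cannot be transported through $\iota$ as normal suprema, so the whole argument must be phrased using only that $\iota$ preserves projections and the order implication "$r\iota(y)=\iota(y)\Rightarrow r\ge$ support of $\iota(y)$." The genuinely delicate point is the reduction of an arbitrary word with several $e_N$'s and arbitrary $M$-$M'$ coefficients down to a single unitary conjugate $we_Nw^*$; this is exactly where the compression inequality and the lattice subadditivity of left supports are needed, and they must be applied in the correct order. A secondary bookkeeping issue, which I would flag explicitly, is that $MJMJe_NMJMJe_NMJMJ$ is not literally closed under multiplication, so $\overline{q}$ for it must be read as the support projection of the generated $C^*$-algebra $C^*(MJMJe_N)$, which one checks coincides with the closed linear span written.
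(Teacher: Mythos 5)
Your proof is correct and takes essentially the same route as the paper's: the paper likewise reduces to spanning elements whose outer coefficients are unitaries (using that every element of $M$, of $JMJ$, or of the span of $MJMJ$ is a finite linear combination of unitaries $u$, $JvJ$, or $uJvJ$, respectively) and then observes that $we_Nw^*$ exactly fixes such elements on the left and right --- which is precisely what your compression inequality plus subadditivity of left supports recovers --- before passing to the bidual by norm density and separate weak$^*$-continuity of multiplication; the only organizational difference is that the paper runs the argument once, directly on the hereditary algebra $\mathbb{X}_N$, and declares the remaining cases ``similar,'' whereas you treat the thin algebras first and then transfer by your thin-to-hereditary support-projection principle. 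One small correction: your closing claim that $C^*(MJMJe_N)$ coincides with the closed linear span of $MJMJe_NMJMJe_NMJMJ$ is not literally true --- that $C^*$-algebra contains words with three or more occurrences of $e_N$, and a middle factor $e_N xJyJ e_N$ does not reduce to the two-$e_N$ form --- but this is harmless: the displayed span is a norm-closed, $*$-closed subspace whose support projection is well defined and is all the lemma refers to, and your word-reduction argument shows that this support projection, that of $C^*(MJMJe_N)$, and that of $\mathbb{X}_N$ all coincide with $\vee_{u,v\in\mathcal{U}(M)}\iota(uJvJe_NJv^*Ju^*)$.
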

\begin{proof}
    We only prove $ \overline{q}_{\mathbb{X}_{N}}= \vee_{ u,v\in \mathcal{U}(M) } \iota( uJvJe_NJv^*Ju^* ) $ as the proof for other equalities are similar. For this, we note that the linear span of the elements of the form $ x=uJvJe_N Te_N\tilde{u}J\tilde{v}J$ with $u,\tilde{u}, v,\tilde{v}\in \mathcal{U}(M)$ is norm dense in $\mathbb{X}_{N}$. However, as $ ( uJvJe_NJv^*Ju^* )x =uJvJe_N Te_N\tilde{u}J\tilde{v}J=x  $, the left support of $\iota(x)$ is bounded by $ \iota(uJvJe_NJv^*Ju^*)\leqslant \vee_{ u,v\in \mathcal{U}(M) } \iota( uJvJe_NJv^*Ju^* )$, and the same holds for its right support. Since $ \iota( \mathbb{X}_{N} ) $ is weak$^*$-dense in $ \mathbb{X}_{N}^{**} $ and $ \vee_{ u,v\in \mathcal{U}(M) } \iota( uJvJe_NJv^*Ju^* ) \in \mathbb{X}_{N}^{**}$, we conclude that $  \vee_{ u,v\in \mathcal{U}(M) } \iota( uJvJe_NJv^*Ju^* ) $ is the identity of $ \mathbb{X}_{N}^{**} $.
\end{proof}

    The same proof can be used to show that for any $M$-boundary piece $\mathbb{X}$ generated by a given projection $e\in \B(L^2M)$, it is always $ \overline{q}_{\mathbb{X}}= \vee_{u,v\in \mathcal{U}(M)} \iota\left( uJvJeJv^*Ju^* \right)  $.

\begin{remark}
    Since all of $ \iota(M),\iota(JMJ),\iota(e_N)$ commute with $ p^{\sharp}_{\nor} $, by applying $ \Ad(p^{\sharp}_{\nor}) $ to $ \overline{q}_{Me_NM} $, $ \overline{q}_{JMJe_NJMJ} $, and $ \overline{q}_{\mathbb{X}_N} $, we also obtain
    \[ {q}_{{Me_NM} }= {q}_{ {Me_N \B(L^2M)e_NM} } = \vee_{u\in \mathcal{U}(M)} \iota^{\sharp}(ue_Mu^*),\hspace{0.6em} {q}_{ {JMJe_NJMJ} }= {q}_{ {JMJe_N \B(L^2M)e_NJMJ} } = \vee_{u\in \mathcal{U}(M)} \iota^{\sharp}(JuJe_MJu^*J), \]
    and similarly,
    \[ {q}_{\mathbb{X}_{N}}= {q}_{ {MJMJe_NMJMJ e_NMJMJ} } = \vee_{ u,v\in \mathcal{U}(M) } \iota^{\sharp}( uJvJe_NJv^*Ju^* ). \]
    We also note that $ \overline{q}_{Me_NM} $ is the smallest projection in $(M^{**})' $ that majorizes $ \iota(e_N) $, and ${q}_{Me_NM}$ is the smallest projection in $\iota^{\sharp}(M)'\cap \B(L^2M)^{\sharp *}_J$ that majorizes $\iota^{\sharp}(e_N)$. If we have a nondegenerate normal representation $\B(L^2M)^{**}\subset \B(\K)$, then $ \overline{q}_{\mathbb{X}_{N}}\K = \overline{M^{**}\iota(e_N)\K} $ and $  {q}_{\mathbb{X}_{N}}\K = \overline{q}_{\mathbb{X}_{N}}p^{\sharp}_{\nor}\K= \overline{p^{\sharp}_{\nor}M^{**}\iota(e_N)\K} $. We can similarly describe $ \overline{q}_{\mathbb{X}_N} $ as the smallest projection in $ (M^{**}\vee JMJ^{**})' $ majorizing $\iota(e_N)$, and $ \overline{q}_{\mathbb{X}_N}\K = \overline{ M^{**}JMJ^{**}\iota(e_N)\K }$.
\end{remark}

\begin{lemma} \label{lem: q_X_F is union}
    If $\mathcal{F}=\{N_i\}_{i\in I}$ is a family of von Neumann subalgebras of the von Neumann algebra $M$ with expectation, and $ \mathbb{X}_{\mathcal{F}} $ is the $M$-boundary piece generated by $ \{e_{N_i}\}_{i\in I} $ (equivalently by $ \{\mathbb{X}_{N_i}\}_{i\in I} $), then
    \[\overline{q}_{\mathbb{X}_{\mathcal{F}}}=\vee_{i} \;\overline{q}_{\mathbb{X}_{N_i}}=\vee_{ u,v\in \mathcal{U}(M) } \vee_{i}\iota\left( uJvJ e_{N_i}Jv^*Ju^* \right). \]
    In particular, as $p^{\sharp}_{\nor}\in (M^{**}\vee JMJ^{**}\vee \C \iota(e_{N_i}))'$, we also have
    \[{q}_{\mathbb{X}_{\mathcal{F}}}=\vee_{ u,v\in \mathcal{U}(M) } \vee_{i}\iota^{\sharp}\left( uJvJ e_{N_i}Jv^*Ju^* \right) = \vee_{i=1}^n {q}_{\mathbb{X}_{N_i}}. \]
\end{lemma}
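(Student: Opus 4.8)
The plan is to first pin down the projection $\overline{q}_{\mathbb{X}_\mathcal{F}}$ and only afterwards pass to $q_{\mathbb{X}_\mathcal{F}}$ by compressing with $p^{\sharp}_{\nor}$. The whole argument reduces to the single-subalgebra case already handled in Lemma~\ref{lem:identityofbidual}: once I establish
\[\overline{q}_{\mathbb{X}_\mathcal{F}} = \vee_i \overline{q}_{\mathbb{X}_{N_i}},\]
substituting the formula $\overline{q}_{\mathbb{X}_{N_i}} = \vee_{u,v\in\mathcal{U}(M)}\iota(uJvJe_{N_i}Jv^*Ju^*)$ from Lemma~\ref{lem:identityofbidual} and regrouping the double join immediately gives the first displayed identity of the statement.

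To prove $\overline{q}_{\mathbb{X}_\mathcal{F}} = \vee_i\overline{q}_{\mathbb{X}_{N_i}}$ I would mirror the support argument used for Lemma~\ref{lem:identityofbidual}. Set $Q := \vee_{u,v\in\mathcal{U}(M)}\vee_i \iota(uJvJe_{N_i}Jv^*Ju^*)$. Each $uJvJe_{N_i}Jv^*Ju^*$ lies in $\mathbb{X}_{N_i}\subseteq\mathbb{X}_\mathcal{F}$, so $Q\in\mathbb{X}_\mathcal{F}^{**}$ and thus $Q\leqslant \overline{q}_{\mathbb{X}_\mathcal{F}}$. For the reverse inequality I use that $\mathbb{X}_\mathcal{F}$ is the hereditary $C^*$-algebra generated by $\bigcup_i\mathbb{X}_{N_i}$, so the linear span of elements $z = uJvJe_{N_i}Te_{N_j}\tilde uJ\tilde vJ$ (with $u,\tilde u,v,\tilde v\in\mathcal{U}(M)$, $T\in\B(L^2M)$, $i,j\in I$) is norm-dense in $\mathbb{X}_\mathcal{F}$. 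Since $\iota$ is a (non-normal) $*$-homomorphism, the relation $(uJvJe_{N_i}Jv^*Ju^*)\,z = z$ bounds the left support of $\iota(z)$ by $\iota(uJvJe_{N_i}Jv^*Ju^*)\leqslant Q$, and applying this to $z^*$ bounds the right support by $Q$ as well. Weak$^*$-density of $\iota(\mathbb{X}_\mathcal{F})$ in $\mathbb{X}_\mathcal{F}^{**}$ then forces $\overline{q}_{\mathbb{X}_\mathcal{F}}\leqslant Q$, whence $Q = \overline{q}_{\mathbb{X}_\mathcal{F}}$, which by Lemma~\ref{lem:identityofbidual} is precisely $\vee_i\overline{q}_{\mathbb{X}_{N_i}}$. (Alternatively one can invoke the general fact that the open support projection of the hereditary algebra generated by a family of $C^*$-subalgebras equals the join of their support projections, bypassing the explicit dense set.)

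For the ``in particular'' statement I would apply the compression $\Ad(p^{\sharp}_{\nor})$. Using $[p^{\sharp}_{\nor},\iota(e_{N_i})]=0$ together with the fact that $M$ and $JMJ$ lie in the multiplicative domain of $\iota$, every generator $\iota(uJvJe_{N_i}Jv^*Ju^*)$ commutes with $p^{\sharp}_{\nor}$; since the commutant $\{p^{\sharp}_{\nor}\}'$ is a von Neumann algebra closed under joins, $\overline{q}_{\mathbb{X}_\mathcal{F}}=Q$ also commutes with $p^{\sharp}_{\nor}$. The decisive observation is that $p^{\sharp}_{\nor}$ is a central projection in $\{p^{\sharp}_{\nor}\}'$, so $x\mapsto p^{\sharp}_{\nor}x$ is a normal $*$-homomorphism on $\{p^{\sharp}_{\nor}\}'$ and hence commutes with the (possibly infinite) join of projections lying in $\{p^{\sharp}_{\nor}\}'$. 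This yields
\[q_{\mathbb{X}_\mathcal{F}} = p^{\sharp}_{\nor}\overline{q}_{\mathbb{X}_\mathcal{F}} = \vee_{u,v\in\mathcal{U}(M)}\vee_i p^{\sharp}_{\nor}\iota(uJvJe_{N_i}Jv^*Ju^*) = \vee_{u,v\in\mathcal{U}(M)}\vee_i\iota^{\sharp}(uJvJe_{N_i}Jv^*Ju^*) = \vee_i q_{\mathbb{X}_{N_i}},\]
the last equality being the single-$N$ formula for $q_{\mathbb{X}_{N_i}}$. I expect this interchange of the compression with an infinite join to be the one genuinely delicate step: the earlier remark warns that $\iota$ and $\iota^{\sharp}$ fail to preserve infinite joins, so it is essential that I am pulling the compression $p^{\sharp}_{\nor}\cdot$ — not $\iota$ — through a join of projections that already commute with $p^{\sharp}_{\nor}$, which is exactly what legitimizes the step.
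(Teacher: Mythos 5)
Your proof is correct and takes essentially the same approach as the paper's: the same norm-dense span $\{uJvJe_{N_i}Te_{N_j}Jv'Ju' : u,v,u',v'\in\mathcal{U}(M),\ i,j\in I\}$ with the left/right-support bound (plus the alternative open-projection remark) to establish $\overline{q}_{\mathbb{X}_{\mathcal{F}}}=\vee_{u,v}\vee_{i}\iota\left(uJvJe_{N_i}Jv^*Ju^*\right)$, followed by compression with $p^{\sharp}_{\nor}$ for the second claim. Your explicit justification of the compression step --- that multiplication by the central projection $p^{\sharp}_{\nor}$ is a normal $*$-homomorphism on the von Neumann algebra $\{p^{\sharp}_{\nor}\}'$ and therefore passes through the infinite join of projections commuting with $p^{\sharp}_{\nor}$ --- is exactly what the paper abbreviates as ``the second claim again follows from the normality of $\Ad(p^{\sharp}_{\nor})$.''
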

\begin{proof}
    The identity $ \overline{q}_{\mathbb{X}_{\mathcal{F}}} =\vee_{i} \overline{q}_{\mathbb{X}_{N_i}} $ in fact holds in general as $\mathbb{X}_{\mathcal{F}}$ is the hereditary $C^*$-algebra generated by $ \{\mathbb{X}_{N_i}\} $. This follows from the fact that hereditary $C^*$-subalgebras of $\B(L^2M)$ bijectively to open projections of $ \B(L^2M)^{**} $, and that increasing strong limit of open projections are open (cf.\ \cite{akemann1969general} \cite{bosa2018open}). We provide a proof here for our specific cases when $ \mathbb{X}_{\mathcal{F}}  $ is generated by $e_{N_{i}}$'s. Notice that the span of $\{ uJvJe_{N_i}Te_{N_j}Jv'Ju': u,v,u',v'\in \mathcal{U}(M),i,j\in I\}$ is norm dense in ${\mathbb{X}_{\mathcal{F}}}$, and hence its image under $\iota$ is strongly dense in $ {\mathbb{X}_{\mathcal{F}}}^{**} $. Each of such elements has both left and right support bounded by $ \vee_{ u,v\in \mathcal{U}(M) } \vee_{i}\iota\left( uJvJ e_{N_i}Jv^*Ju^* \right) $, so $\overline{q}_{\mathbb{X_{\mathcal{F}}}}\leqslant \vee_{ u,v\in \mathcal{U}(M) } \vee_{i}\iota\left( uJvJ e_{N_i}Jv^*Ju^* \right) $. Since $ \iota\left( uJvJ e_{N_i}Jv^*Ju^* \right)\leqslant \overline{q}_{\mathbb{X_{\mathcal{F}}}}$ for each $e_{N_{i}}$, taking the supremum we also have $\vee_{ u,v\in \mathcal{U}(M) } \vee_{i}\iota\left( uJvJ e_{N_i}Jv^*Ju^* \right) \leqslant  \overline{q}_{\mathbb{X_{\mathcal{F}}}}$.
    
    The second claim again follows from the normality of $ \Ad(p^{\sharp}_{\nor}) $.
\end{proof}
If $\mathcal{F}=\{N_1,\cdots,N_n\}$ is a finite family of subalgebras of $M$ with expectation, then $\mathbb{X}_{\mathcal{F}}$ has a simpler form given by
\[\mathbb{X}_{\mathcal{F}}= \overline{ MJMJ(\vee_{i=1}^n e_{N_i}) \B(L^2M)(\vee_{i=1}^n e_{N_i})JMJM }. \]
Indeed, since $ \vee_{i=1}^n e_{N_i} \leqslant \sum_{i=1}^n e_{N_i} $, we must have $ \vee_{i=1}^n e_{N_i}\subset \mathbb{X}_{\mathcal{F}} $. In particular, we also have
\[ q_{\mathbb{X}_{\mathcal{F}}}= \vee_{u,v\in \mathcal{U}(M)}\iota^{\sharp}(uJvJ(\vee_{i=1}^n e_{N_i}) Jv^*Ju). \]
The above formulae are not true in general when $ \mathcal{F} $ is an infinite family.

\subsection{Mixing subalgebras}

Let $M$ be a tracial von Neumann algebra. A von Neumann subalgebra $N \subset M$ with a normal (trace-presrving) conditional expectation $E\colon M\to N$ is said to be \textit{mixing} if, for any sequence $(u_n) \subset N$ converging weakly to $0$, one has $\left\Vert E_N(x u_n y)\right\Vert_2 \to 0$ for all $x, y \in M \ominus N \coloneqq \ker E$. The usual requirement that $u_n$ be unitary can in fact be omitted, as shown in \cite[Theorem 3.3]{cameron2013mixing}. Equivalently, by the proposition in \cite{MR2730894}, $N\subset M$ is a mixing subalgebra if and only if $ e_NxJyJe_N\in \mathbb{K}(M) = \mathbb{K}^L(M)\cap \mathbb{K}^L(M)^* $ whenever $x,y\in M$ and one of $x,y$ is in $ M\ominus N $. For a general (not necessarily tracial) von Neumann algebra $M$, we also attempt to call a subalgebra $N\subset M$ is mixing if $ e_NxJyJe_N\in \mathbb{K}^{\infty,1}(L^2M) $ whenever one of $x,y$ is in $ M\ominus N $. Therefore, we propose the following definition of mixingness with respect to general $M$-boundary pieces.

\begin{defn}
    Let $\mathbb{Y}\subset \B(L^2M)$ be an $M$-boundary piece, and $ N\subset M $ be a von Neumann subalgebra with expectation. We say that $N\subset M$ is \textit{mixing relative to $\mathbb{Y}$}, or simply \textit{$\mathbb{Y}$-mixing}, if $ e_NxJyJe_N\in \mathbb{K}_{\mathbb{Y}}^{\infty,1}(M) $ whenever $x,y \in M$ and at least one of $x,y$ belongs to $M\ominus N$. When $\mathbb{Y} = \mathbb{K}(L^2M)$, we also say that $ N\subset M $ is mixing.
\end{defn}

We have following lemma for mixing subalgebras in AFP von neumann algebras; see the proof of \cite[Corollary C]{vaes2014normalizers} for the case of finite von Neumann algebras.

\begin{lemma}
    If $B$ is mixing in $M_2$, then $ M_1 $ is mixing in $M_1\overline{\ast}_B M_2$.
\end{lemma}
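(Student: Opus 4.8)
The plan is to verify the defining condition directly. Writing $e_1$ for the Jones projection of $M_1\subset M$ (the orthogonal projection of $\FF=L^2M$ onto $L^2M_1$), $E_{M_1}\colon M\to M_1$ for the canonical conditional expectation, and $\rho(y)=JyJ$ for the right action, I must show that $e_1\, x\, JyJ\, e_1\in\mathbb{K}^{\infty,1}(L^2M)$ whenever $x,y\in M$ with at least one of them in $M\ominus M_1$. Passing to adjoints interchanges $x\leftrightarrow y$ and $M\leftrightarrow JMJ$, so it suffices to treat $x\in M\ominus M_1$. By a routine bounded-density argument (reduced words are $\sigma$-strongly$^*$ dense, and on norm-bounded sets the defining seminorms of the $M$-$M$/$M'$-$M'$ topology are controlled by strong-type seminorms, along which $x\mapsto e_1\,x\,JyJ\,e_1$ is continuous) together with the closedness of $\mathbb{K}^{\infty,1}(L^2M)$ in that topology, I reduce to the case where $x,y$ are reduced words in the letters $M_i^o=M_i\ominus B$, the hypothesis $x\in M\ominus M_1$ forcing at least one letter from $M_2^o$.

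The computational backbone is the identity that, on $L^2M_1$, the operator $e_1\lambda(x)\rho(y)e_1$ is the continuous extension of $\widehat\zeta\mapsto\widehat{E_{M_1}(x\zeta y)}$, $\zeta\in M_1$. Since $E_{M_1}$ is $M_1$-bimodular, a leading $M_1$-letter of $x$ or a trailing $M_1$-letter of $y$ factors out of $E_{M_1}$, while a trailing $M_1$-letter of $x$ or a leading $M_1$-letter of $y$ merges with $\zeta$; in each case this rewrites $e_1\,x\,JyJ\,e_1=\lambda(m)\big(e_1\lambda(x')\rho(y')e_1\big)\rho(m')$ with $m\in M$, $m'\in M'$ and $x',y'$ strictly shorter. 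As $\mathbb{K}^{\infty,1}(L^2M)$ is an $M$-$M$ and $M'$-$M'$ bimodule, I may therefore assume that both $x$ and $y$ begin and end with letters from $M_2^o$.

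The base case is $x=c$, $y=d$ with $c,d\in M_2^o$, where a direct calculation (using $\lambda_1(e_B)\lambda_2(e_B)=e_B$) gives
\[
e_1\,\lambda(c)\rho(d)\,e_1 \;=\; e_B\,\lambda(c)\rho(d)\,e_B \;=\; V\big(e_B^{(2)}\,c\,J_2 d J_2\,e_B^{(2)}\big)V^{*},
\]
where $V\colon L^2M_2\hookrightarrow\FF$ is the isometry onto $L^2M_2=L^2B\oplus L^2M_2^o$, $e_B^{(2)}$ is the projection of $L^2M_2$ onto $L^2B$, and $J_2$ is the modular conjugation of $M_2$. The middle factor lies in $\mathbb{K}^{\infty,1}(L^2M_2)$ precisely because $B$ is mixing in $M_2$ and $c,d\in M_2\ominus B$. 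To move it to $L^2M$ I will use a transfer lemma: as $M_2$ is in standard form on $L^2M_2$ one has $M_2'=J_2M_2J_2$, and $V$ intertwines the left $M_2$-action with $\lambda|_{M_2}$ and the right action $J_2M_2J_2$ with $JM_2J\subset M'$. Consequently $\mathrm{Ad}(V)$ sends compacts to compacts and is continuous from the $M_2$-$M_2$/$M_2'$-$M_2'$ topology into the $M$-$M$/$M'$-$M'$ topology, so it carries $\mathbb{K}^{\infty,1}(L^2M_2)$ into $\mathbb{K}^{\infty,1}(L^2M)$; this settles the single-letter case.

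For longer cores I argue by induction on the word length. Contracting the innermost pair $c=a_p$, $d=b_1\in M_2^o$ through the centre replaces $c\,\zeta\,d$ by $E_2\!\big(c\,E_B(\zeta)\,d\big)$, which after the transfer above is an honest element of $\mathbb{K}^{\infty,1}(L^2M)$, while the surrounding letters act as $M$- and $M'$-module maps and the adjacent $M_1$-letters produce only a bounded $M_1$-contraction $\zeta\mapsto E_B(a_{p-1}\,\zeta\,b_2)$. The main obstacle — and the only point genuinely beyond the tracial computation in the proof of \cite[Corollary C]{vaes2014normalizers} — is that this exhibits $e_1\,x\,JyJ\,e_1$ as a composition of several transferred mixing operators interlaced with bounded $M_1$-module maps, and one must verify that such compositions remain in $\mathbb{K}^{\infty,1}(L^2M)$. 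I expect to resolve this by showing that on norm-bounded sets the $M$-$M$/$M'$-$M'$ topology is dominated by a strong-type topology in which multiplication is jointly continuous: approximating the innermost transferred mixing factor by compacts (which then absorb the bounded outer operators into compact operators) yields a net of compacts converging to $e_1\,x\,JyJ\,e_1$ in the $M$-$M$/$M'$-$M'$ topology, whence membership in $\mathbb{K}^{\infty,1}(L^2M)$. The case $y\in M\ominus M_1$ follows by symmetry, completing the argument.
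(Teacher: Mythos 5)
Your proposal reproduces the paper's skeleton faithfully up to the crucial analytic step: the reduction to alternating words beginning and ending in $M_2^o$, the identity $e_1 x JyJ e_1 = e_B x JyJ e_B$, the exact factorization of the word operator into terms $e_B(x_kJy_kJ)e_B$, and the transfer of $e_B(cJ_2dJ_2)e_B \in \mathbb{K}^{\infty,1}(M_2)$ into $\mathbb{K}^{\infty,1}(M)$ via the comparison of bimodule topologies are all present in the paper's proof. The genuine gap is exactly the step you flag as ``the main obstacle,'' and your proposed resolution does not work. The $M$-$M$/$M'$-$M'$ topologies are bimodule topologies over $M$ and $M'$ \emph{only}: their seminorms are defined through factorizations $T = \begin{pmatrix} a \\ b\end{pmatrix}^* Z \begin{pmatrix} c \\ d \end{pmatrix}$ whose outer legs must lie in $M$ and $M'$, with smallness measured in $L^2(\omega)$-norms rather than operator norms. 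Consequently, if $K_\alpha\to S$ in these topologies, there is no reason that $AK_\alpha B \to ASB$ when $A,B$ are bounded operators outside $M\cup M'$ (such as anything involving $e_B$, $e_1$, or your interlaced contractions $\zeta\mapsto E_B(a\zeta b)$): left/right multiplication by such $A,B$ destroys the admissible factorizations, and the small legs cannot be absorbed into the middle term $Z$ because only their $L^2$-size, not their operator norm, is controlled. Nor can a ``strong-type topology dominating the bimodule topology'' rescue this: membership of the mixing factor in $\mathbb{K}^{\infty,1}(M)$ only supplies compact approximants converging in the \emph{coarser} bimodule topology, whereas your scheme needs approximants converging in the finer topology; and since bounded strong$^*$ limits of compacts exhaust all of $\mathbb{B}(L^2M)$, producing compacts that converge strongly to $e_1xJyJe_1$ certifies nothing about membership in $\mathbb{K}^{\infty,1}(M)$.

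The paper resolves precisely this point in the bidual, and this is the idea missing from your proposal: (i) $T\in\mathbb{K}^{\infty,1}(M)$ if and only if $\iota^{\sharp}(T)\in \mathbb{K}(M)_J^{\sharp *}$; (ii) the outer factors $e_B(x_kJy_kJ)e_B$ lie in $C^*(M,JMJ,e_B)$, which commutes with $p^{\sharp}_{\nor}$ and hence lies in the multiplicative domain of $\iota^{\sharp}$, so that $\iota^{\sharp}(S_1TS_2)=\iota^{\sharp}(S_1)\iota^{\sharp}(T)\iota^{\sharp}(S_2)$ for \emph{arbitrary} bounded $T$; and (iii) $\mathbb{K}(M)_J^{\sharp *}$ is a weak$^*$-closed two-sided ideal of $\mathbb{B}(L^2M)_J^{\sharp *}$, so the sandwiched product lands in the ideal and one pulls back. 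The same mechanism (strong density plus separate weak$^*$-continuity of multiplication in the von Neumann algebra $\mathbb{B}(L^2M)_J^{\sharp *}$) is also what justifies your opening ``routine bounded-density'' reduction from general $x\in M\ominus M_1$ to reduced words, which, attempted directly at the seminorm level, hits the same obstruction because $e_1\notin M\cup M'$. One honest alternative in the tracial case would be to use that mixing there gives $e_BxJyJe_B$ in the hereditary C$^*$-algebra $\mathbb{K}(M_2)=\mathbb{K}^L(M_2)\cap\mathbb{K}^L(M_2)^*$, for which sandwiching by arbitrary bounded operators is a genuine C$^*$-ideal computation; but the lemma's hypothesis and conclusion are formulated with $\mathbb{K}^{\infty,1}$ for general (possibly non-tracial) $M$, and your argument as written does not take that route either.
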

\begin{proof}
    It suffices to show that $ e_{M_1}(M\ominus M_1)J(M\ominus M_1)Je_{M_1}\subset \mathbb{K}^{\infty,1}(M)$ since $\mathbb{K}^{\infty,1}(M)$ is both $M_1$ and $JM_1J$-bimodular. For this, we first check for alternating centered product: Consider $ x_{1}\cdots x_n $, $ y_{1}\cdots y_m $ such that $ x_k \in M_{i_k}^o $, $ y_k\in M_{j_k}^o $, $ i_1\neq \cdots \neq i_n $, and $j_1\neq \cdots \neq j_m$. We need to show $ e_{M_1}x_{1}\cdots x_nJy_{1}\cdots y_mJe_{M_1} \in \mathbb{K}^{\infty,1}(M) $ when at least one of $x_i$ and $y_j$ is not in $ M_{1} $. As $\mathbb{K}^{\infty,1}(M)$ is both $M_1$ and $JM_1J$-bimodular and $ e_{M_1} $ commutes with $M_1$ and $JM_1J$, we may assume that $i_1=i_n=j_1=j_m = 2$. Now, we observe that 
    \[ e_{M_1}x_{1}\cdots x_nJy_{1}\cdots y_mJ(e_{M_1}-e_B)=0, \]
    and therefore
    \[ e_{M_1}x_{1}\cdots x_nJy_{1}\cdots y_mJe_{M_1} = e_{M_1}x_{1}\cdots x_nJy_{1}\cdots y_mJe_B= e_{B}x_{1}\cdots x_nJy_{1}\cdots y_mJe_B.\]
    One then observe that
    \[ e_{B}x_{1}\cdots x_{n-1}Jy_{1}\cdots y_{m-1}J  (1-e_B)x_nJ y_mJe_B = e_{B}x_{1}\cdots x_{n-1}Jy_{1}\cdots y_{m-1}J  (e_{M_2}-e_B)x_nJ y_mJe_B =0,\]
    and therefore this term becomes
    \[ e_{B}x_{1}\cdots x_nJy_{1}\cdots y_mJe_B =e_{B}x_{1}\cdots x_{n-1}Jy_{1}\cdots y_{m-1}J e_B(x_nJ y_mJ)e_B.\]
    Inductively, the term is either $0$ or $m=n$, and in the latter case
    \[ e_{B}x_{1}\cdots x_nJy_{1}\cdots y_mJe_B = e_B(x_1Jy_1J)e_B(x_2Jy_2J)e_B\cdots e_B(x_nJ y_nJ)e_B.\]
    We claim that $e_B(x_1Jy_1J)e_BT e_B(x_nJ y_nJ)e_B\in \mathbb{K}^{\infty,1}(M)$ for all $T\in \B(L^2M)$, and therefore proving the statement 
    \[ e_{M_1}x_{1}\cdots x_nJy_{1}\cdots y_mJe_{M_1} = e_B(x_1Jy_1J)e_B(x_2Jy_2J)e_B\cdots e_B(x_nJ y_nJ)e_B\in \mathbb{K}^{\infty,1}(M) .\]
    
    To prove the claim, we note that since $B$ is mixing in $M_2$, $e_B(x_1Jy_1J)e_B$ and $ e_B(x_nJ y_nJ)e_B $ belong to $\mathbb{K}^{\infty,1}(M_2)$. Identifying $L^2M_2$ canonically as a subspace of $L^2M$, we can consider these operators as in $ \B(L^2M_2) \simeq e_{M_2} \B(L^2M) e_{M_2} $. Since the $ M $-$M$ and $JMJ$-$JMJ$ topology is weaker than the $ M_2 $-$M_2$ and $JM_2J$-$JM_2J$ topology, we have $e_B(x_1Jy_1J)e_B, e_B(x_nJ y_nJ)e_B\in  \mathbb{K}^{\infty,1}(M) $. Now, for any operator $T\in \mathbb{B}(L^2M)$, $ e_B(x_1Jy_1J)e_BT e_B(x_nJ y_nJ)e_B\in \mathbb{K}^{\infty,1}(M) $ iff $ \iota^{\sharp}(e_B(x_1Jy_1J)e_BT e_B(x_nJ y_nJ)e_B)\in \mathbb{K}(M)_J^{\sharp*}. $
    Since $ p^{\sharp}_{\nor} $ commutes with $ e_B(x_1Jy_1J)e_B $ and $ e_B(x_nJ y_nJ)e_B $, we have
    \begin{align*}
        \iota^{\sharp}(e_B(x_1Jy_1J)e_BT e_B(x_nJ y_nJ)e_B) = \iota^{\sharp}(e_Bx_1Jy_1Je_B) \iota^{\sharp}( T) \iota^{\sharp}(e_Bx_nJ y_nJe_B) \in \mathbb{K}(M)_J^{\sharp*} \iota^{\sharp}( T) \mathbb{K}(M)_J^{\sharp*}\subset \mathbb{K}(M)_J^{\sharp*}
    \end{align*} 
    since $\mathbb{K}(M)_{J}^{\sharp *} = p_{\nor}^{\sharp}q_{\mathbb{K}(M)}\mathbb{B}(L^2 M)p_{\nor}^{\sharp}$ is an ideal of $\mathbb{B}(L^2M)_J^{\sharp *} = p_{\nor}^{\sharp}\mathbb{B}(L^2 M)p_{\nor}^{\sharp}$.

    Finally, to show that $ e_{M_1}(M\ominus M_1)J(M\ominus M_1)Je_{M_1}\subset \mathbb{K}^{\infty,1}(M) $, we note that as $ [\iota^{\sharp}(e_{M_1}),p_{\nor}^\sharp]=0 $,
    $$ \iota^{\sharp}(e_{M_1}(M\ominus M_1)J(M\ominus M_1)Je_{M_1}  ) = e_{M_1}p^{\sharp}_{\nor}(M\ominus M_1)p^{\sharp}_{\nor}J(M\ominus M_1)Jp^{\sharp}_{\nor}e_{M_1}.$$
    Since alternating centered products (conjugated by $p^{\sharp}_{\nor}$) is strongly dense in $ p^{\sharp}_{\nor}(M\ominus M_1)p^{\sharp}_{\nor} $, we obtain $ \iota^{\sharp}(e_{M_1}(M\ominus M_1)J(M\ominus M_1)Je_{M_1})\subset \mathbb{K}(M)^{\sharp *} $.
\end{proof}

\begin{corollary}\label{cor:biexactrelativetoB}
    If $ B $ is mixing in both $M_1$ and $M_2$, then $ B $ is also mixing in $ M_1\overline{\ast}_B M_2 $.
\end{corollary}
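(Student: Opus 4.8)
The plan is to establish \emph{transitivity of mixing}. Since $B$ is mixing in $M_2$, the preceding lemma already shows that $M_1$ is mixing in $M = M_1\overline{\ast}_B M_2$; combining this with the hypothesis that $B$ is mixing in $M_1$ should force $B$ to be mixing in $M$. Concretely, I must show that $e_B xJyJe_B \in \mathbb{K}^{\infty,1}(M)$ whenever $x,y\in M$ and at least one of them lies in $M\ominus B$.

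The first step is to split $x$ and $y$ along the inclusion $M_1\subset M$. Writing $E_{M_1}\colon M\to M_1$ for the canonical conditional expectation coming from the free product structure, I set $x_1 = E_{M_1}(x)$, $x_0 = x - x_1 \in M\ominus M_1$, and similarly $y_1,y_0$. Since $E\colon M\to B$ factors as $E = E_1\circ E_{M_1}$, the assumption that, say, $x\in M\ominus B$ forces $x_1 \in M_1\ominus B$; in all cases at least one of $x_1,y_1$ lies in $M_1\ominus B$. Expanding, $e_B xJyJe_B$ is the sum of the four terms $e_B x_a J y_b J e_B$ with $a,b\in\{0,1\}$, and it suffices to place each of them in $\mathbb{K}^{\infty,1}(M)$.

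For the purely inner term $e_B x_1 Jy_1J e_B$, both $x_1,y_1\in M_1$ and at least one lies in $M_1\ominus B$. Using $e_B\leqslant e_{M_1}$, this operator lives in $e_{M_1}\mathbb{B}(L^2M)e_{M_1}\simeq \mathbb{B}(L^2M_1)$, where it is precisely the operator witnessing the mixing of $B$ in $M_1$, so it lies in $\mathbb{K}^{\infty,1}(M_1)$. As in the proof of the preceding lemma, the $M$-$M$ and $JMJ$-$JMJ$ topologies are weaker than their $M_1$ counterparts, whence $\mathbb{K}^{\infty,1}(M_1)\subseteq \mathbb{K}^{\infty,1}(M)$, disposing of this term. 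For each of the three remaining terms at least one slot contains $x_0$ or $y_0\in M\ominus M_1$; writing $e_B = e_Be_{M_1} = e_{M_1}e_B$ I factor, e.g., $e_B x_1 Jy_0J e_B = e_B\bigl(e_{M_1}x_1 Jy_0J e_{M_1}\bigr)e_B$, and the middle factor lies in $\mathbb{K}^{\infty,1}(M)$ by the mixing of $M_1$ in $M$.

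The remaining point is that compression by $e_B$ preserves $\mathbb{K}^{\infty,1}(M)$, i.e.\ $e_B S e_B\in \mathbb{K}^{\infty,1}(M)$ for every $S\in\mathbb{K}^{\infty,1}(M)$. This is exactly the kind of bookkeeping used at the end of the previous lemma: since $[\iota(e_B),p_{\nor}^{\sharp}]=0$ one has $\iota^{\sharp}(e_B S e_B) = \iota^{\sharp}(e_B)\iota^{\sharp}(S)\iota^{\sharp}(e_B)$, which lies in $\mathbb{K}(M)_J^{\sharp*}$ because the latter is an ideal in $\mathbb{B}(L^2M)_J^{\sharp*} = p_{\nor}^{\sharp}\mathbb{B}(L^2M)p_{\nor}^{\sharp}$. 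I expect this bidual/ideal step, together with the identification of $e_{M_1}\mathbb{B}(L^2M)e_{M_1}$ with $\mathbb{B}(L^2M_1)$ so that $\mathbb{K}^{\infty,1}(M_1)$ genuinely embeds into $\mathbb{K}^{\infty,1}(M)$, to be the only delicate part; everything else is the four-term decomposition above. Summing the four contributions yields $e_B xJyJe_B\in\mathbb{K}^{\infty,1}(M)$, which is precisely the statement that $B$ is mixing in $M$.
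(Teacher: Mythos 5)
Your proof is correct and follows essentially the same route as the paper's: both decompose along $E_{M_1}$ (i.e., $M\ominus B = (M_1\ominus B)\oplus(M\ominus M_1)$), dispatch every term meeting $M\ominus M_1$ using the preceding lemma that $M_1$ is mixing in $M$, and handle the remaining $M_1$-term using the mixing of $B$ in $M_1$ together with the embedding $\mathbb{K}^{\infty,1}(M_1)\subseteq \mathbb{K}^{\infty,1}(M)$. The only difference is expository: the paper compresses all of this into a single chain of inclusions, leaving implicit the compression step $e_B\,\mathbb{K}^{\infty,1}(M)\,e_B\subseteq \mathbb{K}^{\infty,1}(M)$, which you justify explicitly (and correctly) via $[\iota(e_B),p_{\nor}^{\sharp}]=0$ and the ideal property of $\mathbb{K}(M)_J^{\sharp *}$ in the bidual.
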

\begin{proof}
    We simply note that as $ M_1 $ is mixing in $ M $, 
    \[e_B M^oJM^oJe_B = e_B e_{M_1}M^oJM^oJe_{M_1}Je_B\subset e_B M_1^o JM_1^{o}Je_B + \mathbb{K}^{\infty,1}(M)\subset \mathbb{K}^{\infty,1}(M).\]
\end{proof}

Inductively applying the above corollary and lemma, we obtain the following
\begin{corollary} \label{cor: multiple biexact rel B}
    If $B$ is mixing in $ M_1, \cdots, M_n$, then $ B $, $M_1, \cdots, M_n$ are all mixing in $M_1\overline{*}_B\cdots \overline{*}_B M_n$.
\end{corollary}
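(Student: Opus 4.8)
The plan is to induct on the number $n$ of free factors, reducing at each stage to the two-factor Lemma stated just above and to Corollary \ref{cor:biexactrelativetoB} by exploiting the associativity of the amalgamated free product over $B$. The base case $n=1$ is vacuous: $B$ is mixing in $M_1$ by hypothesis, and $M_1$ is trivially mixing in itself because $M_1 \ominus M_1 = \{0\}$, so the defining condition on $e_{M_1} x J y J e_{M_1}$ is satisfied for want of any nonzero $x$ or $y$ to test.

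For the inductive step I would assume the statement for all AFPs of fewer than $n$ factors and write $M = M_1 \overline{*}_B \cdots \overline{*}_B M_n$. The key structural input is that the AFP over $B$ is associative, so for each index $i$ one may identify $M \cong M_i \overline{*}_B Q_i$, where $Q_i \coloneqq \overline{*}_{B,\, j\neq i} M_j$ is the AFP of the remaining $n-1$ factors; this identification is compatible with the conditional expectations onto $B$ and with the relevant projections $e_B, e_{M_i}$ acting on $L^2 M$. Since every $M_j$ with $j \neq i$ contains $B$ as a mixing subalgebra, the inductive hypothesis applied to the $(n-1)$-fold product $Q_i$ gives in particular that $B$ is mixing in $Q_i$.

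Now I would fix $i$ and apply the two-factor Lemma to the decomposition $M = M_i \overline{*}_B Q_i$, with $Q_i$ in the role of the factor in which $B$ is mixing; this yields directly that $M_i$ is mixing in $M$. As $i$ is arbitrary, each of $M_1, \ldots, M_n$ is mixing in $M$. To handle $B$ itself I would take the decomposition $M = M_1 \overline{*}_B Q_1$, where $B$ is mixing in $M_1$ by hypothesis and mixing in $Q_1$ by the inductive hypothesis; Corollary \ref{cor:biexactrelativetoB} then gives that $B$ is mixing in $M$, completing the induction. I note that regrouping the factors as $M_i \overline{*}_B Q_i$ for the index of interest is what makes the argument go through using only the Lemma and Corollary \ref{cor:biexactrelativetoB}; a naive sequential induction writing $M = (M_1 \overline{*}_B \cdots \overline{*}_B M_{n-1}) \overline{*}_B M_n$ would instead require a transitivity property of mixing subalgebras, which is not established in the excerpt.

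The only genuine obstacle I anticipate is the bookkeeping behind the associativity identification $M \cong M_i \overline{*}_B Q_i$: one must verify that the free-product Hilbert space $\mathcal{F}$ and its canonical subspace projections decompose compatibly under regrouping the factors, so that the mixing condition $e_N x J y J e_N \in \mathbb{K}^{\infty,1}(M)$ transfers verbatim between the $n$-fold and the two-fold pictures. Once this compatibility is granted, both the Lemma and Corollary \ref{cor:biexactrelativetoB} apply with no further analysis.
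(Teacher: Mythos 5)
Your proof is correct and matches the paper's intended argument: the paper's entire proof of this corollary is the one-line remark that it follows by ``inductively applying the above corollary and lemma,'' and your induction --- using associativity of the amalgamated free product to regroup $M$ as $M_i \overline{*}_B Q_i$, then invoking the two-factor Lemma for each $M_i$ and Corollary \ref{cor:biexactrelativetoB} for $B$ --- is precisely the natural way to implement it. Your observation that regrouping around the index of interest avoids any appeal to an (unestablished) transitivity of mixing, which a naive sequential induction would require, is a worthwhile detail that the paper leaves implicit.
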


\subsection{Another copy of basic construction inside $(\mathbb{X}_N)^{\sharp *}_J$ for mixing subalgebras}

For a mixing subalgebra $N\subset M$ with expectation, we will show that, inside $ (\mathbb{X}_N)^{\sharp *}_J\cap (JMJ^{**})' \subset \mathbb{B}(L^2M)^{**}$, the von Neumann subalgebra $ (\iota^{\sharp}(M)q_{\mathbb{X}_N}\vee \mathbb{C}q_{JMJe_NJMJ})\subset (\mathbb{X}_N)^{\sharp *}_J\cap (JMJ^{**})' $ behaves like the basic construction $\langle M,e_N\rangle$ modulo $\mathbb{K}(L^2M)^{\sharp *}_{J}$. 

\begin{lemma} \label{lem: corner of bidual of basic constr}
    If $M$ is a von Neumann algebra and $N\subset M$ is a von Neumann subalgebra with expectation, then  
    \[\overline{q}_{{Me_NM}} \langle M,e_N\rangle^{**}\overline{q}_{{Me_NM}} = (Me_NM)^{**} .\]
\end{lemma}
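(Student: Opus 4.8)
The plan is to recognize the identity as a general fact about hereditary C$^*$-subalgebras: writing $A \coloneqq \overline{Me_NM}$, the content is that $A$ ``absorbs'' two-sided multiplication by $\langle M,e_N\rangle$, so that compressing $\langle M,e_N\rangle^{**}$ by $\overline{q}_{Me_NM} = \mathbf{1}_{A^{**}}$ lands exactly in $A^{**} = (Me_NM)^{**}$. Concretely, I would first establish the algebraic absorption property
\[ A\,\langle M,e_N\rangle\, A \subseteq A, \]
and then use that an approximate unit of $A$ converges weak$^*$ to $\overline{q}_{Me_NM}$ to squeeze $\overline{q}_{Me_NM}\langle M,e_N\rangle^{**}\overline{q}_{Me_NM}$ into $A^{**}$.

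For the absorption property, first recall that $Me_NM$ is a $*$-subalgebra of $\langle M,e_N\rangle$: this follows from $(m_1e_Nm_2)^* = m_2^*e_Nm_1^*$ together with the basic-construction relation $e_Nxe_N = E_N(x)e_N$ for $x\in M$, which gives $(m_1e_Nm_2)(m_3e_Nm_4) = m_1E_N(m_2m_3)e_Nm_4 \in Me_NM$. The key input is the standard identity for the corner of the basic construction,
\[ e_N\langle M,e_N\rangle e_N = Ne_N, \]
valid for any with-expectation inclusion $N\subseteq M$ (it holds on the weak$^*$-dense $*$-subalgebra $Me_NM$ by the relation above, and extends by normality of $y\mapsto e_Nye_N$ together with the fact that $Ne_N\cong N$ is weak$^*$-closed). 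Given this, for $c\in\langle M,e_N\rangle$ and generators $m_1e_Nm_2,\,m_3e_Nm_4\in Me_NM$ I compute
\[ (m_1e_Nm_2)\,c\,(m_3e_Nm_4) = m_1\big(e_N\,m_2cm_3\,e_N\big)m_4 \in m_1\,(Ne_N)\,m_4 \subseteq Me_NM, \]
and since $(a,a')\mapsto aca'$ is jointly norm-continuous, density of $Me_NM$ in $A$ upgrades this to $A\langle M,e_N\rangle A\subseteq A$.

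For the bidual step, let $(u_\lambda)$ be an approximate unit for $A$; then $u_\lambda \to \overline{q}_{Me_NM}$ in the weak$^*$ topology of $A^{**}$, hence also in $\sigma(\langle M,e_N\rangle^{**},\langle M,e_N\rangle^*)$ under the normal embedding $A^{**}\hookrightarrow\langle M,e_N\rangle^{**}$. For $x\in\langle M,e_N\rangle$ the net $u_\lambda x u_\lambda$ lies in $A\langle M,e_N\rangle A\subseteq A\subseteq A^{**}$ and converges weak$^*$ to $\overline{q}_{Me_NM}\,x\,\overline{q}_{Me_NM}$; as $A^{**}$ is weak$^*$-closed, this limit lies in $A^{**}$. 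Since $\langle M,e_N\rangle$ is weak$^*$-dense in $\langle M,e_N\rangle^{**}$ and $y\mapsto \overline{q}_{Me_NM}\,y\,\overline{q}_{Me_NM}$ is weak$^*$-continuous, I conclude $\overline{q}_{Me_NM}\langle M,e_N\rangle^{**}\overline{q}_{Me_NM}\subseteq A^{**}$. The reverse inclusion is immediate because $\overline{q}_{Me_NM}$ is the unit of $A^{**}$, so $A^{**} = \overline{q}_{Me_NM}A^{**}\overline{q}_{Me_NM}\subseteq \overline{q}_{Me_NM}\langle M,e_N\rangle^{**}\overline{q}_{Me_NM}$, giving the asserted equality.

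The only genuine obstacle is the corner identity $e_N\langle M,e_N\rangle e_N = Ne_N$ in the general (not necessarily tracial or $\sigma$-finite) setting; once that is in hand, the remainder is soft functional analysis. In fact the absorption property shows that $A$ is a hereditary C$^*$-subalgebra of $\langle M,e_N\rangle$ with associated open projection $\overline{q}_{Me_NM}$, which is the conceptual reason the compression equals $A^{**}$.
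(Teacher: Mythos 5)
Your proof is correct and is essentially the paper's own argument: the paper likewise approximates $\overline{q}_{Me_NM}$ by a bounded net from $Me_NM$ (via Kaplansky density, converging strongly in a faithful normal representation of $\langle M,e_N\rangle^{**}$) and then uses the absorption property $a_n x a_n \in Me_NM$ for $x\in\langle M,e_N\rangle$ — which is exactly the corner identity $e_N\langle M,e_N\rangle e_N = Ne_N$ that you prove explicitly and the paper leaves implicit. The one step you should justify is the claim that $u_\lambda x u_\lambda \to \overline{q}_{Me_NM}\,x\,\overline{q}_{Me_NM}$ weak$^*$: this does not follow formally from weak$^*$ convergence of $(u_\lambda)$ since multiplication is not jointly weak$^*$-continuous, but it is true because a positive contractive approximate unit of $A$ converges \emph{strongly} to $\overline{q}_{Me_NM}$ in any faithful normal representation of $\langle M,e_N\rangle^{**}$ (e.g.\ take it increasing), after which the limit of $u_\lambda x u_\lambda$ is a strong limit of elements of $A$, just as in the paper's Kaplansky-density formulation.
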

\begin{proof}
    We need to show that for any $x\in \langle M,e_N\rangle$, $ \overline{q}_{{Me_NM}}\iota(x)\overline{q}_{{Me_NM}}\subset (Me_NM)^{**} $. Note that as $\overline{q}_{{Me_NM}}$ is the identity of $ (Me_NM)^{**} = \iota(Me_NM)''\overline{q}_{{Me_NM}} $, by Kaplansky density theorem, we can take a net $\iota(a_n)$ in the unit ball of $\iota(Me_NM)$ that converges strongly to $ \overline{q}_{{Me_NM}} $. Then $ a_nxa_n\in Me_NM $ and $ \iota(a_nxa_n)\to \overline{q}_{{Me_NM}}\iota(x)\overline{q}_{{Me_NM}} $ strongly, so we obtain $ \overline{q}_{{Me_NM}}\iota(x)\overline{q}_{{Me_NM}}\subset  (Me_NM)^{**} $.
\end{proof}

Since $ \mathbb{Y} $ is dense in $ \mathbb{K}_{\mathbb{Y}}(M) $ with respect to  the $M$-$M$, $JMJ$-$JMJ$-topology, we have $ p^{\sharp}_{\nor}  \mathbb{K}_{\mathbb{Y}}(M)^{**}p^{\sharp}_{\nor} =\mathbb{K}_{\mathbb{Y}}(M)_J^{\sharp*} = \mathbb{Y}_J^{\sharp*} = p^{\sharp}_{\nor}\mathbb{Y}^{**}p^{\sharp}_{\nor} $. In particular, $ q_{\mathbb{Y}}= q_{\mathbb{K}_{\mathbb{Y}}(M)}= p^{\sharp}_{\nor}\overline{q}_{\mathbb{Y}} = p^{\sharp}_{\nor}\overline{q}_{\mathbb{K}_{\mathbb{Y}}(M)}$ is a central projection in $ M( \mathbb{Y} )^{\sharp *}_{J} $, although $ \overline{q}_{\mathbb{Y}}$ might not be equal to $\overline{q}_{\mathbb{K}_{\mathbb{Y}}(M)} $ in general. Since $ \iota^{\sharp}(M),\iota^{\sharp}(JMJ) \subset M( \mathbb{Y} )^{\sharp *}_{J} $, we have $ [q_{\mathbb{Y}},  \iota^{\sharp}(M)] = [q_{\mathbb{Y}},  \iota^{\sharp}(JMJ)]=0 $.

\begin{lemma}
    For any $M$-boundary piece $\mathbb{Y}$ such that $[\overline{q}_{\mathbb{Y}},p^{\sharp}_{\nor}] = 0$,
    \[ q_{\mathbb{Y}}\B(L^2M)^{\sharp *}_J q_{\mathbb{Y}} = \mathbb{Y}^{\sharp *}_J = \mathbb{K}_{\mathbb{Y}}(M)^{\sharp *}_J. \]
    Namely, the bidual $ \mathbb{Y}^{\sharp *}_J $ of the $M$-boundary piece $\mathbb{Y}$ is in fact a corner of $ \B(L^2M)^{\sharp *}_J $.
\end{lemma}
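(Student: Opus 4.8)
The plan is to reduce both displayed equalities to facts already recorded just before the lemma, isolating as the only new ingredient the statement that the bidual of a hereditary C$^*$-subalgebra appears as a corner of the ambient bidual. First I would collect the three descriptions I need: $\B(L^2M)^{\sharp *}_J = p^{\sharp}_{\nor}\B(L^2M)^{**}p^{\sharp}_{\nor}$; the pair of identities $\mathbb{Y}^{\sharp *}_J = p^{\sharp}_{\nor}\mathbb{Y}^{**}p^{\sharp}_{\nor}$ and $\mathbb{K}_{\mathbb{Y}}(M)^{\sharp *}_J = \mathbb{Y}^{\sharp *}_J$, both noted in the paragraph preceding the lemma (coming from the density of $\mathbb{Y}$ in $\mathbb{K}_{\mathbb{Y}}(M)$ in the $M$-$M$, $JMJ$-$JMJ$ topology); and the observation that, under the hypothesis $[\overline{q}_{\mathbb{Y}},p^{\sharp}_{\nor}]=0$, the element $q_{\mathbb{Y}} = p^{\sharp}_{\nor}\overline{q}_{\mathbb{Y}} = \overline{q}_{\mathbb{Y}}p^{\sharp}_{\nor}$ is a genuine projection dominated by $p^{\sharp}_{\nor}$. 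Granting these, the second equality of the lemma is immediate, so the entire content is the first equality $q_{\mathbb{Y}}\B(L^2M)^{\sharp *}_J q_{\mathbb{Y}} = \mathbb{Y}^{\sharp *}_J$.

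To establish this, I would use $q_{\mathbb{Y}}\le p^{\sharp}_{\nor}$, so that $q_{\mathbb{Y}}p^{\sharp}_{\nor}=q_{\mathbb{Y}}$ and hence $q_{\mathbb{Y}}\B(L^2M)^{\sharp *}_J q_{\mathbb{Y}} = q_{\mathbb{Y}}\B(L^2M)^{**}q_{\mathbb{Y}}$. Substituting $q_{\mathbb{Y}} = p^{\sharp}_{\nor}\overline{q}_{\mathbb{Y}} = \overline{q}_{\mathbb{Y}}p^{\sharp}_{\nor}$ and using that the two projections commute, this rearranges to $p^{\sharp}_{\nor}\big(\overline{q}_{\mathbb{Y}}\B(L^2M)^{**}\overline{q}_{\mathbb{Y}}\big)p^{\sharp}_{\nor}$. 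It therefore suffices to identify the inner corner as $\overline{q}_{\mathbb{Y}}\B(L^2M)^{**}\overline{q}_{\mathbb{Y}} = \mathbb{Y}^{**}$, after which the right-hand side becomes $p^{\sharp}_{\nor}\mathbb{Y}^{**}p^{\sharp}_{\nor} = \mathbb{Y}^{\sharp *}_J$, completing the proof together with the two cited identities.

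The corner identity $\overline{q}_{\mathbb{Y}}\B(L^2M)^{**}\overline{q}_{\mathbb{Y}} = \mathbb{Y}^{**}$ is precisely the open-projection/hereditary-subalgebra correspondence, and I would prove it by the same approximate-identity argument as in Lemma \ref{lem: corner of bidual of basic constr}. The inclusion $\supseteq$ is clear since $\overline{q}_{\mathbb{Y}}=\mathbf{1}_{\mathbb{Y}^{**}}$. For $\subseteq$, I would fix an increasing approximate identity $(e_i)$ of the hereditary C$^*$-algebra $\mathbb{Y}$, so that $\iota(e_i)$ is an increasing bounded net converging strongly in $\B(L^2M)^{**}$ to its supremum $\overline{q}_{\mathbb{Y}}$; for any $T\in \B(L^2M)$ the heredity of $\mathbb{Y}$ gives $e_iTe_j\in\mathbb{Y}$, whence $\overline{q}_{\mathbb{Y}}\iota(T)\overline{q}_{\mathbb{Y}} = \lim_{i,j}\iota(e_iTe_j)$ lies in the weak$^*$-closed set $\mathbb{Y}^{**}$, and a Kaplansky density argument promotes this from $\iota(\B(L^2M))$ to all of $\B(L^2M)^{**}$. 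I expect this to be the only step requiring genuine care, the delicate points being the verification that $\iota(e_i)$ converges to the identity of $\mathbb{Y}^{**}$ as viewed inside $\B(L^2M)^{**}$, and the role of the commutation hypothesis in guaranteeing that $q_{\mathbb{Y}}$ is a projection so that the corner manipulations in the second paragraph are legitimate; the remaining bookkeeping is formal.
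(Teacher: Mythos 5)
Your proposal is correct and follows essentially the same route as the paper's proof: both reduce the lemma to the chain $q_{\mathbb{Y}}\B(L^2M)^{\sharp *}_J q_{\mathbb{Y}} = p^{\sharp}_{\nor}\overline{q}_{\mathbb{Y}}\B(L^2M)^{**}\overline{q}_{\mathbb{Y}}p^{\sharp}_{\nor} = p^{\sharp}_{\nor}\mathbb{Y}^{**}p^{\sharp}_{\nor} = \mathbb{Y}^{\sharp *}_J = \mathbb{K}_{\mathbb{Y}}(M)^{\sharp *}_J$, with the corner identity $\overline{q}_{\mathbb{Y}}\B(L^2M)^{**}\overline{q}_{\mathbb{Y}} = \mathbb{Y}^{**}$ justified exactly as the paper does, by heredity of $\mathbb{Y}$ together with a Kaplansky-density (approximate identity) argument, and the remaining identities taken from the paragraph preceding the lemma. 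Your version merely spells out the details the paper compresses into one line.
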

\begin{proof}
    

    We have 
    \[ q_{\mathbb{Y}}\B(L^2M)^{\sharp *}_J q_{\mathbb{Y}} = p^{\sharp}_{\nor}\overline{q}_{\mathbb{Y}}\B(L^2M)^{**}\overline{q}_{\mathbb{Y}}p^{\sharp}_{\nor} = p^{\sharp}_{\nor}\mathbb{Y}^{**}p^{\sharp}_{\nor} = \mathbb{Y}^{\sharp *}_J=\mathbb{K}_{\mathbb{Y}}(M)^{\sharp *}_J, \]
    where $ \mathbb{Y}^{**} = \overline{q}_{\mathbb{Y}}\B(L^2M)^{**}\overline{q}_{\mathbb{Y}} $ follows from the Kaplansky density theorem and that $\mathbb{Y}$ is a hereditary subalgebra of $\B(L^2M)$.
\end{proof}

We now illustrate the new copy of basic construction.

\begin{lemma}\label{lem:qisbasicconstruction}
    Suppose $N\subset M$ is a von Neumann subalgebra with expectation that is mixing relative to an $M$-boundary $\mathbb{Y}$. Let $ q\coloneqq q_{{JMJe_NJMJ}} = \overline{q}_{{JMJe_NJMJ}}p_{nor}^{\sharp}\in (\B(L^2M)^{\sharp}_J)^* $. We have
    \[ q\iota^{\sharp}(x)qq_{\mathbb{Y}}^\perp = q\iota^{\sharp}(E_N(x))q q_{\mathbb{Y}}^\perp = \iota^{\sharp}(E_N(x))q q_{\mathbb{Y}}^\perp,\quad \forall x\in M.\]
\end{lemma}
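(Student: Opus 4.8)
The plan is to prove the displayed chain by splitting $x\in M$ as $x = E_N(x) + y$ with $y\coloneqq x - E_N(x)\in M\ominus N$, and treating the two summands separately: the ``diagonal'' part $E_N(x)\in N$ accounts for the second equality, while the ``off-diagonal'' part $y$ vanishes after multiplication by $q_{\mathbb{Y}}^{\perp}$, thanks to mixing. Throughout I would work inside a normal representation $\B(L^2M)^{**}\subset\B(\K)$, use the description $q = q_{JMJe_NJMJ}=\vee_{u\in\mathcal U(M)}p_u$ with $p_u\coloneqq\iota^\sharp(JuJe_NJu^*J)$ from the Remark following Lemma \ref{lem:identityofbidual}, together with the description $q\K=\overline{\iota^\sharp(JMJ)\iota^\sharp(e_N)\K}$, and recall that $C^*(M,JMJ,e_N)$ lies in the multiplicative domain of $\iota^\sharp$, that $q_{\mathbb{Y}}$ is a central projection of $M(\mathbb{Y})^{\sharp*}_J$, and that $\mathbb{Y}^{\sharp*}_J = q_{\mathbb{Y}}\B(L^2M)^{\sharp*}_J q_{\mathbb{Y}}$.

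I would first record two structural facts. (i) Since $a\in N$ commutes with $e_N$ and with $JuJ,Ju^*J\in M'$, the element $JuJe_NJu^*J$ commutes with $a$ in $\B(L^2M)$; as $\iota^\sharp$ is a $*$-homomorphism on $C^*(M,JMJ,e_N)$, $\iota^\sharp(a)$ commutes with each $p_u$, hence with $q=\vee_u p_u$. Thus $q$ commutes with $\iota^\sharp(N)$, giving $q\iota^\sharp(E_N(x))q = \iota^\sharp(E_N(x))q$, which is the second equality after right multiplication by $q_{\mathbb{Y}}^\perp$. (ii) For $y\in M\ominus N$ and $u,v\in\mathcal U(M)$, multiplicativity of $\iota^\sharp$, the commutation $[y, Ju^*J]=[y,JvJ]=0$, and the anti-multiplicativity $Ju^*J\,JvJ = J(vu^*)J$ give
\[ p_u\,\iota^\sharp(y)\,p_v = \iota^\sharp(JuJ)\,\iota^\sharp\!\big(e_N\,y\,J(vu^*)J\,e_N\big)\,\iota^\sharp(Jv^*J). \]
Since $y\in M\ominus N$, mixing relative to $\mathbb{Y}$ yields $e_N\,y\,J(vu^*)J\,e_N\in\mathbb K^{\infty,1}_{\mathbb{Y}}(M)$, so its image under $\iota^\sharp$ lies in $\mathbb K_{\mathbb{Y}}(M)^{\sharp*}_J=\mathbb{Y}^{\sharp*}_J=q_{\mathbb{Y}}\B(L^2M)^{\sharp*}_J q_{\mathbb{Y}}$; as $q_{\mathbb{Y}}$ commutes with $\iota^\sharp(JMJ)$, we get $p_u\iota^\sharp(y)p_v\in q_{\mathbb{Y}}\B(L^2M)^{\sharp*}_J q_{\mathbb{Y}}$ for all $u,v$.

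The crux — and the step I expect to be the main obstacle — is to pass from these matrix-coefficient estimates to the single identity $q\iota^\sharp(y)q\,q_{\mathbb{Y}}^\perp = 0$, since $q=\vee_u p_u$ is a supremum of \emph{non-commuting} projections: the pairwise containment $p_u\iota^\sharp(y)p_v\in\mathbb{Y}^{\sharp*}_J$ does not by itself force $q\iota^\sharp(y)q\in\mathbb{Y}^{\sharp*}_J$ (a simple $2\times2$ example shows this can fail). I would resolve this using the commutation $[\iota^\sharp(e_N),q_{\mathbb{Y}}]=0$: being central in $M(\mathbb{Y})^{\sharp*}_J$, $q_{\mathbb{Y}}$ commutes with $\iota^\sharp(JMJ)$, and it commutes with $\iota^\sharp(e_N)$ as well — automatically when $\mathbb{Y}=\mathbb K(L^2M)$, since then $M(\mathbb{Y})=\B(L^2M)$ and $q_{\mathbb{Y}}$ is central in all of $\B(L^2M)^{\sharp*}_J$, and more generally whenever $e_N$ multiplies $\mathbb{Y}$. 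Consequently $q_{\mathbb{Y}}$ commutes with each $p_u$ and with $q$. Writing $f\coloneqq q_{\mathbb{Y}}^\perp$ and $w\coloneqq qf\zeta = fq\zeta$, the subspace $q\K\cap f\K = fq\K$ is the closed span of the vectors $\iota^\sharp(JmJ)\iota^\sharp(e_N)f\eta$ (here I use that $f$ commutes with $\iota^\sharp(JMJ)$ and $\iota^\sharp(e_N)$ and the description of $q\K$ above).

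Finally I would compute, for $w=\lim\sum_i\iota^\sharp(Jm_iJ)\iota^\sharp(e_N)f\eta_i$ and a spanning vector $\beta=\iota^\sharp(JnJ)\iota^\sharp(e_N)\mu\in q\K$,
\[ \langle\iota^\sharp(y)w,\beta\rangle = \lim\sum_i\big\langle\iota^\sharp\!\big(e_N\,y\,J(m_in^*)J\,e_N\big)f\eta_i,\ \mu\big\rangle, \]
and observe that each operator $\iota^\sharp(e_N\,y\,J(m_in^*)J\,e_N)$ lies in $q_{\mathbb{Y}}\B(L^2M)^{\sharp*}_J q_{\mathbb{Y}}$ by (ii), hence annihilates $f\eta_i=q_{\mathbb{Y}}^\perp\eta_i$. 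Thus $\langle\iota^\sharp(y)w,\beta\rangle=0$ for all such $\beta$, so $q\iota^\sharp(y)w=0$ and therefore $q\iota^\sharp(y)q\,q_{\mathbb{Y}}^\perp=0$. Combining this with (i) and the decomposition $x=E_N(x)+y$ gives $q\iota^\sharp(x)qq_{\mathbb{Y}}^\perp = q\iota^\sharp(E_N(x))qq_{\mathbb{Y}}^\perp = \iota^\sharp(E_N(x))qq_{\mathbb{Y}}^\perp$, as desired. The two delicate points to get right are the multiplicativity bookkeeping on $C^*(M,JMJ,e_N)$ and the justification of $[\iota^\sharp(e_N),q_{\mathbb{Y}}]=0$, on which the supremum argument entirely rests.
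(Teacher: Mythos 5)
Your steps (i) and (ii) are sound --- in fact (i) makes explicit a commutation, $[q,\iota^{\sharp}(N)]=0$, that the paper's own proof leaves implicit --- and you have correctly located the crux: since $q=\vee_{u}p_{u}$ is a supremum of non-commuting projections, the containments $p_{u}\iota^{\sharp}(y)p_{v}\in\mathbb{Y}^{\sharp*}_{J}$ do not by themselves yield $q\iota^{\sharp}(y)q\,q_{\mathbb{Y}}^{\perp}=0$. The gap is in how you resolve this: your argument rests entirely on the commutation $[\iota^{\sharp}(e_{N}),q_{\mathbb{Y}}]=0$, which is \emph{not} among the hypotheses of the lemma. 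The lemma is stated for an arbitrary $M$-boundary piece $\mathbb{Y}$ and an arbitrary $\mathbb{Y}$-mixing inclusion $N\subset M$ with expectation; $q_{\mathbb{Y}}$ is only known to be central in $\mathbb{M}(\mathbb{Y})^{\sharp*}_{J}$, and there is no reason in general that $e_{N}$ should multiply $\mathbb{Y}$. That this commutation is genuinely an extra assumption is visible in the paper itself: the two results built on this lemma (Lemma \ref{lem: map identify basic constructions} and Theorem \ref{thm:upgrading}) add $[q_{\mathbb{Y}},\iota^{\sharp}(e_{N})]=0$ as an explicit hypothesis, precisely because it does not come for free. So your proposal proves the lemma only in the special cases you mention (e.g.\ $\mathbb{Y}=\mathbb{K}(L^{2}M)$), not in the stated generality.

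The missing idea --- and the paper's route --- is to approximate $q$ by a Kaplansky-density net in the non-unital C$^{*}$-algebra $\overline{JMJe_{N}JMJ}$ rather than by the individual projections $p_{u}$. Since $\overline{q}_{JMJe_{N}JMJ}$ is the unit of $(JMJe_{N}JMJ)^{**}$, Kaplansky's theorem gives a bounded net $(a_{n})$ in $JMJe_{N}JMJ$ with $\iota^{\sharp}(a_{n})=\iota(a_{n})p^{\sharp}_{\nor}\to q$ strongly. For $y\in M\ominus N$ one has $a_{n}ya_{n}\in\mathbb{K}^{\infty,1}_{\mathbb{Y}}(M)$: a generic word $Ju_{1}Je_{N}Jv_{1}J\,y\,Ju_{2}Je_{N}Jv_{2}J$ equals $Ju_{1}J\bigl(e_{N}\,y\,J(v_{1}u_{2})J\,e_{N}\bigr)Jv_{2}J$, which lies in $JMJ\,\mathbb{K}^{\infty,1}_{\mathbb{Y}}(M)\,JMJ\subseteq\mathbb{K}^{\infty,1}_{\mathbb{Y}}(M)$ by mixing and $JMJ$-bimodularity, and this passes to linear combinations and norm limits. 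Since $a_{n}$ and $a_{n}ya_{n}$ lie in $C^{*}(M,JMJ,e_{N})$, hence in the multiplicative domain of $\iota^{\sharp}$, we get $\iota^{\sharp}(a_{n})\iota^{\sharp}(y)\iota^{\sharp}(a_{n})=\iota^{\sharp}(a_{n}ya_{n})\in\mathbb{Y}^{\sharp*}_{J}$; and because $\mathbb{Y}^{\sharp*}_{J}=q_{\mathbb{Y}}\B(L^{2}M)^{\sharp*}_{J}q_{\mathbb{Y}}$ is a weakly closed corner while multiplication of bounded nets is jointly strongly continuous, the strong limit $q\iota^{\sharp}(y)q$ lies in $\mathbb{Y}^{\sharp*}_{J}$, so $q\iota^{\sharp}(y)q\,q_{\mathbb{Y}}^{\perp}=0$ with no commutation assumption whatsoever. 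Replacing your supremum step by this density argument, and keeping your decomposition $x=E_{N}(x)+y$ together with your step (i), completes the proof in full generality.
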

\begin{proof}
    Suppose $x\in M\ominus N$. It suffices to show that $  q\iota^{\sharp}(x)q\in q_{\mathbb{Y}} (\B(L^2M)^{\sharp}_J)^* q_{\mathbb{Y}}=  \mathbb{Y}^{\sharp *}_{J}$. For this, we recall that $ \overline{q}_{JMJe_NJMJ}$ is the identity of $ ( JMJe_NJMJ)^{**} $, and therefore by Kaplansky density theorem, we can pick a net $(a_n)$ in $ JMJe_NJMJ $ converging strongly to $ \overline{q}_{JMJe_NJMJ}$ in the bidual, so then $ \iota^{\sharp}(a_n) = \iota(a_n)p^{\sharp}_{\nor} $ converges strongly to $q$. Since $x\in M\ominus N$ and $N\subset M$ is $\mathbb{Y}$-mixing, we have $ a_n xa_n\in \mathbb{K}^{\infty,1}_{\mathbb{Y}}(M) $, so $ \iota^{\sharp}(a_nxa_n) \in (\mathbb{K}_{\mathbb{Y}}(M)^{\sharp}_J)^{*} = \mathbb{Y}_J^{\sharp *} $. Finally, as $ \iota^{\sharp}(a_nxa_n) = \iota^{\sharp}(a_n)\iota^{\sharp}(x)\iota^{\sharp}(a_n)$ converges strongly to $ q\iota^{\sharp}(x)q $, we conclude that $ q\iota^{\sharp}(x)q\in \mathbb{Y}_J^{\sharp *} $ for all $x\in M\ominus N$.
\end{proof}

\begin{lemma} \label{lem: map identify basic constructions}
    Suppose $N\subset M$ is $\mathbb{Y}$-mixing and $ [q_{\mathbb{Y}},\iota^{\sharp}(e_N)]=0 $. Let $ q\coloneqq q_{{JMJe_NJMJ}} = \overline{q}_{{JMJe_NJMJ}}p_{nor}\in (\B(L^2M)^{\sharp}_J)^* $. The linear map
    \begin{equation*}
    \begin{aligned}
        \phi_0\colon Me_NM&\to \iota^{\sharp}(M)q\iota^{\sharp}(M){q}_{\mathbb{Y}}^{\perp} \subset  ( \iota^{\sharp}(M)q_{\mathbb{X}_N} \vee \C q){q}_{\mathbb{Y}}^{\perp} \subset (\mathbb{X}_N)_{J}^{\sharp *}{q}_{\mathbb{Y}}^{\perp}\\
        xe_Ny&\mapsto \iota^{\sharp}(x)q\iota^{\sharp}(y){q}_{\mathbb{Y}}^{\perp}
    \end{aligned}
    \end{equation*}
    is an $M$-bimodular norm continuous $*$-homomorphism. In particular, $\phi_0$ has a normal extension
    \[\phi\coloneqq \left(\phi_0^*\big|_{ ( \iota^{\sharp}(M)q_{\mathbb{X}_N} \vee \C q)_*{q}_{\mathbb{Y}}^{\perp} }\right)^* =\left(\phi_0^*\big|_{ (\mathbb{X}_N)^{\sharp}_J{q}_{\mathbb{Y}}^{\perp} }\right)^*\colon (Me_N M)^{**}\to ( \iota^{\sharp}(M)q_{\mathbb{X}_N} \vee \C q){q}_{\mathbb{Y}}^{\perp}\subset (\mathbb{X}_N)_{J}^{\sharp*}{q}_{\mathbb{Y}}^{\perp}.\]
\end{lemma}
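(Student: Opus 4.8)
The plan is to exhibit the projection $\tilde{e} := q\,q_{\mathbb{Y}}^{\perp}$ as a copy of the Jones projection for $N\subset M$ compressed to $q_{\mathbb{Y}}^{\perp}$, and to obtain $\phi_0$ as the $*$-homomorphism that this compressed basic construction induces on the ideal $\overline{Me_NM}$. First I would assemble the relevant commutation relations. Recall from the paragraph preceding Lemma \ref{lem:qisbasicconstruction} that $q_{\mathbb{Y}}$ commutes with $\iota^{\sharp}(M)$ and $\iota^{\sharp}(JMJ)$, and that $[q_{\mathbb{Y}},\iota^{\sharp}(e_N)]=0$ by hypothesis. Writing $q = q_{JMJe_NJMJ} = \vee_{u\in\mathcal{U}(M)}\iota^{\sharp}(JuJ\,e_N\,Ju^{*}J)$ from the remark after Lemma \ref{lem:identityofbidual}, each generator on the right lies in $\{q_{\mathbb{Y}}\}'\cap\iota^{\sharp}(N)'$: for $b\in N$, $\iota^{\sharp}(b)$ commutes with $\iota^{\sharp}(JuJ)$ and with $\iota^{\sharp}(e_N)$ inside the multiplicative domain $C^{*}(M,JMJ,e_N)$ of $\iota^{\sharp}$. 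As $\{q_{\mathbb{Y}}\}'$ and $\iota^{\sharp}(N)'$ are von Neumann algebras, the supremum $q$ lies in both. Hence $q_{\mathbb{Y}}^{\perp}$ commutes with $\iota^{\sharp}(M)$ and with $q$, while $q$ commutes with $\iota^{\sharp}(N)$; moreover $q\leq q_{\mathbb{X}_N}$ since $JMJe_NJMJ\subseteq\mathbb{X}_N$. It follows that $\tilde{e}=q\,q_{\mathbb{Y}}^{\perp}$ is a projection, and Lemma \ref{lem:qisbasicconstruction} yields the Jones relation $\tilde{e}\,\iota^{\sharp}(x)\,\tilde{e}=\iota^{\sharp}(E_N(x))\,\tilde{e}$ for $x\in M$, together with $\tilde{e}\,\iota^{\sharp}(b)=\iota^{\sharp}(b)\,\tilde{e}$ for $b\in N$.

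The main obstacle is promoting this covariant pair to a genuine $*$-homomorphism; this is exactly where the $\mathbb{Y}$-mixing of $N$ (through Lemma \ref{lem:qisbasicconstruction}) and the commutation $[q,q_{\mathbb{Y}}]=0$ are indispensable. On the algebraic span of $\{xe_Ny\}$ the assignment $xe_Ny\mapsto\iota^{\sharp}(x)\,\tilde{e}\,\iota^{\sharp}(y)=\iota^{\sharp}(x)\,q\,\iota^{\sharp}(y)\,q_{\mathbb{Y}}^{\perp}$ respects the balancing relation $xbe_Ny=xe_Nby$ ($b\in N$) because $q\in\iota^{\sharp}(N)'$, and it is multiplicative by the computation
\[\phi_0(x_1e_Ny_1)\,\phi_0(x_2e_Ny_2)=\iota^{\sharp}(x_1)\,q\,\iota^{\sharp}(y_1x_2)\,q\,q_{\mathbb{Y}}^{\perp}\,\iota^{\sharp}(y_2)\,q_{\mathbb{Y}}^{\perp}=\iota^{\sharp}\!\big(x_1E_N(y_1x_2)\big)\,q\,\iota^{\sharp}(y_2)\,q_{\mathbb{Y}}^{\perp},\]
where the first equality commutes $q_{\mathbb{Y}}^{\perp}$ past $\iota^{\sharp}(x_2)$ and $q$, and the second applies the Jones relation $q\,\iota^{\sharp}(y_1x_2)\,q\,q_{\mathbb{Y}}^{\perp}=\iota^{\sharp}(E_N(y_1x_2))\,q\,q_{\mathbb{Y}}^{\perp}$; the right-hand side equals $\phi_0\big(x_1E_N(y_1x_2)e_Ny_2\big)=\phi_0\big((x_1e_Ny_1)(x_2e_Ny_2)\big)$. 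The $*$-identity $\phi_0(xe_Ny)^{*}=\phi_0(y^{*}e_Nx^{*})$ follows from $\tilde{e}=\tilde{e}^{*}$ and the same commutations. Well-definedness on all of $Me_NM$ is then the standard fact that a covariant pair $(\iota^{\sharp}(\cdot)q_{\mathbb{Y}}^{\perp},\tilde{e})$ satisfying the Jones relation induces a representation of the reduced basic-construction ideal $\overline{Me_NM}\cong\mathbb{K}_N(L^2M)$; concretely, the multiplicativity and $*$-preservation just verified exhibit $\phi_0$ as a $*$-homomorphism on the dense $*$-subalgebra, and since it takes values in the $C^{*}$-algebra $(\mathbb{X}_N)_J^{\sharp*}q_{\mathbb{Y}}^{\perp}$ it is automatically contractive, hence extends to a norm-continuous $*$-homomorphism on $Me_NM$. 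Its values lie in $(\iota^{\sharp}(M)q_{\mathbb{X}_N}\vee\mathbb{C}q)q_{\mathbb{Y}}^{\perp}$ by $q\leq q_{\mathbb{X}_N}$ together with $*$-closedness.

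Finally, $M$-bimodularity is immediate: $\phi_0(a\,xe_Ny\,b)=\iota^{\sharp}(ax)\,q\,\iota^{\sharp}(yb)\,q_{\mathbb{Y}}^{\perp}=\iota^{\sharp}(a)\,\phi_0(xe_Ny)\,\iota^{\sharp}(b)$, using that $q_{\mathbb{Y}}^{\perp}$ commutes with $\iota^{\sharp}(b)$. For the normal extension I would invoke the universal property of the bidual of a $C^{*}$-algebra: since $q_{\mathbb{Y}}^{\perp}$ is central and commutes with $\iota^{\sharp}(M)q_{\mathbb{X}_N}\vee\mathbb{C}q$, the set $W:=(\iota^{\sharp}(M)q_{\mathbb{X}_N}\vee\mathbb{C}q)q_{\mathbb{Y}}^{\perp}$ is a genuine corner, hence a von Neumann algebra with predual $W_{*}=(\iota^{\sharp}(M)q_{\mathbb{X}_N}\vee\mathbb{C}q)_{*}q_{\mathbb{Y}}^{\perp}$, and $\phi:=(\phi_0^{*}|_{W_{*}})^{*}$ is the unique normal extension $(Me_NM)^{**}\to W$; being the bidual extension of a $*$-homomorphism, $\phi$ is itself a normal $*$-homomorphism. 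The alternative expression $(\phi_0^{*}|_{(\mathbb{X}_N)_J^{\sharp}q_{\mathbb{Y}}^{\perp}})^{*}$ defines the same map because $\phi_0$ factors through the normal inclusion $W\hookrightarrow(\mathbb{X}_N)_J^{\sharp*}q_{\mathbb{Y}}^{\perp}$, so restricting $\phi_0^{*}$ to either predual produces identical functionals on $Me_NM$.
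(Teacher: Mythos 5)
Your proof is correct, but it reaches norm continuity by a genuinely different route than the paper. The paper's own proof is a single quantitative computation: writing $X=(x_1e_N,\dots,x_ne_N)$ and $Y=(y_1^*e_N,\dots,y_n^*e_N)^*$ and using polar decomposition, it establishes the norm identity $\|\sum_k x_ke_Ny_k\| = \|[E_N(x_i^*x_j)]_{i,j}^{1/2}[E_N(y_iy_j^*)]_{i,j}^{1/2}\|_{\M_n(N)}$, and then observes via Lemma \ref{lem:qisbasicconstruction} that $\|\phi_0(\sum_k x_ke_Ny_k)\|$ is computed by the same matrix expression after applying $\id\otimes\iota^{\sharp}$ and compressing by $\id\otimes qq_{\mathbb{Y}}^{\perp}$, hence is dominated by it; this yields well-definedness and contractivity in one stroke, with the $*$-homomorphism identities left implicit. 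You instead verify the algebra explicitly (the commutations $q\in\iota^{\sharp}(N)'$, $[q,q_{\mathbb{Y}}]=0$, $q\leqslant q_{\mathbb{X}_N}$, multiplicativity via the Jones relation, and $*$-preservation --- all of which are correct and are a genuine plus, since the paper never writes them out) and then delegate well-definedness and boundedness to the standard fact that a covariant pair $(\pi,\tilde e)$ satisfying the Jones relation induces a representation of $\overline{Me_NM}\cong\mathbb{K}_N(L^2M)$, i.e.\ to the representation theory of compacts on the GNS Hilbert $N$-module of $(M,E_N)$. That citation is legitimate and does the job; note, however, that the paper's matrix computation is precisely a self-contained proof of that standard fact in this setting, so the two arguments have the same mathematical core. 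One caveat: your parenthetical gloss that $\phi_0$ is ``automatically contractive'' because it is a $*$-homomorphism on a dense $*$-subalgebra with values in a C$^*$-algebra is not valid as a standalone argument --- automatic contractivity requires a complete domain, and densely defined $*$-homomorphisms can be unbounded (e.g.\ evaluation of polynomials at a point outside $[0,1]$, viewed on the dense $*$-subalgebra $\mathbb{C}[x]\subset C([0,1])$). Moreover, respecting the balancing relation $xbe_Ny=xe_Nby$ only gives a well-defined map on the algebraic balanced tensor product, not on its image $Me_NM\subset \B(L^2M)$, whose defining kernel may be larger. So the logical weight of your proof must rest entirely on the Hilbert-module fact (or on the paper's explicit norm identity), not on that sentence; with that understood, your argument is complete.
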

\begin{proof}
    For an element $ \sum_{k=1}^{n} x_ke_Ny_k \in Me_NM $, we let $ X\coloneqq ( x_1e_N,\cdots,x_ne_N ) $ and $Y\coloneqq (y_1^*e_N,\cdots, y_n^*e_N)^*$. By considering the polar decomposition of $X$ and $Y$, we have the identity of norms
    \[ \|  \sum_{k=1}^n x_ke_Ny_k \| = \| (X^*X)^{1/2}(YY^{*})^{1/2} \|_{\M_n(N)} = \| [E_N(x_i^*x_j)]_{i,j}^{1/2}[ E_N(y_iy^*_j)]_{i,j}^{1/2} \|_{\M_n(N)}.\]
    Similarly, by Lemma \ref{lem:qisbasicconstruction}, we have
    \[ \|  \phi_0(\sum_{k=1}^n x_ke_Ny_k) \| = \|(\id_{\mathbb{M}_n}\otimes qq_{\mathbb{Y}}^{\perp})\cdot(\id\otimes \iota^{\sharp}) ((X^*X)^{1/2}(YY^{*})^{1/2}) \|_{\M_n( qq_{\mathbb{Y}}^{\perp}\iota_{\nor(N)} )} \leqslant \|\sum_{k=1}^n x_ne_Ny_n \|.\]
\end{proof}
    Note that without the assumption $ [q_{\mathbb{Y}},\iota^{\sharp}(e_N)] = 0$ in above lemma, $\phi_0$ is still a contraction if we set $ \phi_0(xe_Ny)=  \iota^{\sharp}(x)q{q}_{\mathbb{Y}}^{\perp}q\iota^{\sharp}(y)$, but it may no longer be a $*$-homomophism.

\subsection{Upgrading proper proximality}

\begin{thm}\label{thm:upgrading}
    For a von Neumann algebra $M$ and a $ \mathbb{Y}$-mixing subalgebra $N\subset M$ with expectation, if $ [q_{\mathbb{Y}},\iota^{\sharp}(e_N)] = 0$, then there is a normal $M^{**}$-bimodular u.c.p.\ map
    $$ \Phi: \langle M,e_N\rangle^{**}\to ( \iota^{\sharp}(M)q_{\mathbb{X}_N} \vee \C q)q^{\perp}_{\mathbb{Y}} $$
    such that $ \Phi(x)= xq_{\mathbb{X}_N}q_{\mathbb{Y}}^\perp$ for all $x\in M^{**}$, where $ q= q_{JMJe_NJMJ} $.
\end{thm}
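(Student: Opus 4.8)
The plan is to realize $\Phi$ as the composition of a corner compression onto $(Me_NM)^{**}$ with the normal $*$-homomorphism $\phi$ already constructed in Lemma~\ref{lem: map identify basic constructions}. Write $\overline{q}\coloneqq \overline{q}_{Me_NM}$. By Lemma~\ref{lem: corner of bidual of basic constr} we have $\overline{q}\,\langle M,e_N\rangle^{**}\,\overline{q}=(Me_NM)^{**}$, so the compression $V\colon \langle M,e_N\rangle^{**}\to (Me_NM)^{**}$, $V(x)=\overline{q}x\overline{q}$, is a normal completely positive map, unital as a map into $(Me_NM)^{**}$ since $\overline{q}$ is the unit of $(Me_NM)^{**}$. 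I would then set $\Phi\coloneqq \phi\circ V$, where $\phi\colon (Me_NM)^{**}\to (\iota^{\sharp}(M)q_{\mathbb{X}_N}\vee \C q)q_{\mathbb{Y}}^{\perp}$ is the normal $*$-homomorphism of Lemma~\ref{lem: map identify basic constructions}. This $\Phi$ is automatically normal, completely positive, and takes values in the required target.

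The crux is checking unitality, i.e.\ that $\Phi(1)=\phi(\overline{q})$ equals the unit $q_{\mathbb{X}_N}q_{\mathbb{Y}}^{\perp}$ of the target. Using Lemma~\ref{lem:identityofbidual} I would write $\overline{q}=\vee_{u\in\mathcal{U}(M)}\iota(ue_Nu^*)$ and invoke normality of $\phi$ together with $\phi(ue_Nu^*)=\iota^{\sharp}(u)q\iota^{\sharp}(u^*)q_{\mathbb{Y}}^{\perp}$. Since $C^*(M,JMJ,e_N)$ lies in the multiplicative domain of $\iota^{\sharp}$, each $\iota^{\sharp}(u)$ is a unitary in $p_{\nor}^{\sharp}\B(L^2M)^{**}p_{\nor}^{\sharp}$, so conjugation by it preserves the supremum $q=\vee_{v}\iota^{\sharp}(JvJe_NJv^*J)$ and gives $\iota^{\sharp}(u)q\iota^{\sharp}(u^*)=\vee_{v}\iota^{\sharp}(uJvJe_NJv^*Ju^*)$. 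Taking the supremum over $u$ and using that $q_{\mathbb{Y}}^{\perp}$ commutes with each term recovers $q_{\mathbb{X}_N}q_{\mathbb{Y}}^{\perp}=\big(\vee_{u,v}\iota^{\sharp}(uJvJe_NJv^*Ju^*)\big)q_{\mathbb{Y}}^{\perp}$, as required. Hence $\Phi$ is u.c.p.

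For bimodularity I would use that $\overline{q}\in (M^{**})'$: for $a,b\in M$ and $x\in\langle M,e_N\rangle^{**}$ the compression satisfies $V(\iota(a)x\iota(b))=\iota(a)V(x)\iota(b)\in (Me_NM)^{**}$, and then the $M$-bimodularity of $\phi$ gives $\Phi(\iota(a)x\iota(b))=\iota^{\sharp}(a)\Phi(x)\iota^{\sharp}(b)$. Since $\Phi$ and the two module actions are normal and $M$ is weak$^*$-dense in $M^{**}$, this upgrades to $M^{**}$-bimodularity. The asserted formula then follows immediately from $\Phi(x)=\Phi(x\cdot 1)=\iota^{\sharp}(x)\Phi(1)=\iota^{\sharp}(x)q_{\mathbb{X}_N}q_{\mathbb{Y}}^{\perp}$ for $x\in M^{**}$, which is exactly $xq_{\mathbb{X}_N}q_{\mathbb{Y}}^{\perp}$ under the identification $M\simeq\iota^{\sharp}(M)$ (here one uses $q_{\mathbb{X}_N},q_{\mathbb{Y}}\leqslant p_{\nor}^{\sharp}$).

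I expect the main obstacle to be precisely the unitality identity $\phi(\overline{q}_{Me_NM})=q_{\mathbb{X}_N}q_{\mathbb{Y}}^{\perp}$. This is the step where the hypotheses $[q_{\mathbb{Y}},\iota^{\sharp}(e_N)]=0$ and the $\mathbb{Y}$-mixing of $N$ are genuinely used—they are what make $\phi$ a well-defined normal $*$-homomorphism in Lemma~\ref{lem: map identify basic constructions}—and where care is needed because $\iota^{\sharp}$ is not multiplicative in general; one must stay inside the multiplicative domain $C^*(M,JMJ,e_N)$ to justify pushing the unitary conjugation through the suprema defining $q$ and $q_{\mathbb{X}_N}$.
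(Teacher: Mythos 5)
Your proposal is correct and follows essentially the same route as the paper's proof: the paper also defines $\Phi=\phi\circ\Ad(\overline{q}_{Me_NM})$, invokes Lemma \ref{lem: corner of bidual of basic constr} for well-definedness, gets bimodularity from $\overline{q}_{Me_NM}\in(M^{**})'$ together with the bimodularity of $\phi$, and establishes unitality by exactly your computation $\phi(\overline{q}_{Me_NM})=\vee_{u,v}\iota^{\sharp}(uJvJe_NJv^*Ju^*)\,q_{\mathbb{Y}}^{\perp}=q_{\mathbb{X}_N}q_{\mathbb{Y}}^{\perp}$, using normality of $\phi$, the multiplicative-domain fact for $C^*(M,JMJ,e_N)$, and the commutation of $q_{\mathbb{Y}}$ with $\iota^{\sharp}(e_N)$, $M$, and $JMJ$.
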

\begin{proof}
    We can simply take $ \Phi = \phi\circ\Ad(\overline{q}_{{Me_NM}})$, where $\phi$ is the normal $*$-homomorphism constructed in Lemma \ref{lem: map identify basic constructions}. By Lemma \ref{lem: corner of bidual of basic constr}, $\Phi$ is well-defined. $\Phi$ is normal because both $ \phi $ and $ \Ad(\overline{q}_{{Me_NM}}) $ are normal. Also, $\Phi$ is $M^{**}$-bimodular because $ \phi $ is $M^{**}$-bimodular by definition and $\overline{q}_{{Me_NM}} = \vee_{u\in \mathcal{U}(M)}\iota( ue_N u^{*})$ commutes with $\iota(M)$. Finally, to show $ \Phi(x)= xq_{\mathbb{X}_N}q_{\mathbb{Y}}^\perp$ for all $x\in M^{**}$, by normality of $\Phi$, it suffices to show that $ \Phi(\iota(x)) = \iota^{\sharp}(x)q_{\mathbb{X}_N}q_{\mathbb{Y}}^\perp $. Since $ \Phi(\iota(x))=  \Phi(1)\iota(x)$ for $x\in M$, it suffices to show $\phi(\overline{q}_{{Me_NM}}) = q_{\mathbb{X}_N}q^{\perp}_{\mathbb{Y}}$:
    \begin{equation*}
    \begin{aligned}
        \phi(\overline{q}_{{Me_NM}}) &= \phi( \vee_{u\in \mathcal{U}(M)}\iota(ue_Nu^{*}) ) = \vee_{u\in \mathcal{U}(M)}\phi(\iota(ue_Nu^*)) = \vee_{u\in \mathcal{U}(M)}\iota^{\sharp}(u)q\iota^{\sharp}(u^*)q_{\mathbb{Y}}^{\perp}\\ 
        &= q_{\mathbb{Y}}^{\perp}\vee_{u\in \mathcal{U}(M)}\iota^{\sharp}(u)q\iota^{\sharp}(u^*) = q_{\mathbb{Y}}^{\perp}\vee_{u\in \mathcal{U}(M)}\iota^{\sharp}(u)(\vee_{u\in \mathcal{U}(JMJ)}\iota^{\sharp}(v){q}\iota^{\sharp}(v^*))\iota^{\sharp}(u^*) \\
        &= q_{\mathbb{Y}}^{\perp}\vee_{u,v\in \mathcal{U}(M)}\iota^{\sharp}(uJvJe_NJv^*Ju^*)=p^{\sharp}_{\nor}q_{\mathbb{Y}}^{\perp}\overline{q}_{\mathbb{X}_N} =q_{\mathbb{Y}}^{\perp}{q}_{\mathbb{X}_N}  ,
    \end{aligned}
    \end{equation*}
    where we used the normality of $\phi$ and the fact that $ q_{\mathbb{Y}} $ commutes with $\iota^{\sharp}(e_N)$, $ M$, and $JMJ$.
\end{proof}

\begin{remark}
    From the computation above, we note that for our study of a von Neumann subalgebra $N \subset M$ with expectation, in many cases, it is enough to work with the subalgebra
    \[ \overline{MJMJ e_{N}MJMJ e_NMJMJ}\subset  \mathbb{X}_{N},\]
    which is a hereditary C$^*$-subalgebra of $ C^*( M,JMJ,e_N ) $. We have the unital inclusion of von Neumann algebras
    \[ (MJMJ e_{N}MJMJ e_NMJMJ)_{J}^{\sharp *}\subset \mathbb{X}_{N}^{\sharp *}\cap \left( M^{**}\vee JMJ^{**}\vee \C \iota^{\sharp}(e_N) \right). \]
    In particular, the new basic construction $ ( \iota^{\sharp}(M)q_{\mathbb{X}_N} \vee \C q)q^{\perp}_{\mathbb{Y}} $ lives inside $ (MJMJ e_{N}MJMJ e_NMJMJ)_{J}^{\sharp *}q_{\mathbb{Y}}^{\perp} $.
\end{remark}

Let $M$ be a finite von Neumann algebra, $\mathbb{X}$ an $M$-boundary piece, and $A\subset pMp$ a von Neumann subalgebra, where $p$ is a nonzero projection in $M$. Define $E \coloneqq \Ad(e_A)\circ \Ad(pJpJ)\colon \BB(L^2M)\to \BB(L^2A)$, which is u.c.p.\ and $A$-bimodular. By \cite[Remark 6.3]{MR4675043}, $\mathbb{X}^A\coloneqq E(\mathbb{K}_{\mathbb{X}}(M))$ is an $A$-boundary piece. By (essentially the same proof as) \cite[Lemma 3.11]{DS24structure}, we have $E(\mathbb{S}_{\mathbb{X}}(M)) \subset \mathbb{S}_{\mathbb{X}^A}(A)$. Therefore, if there exists an $A$-central state on $\mathbb{S}_{\mathbb{X}^A}(A)$ that is normal on $A$, then its composition with $\mathcal{E}_{A}$ yields an $A$-central state on $\mathbb{S}_{\mathbb{X}}(M)$ that is normal on $M$. In particular, if $A$ is properly proximal relative to $\mathbb{X}$ inside $M$, then $A$ is properly proximal relative to $\mathbb{X}^A$. 

\begin{thm} \label{thm: split rel prop prox}
    Let $M$ be a separable tracial von Neumann algebra, $p\in M$ be a projection, and $\mathcal{F}=\{N_1,\cdots,N_n\}$ be a finite family of mixing von Neumann subalgebras of $M$. If $A\subset pMp$ is a von Neumann subalgebra properly proximal relative to $ \mathbb{X}_{\mathcal{F}}^A $, then there exist projections $f_0\in \mathcal{Z}(A)$ and $f_i\in \mathcal{Z}(A'\cap pMp)$, $i=1,\cdots,n$, such that $\sum_{i=0}^n f_i=p$, $Af_0$ is properly proximal, and $Af_i$ is amenable relative to $M_i$ inside $M$ for all $i=1,\cdots,n$.
\end{thm}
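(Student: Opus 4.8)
The plan is to run a maximality argument that first splits off the properly proximal direct summand of $A$ and then shows that the complementary summand is exhausted by pieces relatively amenable to the $N_i$, the engine for the second step being the upgrading map of Theorem~\ref{thm:upgrading}. First I would let $f_0\in\mathcal Z(A)$ be the maximal central projection of $A$ for which $Af_0$ is properly proximal (relative to the compacts); such a maximal projection exists because proper proximality is stable under direct sums, which is immediate from the state characterization of proper proximality. Writing $z=p-f_0\in\mathcal Z(A)$, the summand $Az\subset zMz$ then has no nonzero properly proximal direct summand, while it remains properly proximal relative to $\mathbb X_{\mathcal F}^{Az}$; this is inherited from the hypothesis on $A$ by the corner reduction $E=\Ad(e_A)\circ\Ad(pJpJ)$ discussed before the statement, applied to the further central cut by $z$.

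For the second step I would, for each $i=1,\dots,n$, let $g_i\in\mathcal Z((Az)'\cap zMz)=\mathcal Z(A'\cap pMp)z$ be the maximal projection with $Ag_i$ amenable relative to $N_i$ inside $M$; this is well defined since relative amenability is stable under suprema of central projections of the commutant. The crux is then the claim that $\bigvee_{i=1}^n g_i=z$. Granting it, I orthogonalize inside the abelian algebra $\mathcal Z(A'\cap pMp)$ by setting $f_1=g_1$ and $f_i=g_i\wedge\big(z-\sum_{j<i}f_j\big)$ for $i\ge 2$; each $Af_i$ is a corner of $Ag_i$ by a central projection of the commutant, hence still amenable relative to $N_i$, and $\sum_{i=1}^n f_i=\bigvee_i g_i=z$, so that $\sum_{i=0}^n f_i=f_0+z=p$, as required.

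To prove the claim I would argue by contradiction. Suppose there were a nonzero $e\in\mathcal Z(A'\cap pMp)$ with $e\le z$ such that, by maximality of the $g_i$, $Ae$ is not amenable relative to $N_i$ on any nonzero subprojection of $e$, for every $i$; since $e\le z$, $Ae$ has no properly proximal summand, so it is not properly proximal. Non-proper-proximality of $Ae$ relative to the compacts yields, through the bidual characterization, an $Ae$-central state $\omega$ on $\widetilde{\mathbb S}_{\mathbb K}(Ae)$ normal on $Ae$. I would extend $\omega$ by Hahn--Banach to the larger operator system $\widetilde{\mathbb S}_{\mathbb X_{\mathcal F}^{Ae}}(Ae)$ and average over $\mathcal U(Ae)$ to obtain an $Ae$-central state $\bar\omega$ there. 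Since $Ae$ is properly proximal relative to $\mathbb X_{\mathcal F}^{Ae}$, $\bar\omega$ cannot be normal on $Ae$, so its non-normal part is carried by the corner projection $q_{\mathbb X_{\mathcal F}}=\bigvee_i q_{\mathbb X_{N_i}}$ of Lemma~\ref{lem: q_X_F is union}. Localizing this mass on some $q_{\mathbb X_{N_i}}$ and using that the mixing hypothesis on $N_i$ supplies $\mathbb Y$-mixing together with $[q_{\mathbb Y},\iota^\sharp(e_{N_i})]=0$, I apply the upgrading map $\Phi\colon\langle M,e_{N_i}\rangle^{**}\to(\iota^\sharp(M)q_{\mathbb X_{N_i}}\vee\mathbb C q)q_{\mathbb Y}^\perp$ of Theorem~\ref{thm:upgrading}: pulling the localized mass back through $\Phi$ produces a nonzero $Ae$-central state on $\langle M,e_{N_i}\rangle$ that is normal on $M$, that is, relative amenability of a nonzero corner of $Ae$ to $N_i$. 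This contradicts the choice of $e$, and the claim follows.

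The main obstacle is precisely this last step, because $q_{\mathbb X_{\mathcal F}}=\bigvee_i q_{\mathbb X_{N_i}}$ is a supremum, rather than a sum, of mutually non-orthogonal corner projections, so the non-normal mass of $\bar\omega$ cannot be split over the indices $i$ in a single stroke. I expect to handle this by induction on the size of $\mathcal F$, peeling off one mixing subalgebra at a time: at each stage one either detects through $\Phi$ a nonzero central corner carrying relative amenability to the current $N_i$, which is then absorbed into $g_i$, or the state is seen to annihilate $q_{\mathbb X_{N_i}}$, reducing the problem to the family $\mathcal F\setminus\{N_i\}$ and ultimately to the empty family, where $\mathbb X_{\mathcal F}$ collapses to the compacts and the hypothesis forces proper proximality outright. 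A secondary point to be handled with care is the normality bookkeeping needed to move $\omega$ between the operator-system boundaries $\mathbb S_{\mathbb X}$ and their bidual corners $\widetilde{\mathbb S}_{\mathbb X}$ while tracking $p^\sharp_{\nor}$ and the identities $q_{\mathbb X_{N_i}}$, for which the identities assembled in the subsections on biduals and on mixing subalgebras are exactly what is needed.
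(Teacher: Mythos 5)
Your overall architecture (split off the maximal properly proximal summand $f_0$, then use Theorem \ref{thm:upgrading} together with Lemma \ref{lem: q_X_F is union} to cover the complement by pieces amenable relative to the $N_i$) matches the paper's, but the step that is supposed to drive the contradiction has a genuine gap: you cannot ``extend $\omega$ by Hahn--Banach to $\widetilde{\mathbb S}_{\mathbb X_{\mathcal F}^{Ae}}(Ae)$ and average over $\mathcal U(Ae)$ to obtain an $Ae$-central state.'' A Hahn--Banach extension of a central state is not central, and producing an $\Ad(\mathcal U(Ae))$-invariant point in the compact convex set of extensions requires an invariant mean for that action; since $Ae$ has no amenable direct summand (proper proximality of $A$ relative to $\mathbb X_{\mathcal F}^A$ already rules out amenable summands), no such averaging procedure exists. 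Worse, if this maneuver were available your argument would collapse on itself: every extension, and hence every average of extensions, agrees with $\omega$ on $\widetilde{\mathbb S}_{\mathbb K}(Ae)$ (that operator system is invariant under $\Ad(\mathcal U(Ae))$ and $\omega$ is central on it), so in particular $\bar\omega|_{Ae}=\omega|_{Ae}$ would be \emph{normal}. Thus $\bar\omega$ would be an $Ae$-central state on $\widetilde{\mathbb S}_{\mathbb X_{\mathcal F}^{Ae}}(Ae)$ normal on $Ae$, flatly contradicting proper proximality relative to $\mathbb X_{\mathcal F}^{Ae}$ --- i.e., the trick would show that no non-properly-proximal corner $Ae$ can exist at all, which would trivialize the theorem (it would say relative proper proximality implies proper proximality). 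For the same reason there is no ``non-normal part'' of $\bar\omega$ to localize on $q_{\mathbb X_{\mathcal F}}$, so the subsequent localization and the proposed induction on $|\mathcal F|$ have nothing to run on.

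The paper avoids exactly this obstruction by never extending states to a larger operator system: all state transfers are implemented by \emph{normal, bimodular, u.c.p.\ maps}, which preserve centrality automatically. Concretely, non-proper-proximality of $Af_0^\perp$ gives an $Af_0^\perp$-central state $\mu$ on $\widetilde{\mathbb S}(Af_0^\perp)$, normal on $Af_0^\perp$ and faithful on $\mathcal Z(Af_0^\perp)$; pulling back through the bidual extension $\widetilde E$ of $\Ad(e_{Af_0^\perp})\circ\Ad(fJfJ)$ gives a central state $\varphi$ on $\widetilde{\mathbb S}(M)$. Proper proximality relative to $\mathbb X_{\mathcal F}$ is then invoked in the direction opposite to yours: if $\varphi(q_{\mathbb X_{\mathcal F}}^\perp)>0$, the \emph{compression} $\Ad(q_{\mathbb X_{\mathcal F}}^\perp)$ turns $\varphi$ into an $Af_0^\perp$-central state factoring through $\widetilde{\mathbb S}_{\mathbb X_{\mathcal F}^{Af_0^\perp}}(Af_0^\perp)$ and normal on $Af_0^\perp$, a contradiction; hence $\varphi(q_{\mathbb K}^\perp q_{\mathbb X_{\mathcal F}})=1$. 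Your concern about the supremum $\vee_i q_{\mathbb X_{N_i}}$ also disappears: since $\vee_i q_{\mathbb X_{N_i}}\leq\sum_i q_{\mathbb X_{N_i}}$, positive mass on the supremum forces $\varphi(q_{\mathbb K}^\perp q_{\mathbb X_{N_i}})>0$ for some $i$; one applies the upgrading map $\Phi_i$ for every such $i$ simultaneously, takes $f_i$ to be the support projection of $\nu_i=\varphi\circ\Phi_i$ restricted to $\mathcal Z((Af_0^\perp)'\cap f_0^\perp Mf_0^\perp)$, and concludes $\vee_i f_i=f_0^\perp$ from $\varphi(q_{\mathbb X_{N_i}}f_i^\perp)=0$ and $\varphi(\vee_i q_{\mathbb X_{N_i}})=1$ --- no induction on the size of $\mathcal F$ and no splitting of mass is needed.
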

\begin{proof}
    This essentially follows from the proof of \cite[Theorem 1.1]{DS24structure}, so we only sketch the argument here for completeness.

    By presumption $A$ is properly proximal relative to $\mathbb{X}_{\mathcal{F}}^A$. In particular, $A$ has no amenable direct summand. Let $f_{0} \in \mathcal{Z}(A)$ be the projection such that $Af_{0}$ is the maximal properly proximal direct summand of $A$. If $f_0 = 1$, then $A$ is properly proximal and the theorem follows. Thus we assume $f_{0}\neq 1$. Then $Af_{0}^{\perp}$ has no amenable summand, is properly proximal relative to $\mathbb{X}_{\mathcal{F}}^{Af_0^\perp}$, and has no properly proximal direct summand. This last condition implies that there exists an $Af_{0}^{\perp}$-central state $\mu$ on $\widetilde{\mathbb{S}}(Af_{0}^{\perp})$ such that $\restr{\mu}{Af_{0}^{\perp}}$ is normal and $\restr{\mu}{\mathcal{Z}(Af_{0}^{\perp})}$ is faithful. 

    Let $E\coloneqq \Ad(e_{Af_{0}^{\perp}})\circ \Ad(fJfJ)\colon \mathbb{B}(L^2M) \to \mathbb{B}(L^2(Af_{0}^{\perp}))$. By \cite[Lemma 3.11]{DS24structure}, its restriction $\restr{E}{\mathbb{S}(M)}$ maps $\mathbb{S}(M)$ into $\mathbb{S}(Af_{0}^{\perp})$, and its bidual extension 
    \[\widetilde{E}\coloneqq (\restr{E^{*}}{\mathbb{B}(L^2(Af_{0}^{\perp}))_{J}^{\sharp}})^{*} \colon \mathbb{B}(L^2 M)_{J}^{\sharp*}\to \mathbb{B}(L^2 (Af_{0}^{\perp}))_{J}^{\sharp*}\] 
    maps $\widetilde{\mathbb{S}}(M)$ into $\widetilde{\mathbb{S}}(Af_{0}^{\perp})$ and maps $\widetilde{\mathbb{S}}_{\mathbb{X}_{\mathcal{F}}}(M)$ into $\widetilde{\mathbb{S}}_{\mathbb{X}_{\mathcal{F}}^{Af_0^\perp}}(Af_{0}^{\perp})$. Moreover, $\widetilde{E}|_{M}$ coincides with the conditional expectation from $M$ onto $Af_{0}^{\perp}$.

    Now consider the $Af_{0}^{\perp}$-central state $\varphi\coloneqq \mu\circ \widetilde{E}\colon \widetilde{\mathbb{S}}(M)\to \mathbb{C}$. Its restriction $\restr{\varphi}{Af_0^{\perp}}$ is faithful and normal. We let $q_{\mathbb{K}}$ denote the identity of $\mathbb{K}(L^2 M)_{J}^{\sharp*} \subset \mathbb{B}(L^2 M)_{J}^{\sharp*}$ and $q_{\mathbb{X}_{\mathcal{F}}}$ the identity of $\mathbb{K}_{\mathbb{X_{\mathcal{F}}}}(L^2 M)_{J}^{\sharp*} \subset \mathbb{B}(L^2 M)_{J}^{\sharp*}$ as above. Then $\varphi(q_{\mathbb{K}}^{\perp}) = 1$ since $Af_{0}^{\perp}$ has amenable direct summand. Moreover, we have $\varphi(q_{\mathbb{X}_{\mathcal{F}}}) = 1$. Indeed, suppose for contradiction that $\varphi(q_{\mathbb{X}_{\mathcal{F}}}^\perp)>0$. Then consider the state
    \[\varphi'\coloneqq \frac{1}{\mu(q_{\mathbb{X}_\mathcal{F}}^\perp)}\varphi\circ \Ad(q_{\mathbb{X}_\mathcal{F}}^\perp) = \frac{1}{\mu(q_{\mathbb{X}_\mathcal{F}}^\perp)} \mu\circ \widetilde{E}\circ \Ad(q_{\mathbb{X}_\mathcal{F}}^\perp) \colon \widetilde{S}_{\mathbb{X}_\mathcal{F}}(M)\to \mathbb{C},\]
    which is $Af_0^\perp$-central and normal on $f_0^\perp M f_0^\perp$. Since $\varphi'$ factors through the $Af_0^\perp$-bimodular u.c.p.\ map 
    \[\widetilde{E}\circ \Ad(q_{\mathbb{X}_{\mathcal{F}}}^\perp) =  \Ad((q_{\mathbb{X}_{\mathcal{F}}^{Af_0^\perp}})^\perp) \circ \widetilde{E} \colon \widetilde{\mathbb{S}}_{\mathbb{X}_{\mathcal{F}}}(M) \to (q_{\mathbb{X}_{\mathcal{F}}^{Af_0^\perp}})^\perp\widetilde{\mathbb{S}}_{\mathbb{X}_{\mathcal{F}}^{Af_0^\perp}}(Af_{0}^{\perp})(q_{\mathbb{X}_{\mathcal{F}}^{Af_0^\perp}})^\perp,\]
    there exists an $Af_0^\perp$-central state $\widetilde{\varphi}\colon \widetilde{\mathbb{S}}_{\mathbb{X}_{\mathcal{F}}^{Af_0^\perp}}(Af_{0}^{\perp})\to \mathbb{C}$ such that $\varphi' = \widetilde{\varphi}\circ \widetilde{E}$, and $\widetilde{\varphi}$ is normal on $Af_0^\perp$. Since $\mathbb{S}_{\mathbb{X}_{\mathcal{F}}^{Af_0^\perp}}(Af_{0}^{\perp})$ canonically embeds into $\widetilde{\mathbb{S}}_{\mathbb{X}_{\mathcal{F}}^{Af_0^\perp}}(Af_{0}^{\perp})$, this contradicts the assumption that $Af_0^\perp$ is properly proximal relative to $\mathbb{X}_{\mathcal{F}}^{Af_0^\perp}$.

    Therefore,  $\varphi(q_{\mathbb{K}}^{\perp}q_{\mathbb{X}_{\mathcal{F}}}) = 1$. By Lemma \ref{lem: q_X_F is union}, $\vee_{i=1}^{n} q_{\mathbb{X}_{N_{i}}} = q_{\mathbb{X}_{\mathcal{F}}}$, so there exists $1\leqslant i\leqslant n$ such that $\varphi(q_{\mathbb{K}}^{\perp}q_{\mathbb{X}_{N_{i}}}) > 0$. Now we consider an $M$-bimodular u.c.p.\ map $\Phi_i: \langle M,e_{N_i}\rangle\to q^{\perp}_{\mathbb{K}}\tilde{\mathbb{S}}(M)$ given by
    \[ \Phi_i \coloneqq \Phi\circ\iota\big|_{\langle M,e_{N_i}\rangle}: \langle M,e_{N_i}\rangle\to  ( \iota^{\sharp}(M)q_{\mathbb{X}_{N_i}} \vee \C q)q^{\perp}_{\mathbb{K}} \subset q^{\perp}_{\mathbb{K}}\tilde{\mathbb{S}}(M), \]
    where $ q= q_{JMJe_{N_i}JMJ} = \vee_{u\in \mathcal{U}(M)}\iota^{\sharp}(JuJe_{N} Ju^*J) $ and $\Phi$ is the normal u.c.p.\ map constructed in Theorem \ref{thm:upgrading}. Note that $[q,\iota^{\sharp}(JMJ)]=0$, so $ ( \iota^{\sharp}(M)q_{\mathbb{X}_{N_i}} \vee \C q)\subset \tilde{\mathbb{S}}(M) $ and thus $\Phi_i$ is well-defined. Define the normalized composition functional
    \[\nu_i\coloneqq \frac{1}{\varphi(q_{\mathbb{K}}^{\perp}q_{\mathbb{X}_{N_{i}}})} (\varphi\circ \Phi_i) \in \langle M, e_{N_i}\rangle^*.\] 
    Then $\nu_i$ is an $Af_0^\perp$-central state that is normal when restricted to $f_0^\perp M f_0^\perp$. We let $f_i \in \mathbb{Z}((Af_0^\perp)'\cap f_0^\perp M f_0^\perp)$ be the support projection of $\restr{\nu_i}{ \mathcal{Z}((Af_0^\perp)'\cap f_0^\perp M f_0^\perp)}$, so then $Af_i$ is amenable relative to $N_i$ inside $M$. 

    Applying the same argument for each $i$ such that $\varphi(q_{\mathbb{K}}^{\perp}q_{\mathbb{X}_{N_{i}}}) > 0$, we obtain projections $f_i \in  \mathcal{Z}((Af_0^\perp)'\cap f_0^\perp M f_0^\perp)$ such that $Af_i$ is amenable relative to $N_i$ inside $M$. For indices $i$ with $\varphi(q_{\mathbb{K}}^{\perp}q_{\mathbb{X}_{N_{i}}}) = 0$, we set $f_i = 0$. Note that
    \[ \varphi(q_{\mathbb{X}_{N_i}}f_i^\perp) = \varphi(q_{\mathbb{K}}q_{\mathbb{X}_{N_i}}f_i^\perp) = \varphi(\Phi_i(f_i^\perp)) = 0,\]
    so $\varphi(q_{\mathbb{X}_{N_i}}) = \varphi(q_{\mathbb{X}_{N_i}}f_i)$. Then 
    \[ \varphi(\vee_{i=1}^{n} f_i) \geqslant \varphi(\vee_{i=1}^{n}q_{\mathbb{X}_{N_i}}f_i) \geqslant \varphi(\vee_{i=1}^n q_{\mathbb{X}_{N_i}})= 1,\]
    so $\vee_{i=1}^nf_i = f_0^\perp$. Since $f_i\in \mathcal{Z}((Af_0^\perp)'\cap f_0^\perp M f_0^\perp)$, we may arrange the projections so that $\sum_{i=1}^n f_i = f_0^\perp$.
\end{proof}

We also have the following proposition by the same proof as in \cite[Proposition 4.2]{ding2025unique}.
\begin{prop}
    Let $M$ be a separable tracial von Neumann algebra, $p\in M$ be a projection, $\mathbb{Y}$ be an $M$-boundary piece, and $\mathcal{F}=\{N_1,\cdots,N_n\}$ be a finite family of $\mathbb{Y}$-mixing von Neumann subalgebras of $M$ satisfying $ [q_{\mathbb{Y}},\iota^{\sharp}(e_N)]=0 $. Assume $A\subset pMp$ is a von Neumann subalgebra properly proximal relative to $ \mathbb{X}_{\mathcal{F}} $ inside $M$ and has no amenable direct summand.
    
    If there exists a $N$-central state $\varphi: \tilde{\mathbb{S}}_{\mathbb{Y}}(M)\to \C$ such that $ \varphi|_{pMp} $ is faithful normal, then there exists projections $f_i\in \mathcal{Z}(A'\cap pMp)$, $i=1,\cdots,n$, such that $\sum_{i=1}^n f_i=p$, and $Af_i$ is amenable relative to $M_i$ inside $M$ for all $i=1,\cdots,n$.
\end{prop}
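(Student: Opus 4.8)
The plan is to run the proof of Theorem~\ref{thm: split rel prop prox} (equivalently \cite[Proposition 4.2]{ding2025unique}) essentially verbatim, the only difference being that the $A$-central, faithful-normal-on-$pMp$ state $\varphi$ is now provided by hypothesis rather than manufactured. In particular there is no need to split off a properly proximal summand: here $A$ itself plays the role that $Af_0^\perp$ played there, and $p$ plays the role of $f_0^\perp$. The whole argument then reduces to showing that the mass of $\varphi$ is concentrated on the projection $q_{\mathbb{Y}}^\perp q_{\mathbb{X}_{\mathcal{F}}}$ and then decomposing $q_{\mathbb{X}_{\mathcal{F}}}$ along the family $\mathcal{F}$.

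First I would show $\varphi(q_{\mathbb{Y}}^\perp q_{\mathbb{X}_{\mathcal{F}}}) = 1$. Since $q_{\mathbb{Y}}$ and $q_{\mathbb{X}_{\mathcal{F}}}$ commute, it suffices to check $\varphi(q_{\mathbb{Y}}) = 0$ and $\varphi(q_{\mathbb{X}_{\mathcal{F}}}) = 1$. For the first, mirroring the step $\varphi(q_{\mathbb{K}}) = 0$ in Theorem~\ref{thm: split rel prop prox}: a positive value $\varphi(q_{\mathbb{Y}}) > 0$ would, after normalizing $\varphi\circ\Ad(q_{\mathbb{Y}})$ on the corner $q_{\mathbb{Y}}\widetilde{\mathbb{S}}_{\mathbb{Y}}(M)q_{\mathbb{Y}} = \mathbb{K}_{\mathbb{Y}}(M)_J^{\sharp*}$, produce an $A$-central state that is normal on $A$, witnessing a nonzero amenable direct summand of $A$ and contradicting the no-amenable-summand hypothesis. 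For the second, I argue by contradiction exactly as in Theorem~\ref{thm: split rel prop prox}: if $\varphi(q_{\mathbb{X}_{\mathcal{F}}}^\perp) > 0$, then the normalization of $\varphi\circ\Ad(q_{\mathbb{X}_{\mathcal{F}}}^\perp)$ is an $A$-central state, normal on $pMp$, on the $q_{\mathbb{X}_{\mathcal{F}}}^\perp$-corner of $\widetilde{\mathbb{S}}_{\mathbb{Y}}(M)$; pushing it through the $A$-bimodular corner map $E = \Ad(e_A)\circ\Ad(pJpJ)$ (which carries $\widetilde{\mathbb{S}}_{\mathbb{X}_{\mathcal{F}}}(M)$ into $\widetilde{\mathbb{S}}_{\mathbb{X}_{\mathcal{F}}^A}(A)$ by \cite[Lemma 3.11]{DS24structure}, cf.\ \cite[Remark 6.3]{MR4675043}) yields an $A$-central, normal-on-$A$ state on $\widetilde{\mathbb{S}}_{\mathbb{X}_{\mathcal{F}}^A}(A)$, contradicting proper proximality of $A$ relative to $\mathbb{X}_{\mathcal{F}}$ inside $M$ via the bidual characterization \cite[Theorem 8.5]{MR4675043}. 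By Lemma~\ref{lem: q_X_F is union}, $q_{\mathbb{X}_{\mathcal{F}}} = \vee_{i=1}^n q_{\mathbb{X}_{N_i}}$, so there is at least one index with $\varphi(q_{\mathbb{Y}}^\perp q_{\mathbb{X}_{N_i}}) > 0$.

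Next, for each $i$ with $\varphi(q_{\mathbb{Y}}^\perp q_{\mathbb{X}_{N_i}}) > 0$ I invoke the upgrading map: since $N_i$ is $\mathbb{Y}$-mixing and $[q_{\mathbb{Y}}, \iota^\sharp(e_{N_i})] = 0$, Theorem~\ref{thm:upgrading} provides a normal $M^{**}$-bimodular u.c.p.\ map $\Phi\colon \langle M, e_{N_i}\rangle^{**} \to (\iota^\sharp(M)q_{\mathbb{X}_{N_i}} \vee \mathbb{C}q)q_{\mathbb{Y}}^\perp \subset \widetilde{\mathbb{S}}_{\mathbb{Y}}(M)$, with $q = q_{JMJe_{N_i}JMJ}$. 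Setting $\Phi_i = \Phi\circ\iota|_{\langle M,e_{N_i}\rangle}$ and $\nu_i = \varphi(q_{\mathbb{Y}}^\perp q_{\mathbb{X}_{N_i}})^{-1}\,\varphi\circ\Phi_i$, the functional $\nu_i$ is a state on $\langle M, e_{N_i}\rangle$ which is $A$-central (as $\varphi$ is $A$-central and $\Phi_i$ is $A$-bimodular) and normal on $pMp$. Letting $f_i \in \mathcal{Z}(A'\cap pMp)$ be the support projection of $\restr{\nu_i}{\mathcal{Z}(A'\cap pMp)}$, the restriction of $\nu_i$ to $\langle M, e_{N_i}\rangle f_i$ exhibits $Af_i$ as amenable relative to $M_i$ inside $M$; for indices with $\varphi(q_{\mathbb{Y}}^\perp q_{\mathbb{X}_{N_i}}) = 0$ I set $f_i = 0$.

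Finally I would assemble the $f_i$. From $\nu_i(f_i^\perp) = 0$ and $\Phi_i(f_i^\perp) = f_i^\perp q_{\mathbb{X}_{N_i}} q_{\mathbb{Y}}^\perp$ one gets $\varphi(q_{\mathbb{Y}}^\perp q_{\mathbb{X}_{N_i}} f_i^\perp) = 0$, hence $\varphi(q_{\mathbb{Y}}^\perp q_{\mathbb{X}_{N_i}}) = \varphi(q_{\mathbb{Y}}^\perp q_{\mathbb{X}_{N_i}} f_i)$. Combining with the first step, $\varphi(\vee_{i=1}^n f_i) \geqslant \varphi(q_{\mathbb{Y}}^\perp \vee_{i=1}^n q_{\mathbb{X}_{N_i}}) = \varphi(q_{\mathbb{Y}}^\perp q_{\mathbb{X}_{\mathcal{F}}}) = 1 = \varphi(p)$, so faithfulness of $\restr{\varphi}{\mathcal{Z}(A'\cap pMp)}$ forces $\vee_{i=1}^n f_i = p$; since the $f_i$ are central in $A'\cap pMp$ they can be disjointified to achieve $\sum_{i=1}^n f_i = p$. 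The main obstacle is the first step, namely the two mass estimates $\varphi(q_{\mathbb{Y}}) = 0$ and $\varphi(q_{\mathbb{X}_{\mathcal{F}}}) = 1$: one must set up the corner boundary piece $\mathbb{X}_{\mathcal{F}}^A$ carefully and verify, now in the general (non-compact) boundary-piece setting with state living on $\widetilde{\mathbb{S}}_{\mathbb{Y}}(M)$, that the no-amenable-summand hypothesis and proper proximality relative to $\mathbb{X}_{\mathcal{F}}$ translate into the vanishing, respectively fullness, of $\varphi$ on these bidual projections.
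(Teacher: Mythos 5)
Your proposal follows the route the paper intends for this proposition (which it derives by rerunning the argument of Theorem~\ref{thm: split rel prop prox}, citing \cite{ding2025unique}): concentrate the mass of $\varphi$ on $q_{\mathbb{Y}}^{\perp}q_{\mathbb{X}_{\mathcal{F}}}$, split this projection along the family via Lemma~\ref{lem: q_X_F is union}, compose $\varphi$ with the upgrading maps $\Phi_i$ of Theorem~\ref{thm:upgrading}, extract the support projections $f_i$, and assemble them using faithfulness of $\varphi|_{pMp}$. Your treatment of $\varphi(q_{\mathbb{Y}})=0$, of the states $\nu_i$, and of the final join argument is sound; for the first of these, note that the conclusion requires pulling the corner state back along the u.c.p.\ map $\mathbb{B}(L^2M)\ni T\mapsto q_{\mathbb{Y}}\iota^{\sharp}(T)q_{\mathbb{Y}}$, which has $M$ and $JMJ$ in its multiplicative domain, so as to obtain an $A$-central state on $\mathbb{B}(L^2M)$ normal on $pMp$ and hence an amenable corner of $A$.

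The step that fails as written is your verification of $\varphi(q_{\mathbb{X}_{\mathcal{F}}})=1$: you propose to ``push'' the corner state $\varphi'\coloneqq \varphi(q_{\mathbb{X}_{\mathcal{F}}}^{\perp})^{-1}\,\varphi\circ\Ad(q_{\mathbb{X}_{\mathcal{F}}}^{\perp})$ through $\widetilde{E}$ to obtain a state on $\widetilde{\mathbb{S}}_{\mathbb{X}_{\mathcal{F}}^{A}}(A)$. States pull back along u.c.p.\ maps, they do not push forward, and $\widetilde{E}$ points the wrong way for this. In Theorem~\ref{thm: split rel prop prox} the corresponding step is legitimate only because there $\varphi=\mu\circ\widetilde{E}$ factors through $\widetilde{E}$ by construction; in the present proposition $\varphi$ is an arbitrary hypothesis-given state on $\widetilde{\mathbb{S}}_{\mathbb{Y}}(M)$, and no such factorization exists. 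The fix is to drop the detour entirely: the hypothesis is proper proximality relative to $\mathbb{X}_{\mathcal{F}}$ \emph{inside} $M$, i.e.\ the nonexistence of an $A$-central state on $\mathbb{S}_{\mathbb{X}_{\mathcal{F}}}(M)$ normal on $pMp$, and the functional $\mathbb{S}_{\mathbb{X}_{\mathcal{F}}}(M)\ni T\mapsto \varphi'(\iota^{\sharp}(T))$ is already such a state. It is well defined because, for $T\in\mathbb{S}_{\mathbb{X}_{\mathcal{F}}}(M)$ and $a\in JMJ$, one has $q_{\mathbb{X}_{\mathcal{F}}}^{\perp}\iota^{\sharp}([T,a])q_{\mathbb{X}_{\mathcal{F}}}^{\perp}=0$ since $\iota^{\sharp}(\mathbb{K}_{\mathbb{X}_{\mathcal{F}}}^{\infty,1}(M))\subset q_{\mathbb{X}_{\mathcal{F}}}\mathbb{B}(L^2M)_{J}^{\sharp*}q_{\mathbb{X}_{\mathcal{F}}}$, so $\Ad(q_{\mathbb{X}_{\mathcal{F}}}^{\perp})\circ\iota^{\sharp}$ carries $\mathbb{S}_{\mathbb{X}_{\mathcal{F}}}(M)$ into $\widetilde{\mathbb{S}}(M)\subset\widetilde{\mathbb{S}}_{\mathbb{Y}}(M)$, the domain of $\varphi$; $A$-centrality and normality of the restriction to $pMp$ (by domination, $\varphi'|_{pMp}\leqslant \varphi(q_{\mathbb{X}_{\mathcal{F}}}^{\perp})^{-1}\varphi|_{pMp}$ on positive elements) follow as you indicate. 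Incidentally, this is precisely why the hypothesis is phrased ``inside $M$'' rather than in terms of the corner piece $\mathbb{X}_{\mathcal{F}}^{A}$: for an arbitrary $\varphi$, the transfer to the $A$-picture that drives Theorem~\ref{thm: split rel prop prox} is exactly what is unavailable.
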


\begin{thm}[Theorem \protect{\ref{thm: split rel biex}}]
      Let $M$ be a separable tracial von Neumann algebra and $\mathcal{F}=\{N_1,\cdots,N_n\}$ a finite family of mixing von Neumann subalgebra of $M$. Let $p\in M$ be a projection. If $M$ is biexact relative to $ \mathbb{X}_\FF$, then for any von Neumann subalgebra $ A\subset pMp $, there exist projections $f_0, f_{n+1}\in \mathcal{Z}(A)$ and $f_i\in \mathcal{Z}(A'\cap pMp)$, $i=1,\cdots,n$, such that $\sum_{i=0}^{n+1} f_i=p$, $Af_0$ is properly proximal, $Af_{n+1}$ is amenable, and $Af_i$ is amenable relative to $M_i$ inside $M$ for all $i=1,\cdots,n$.
\end{thm}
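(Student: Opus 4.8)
The plan is to run the argument of Theorem~\ref{thm: split rel prop prox}, but to first strip off \emph{two} maximal central summands of $A$ rather than one, replacing the hypothesis that $A$ is properly proximal relative to $\mathbb{X}_\FF^A$ by an appeal to the biexactness of $M$. Concretely, let $f_0\in\mathcal Z(A)$ be the maximal central projection with $Af_0$ properly proximal, and inside $Af_0^{\perp}$ let $f_{n+1}\in\mathcal Z(Af_0^{\perp})=\mathcal Z(A)f_0^{\perp}$ be the maximal central projection with $Af_{n+1}$ amenable; since proper proximality and amenability are mutually exclusive on a summand, $f_0,f_{n+1}\in\mathcal Z(A)$ are orthogonal. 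Put $g\coloneqq p-f_0-f_{n+1}$ and $Q\coloneqq Ag\subset gMg$, so that $Q$ has neither a properly proximal nor an amenable direct summand. If $g=0$ we are done (taking $f_1=\cdots=f_n=0$), so assume $g\neq 0$.

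Since $Q$ has no properly proximal summand, the tracial bidual characterization of proper proximality \cite[Theorem~8.5]{MR4675043} furnishes a $Q$-central state $\mu$ on $\widetilde{\mathbb{S}}(Q)$ that is normal on $Q$ and whose restriction to $\mathcal Z(Q)$ is faithful. Composing with the bidual $\widetilde E$ of the compression $\B(L^2M)\to\B(L^2Q)$ (as in the proof of Theorem~\ref{thm: split rel prop prox}, via \cite[Lemma~3.11]{DS24structure}) produces a $Q$-central state $\varphi\coloneqq\mu\circ\widetilde E$ on $\widetilde{\mathbb{S}}(M)$ whose restriction to $gMg$ is normal. Exactly as in that proof, the absence of an amenable summand of $Q$ forces $\varphi(q_{\mathbb K}^{\perp})=1$, where $q_{\mathbb K}$ is the identity of $\mathbb K(L^2M)^{\sharp*}_J$.

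The crux — and the only place biexactness enters — is to show $\varphi(q_{\mathbb{X}_\FF})=1$. Suppose not, and set $c\coloneqq\varphi(q_{\mathbb{X}_\FF}^{\perp})>0$ and $\varphi'\coloneqq c^{-1}\varphi\circ\Ad(q_{\mathbb{X}_\FF}^{\perp})$, a $Q$-central state, normal on $gMg$, supported on $q_{\mathbb{X}_\FF}^{\perp}\widetilde{\mathbb{S}}(M)q_{\mathbb{X}_\FF}^{\perp}$; since $q_{\mathbb{X}_\FF}$ dominates $q_{\mathbb K}$ and commutes with $\iota^{\sharp}(M)$ and $\iota^{\sharp}(JMJ)$, this corner consists of operators commuting with $JMJ^{**}$. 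Because $M$ is biexact relative to $\mathbb{X}_\FF$, the inclusion $M\subset\mathbb{S}_{\mathbb{X}_\FF}(M)$ is $M$-nuclear, so there are c.c.p.\ maps $\phi_k\colon M\to\mathbb M_{n_k}(\C)$ and $\zeta_k\colon\mathbb M_{n_k}(\C)\to\mathbb{S}_{\mathbb{X}_\FF}(M)$ with $\zeta_k\circ\phi_k\to\iota$ in the weak $M$-topology. The assignments $\omega_k(x)\coloneqq\varphi'\big(q_{\mathbb{X}_\FF}^{\perp}\zeta_k(\phi_k(x))q_{\mathbb{X}_\FF}^{\perp}\big)$ give positive functionals on $M$, asymptotically unital, that factor through the matrix algebras $\mathbb M_{n_k}(\C)$ and converge to $\varphi'|_M=\tau(d'\,\cdot\,)$ for some nonzero $d'\in(Q'\cap gMg)_+$. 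A factorization of a normal $Q$-central functional through finite-dimensional algebras whose image commutes with $JMJ^{**}$ exhibits the corner $Qz$, where $z\in\mathcal Z(Q)$ is the central support of $d'$, as amenable, contradicting that $Q$ has no amenable summand. I expect this hypertrace extraction to be the main obstacle: it is where biexactness is genuinely consumed, and it is precisely the step that cannot be bypassed by the weaker (and here false) assertion that $Q$ is properly proximal relative to $\mathbb{X}_\FF^Q$, since a summand of $Q$ may well be amenable relative to some $N_i$ without being amenable or properly proximal.

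With $\varphi(q_{\mathbb K}^{\perp}q_{\mathbb{X}_\FF})=1$ established, the remainder follows verbatim from the proof of Theorem~\ref{thm: split rel prop prox}: by Lemma~\ref{lem: q_X_F is union} we have $q_{\mathbb{X}_\FF}=\vee_{i=1}^{n}q_{\mathbb{X}_{N_i}}$, so $\varphi(q_{\mathbb K}^{\perp}q_{\mathbb{X}_{N_i}})>0$ for some $i$; feeding $\varphi$ through the $M$-bimodular u.c.p.\ map $\Phi_i\colon\langle M,e_{N_i}\rangle\to q_{\mathbb K}^{\perp}\widetilde{\mathbb{S}}(M)$ supplied by the upgrading Theorem~\ref{thm:upgrading} yields a normalized $Q$-central state $\nu_i$ on $\langle M,e_{N_i}\rangle$, whose support projection $f_i\in\mathcal Z(Q'\cap gMg)$ makes $Qf_i$ amenable relative to $N_i$ inside $M$, and one arranges $\sum_{i=1}^{n}f_i=g$. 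Finally one checks the bookkeeping: $f_0,f_{n+1}\in\mathcal Z(A)$ by construction; since $g\in\mathcal Z(A)\subset\mathcal Z(A'\cap pMp)$ we have $\mathcal Z(Q'\cap gMg)=\mathcal Z(A'\cap pMp)g$, so each $f_i\in\mathcal Z(A'\cap pMp)$; and $\sum_{i=0}^{n+1}f_i=f_0+g+f_{n+1}=p$ with $Af_0$ properly proximal, $Af_{n+1}$ amenable, and $Af_i=Qf_i$ amenable relative to $N_i$ for $i=1,\dots,n$, which is the assertion.
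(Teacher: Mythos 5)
Your Steps 1--3 and 5 (stripping the maximal properly proximal and amenable summands, producing $\mu$ and $\varphi=\mu\circ\widetilde{E}$, getting $\varphi(q_{\mathbb{K}}^{\perp})=1$, and the endgame via Lemma \ref{lem: q_X_F is union} and Theorem \ref{thm:upgrading}) track the paper's proof of Theorem \ref{thm: split rel prop prox} faithfully; the problem is exactly at the step you yourself flag as the crux, and it is a genuine gap, for two reasons. First, the convergence $\omega_k\to\varphi'|_M$ is unjustified: the nuclear approximation $\zeta_k\circ\phi_k\to\mathrm{id}_M$ holds in the point-(weak) $M$-topology, equivalently $\iota^{\sharp}(\zeta_k\phi_k(x))\to\iota^{\sharp}(x)$ weak$^*$ in the bidual, but $\varphi'$ is built from $\mu$, which is merely a state on $\widetilde{\mathbb{S}}(Q)$, so $\varphi'$ is neither normal on the bidual nor weak-$M$-continuous. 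The standard argument that a central state which is normal on the algebra is automatically weak-$M$-continuous (for $T\geqslant 0$, centrality gives $x\mapsto\varphi'(xT)=\varphi'(x^{1/2}Tx^{1/2})\leqslant\|T\|\varphi'(x)$, hence domination by a normal functional) requires centrality over the \emph{same} algebra whose topology is in play; your $\varphi'$ is only $Q$-central while the nuclearity is over $M$, so this argument is unavailable. Second, even granting the convergence, the inference ``a normal $Q$-central functional factoring through finite-dimensional algebras exhibits a corner of $Q$ as amenable'' is a non sequitur: every functional on $M$ factors through $\mathbb{M}_1(\mathbb{C})=\mathbb{C}$. Amenability requires that the identity \emph{map} on a corner of $Q$ be a point-weak$^*$ limit of c.c.p.\ maps through matrix algebras (semidiscreteness), or equivalently a hypertrace on $\B(L^2)$; to extract that from your data one must first convert the $Q$-central state $\varphi'$ into a $Q$-bimodular u.c.p.\ map from $q_{\mathbb{X}_\FF}^{\perp}\widetilde{\mathbb{S}}_{\mathbb{X}_\FF}(M)q_{\mathbb{X}_\FF}^{\perp}$ back into a von Neumann algebra containing $Q$ (a GNS argument that uses precisely the commutation with $JMJ^{**}$ you mention but never exploit) and then compose with the approximants. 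That construction, together with the continuity point above, is essentially the content of \cite[Lemma 6.11]{ding2023biexact}; it does not follow formally from what you wrote.

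Moreover, your parenthetical claim that the assertion ``$Q$ is properly proximal relative to $\mathbb{X}_\FF^Q$'' is false is itself incorrect, and this is where your route diverges from the paper's for no gain. The paper's proof is three lines: by \cite[Proposition 6.10]{ding2023biexact}, biexactness of $M$ relative to $\mathbb{X}_\FF$ passes to the subalgebra $A$ with the compressed boundary piece $\mathbb{X}_\FF^A$; by \cite[Lemma 6.11]{ding2023biexact}, $A$ then splits as an amenable summand plus a summand properly proximal relative to $\mathbb{X}_\FF^A$; and Theorem \ref{thm: split rel prop prox} finishes. Proper proximality relative to $\mathbb{X}_\FF^A$ is perfectly compatible with summands being amenable relative to some $N_i$: the boundary piece $\mathbb{X}_\FF$ contains the $e_{N_i}$ directions precisely so that states witnessing relative amenability live ``under'' $q_{\mathbb{X}_\FF}$ and are not forbidden; if your claim were right, Theorem \ref{thm: split rel prop prox} would be vacuous. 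Passing to the subalgebra via Proposition 6.10 is also what repairs your continuity problem: once the nuclearity is over $Q$ (with boundary piece $\mathbb{X}_\FF^Q$) and the state is $Q$-central and normal on $Q$, weak-$Q$-continuity does follow from centrality plus normality. So the reduction you tried to bypass is not an optional shortcut but the mechanism that makes the argument work. (A minor further point: $\varphi'|_{gMg}$ normal and $Q$-central gives $d'$ only as a positive element of $L^1(Q'\cap gMg)$, not of $(Q'\cap gMg)_+$; this is harmless but should be handled by cutting with spectral projections.)
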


\begin{proof}
    Since $M$ is biexact relative to $\mathbb{X}_\FF $ and $A\subset M$, it follows from \cite[Proposition 6.10]{ding2023biexact} that $A$ is biexact relative to $\mathbb{X}_\FF^A$. By \cite[Lemma 6.11]{ding2023biexact}, there exists a projection $f_{n+1}\in \mathcal{Z}(A)$ such that $Af_{n+1}$ is amenable and $Af_{n+1}^\perp$ is properly proximal relative to $\mathbb{X}_{\FF}^A$. The conclusion then follows from Theorem \ref{thm: split rel prop prox}.
\end{proof}

\begin{corollary}[\protect{Corollary \ref{cor: AFP absorbing}}]
    Let $(M_1,\tau_1)$ and $(M_2,\tau_2)$ be two separable tracial weakly exact von Neumann algebras with a common amenable subalgebra $B$ such that $\tau_1|_B=\tau_2|_B$, and $ M_1\overline{\ast}_{B}M_2 $ be the AFP von Neumann algebra. Assume that $B$ is mixing in $M_i$ for $i=1,2$. If $A\subset M$ is a von Neumann subalgebra with no properly proximal direct summand and $A\cap M_1 \npreceq_{M_1} B $, then $ A\subset M_1 $.
\end{corollary}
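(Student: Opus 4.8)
The plan is to first apply the relative biexactness and splitting results of this paper to reduce the statement to a relative amenability assertion, and then to upgrade relative amenability to the honest inclusion $A\subseteq M_1$ by an amalgamated free product intertwining argument driven by the mixing hypothesis. Since $B$ is amenable it is injective, and as $\tau_1|_B=\tau_2|_B$ the conditional expectations $E_i\colon M_i\to B$ are trace preserving; hence Corollary~\ref{cor: relative biexactness tracial} applies and $M=M_1\overline{\ast}_B M_2$ is biexact relative to $\{M_1,M_2\}$. Moreover, $B$ being amenable and mixing in each $M_i$, Corollary~\ref{cor: multiple biexact rel B} shows that $M_1$ and $M_2$ are themselves mixing subalgebras of $M$, so $\FF=\{M_1,M_2\}$ is a finite family of mixing subalgebras and $\mathbb{X}_{\{M_1,M_2\}}=\mathbb{X}_\FF$. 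Applying Theorem~\ref{thm: split rel biex} to $A\subseteq M$ with $p=1$, I obtain orthogonal projections $f_0,f_3\in\mathcal{Z}(A)$ and $f_1,f_2\in\mathcal{Z}(A'\cap M)$ with $\sum_{i=0}^3 f_i=1$ such that $Af_0$ is properly proximal, $Af_3$ is amenable, and $Af_i$ is amenable relative to $M_i$ for $i=1,2$. The hypothesis that $A$ has no properly proximal direct summand forces $f_0=0$, and since full amenability implies amenability relative to $M_1$, the summand $Af_3$ may be absorbed into $Af_1$; writing $z:=f_2\in A'\cap M$, this leaves $Az$ amenable relative to $M_2$ and $Az^\perp$ amenable relative to $M_1$.

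Next I would extract the intertwining consequences of the hypothesis $Q:=A\cap M_1\npreceq_{M_1}B$. For a subalgebra of $M_1$ one has the standard amalgamated free product equivalences $Q\preceq_M B\iff Q\preceq_{M_1}B$ and $Q\preceq_M M_2\iff Q\preceq_{M_1}B$, the latter expressing that an $M_1$-subalgebra can reach $M_2$ only through the amalgam $B=M_1\cap M_2$. Hence $Q\npreceq_M B$ and $Q\npreceq_M M_2$, and since intertwining is detected on corners, $Qe\npreceq_M M_2$ for every nonzero projection $e\in Q'\cap M$. On the summand $z$ we have $Qz\subseteq Az$, and since relative amenability passes to subalgebras with expectation, $Qz$ is amenable relative to $M_2$; the amalgamated free product dichotomy for relative amenability (obtained from the free malleable deformation of $M$, cf.\ \cite{MR3594283,vaes2014normalizers}) then gives that $Qz$ is either intertwined into $M_2$ or amenable relative to $B$, i.e.\ amenable. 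The first alternative is excluded by $Qz\npreceq_M M_2$, so $Qz$ is amenable.

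The crux, which I expect to be the main obstacle, is to rule out any part of $A$ lying outside $M_1$. The largeness of $Q$ in $M_1$ recorded by $Q\npreceq_{M_1}B$, together with the mixing of $B$ in $M_1$, functions as a relative malnormality: any element of $A\setminus M_1$ has a nonzero component in $L^2M\ominus L^2M_1$, and mixing forces $Q$ to decorrelate from this component, which should be incompatible with $Az$ being amenable relative to $M_2$ (and $Qz$ amenable) unless $z=0$. Making this precise is exactly an amalgamated free product deformation/rigidity and normalizer-control argument: I would first deduce $z=f_2=0$, so that $A$ is amenable relative to $M_1$; then promote relative amenability to an intertwining $A\preceq_M M_1$ via the malleable deformation, using the $M_2$-side non-intertwining to localize $A$ into $M_1$; and finally invoke that, since $B$ is mixing in $M_1$, the quasinormalizer of $Q$ in $M$ is contained in $M_1$ (the von Neumann algebraic mixing-absorption phenomenon, cf.\ the proof of \cite[Corollary C]{vaes2014normalizers}) to upgrade $A\preceq_M M_1$ to $A\subseteq M_1$. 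The genuine difficulty is this upgrade: relative amenability and one-sided intertwining are both strictly weaker than containment, and closing the gap requires combining the deformation estimates with the mixing property so as to trap all of $A$ --- not merely $Q$ or its quasinormalizer --- inside $M_1$.
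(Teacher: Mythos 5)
Your opening step coincides with the paper's: Corollary \ref{cor: relative biexactness tracial} gives biexactness of $M$ relative to $\{M_1,M_2\}$, and Theorem \ref{thm: split rel biex}, combined with the no-properly-proximal-summand hypothesis, yields projections $f_1,f_2\in\mathcal{Z}(A'\cap M)$ with $f_1+f_2=1$ and $Af_i$ amenable relative to $M_i$. But your middle step already goes astray: you apply the AFP dichotomy to $Qz$ (where $Q=A\cap M_1$, $z=f_2$) and conclude only that $Qz$ is amenable, which controls nothing about $Az$; and the conclusion you then aim for, $z=0$, is simply not deducible. For instance, if $A$ is diffuse abelian, $A=A\cap M_1\subset M_1$ and $A\npreceq_{M_1}B$, then $A$ satisfies all hypotheses, yet $A$ is amenable, so the splitting of Theorem \ref{thm: split rel biex} may legitimately place all of $A$ under $f_2$. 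The paper instead applies \cite[Corollary 2.12]{Ioana15cartan} to $Af_2$ itself: since $Af_2$ is amenable relative to $M_2$, either $Af_2\preceq_M M_2$ or $Af_2$ is amenable relative to $B$. The first alternative is excluded by exactly your mixing observation, but run for the whole algebra $Af_2$: the unitaries $u_n\in\mathcal{U}(A\cap M_1)$ witnessing $A\cap M_1\npreceq_{M_1}B$ give unitaries $u_nf_2\in\mathcal{U}(Af_2)$ with $\lVert E_{M_2}(xu_nf_2y)\rVert_2=\lVert E_{M_2}(x)E_B(u_n)E_{M_2}(f_2y)\rVert_2\to 0$ for all $x,y\in M$, by mixingness of $M_2$ inside $M$. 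Hence $Af_2$ is amenable relative to $B\subset M_1$, and therefore \emph{all} of $A$ is amenable relative to $M_1$ --- no vanishing of $f_2$ is needed or available.

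The fatal gap is the final upgrade, which you explicitly leave open ("the genuine difficulty is this upgrade"). Passing from "$A$ is amenable relative to $M_1$" (or even from $A\preceq_M M_1$) to the containment $A\subset M_1$ cannot be done by the route you sketch: the IPP quasinormalizer theorem \cite{IPP08} controls the quasinormalizer of $Q=A\cap M_1$ inside $M$, but elements of $A$ need not quasi-normalize $A\cap M_1$, so that theorem gives no hold on $A$ itself; and one-sided intertwining $A\preceq_M M_1$ only conjugates a corner of $A$ into $M_1$. The missing ingredient is precisely the main theorem of Boutonnet--Houdayer \cite{BH18amenableabsorption}: if $A\subset M_1\overline{\ast}_B M_2$ is amenable relative to $M_1$ inside $M$ and $A\cap M_1\npreceq_{M_1}B$, then $A\subset M_1$. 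The paper concludes by invoking this as a black box; your proposal, in effect, ends where that theorem begins, and proposes to re-derive it by deformation/rigidity without supplying the argument.
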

\begin{proof}
    $ M_1\overline{\ast}_{B}M_2 $ is biexact relative to $\{M_1,M_2\}$ by Corollary \ref{cor: relative biexactness tracial} and $ A $ has no properly proximal direct summand by presumption, so by Theorem \ref{thm: split rel biex}, there exist central projections $f_1,f_2\in \mathcal{Z}(A'\cap M)$ such that $f_1+f_2=1$ and $Af_i$ is amenable relative to $M_i$ inside $M$ for each $i=1,2$.
    
    Suppose $Af_2$ is not amenable relative to $B$, then by \cite[Corollary 2.12]{Ioana15cartan}, we must have $ Af_2\preceq_M M_2 $. Since $A\cap M_1 \npreceq_{M_1} B$ by presumption, there exists a net $(u_n)\subset \mathcal{U}(A\cap M_1)$ such that $ \lim_{n} \|E_{B}( xu_ny )\|_2 =0 $ for all $x,y\in M_1$, and such net $(u_n)$ is automatically converging to $0$ weakly. Since $M_2$ is mixing inside $M$, we then have for all $x,y\in M$, 
    \[\lim_{n} \|E_{M_2}(xu_nf_2y)\|_2 = \lim_{n} \|E_{M_2}(x)E_{M_2}(u_n)E_{M_2}(f_2y)\|_2= \lim_{n} \|E_{M_2}(x)E_{B}(u_n)E_{M_2}(f_2y)\|_2 =0,\]
    contradicting to that $ Af_2\preceq_M M_2$. Therefore, $Af_2$ must be amenable relative to $B$. In particular, $ A $ is amenable relative to $M_1$. Since $ A\cap M_1 \not\preceq_{M_1} B $, we obtain $ A\subset M_1 $ by the main theorem of \cite{BH18amenableabsorption}.
\end{proof}

When $M$ is an AFP von Neumann algebra of weakly exact tracial von Neumann algebras, relative amenability in Theorem \ref{thm: split rel biex} can be replaced by Popa's intertwining-by-bimodule condition in the following form.

\begin{prop} \label{prop: amalg free split intertwining}
    Let $M_1,\cdots, M_n$ be weakly exact tracial von Neumann algebras, $B$ be a common injective von Neumann algebra with expectation, and $M = M_1 \overline{*}_B \cdots \overline{*}_B M_n$ be the AFP von Neumann algebra over $B$. Then for any projection $p\in M$ and any von Neumann subalgebra $ A\subset pMp $, there exist projections $p_0, p_{n+1}\in \mathcal{Z}(A)$ and $p_i\in \mathcal{Z}(A'\cap pMp)$, $i=1,\cdots,n$, such that $\sum_{i=0}^{n+1} p_i=p$, $Ap_0$ is properly proximal, $Ap_{n+1}$ is amenable, and $Ap_i \preceq_M M_i$ for all $i=1,\cdots,n$.
\end{prop}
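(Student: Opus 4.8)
The plan is to reproduce the architecture of the proof of Theorem~\ref{thm: split rel biex}, but to run the relative proper proximality splitting with respect to the boundary piece $\mathbb{X}_B$ rather than $\mathbb{K}(L^2M)$, and then to convert the resulting relative amenability into Popa intertwining using the standard dichotomy for amalgamated free products together with the injectivity of $B$. Two structural inputs are needed at the outset. First, by Corollary~\ref{cor: relative biexactness tracial}, the AFP $M=M_1\overline{*}_B\cdots\overline{*}_B M_n$ is biexact relative to $\{M_1,\dots,M_n\}$. Second, although the $M_i$ need not be mixing in $M$ (ordinary mixing already fails for $M_1\overline{*}_B(B\overline{\otimes}\mathcal{R})$), each $M_i$ is mixing relative to $\mathbb{X}_B$: running the computation of the mixing lemma preceding Corollary~\ref{cor:biexactrelativetoB}, an alternating word $e_{M_i}x_1\cdots x_nJy_1\cdots y_mJe_{M_i}$ collapses to $e_B(x_1Jy_1J)e_B\cdots e_B(x_nJy_nJ)e_B$, and now each sandwiched factor $e_B(\,\cdot\,)e_B$ lies in $\mathbb{X}_B$ by construction, so the whole word lies in $\mathbb{K}^{\infty,1}_{\mathbb{X}_B}(M)$. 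Since $e_B\leqslant e_{M_i}$ one also has $[q_{\mathbb{X}_B},\iota^\sharp(e_{M_i})]=0$, so the hypotheses of the upgrading Theorem~\ref{thm:upgrading} (and hence of the Proposition preceding the present one) hold with $\mathbb{Y}=\mathbb{X}_B$ and $\mathcal{F}=\{M_1,\dots,M_n\}$.

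Next I would peel off the trivial summands exactly as in Theorems~\ref{thm: split rel prop prox} and~\ref{thm: split rel biex}. Applying \cite[Proposition 6.10, Lemma 6.11]{ding2023biexact} to $A\subset pMp$ produces $p_{n+1}\in\mathcal{Z}(A)$ with $Ap_{n+1}$ amenable and $Ap_{n+1}^\perp$ properly proximal relative to $\mathbb{X}_{\{M_1,\dots,M_n\}}$. Splitting off the maximal genuinely properly proximal direct summand of $Ap_{n+1}^\perp$ yields $p_0\in\mathcal{Z}(A)$ with $Ap_0$ properly proximal, and on the middle corner $q\coloneqq p-p_0-p_{n+1}$ the algebra $Aq$ has neither an amenable nor a properly proximal direct summand while remaining properly proximal relative to $\mathbb{X}_{\{M_1,\dots,M_n\}}$ (inside $M$).

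Since $Aq$ is not properly proximal and proper proximality relative to $\mathbb{X}_B$ is a \emph{stronger} condition than genuine proper proximality (enlarging $\mathbb{K}(L^2M)$ to $\mathbb{X}_B$ enlarges the small-at-infinity boundary), $Aq$ is also not properly proximal relative to $\mathbb{X}_B$; by the bidual characterization this furnishes, after the usual reduction to a faithful state on the support, an $Aq$-central state on $\widetilde{\mathbb{S}}_{\mathbb{X}_B}(M)$ that is normal on $qMq$. I can therefore invoke the Proposition preceding this one with $\mathbb{Y}=\mathbb{X}_B$, obtaining projections $f_i\in\mathcal{Z}(A'\cap pMp)$ with $\sum_{i=1}^n f_i=q$ and $Af_i$ amenable relative to $M_i$ inside $M$.

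The final step, and the conceptual heart of the argument, is to upgrade relative amenability to intertwining. Writing $M=M_i\overline{*}_B\widehat{M_i}$ with $\widehat{M_i}=\overline{*}_{B,\,j\neq i}M_j$, the dichotomy \cite[Corollary 2.12]{Ioana15cartan} splits each $f_i$ into a part on which $Af_i\preceq_M M_i$ and a part on which $Af_i$ is amenable relative to $B$. Because $B$ is injective, amenability relative to $B$ coincides with amenability; as $Aq$ has no amenable direct summand, the second part must vanish, leaving $Af_i\preceq_M M_i$. Setting $p_i=f_i$ then produces the asserted decomposition $p=p_0+\sum_{i=1}^n p_i+p_{n+1}$. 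The main obstacle I anticipate is precisely this conversion: one must run the entire proximality splitting relative to the enlarged boundary piece $\mathbb{X}_B$---this is what isolates the $B$-direction and makes the amalgamated free product dichotomy applicable---while carefully tracking that the pieces living on the middle corner inherit the absence of an amenable direct summand, so that the ``amenable relative to $B$'' alternative is genuinely excluded.
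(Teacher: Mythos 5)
Your endgame is the same as the paper's: once one has summands $Af_i$ that are amenable relative to $M_i$, strip off the maximal amenable direct summand and use injectivity of $B$ together with \cite[Corollary 2.12]{Ioana15cartan} to upgrade ``amenable relative to $M_i$ with no amenable direct summand'' to $Af_i\preceq_M M_i$. Where you diverge is in how the relative-amenability decomposition is produced: the paper simply quotes Corollary \ref{cor: relative biexactness tracial} and Theorem \ref{thm: split rel biex}, whereas you try to rerun the splitting with $\mathbb{Y}=\mathbb{X}_B$ because you (correctly) notice that Theorem \ref{thm: split rel biex} requires each $M_i$ to be mixing in $M$, which holds only when $B$ is mixing in the other factors (Corollary \ref{cor: multiple biexact rel B}) and is not among the proposition's hypotheses. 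That diagnosis is sharper than the paper's own treatment of this point.

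However, your repair contains a genuine error, and it is not fixable. You assert that proper proximality relative to $\mathbb{X}_B$ is \emph{stronger} than genuine proper proximality, and deduce that $Aq$ not properly proximal implies $Aq$ not properly proximal relative to $\mathbb{X}_B$; this is what is supposed to furnish the $Aq$-central state on $\widetilde{\mathbb{S}}_{\mathbb{X}_B}(M)$ needed for the $\mathbb{Y}$-version proposition. The implication runs the other way: since $\mathbb{K}(L^2M)\subset\mathbb{X}_B$ gives $\mathbb{S}(M)\subset\mathbb{S}_{\mathbb{X}_B}(M)$, any central state on the larger operator system \emph{restricts} to one on the smaller, so genuine proper proximality implies relative proper proximality, and relative proper proximality (with respect to a larger boundary piece) is the \emph{weaker} notion. (If your implication held, Theorem \ref{thm: split rel prop prox} would be vacuous, since its hypothesis would already force $A$ to be properly proximal.) Thus ``no properly proximal summand'' only yields a central state on $\widetilde{\mathbb{S}}(M)$, and a Hahn--Banach extension to $\widetilde{\mathbb{S}}_{\mathbb{X}_B}(M)$ need not remain central. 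Moreover, no alternative patch can exist, because without a mixing hypothesis the statement itself is false: take $M_1=M_2=L(\mathbb{Z})\overline{\otimes}\mathcal{R}$, $B=\mathbb{C}1\overline{\otimes}\mathcal{R}$, so that $M=M_1\overline{\ast}_B M_2=L(\mathbb{F}_2)\overline{\otimes}\mathcal{R}$ (the example in the paper's introduction), and $A=M$. All hypotheses hold ($M_i$ amenable hence weakly exact and tracial, $B$ injective), and $\mathcal{Z}(A)=\mathcal{Z}(A'\cap M)=\mathbb{C}$, so the conclusion forces $A$ to be properly proximal, or amenable, or to satisfy $A\preceq_M M_i$; but $A$ is nonamenable, $A$ is McDuff hence has property Gamma and is not properly proximal by \cite{MR4675043}, and $A\preceq_M M_i$ is impossible since a nonamenable corner cannot embed with expectation into the amenable $fM_if$. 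So the proposition, and the paper's own citation of Theorem \ref{thm: split rel biex} in its proof, tacitly require $B$ to be mixing in each $M_i$; this costs nothing in the intended application, Corollary \ref{cor: Kurosh-type}, where $B=\mathbb{C}$.
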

\begin{proof}
    By Corollary \ref{cor: relative biexactness tracial}, $M$ is biexact relative to $\{M_1,\cdots, M_n\}$. By Theorem \ref{thm: split rel biex}, there exist projections $f_0, f_{n+1}\in \mathcal{Z}(A)$ and $f_i\in \mathcal{Z}(A'\cap pMp)$, $i=1,\cdots,n$, such that $\sum_{i=0}^{n+1} f_i=p$, $Af_0$ is properly proximal, $Af_{n+1}$ is amenable, and $Af_i$ is amenable relative to $M_i$ inside $M$ for all $i=1,\cdots,n$. Let $p_i' \in \mathcal{Z}(Af_i)$ be the maximal projection such that $Ap_i'$ is amenable for each $i=1,\cdots, n$. Then for $p_i\coloneqq f_i - p_i'$, $Ap_i$ has no amenable direct summand, so $Ap_i$ cannot be amenable relative to $B$ inside $M$ since $B$ is injective. By \cite[Corollary 2.12]{Ioana15cartan}, we therefore have $Ap_i \preceq_M M_i$. Finally, we let $p_0 =f_0$ and $p_{n+1} = f_{n+1} + \sum_{i=1}^n p_i$.
\end{proof}


The authors are grateful to Srivatsav Kunnawalkam Elayavalli and Amine Marrakchi for communicating the idea of the flip automorphism trick for the free product von Neumann algebras used in the proof of the following corollary.

\begin{corollary}[\protect{Corollary \ref{cor: Kurosh-type}}]
    Let $M_1,\cdots,M_m$ be weakly exact nonamenable non-properly proximal $\text{II}_1$ factors, and $M = M_1 \overline{\ast}\cdots \overline{\ast}M_m$ be the free product von Neumann algebra. If
    $M = N_1 \overline{\ast}\cdots \overline{\ast}N_n$
    where each $N_j$ is also a weakly exact nonamenable non-properly proximal $\text{II}_1$ factor, then $ m=n $, and up to a permutation of indices, $M_i$ is unitarily conjugate to $N_i$ in $M$.

    Moreover, if
    $M = N_1 \overline{*} N_2$
    for some II$_1$ factors $N_1, N_2$, then there exists a partition of the index set $\{1,\cdots, m\} = I_1 \sqcup I_2$ such that, up to unitary conjugacy in $M$, we have $N_1 = \overline{*}_{i\in I_1}M_i$ and $N_2 = \overline{*}_{i\in I_2}M_i$.
\end{corollary}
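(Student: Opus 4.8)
The plan is to combine the relative biexactness established above with Popa's intertwining calculus and a back-and-forth argument, and to treat the second statement by a free-product flip argument in the spirit of \cite{IsoMarra22}. First I would record the inputs. Taking the amalgam to be $B=\mathbb{C}$, which is injective and trivially mixing in every $M_i$ and $N_j$, Corollary \ref{cor: relative biexactness tracial} shows that $M$ is biexact relative to $\{M_1,\dots,M_m\}$ and, symmetrically, relative to $\{N_1,\dots,N_n\}$; by Corollary \ref{cor: multiple biexact rel B} each $M_i$ and each $N_j$ is mixing in $M$. I would then feed each $N_j$ into Proposition \ref{prop: amalg free split intertwining} as the subalgebra $A\subset M$. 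The key simplification is that the relevant central projections collapse: $N_j$ is a factor, so $\mathcal{Z}(N_j)=\mathbb{C}$, and since distinct free components satisfy $N_j'\cap M=\mathcal{Z}(N_j)=\mathbb{C}$, also $\mathcal{Z}(N_j'\cap M)=\mathbb{C}$. Hence exactly one of the pieces $p_0,p_{n+1},p_i$ is nonzero, and the non--amenability and non--proper proximality hypotheses exclude the first two, yielding $N_j\preceq_M M_{\sigma(j)}$ for some $\sigma(j)\in\{1,\dots,m\}$. Running the same argument with the two decompositions interchanged gives $M_i\preceq_M N_{\tau(i)}$.

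Next I would promote these one--sided intertwinings to a bijection and then to unitary conjugacy. The engine is the rigidity of free components: distinct free components do not intertwine (a weakly null sequence of unitaries in a diffuse $P$ is annihilated by $E_{P'}$ when $P,P'$ are in free position, so $P\npreceq_M P'$), and each free component is its own quasi-normalizer in $M$. Using the latter I would first upgrade $N_j\preceq_M M_{\sigma(j)}$ to an honest conjugation $u_jN_ju_j^*\subseteq M_{\sigma(j)}$; pairing this with $M_i\preceq_M N_{\tau(i)}$ and using that distinct $N$'s, respectively distinct $M$'s, cannot intertwine, a counting argument over the free components forces $\sigma$ and $\tau$ to be mutually inverse bijections, so $m=n$ and, after relabelling, $N_j\preceq_M M_j$ and $M_j\preceq_M N_j$ with $N_j,M_j$ irreducible subfactors. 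The main obstacle is the final upgrade to an equality: from this mutual intertwining of irreducible subfactors I must produce a single unitary $u\in M$ with $uN_ju^*=M_j$. I would extract from one direction an inclusion $uN_ju^*\subseteq M_j$ of finite Jones index, then use the reverse intertwining together with irreducibility ($N_j'\cap M=M_j'\cap M=\mathbb{C}$) to force the index to be $1$, giving equality.

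Finally, for the ``moreover'' statement the hypotheses of weak exactness, non--amenability, and non--proper proximality are dropped on the $N$-side, so Proposition \ref{prop: amalg free split intertwining} can no longer be applied to the $N$-decomposition and one of the two intertwining directions becomes unavailable. To recover it I would follow the flip automorphism method of \cite{IsoMarra22}: embed $M$ into an auxiliary free product carrying a flip automorphism that exchanges two copies of the decomposition, exploit the malleability of the free-product deformation to homotope the flip to the identity, and thereby transport the intertwining obtained on the good $\{M_i\}$-side into the statement that each $M_i$ satisfies $M_i\preceq_M N_1$ or $M_i\preceq_M N_2$, with no assumption on $N_1,N_2$ beyond factoriality. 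Freeness of $N_1,N_2$ then shows each $M_i$ intertwines into exactly one $N_k$, producing a partition $\{1,\dots,m\}=I_1\sqcup I_2$; collecting the components gives $\overline{*}_{i\in I_k}M_i\preceq_M N_k$, and the quasi-normalizer argument of the previous paragraph supplies the reverse intertwining and the upgrade to $uN_ku^*=\overline{*}_{i\in I_k}M_i$ for a suitable unitary $u$. I expect the flip/malleability transport and the fullness step in the conjugacy upgrade to be the most delicate points of the whole argument.
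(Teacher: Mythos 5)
Your first statement is argued essentially as in the paper: take $B=\C$ (injective, trivially mixing), get relative biexactness from Corollary \ref{cor: relative biexactness tracial}, feed the free components into Proposition \ref{prop: amalg free split intertwining}, use factoriality and irreducibility ($N_j'\cap M=\C$) to collapse the projections, and invoke the standard quasi-normalizer machinery to upgrade mutual intertwining to unitary conjugacy; the paper simply delegates that last step to \cite[Corollary 8.1]{Drimbe23}. One caveat: your phrase ``extract a finite Jones index inclusion $uN_ju^*\subseteq M_j$, then force the index to be $1$'' is not how the upgrade runs -- intertwining does not produce finite index directly; one first gets a genuine containment $uN_ju^*\subseteq M_{\sigma(j)}$ via \cite[Theorem 1.1]{IPP08}, and the reverse intertwining plus quasi-normalizer control then yields equality of corners, hence index one. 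This is in the spirit of what you wrote, and it is exactly the content of the cited argument.

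The genuine gap is in the ``moreover'' part. There is no malleability or deformation step in this argument, and the one you propose would not work: ``homotoping the flip to the identity and transporting intertwining along the deformation'' is a deformation/rigidity scheme that requires a rigidity input (property (T), spectral gap, or the like) on the subalgebras being transported, and no such hypothesis is available for the $M_i$ here. What the paper actually does is purely algebraic: form $\widehat{M}=M^{(1)}\overline{*}M^{(2)}$ and let $\sigma_1\in\Aut(\widehat{M})$ be the flip exchanging $N_1^{(1)}$ and $N_1^{(2)}$ while fixing $N_2^{(1)},N_2^{(2)}$. Since $\sigma_1$ is an automorphism, $\widehat{M}=\sigma_1(M_1^{(1)})\overline{*}\sigma_1(M_2^{(1)})\overline{*}\sigma_1(M_1^{(2)})\overline{*}\sigma_1(M_2^{(2)})$ is a second decomposition of $\widehat{M}$ into weakly exact, nonamenable, non--properly proximal II$_1$ factors, so the already-proved first statement applies to $\widehat{M}$ and produces a unitary $u$ with $uM_1^{(1)}u^*=\sigma_1(M_i^{(k)})\subseteq\sigma_1(M^{(1)})=N_1^{(2)}\overline{*}N_2^{(1)}$. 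The desired intertwining $M_1\preceq_M N_2$ (or $N_1$) then follows from \cite[Theorem 1.1]{IPP08} alone: viewing $\widehat{M}$ as an AFP over $N_2^{(1)}$, if $M_1^{(1)}\npreceq_{\widehat{M}}N_2^{(1)}$ the control theorem forces $u\in M^{(1)}$, whence $uM_1^{(1)}u^*\subseteq M^{(1)}\cap\sigma_1(M^{(1)})=N_2^{(1)}$, a contradiction. Your proposal replaces this step by an unspecified deformation transport, which is the missing idea. A second, smaller flaw: ``collecting the components gives $\overline{*}_{i\in I_k}M_i\preceq_M N_k$'' is a non sequitur, since intertwining of each $M_i$ into $N_k$ does not combine into intertwining of their joint free product; in \cite[Corollary 8.1]{Drimbe23} the assembly is done by successive unitary conjugations of the components, not by aggregating intertwining relations.
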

\begin{proof}
    First we assume all $M_i$ and $N_j$ are weakly exact nonamenable non-properly proximal separable $\text{II}_1$ factors. Since $\mathcal{Z}(M_1'\cap M) = \mathbb{C}$, Proposition \ref{prop: amalg free split intertwining} implies that either $M_1 \preceq_{M} N_1$ or $M_1 \preceq_{M} N_2*\cdots N_n$. Then the same argument as in \cite[Corollary 8.1]{Drimbe23} then yields the first statement.

    Now suppose each $M_i$ is as above, but the $N_j$ are arbitrary von Neumann subalgebras such that $M = N_1\overline{*}N_2$. For simplicity, we treat the case $m = 2$; the general case follows by similar argument. Let $M_1^{(1)}, M_1^{(2)}$ be two isomorphic copies of $M_1$, and for $x \in M_1$, let $x^{(k)}$ denote its image in $M_1^{(k)}$. We use the same superscript notation for copies of $M_2$ and the $N_j$. For $k = 1,2$, let 
    \[M^{(k)} \coloneqq M_1^{(k)} \overline{\ast} M_2^{(k)} = N_1^{(k)} \overline{\ast} N_2^{(k)},\]
    and let $\widehat{M} \coloneqq M^{(1)} \overline{\ast} M^{(2)}$.
    
    Define $\sigma_1 \in \Aut(\widehat{M})$ to be the flip automorphism exchanging $N_1^{(1)}$ and $N_1^{(2)}$ while fixing all other free product factors. Then
    \[\widehat{M} = M^{(1)}\overline{*}M^{(2)} = M_1^{(1)} \overline{*} M_2^{(1)}\overline{*}M_1^{(2)} \overline{*} M_2^{(2)} = \sigma_1(M_1^{(1)}) \overline{*} \sigma_1(M_2^{(1)})\overline{*} \sigma_1(M_1^{(2)}) \overline{*} \sigma_1(M_2^{(2)}),\]
    where each factor is weakly exact, nonamenable, and non–properly proximal. By the first part, $M_1$ is unitarily conjugate in $\widehat{M}$ to some $\sigma_1(M_i^{(k)})$. Without loss of generality we may assume $i=1$ and $k=1$, as the proof for other cases follows by similar argument. In this case, there exists a unitary $u\in \mathcal{U}(\widehat{M})$ such that 
    \[u M_1^{(1)} u^{*}=\sigma_1(M_1^{(1)}) \subset \sigma_1(M^{(1)}) = N_1^{(2)} \overline{*} N_2^{(1)}.\]
    
    We claim that $M_1^{(1)} \preceq_{\widehat{M}} N_2^{(1)}$. If not, then
    \[M_1^{(1)} \subset M^{(1)} \subset \widehat{M} = (N_1^{(1)} \overline{*} N_2^{(1)}) \overline{*}_{N_2^{(1)}} (N_2^{(1)} \overline{*} N_1^{(2)} \overline{*} N_2^{(2)}),\]
    and by \cite[Theorem 1.1]{IPP08}, we have $u \in N_1^{(1)} \overline{*} N_2^{(1)} = M^{(1)}$. This implies
    \[uM_1^{(1)}u^{*} \subset M^{(1)}\cap \sigma_1(M^{(1)}) = (N_1^{(1)}\overline{*} N_2^{(1)})\cap (N_1^{(2)}\overline{*} N_2^{(1)}) = N_2^{(1)},\]
    contradicting our presumption. Thus $M_1^{(1)} \preceq_{\widehat{M}} N_2^{(1)} \subset M^{(1)}$, and hence again by \cite[Theorem 1.1]{IPP08}, we have $M_1^{(1)} \preceq_{M^{(1)}} N_2^{(1)}$, or equivalently, $M_1 \preceq_{M}  N_2$.

    Repeating this argument for $M_2$ and following \cite[Corollary~8.1]{Drimbe23} proves the second statement.
\end{proof}

\subsection{Upgrading biexactness}
\begin{lemma}\label{lem:Upgrading biexactness}
    Let $M$ be a von Neumann algebra and $N\subset M$ be a mixing subalgebra with expectation. If $M$ is biexact relative to $N$ and the embedding $ M\xhookrightarrow{} \langle M,e_N\rangle $ is $M$-nuclear, then the inclusion $\iota^{\sharp}|_{M}: M \xhookrightarrow{} \tilde{\mathbb{S}}(M) $ is weak${}^*$-nuclear.
\end{lemma}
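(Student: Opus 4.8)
The plan is to realise $\iota^{\sharp}|_{M}$ as a block sum of two weak$^{*}$-nuclear maps, each adapted to one of two complementary corners of $\widetilde{\mathbb{S}}(M)$. Throughout take $\mathbb{Y}=\mathbb{K}(L^{2}M)$; since $N\subset M$ is mixing (i.e.\ $\mathbb{K}$-mixing) and $q_{\mathbb{K}}$ is central in $\B(L^{2}M)_{J}^{\sharp*}$, so that $[q_{\mathbb{K}},\iota^{\sharp}(e_N)]=0$, Theorem \ref{thm:upgrading} furnishes a normal, $M^{**}$-bimodular u.c.p.\ map $\Phi\colon \langle M,e_N\rangle^{**}\to (\iota^{\sharp}(M)q_{\mathbb{X}_N}\vee\mathbb{C}q)q_{\mathbb{K}}^{\perp}\subset \widetilde{\mathbb{S}}(M)$ with $\Phi(\iota(x))=\iota^{\sharp}(x)q_{\mathbb{X}_N}q_{\mathbb{K}}^{\perp}$ for $x\in M$. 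Because $\mathbb{K}\subset\mathbb{X}_N$ we have $q_{\mathbb{K}}\leqslant q_{\mathbb{X}_N}$, so the two projections $s\coloneqq q_{\mathbb{X}_N}q_{\mathbb{K}}^{\perp}=q_{\mathbb{X}_N}-q_{\mathbb{K}}$ and $r\coloneqq q_{\mathbb{X}_N}^{\perp}+q_{\mathbb{K}}$ are orthogonal, sum to $p_{\nor}^{\sharp}=\mathbf{1}$, and (as $q_{\mathbb{X}_N},q_{\mathbb{K}}$ commute with $\iota^{\sharp}(M)$) commute with $\iota^{\sharp}(M)$. Thus $\iota^{\sharp}(x)=\iota^{\sharp}(x)s+\iota^{\sharp}(x)r$, and it suffices to weak$^{*}$-nuclearly approximate each summand and take a block-diagonal direct sum of the resulting matrix factorisations.

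For the $s$-corner I would use hypothesis (ii). The $M$-nuclearity of $M\hookrightarrow\langle M,e_N\rangle$ gives nets of c.c.p.\ maps $\phi_i\colon M\to\mathbb{M}_{n_i}$ and $\psi_i\colon\mathbb{M}_{n_i}\to\langle M,e_N\rangle$ with $\psi_i\circ\phi_i\to\mathrm{id}$ in the $M$-topology. Post-composing with $\iota$ and $\Phi$ yields c.c.p.\ maps $\Phi\circ\iota\circ\psi_i\colon\mathbb{M}_{n_i}\to\widetilde{\mathbb{S}}(M)$, and by the formula above $(\Phi\circ\iota)|_{M}=\iota^{\sharp}(\cdot)s$. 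The point is that $\Phi\circ\iota$ carries the $M$-topology to the weak$^{*}$-topology: if $f\in \B(L^{2}M)_{J}^{\sharp}$ is a weak$^{*}$-continuous functional, then $f\circ\Phi$ is normal on $\langle M,e_N\rangle^{**}$, and the $M^{**}$-bimodularity of $\Phi$ together with the normality of $f$ on $M$ show that $f\circ\Phi\circ\iota\in\langle M,e_N\rangle^{M\sharp M}$, exactly as in the proof of Lemma \ref{Lem: Lemma 2.4 of Toy25}. Hence $\Phi\iota\psi_i\phi_i(x)\to\iota^{\sharp}(x)s$ weak$^{*}$ for every $x\in M$, giving a weak$^{*}$-nuclear factorisation of $x\mapsto\iota^{\sharp}(x)s$. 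It is here that mixing is indispensable: it is built into $\Phi$ via Lemma \ref{lem:qisbasicconstruction}, and it is what guarantees the image lands in $\widetilde{\mathbb{S}}(M)$ rather than only in $\widetilde{\mathbb{S}}_{\mathbb{X}_N}(M)$.

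For the $r$-corner I would use hypothesis (i) and no mixing. Relative biexactness means $M\subset\mathbb{S}_{\mathbb{X}_N}(M)$ is $M$-nuclear; post-composing a matrix factorisation with the compression $\iota^{\sharp}\colon\mathbb{S}_{\mathbb{X}_N}(M)\to\widetilde{\mathbb{S}}_{\mathbb{X}_N}(M)$ gives the (easy-direction) bidual reformulation that $\iota^{\sharp}|_M\colon M\to\widetilde{\mathbb{S}}_{\mathbb{X}_N}(M)$ is weak$^{*}$-nuclear. Now compress by $r$. Since $r$ commutes with $JMJ$, for $T\in\widetilde{\mathbb{S}}_{\mathbb{X}_N}(M)$ and $a\in JMJ$ one has $[rTr,a]=r[T,a]r\in r\,\mathbb{K}_{\mathbb{X}_N}(M)_{J}^{\sharp*}\,r$. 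Because $q_{\mathbb{X}_N}^{\perp}$ annihilates $\mathbb{K}_{\mathbb{X}_N}(M)_{J}^{\sharp*}$ and $q_{\mathbb{K}}$ is the central projection with $q_{\mathbb{K}}\B(L^{2}M)_{J}^{\sharp*}=\mathbb{K}(M)_{J}^{\sharp*}$, this compression equals $q_{\mathbb{K}}\mathbb{K}_{\mathbb{X}_N}(M)_{J}^{\sharp*}q_{\mathbb{K}}\subset \mathbb{K}(M)_{J}^{\sharp*}$, so $r\widetilde{\mathbb{S}}_{\mathbb{X}_N}(M)r\subset\widetilde{\mathbb{S}}(M)$. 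As $r\iota^{\sharp}(x)r=\iota^{\sharp}(x)r$, compressing the factorisation from (i) by $r$ produces a weak$^{*}$-nuclear factorisation of $x\mapsto\iota^{\sharp}(x)r$ into $\widetilde{\mathbb{S}}(M)$.

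Combining the two families block-diagonally (over the product directed set) yields a single net of c.c.p.\ maps $M\to\mathbb{M}_{\bullet}\to\widetilde{\mathbb{S}}(M)$ converging point-weak$^{*}$ to $\iota^{\sharp}|_M$, which is the assertion. The main obstacle I anticipate is the $s$-corner: one must verify that $\Phi\circ\iota$ is continuous from the $M$-topology to the weak$^{*}$-topology (the bimodularity/normality bookkeeping of the second paragraph) and, conceptually, understand why compressing the relative factorisation of (i) directly by $s$ would fail—the $N$-diagonal part of $\mathbb{K}_{\mathbb{X}_N}(M)$ survives under $q_{\mathbb{X}_N}q_{\mathbb{K}}^{\perp}$ and is not compact—so that the mixing-built map $\Phi$ is genuinely needed to collapse that diagonal (to $E_N$, by Lemma \ref{lem:qisbasicconstruction}) and thereby land in the compact small-at-infinity boundary.
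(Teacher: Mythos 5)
Your proposal is correct and is essentially the paper's own argument: your $s$-corner is handled exactly as in the paper, via the normal bimodular map $\Phi$ from Theorem \ref{thm:upgrading} composed with a factorization coming from the $M$-nuclearity of $M\hookrightarrow\langle M,e_N\rangle$, while your $r$-corner merely merges the paper's two remaining pieces $q_{\mathbb{K}}$ and $q_{\mathbb{X}_N}^{\perp}$ into the single projection $r=q_{\mathbb{K}}+q_{\mathbb{X}_N}^{\perp}$, with the compression identity $rq_{\mathbb{X}_N}=q_{\mathbb{K}}$ (hence $r\,\widetilde{\mathbb{S}}_{\mathbb{X}_N}(M)\,r\subset\widetilde{\mathbb{S}}(M)$) subsuming the paper's two separate observations that $q_{\mathbb{K}}$-compression lands in $\mathbb{K}(L^2M)^{\sharp*}_J$ and that $q_{\mathbb{X}_N}^{\perp}$-compression kills the commutators. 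The two-piece versus three-piece decomposition of $p^{\sharp}_{\nor}$ is a cosmetic difference; the key lemmas and the roles of mixing and relative biexactness are identical.
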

\begin{proof}
    Since $M$ is biexact relative to $N$, the embedding is $M\xhookrightarrow{} \mathbb{S}_{\mathbb{X}_N}(M) $ is $M$-nuclear. Since $M$-$M$ and $JMJ$-$JMJ$ topology is weaker than $M$-topology, the inclusion
    $$ M \xhookrightarrow{\iota^{\sharp}} p^{\sharp}_{\nor}\mathbb{S}_{\mathbb{X}_N}(M)^{**}p^{\sharp}_{\nor} \subset \tilde{\mathbb{S}}_{\mathbb{X}_N}(M) $$
    is weak${}^*$-nuclear. In particular, $$\Ad(q_{\mathbb{K}})\circ\iota^{\sharp}|_{M}: M\to \mathbb{K}(L^2M)^{\sharp *}_J,\quad \Ad(q_{\mathbb{X}_N}^{\perp})\circ\iota^{\sharp}|_{M}: M\to q_{\mathbb{X_N}}^{\perp}\tilde{\mathbb{S}}_{\mathbb{X}_N}(M)q_{\mathbb{X}_N}^{\perp}  $$
    are also weak${}^*$-nuclear maps.
    We trivially have $  \mathbb{K}(L^2M)^{\sharp *}_J\subset \tilde{\mathbb{S}}(M) $. We also observe that $ q_{\mathbb{X}_N}^{\perp}\tilde{\mathbb{S}}_{\mathbb{X}_N}(M)q_{\mathbb{X}_N}^{\perp}  $ is precisely
    $$ q_{\mathbb{X}_N}^{\perp}\tilde{\mathbb{S}}_{\mathbb{X}_N}(M)q_{\mathbb{X_N}}^{\perp} = (JMJ^{**})'\cap  q_{\mathbb{X}_N}^{\perp}\B(L^2M)^{\sharp *}_J q_{\mathbb{X}_N}^{\perp} \subset \tilde{\mathbb{S}}(M)q_{\mathbb{K}}^{\perp}. $$
    Therefore, we have two weak${^*}$-nuclear maps,
    $$ \Ad(q_{\mathbb{K}})\circ\iota^{\sharp}|_{M}: M\to \tilde{\mathbb{S}}(M),\quad \Ad(q_{\mathbb{X}_N}^{\perp})\circ\iota^{\sharp}|_{M}: M\to \tilde{\mathbb{S}}(M)q_{\mathbb{K}}^{\perp}. $$
    
    By Theorem \ref{thm:upgrading}, we obtain a normal u.c.p map
    $$ \Phi\circ \Ad(p^{\sharp}_{\nor}): p^{\sharp}_{\nor}\langle M,e_N\rangle^{**}p^{\sharp}_{\nor}\to \tilde{\mathbb{S}}(M)q_{\mathbb{K}}^{\perp} $$
    sending $\iota(x)$ to $ \iota^{\sharp}(x)q_{\mathbb{X}_N}q_{\mathbb{K}}^{\perp} $. We note that $ p_{\nor}\in M^{**}\subset \langle M,e_N\rangle^{**} $, so $ p_{\nor}\langle M,e_N\rangle^{**}p_{\nor}\subset \langle M,e_N\rangle^{**} $.

    Now, by the assumption, the inclusion $M\xhookrightarrow{}\langle M,e_N\rangle$ is $M$-nuclear. Since $ \Phi\circ \Ad(p^{\sharp}_{\nor}) $ is normal and the weak${}^*$ topology on $ p_{\nor}\langle M,e_N\rangle^{**}p_{\nor} $ restricts to the weak $M$-topology on $\langle M,e_N\rangle$, we obtain the weak$^*$-nuclear map
    \[ \Phi\circ \Ad(p^{\sharp}_{\nor})\circ \iota\big|_{M}\colon M \to \tilde{\mathbb{S}}(M)q_{\mathbb{K}}^{\perp}.\]
    Notice that this map sends any $x\in M$ to $ \iota^{\sharp}(x)q_{\mathbb{X}_N}q_{\mathbb{K}}^{\perp} $, or in other words, the map $ \Ad(q_{\mathbb{X}_N}q_{\mathbb{K}}^{\perp})\circ \iota^{\sharp}\big|_{M}$ is weak$^{*}$-nuclear.

    Finally, since 
    \[\iota^{\sharp}\big|_{M} = \Ad(q_{\mathbb{K}})\circ\iota^{\sharp}|_{M}+\Ad(q_{\mathbb{X}_N}^{\perp})\circ\iota^{\sharp}|_{M}+  \Ad(q_{\mathbb{X}_N}q_{\mathbb{K}}^{\perp})\circ \iota^{\sharp}\big|_{M} \]
    is the sum of three weak$^*$-nuclear maps from $M$ to $ \tilde{\mathbb{S}}(M) $, the map $\iota^{\sharp}\big|_{M}: M\to \tilde{\mathbb{S}}(M)$ must also be weak$^*$-nuclear.
\end{proof}

The following theorem is an application of \cite[Theorem 7.3]{ding2023biexact}.
\begin{thm}
    Let $M$ be a separable von Neumann algebra with a faithful normal state $\mu$, and $\mathbb{X}$ an $M$-boundary piece. The following conditions are equivalent:
    \begin{enumerate}
        \item $M$ is biexact relative to $\mathbb{X}$.
        \item $M$ is weakly exact, and the inclusion $\iota^{\sharp}|_M : M\hookrightarrow \tilde{\mathbb{S}}_\mathbb{X}(M)$ is weak${}^*$ nuclear.
        \item $M$ is weakly exact, and for every $\varepsilon>0$ and finite-dimensional operator systems $E\subset F\subset \B(L^2M)$ with $E\subset M$ there exists a u.c.p. map $\phi:\B(L^2M)\to \B(L^2M)$ such that $ d_{r_\mu}(x-\phi(x),\mathbb{K}_{\mathbb{X}}^{\infty,1}(M))<\varepsilon $, and $d_{r_\mu}([JxJ,\phi(T)],\mathbb{K}_\mathbb{X}^{\infty,1}(M))<\varepsilon$ for all $x\in E_1$ and $T\in F_1$.
    \end{enumerate}
\end{thm}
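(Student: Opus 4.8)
The plan is to establish the cycle of implications $(1)\Rightarrow(2)\Rightarrow(3)\Rightarrow(1)$. The implication $(1)\Rightarrow(2)$ is the general–boundary–piece version of the first step of Lemma~\ref{lem:Upgrading biexactness}. Relative biexactness already forces weak exactness, since $\mathbb{S}_{\mathbb{X}}(M)\subset\B(L^2M)$. For the weak${}^*$-nuclearity I would compose the $M$-nuclear inclusion $M\subset\mathbb{S}_{\mathbb{X}}(M)$ with $\iota^{\sharp}=\Ad(p_{\nor}^{\sharp})$. Since $\iota^{\sharp}$ carries $\mathbb{S}_{\mathbb{X}}(M)$ into $\tilde{\mathbb{S}}_{\mathbb{X}}(M)$ (the commutator condition defining $\mathbb{S}_{\mathbb{X}}(M)$ compresses, using $JMJ\subset M'$, to the one defining $\tilde{\mathbb{S}}_{\mathbb{X}}(M)$), and the $M$-$M$/$JMJ$-$JMJ$ topology is weaker than the $M$-topology, the matrix factorizations converge point-weak${}^*$ after compression, yielding weak${}^*$-nuclearity of $\iota^{\sharp}|_M\colon M\to\tilde{\mathbb{S}}_{\mathbb{X}}(M)$.

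The heart of the argument is $(2)\Rightarrow(3)$, where a bidual factorization must be concretized into honest u.c.p.\ maps on $\B(L^2M)$. Starting from c.c.p.\ maps $\phi_i\colon M\to\mathbb{M}_{n_i}$ and $\psi_i\colon\mathbb{M}_{n_i}\to\tilde{\mathbb{S}}_{\mathbb{X}}(M)$ with $\psi_i\circ\phi_i\to\iota^{\sharp}|_M$ point-weak${}^*$, I would first extend each $\phi_i$ to a u.c.p.\ map $\tilde\phi_i\colon\B(L^2M)\to\mathbb{M}_{n_i}$ by Arveson's theorem, and then lift each $\psi_i$, using injectivity of $\B(L^2M)$ and finite-dimensionality of the domain, to a u.c.p.\ map $\hat\psi_i\colon\mathbb{M}_{n_i}\to\B(L^2M)$ with $\iota^{\sharp}\circ\hat\psi_i\to\psi_i$ point-weak${}^*$; this is the standard density of u.c.p.\ maps out of a matrix algebra into $\B(L^2M)$, after compression, among those into the bidual corner $\B(L^2M)_J^{\sharp *}$. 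Setting $\phi=\hat\psi_i\circ\tilde\phi_i$, for $x\in E_1\subset M$ one has $\iota^{\sharp}(\phi(x)-x)\to0$ weak${}^*$, while for $T\in F_1$, $x\in E_1$ one has $[\iota^{\sharp}\phi(T),\iota^{\sharp}(JxJ)]\to[\psi_i\tilde\phi_i(T),\iota^{\sharp}(JxJ)]\in\mathbb{K}_{\mathbb{X}}(M)_J^{\sharp*}$ weak${}^*$, because left and right multiplication by the fixed multiplier $\iota^{\sharp}(JxJ)$ is weak${}^*$-continuous and $\psi_i\tilde\phi_i(T)\in\tilde{\mathbb{S}}_{\mathbb{X}}(M)$. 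The remaining step is to upgrade these point-weak${}^*$ statements to the $d_{r_\mu}$-distance estimates of (3): since $\mathbb{K}_{\mathbb{X}}^{\infty,1}(M)$ is, on bounded sets and by separability, exactly the $r_\mu$-closed subspace cut out by the functionals in $\B(L^2M)_J^{\sharp}$ (Lemma~\ref{lem:density of X}), a Hahn–Banach/Mazur convexity argument as in \cite[Lemma~4.4]{ding2023biexact} lets me pass to a single convex combination $\phi$ of the $\hat\psi_i\tilde\phi_i$ for which $d_{r_\mu}(x-\phi(x),\mathbb{K}_{\mathbb{X}}^{\infty,1}(M))$ and $d_{r_\mu}([JxJ,\phi(T)],\mathbb{K}_{\mathbb{X}}^{\infty,1}(M))$ are simultaneously $<\varepsilon$ on $E_1$ and $F_1$.

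For $(3)\Rightarrow(1)$ I would invoke \cite[Theorem~7.3]{ding2023biexact}. Given weak exactness, \cite[Theorem~5.1]{ding2023biexact} provides matrix factorizations of $\id\colon M\hookrightarrow\B(L^2M)$ converging in the $M$-topology; composing these with the concrete maps $\phi$ furnished by (3) — whose outputs approximately land in $\mathbb{S}_{\mathbb{X}}(M)$ and which approximately fix $M$ modulo $\mathbb{K}_{\mathbb{X}}^{\infty,1}(M)$ — assembles a net of c.c.p.\ maps $M\to\mathbb{M}_k\to\mathbb{S}_{\mathbb{X}}(M)$ converging to the inclusion in the (weak) $M$-topology, that is, $M$-nuclearity of $M\subset\mathbb{S}_{\mathbb{X}}(M)$. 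The relative boundary piece $\mathbb{X}$ enters the hypotheses of \cite[Theorem~7.3]{ding2023biexact} in precisely the same way the compacts $\mathbb{K}(L^2M)$ do in the absolute case, so its proof applies here.

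I expect the main obstacle to be the concretization in $(2)\Rightarrow(3)$: producing u.c.p.\ lifts on all of $\B(L^2M)$ whose compressions weak${}^*$-approximate the bidual maps $\psi_i$, and then matching the resulting point-weak${}^*$ control with the precise $d_{r_\mu}$-modulo-$\mathbb{K}_{\mathbb{X}}^{\infty,1}(M)$ formulation of (3). The separability of $M$ and the faithful normal state $\mu$ are exactly what make the concluding metric translation, through the single seminorm $r_\mu$ and the convexity argument, possible.
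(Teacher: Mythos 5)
Your proposal follows essentially the same route as the paper's proof: (1)$\Rightarrow$(2) by compressing the $M$-nuclear factorization with $\iota^{\sharp}$, (2)$\Rightarrow$(3) by Arveson-extending the $\phi_i$, approximating the bidual maps $\psi_i$ by u.c.p.\ maps into $\iota^{\sharp}(\B(L^2M))$ (strong density of the corner $\B(L^2M)^{\sharp*}_J$), and then a Mazur-type convex-combination argument turning weak$^{*}$ control into $r_\mu$-estimates via separability and the faithful normal state, and (3)$\Rightarrow$(1) by citing \cite[Theorem 7.3]{ding2023biexact}. The only cosmetic difference is that the paper organizes the concretization as an explicit double net $\psi_{n,n_m}$ and passes to convex combinations twice --- first to get the commutator estimate uniformly over the matrix unit ball, then to approximate the basis of $E$ --- which your single-index formulation compresses.
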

\begin{proof}
    (3)$\implies$(1) is \cite[Theorem 7.3]{ding2023biexact}. (1)$\implies$(2) is trivial as $ p^{\sharp}_{\nor} {\mathbb{S}}_\mathbb{X}(M)^{**}p^{\sharp}_{\nor}\subset \tilde{\mathbb{S}}_\mathbb{X}(M)$. It remains to show (2)$\implies$(3).

    Fix $\varepsilon>0 $. Let $ \psi_{n}\circ \phi_n:M\xhookrightarrow{\phi_n} \M_{k_n}(\C) \xhookrightarrow{\psi_n} \tilde{\mathbb{S}}_\mathbb{X}(M) $ be a net of u.c.p.\ maps such that $ \psi_n\circ \phi_n(x)\to \iota^{\sharp}(x) $ in weak$^*$ topology for each $x \in M$. By Arveson's extension theorem, we can extends each $ \phi_n $ to $ \B(L^2M)\to \tilde{\mathbb{S}}_{\mathbb{X}}(M) $, and we still denote the extension by $\phi_n$. Since $ \iota^{\sharp}(\B(L^2M)) $ is strongly dense in $ \B(L^2M)^{\sharp *}_J = p^{\sharp}_{\nor}\B(L^2M)^{**}p^{\sharp}_{\nor} $, we can approximate each $ \psi_{n}\colon \M_{k_n}(\C) \to \tilde{\mathbb{S}}_\mathbb{X}(M) $ by a net of u.c.p.\ maps $\psi_{n,n_m}\colon \M_{k_n}(\C)\to \iota^{\sharp}(\B(L^2M))$ in the point-strong operator topology. Since $\iota^{\sharp}$ is an complete order isomorphism to its image, this also gives a net of u.c.p.\ maps $ \tilde\psi_{n,n_m}\coloneqq(\iota^{\sharp})^{-1}\circ\psi_{n,n_m}: \M_{k_n}(\C)\to \B(L^2M) $.

    Pick a basis $ \{x_1,\cdots,x_s\}\subset E$ with $ \|x_i\|=1 $. Since $ [Jx_iJ,\psi_{n}(A)]\in \mathbb{K}_{\mathbb{X}}(M)^{\sharp *}_J $ for all $i=1,\cdots,s$, and $A\in \M_{k_n}(\C)$, there exists a net $ \{ k^A_{n_m} \}\subset \mathbb{K}_{\mathbb{X}}(L^2M) $ such that $ \iota^{\sharp}(k_{n_m}) \to  [\iota^{\sharp}(Jx_iJ),\psi_{n}(A)]$ $*$-weakly. In particular, we obtain a net converging to zero in the weak-$*$ topology:
    \[ \lim_{m} [\iota^{\sharp}(Jx_iJ),\psi_{n,n_m}(A)]-\iota^{\sharp}(k^A_{n_m}) = 0, \quad \forall n,i. \]
    In other words, $ [Jx_iJ, \tilde{\psi}_{n,n_m}(A)]-k^A_{ n_m } \to 0 $ in the weak $M$-$M$ and $JMJ$-$JMJ$-topology. Since $ \M_{k_n}(\C) $ is finite dimensional, we may perform this construction simultaneously for all elementary matrices $ e_{ij}\in \M_{k_n}(\C) $. By passing to a convex combination, we may moreover assume that $ [Jx_iJ, \tilde{\psi}_{n,n_m}(A)]-k^A_{ n_m } \to 0 $ in the (strong) $M$-$M$ and $JMJ$-$JMJ$-topology for all $A\in \M_{k_n}(\C)$ and $i=1,\cdots,s$, which in particular implies that
    $$ \lim_m r_{\mu}( [Jx_iJ, \tilde{\psi}_{n,n_m}(A)]-k^A_{ n_m } ) =0,\quad \forall n,i,A\in \M_{k_n}(\C). $$
    Therefore, by passing to a subnet of $(n_m)_m$ for each $ n $, we may assume that
    \begin{equation}\label{eq:bidualbiexact}
        d_{r_\mu}( [Jx_iJ, \tilde{\psi}_{n,n_m}(A)], \mathbb{K}^{\infty,1}_\mathbb{X}(M)  ) <\varepsilon,\quad \forall n,m,i,A\in \M_{k_n}(\C), \|A\|\leqslant 1. 
    \end{equation}

    Now, recall that we also have weak$^*$ convergence $ \psi_n\circ\phi_n(x)\to \iota^{\sharp}(x) $, which implies the weak$^*$ convergence
    \[ \lim_{n}\lim_{m} \psi_{n,n_m}\circ\phi_{n}(x_i) = \iota^{\sharp}(x_i),\quad \forall i=1,\cdots,s, \]
    so $ x_i $ is inside the weak $M$-$M$ and $JMJ$-$JMJ$ closure of $ \{ \tilde{\psi}_{n,n_m}\circ\phi_{n}(x_i) \} $. In particular, there is a convex combination $ \phi\coloneqq  \sum_{l=1}^{p}\lambda_l \tilde{\psi}_{n_l,(n_l)_{m_l}}\circ\phi_{n_l}: \B(L^2M)\to \B(L^2M) $ such that
    \[ r_\mu(\phi(x_i)-x_i)= r_{\mu}(\sum_{l=1}^{p}\lambda_l \tilde{\psi}_{n_l,(n_l)_{m_l}}\circ\phi_{n_l}(x_i)-x_i)<\varepsilon,\quad \forall i=1,\cdots,s. \]
    Since the estimate \eqref{eq:bidualbiexact} is preserved under convex combination, we also obtain
    \[ d_{r_\mu}( [Jx_iJ, \phi(T)], \mathbb{K}^{\infty,1}_\mathbb{X}(M)  ) <\varepsilon,\quad\forall i=1,\cdots,s, \forall T\in F, \|T\|\leqslant 1. \]
    Therefore, the u.c.p.\ map $\phi$ satisfies the condition for (3).
\end{proof}

Combining Lemma \ref{lem:Upgrading biexactness} with Condition (2) in the previous theorem, we obtain
\begin{corollary}
    Let $M$ be a separable von Neumann algebra and $N\subset M$ be a mixing subalgebra with expectation. If $M$ is biexact relative to $N$ and the embedding $ M\hookrightarrow \langle M,e_N\rangle $ is $M$-nuclear, then $M$ is biexact.
\end{corollary}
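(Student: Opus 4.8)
The plan is to chain together the two results immediately preceding the corollary, specializing the boundary piece to $\mathbb{X} = \mathbb{K}(L^2M)$, so that $\tilde{\mathbb{S}}_{\mathbb{X}}(M) = \tilde{\mathbb{S}}(M)$ and ``biexact relative to $\mathbb{X}$'' becomes exactly ``biexact'' in the sense of the definition.

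First I would apply Lemma \ref{lem:Upgrading biexactness}. Its hypotheses are precisely those of the corollary: $N \subset M$ is a mixing subalgebra with expectation, $M$ is biexact relative to $N$, and the embedding $M \hookrightarrow \langle M, e_N\rangle$ is $M$-nuclear. The lemma then yields exactly the statement that the compression $\iota^{\sharp}|_M \colon M \hookrightarrow \tilde{\mathbb{S}}(M)$ is weak${}^*$-nuclear, which is the substantive half of Condition (2) in the preceding equivalence theorem for the compact boundary piece.

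Next I would supply the remaining ingredient of Condition (2), namely weak exactness of $M$. This is automatic: as recorded after the definition of relative biexactness, any von Neumann algebra that is biexact relative to some boundary piece is weakly exact, since the small-at-infinity boundary lies inside $\mathbb{B}(L^2M)$; as $M$ is biexact relative to $N$ by hypothesis, $M$ is weakly exact. I would also note that a separable von Neumann algebra is $\sigma$-finite and hence admits a faithful normal state $\mu$, so the standing hypotheses of the equivalence theorem are in force.

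With both parts of Condition (2) verified for $\mathbb{X} = \mathbb{K}(L^2M)$, the implication $(2) \Rightarrow (1)$ of the equivalence theorem gives that $M$ is biexact relative to $\mathbb{K}(L^2M)$, i.e.\ $M$ is biexact. I do not expect a genuine obstacle at this stage: the real work has already been done in Theorem \ref{thm:upgrading} (producing the normal u.c.p.\ map that realizes a second copy of the basic construction inside the bidual and lets one transfer relative biexactness across $q_{\mathbb{X}_N}$) and in the bidual characterization of the equivalence theorem. The only point needing care is the routine bookkeeping that the $\tilde{\mathbb{S}}(M)$ appearing in Lemma \ref{lem:Upgrading biexactness} is the $\mathbb{X} = \mathbb{K}(L^2M)$ instance of $\tilde{\mathbb{S}}_{\mathbb{X}}(M)$, so that the two results genuinely compose.
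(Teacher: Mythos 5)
Your proposal is correct and takes essentially the same route as the paper, which obtains the corollary precisely by combining Lemma \ref{lem:Upgrading biexactness} with the implication $(2)\Rightarrow(1)$ of the preceding equivalence theorem for $\mathbb{X}=\mathbb{K}(L^2M)$. The details you supply---weak exactness being automatic from relative biexactness, and separability furnishing the faithful normal state $\mu$---are exactly the bookkeeping the paper leaves implicit.
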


\begin{corollary}[\protect{Corollary \ref{cor: biexact AFP}}]
    Let $M_1,\cdots, M_n$ be separable injective von Neumann algebras admitting a common subalgebra $B$ with expectation. If $B$ is mixing in $M_i$ for each $i=1,\cdots, n$, then the AFP von Neumann algebra $ M= M_1\overline{*}_B \cdots \overline{*}_B M_n $ is biexact.
\end{corollary}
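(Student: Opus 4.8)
The plan is to deduce this corollary directly from the upgrading corollary stated immediately above, which asserts that a separable von Neumann algebra $M$ that is biexact relative to a mixing subalgebra $N$ with expectation, and for which the embedding $M \hookrightarrow \langle M, e_N\rangle$ is $M$-nuclear, must itself be biexact. I would apply this with $N = B$. Thus the whole argument reduces to verifying three hypotheses for the inclusion $B \subset M$, where $M = M_1 \overline{*}_B \cdots \overline{*}_B M_n$: that $B$ is a mixing subalgebra of $M$ equipped with a conditional expectation, that $M$ is biexact relative to $B$, and that $M \hookrightarrow \langle M, e_B\rangle$ is $M$-nuclear. Note first that $M$ is separable, being a finite amalgamated free product of separable von Neumann algebras, so the separability assumption of the upgrading corollary is met.

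First I would record that $B$ is injective. Indeed, for each $i$ the subalgebra $B \subset M_i$ carries a normal conditional expectation $E_i \colon M_i \to B$, and $M_i$ is injective; composing a norm-one projection $\mathbb{B}(\mathcal{H}) \to M_i$ with $E_i$ exhibits $B$ as the range of a norm-one projection, so $B$ is injective. Consequently Theorem \ref{thm: relative biexactness injective} applies and shows that $M$ is biexact relative to $B$; in particular $M$ is weakly exact, since any relatively biexact von Neumann algebra is weakly exact. Next, since $B$ is mixing in each $M_i$ by hypothesis, Corollary \ref{cor: multiple biexact rel B} shows that $B$ is mixing in $M$. Together with the canonical faithful normal conditional expectation $E \colon M \to B$, this supplies the first two hypotheses of the upgrading corollary.

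It remains to check that the embedding $M \hookrightarrow \langle M, e_B\rangle$ is $M$-nuclear. For this I would invoke Lemma \ref{lem: weak exactness into smaller injective} with $A = M$: because $B$ is injective and $M$ is weakly exact, that lemma yields precisely the $M$-nuclearity of $M \hookrightarrow \langle M, e_B\rangle$. With all three hypotheses verified, the upgrading corollary produces the biexactness of $M$. The substance of this result lies entirely in the machinery already established — the relative biexactness of injective AFPs in Theorem \ref{thm: relative biexactness injective}, the propagation of mixingness to the free product in Corollary \ref{cor: multiple biexact rel B}, and above all the upgrading step built on Theorem \ref{thm:upgrading} and Lemma \ref{lem:Upgrading biexactness}. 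The only point deserving attention is the passage from weak exactness to $M$-nuclearity of the basic-construction embedding, which is exactly Lemma \ref{lem: weak exactness into smaller injective} and hinges on the injectivity of $B$; no genuinely new obstacle arises at the level of the corollary itself.
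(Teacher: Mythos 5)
Your proof is correct and follows essentially the same route as the paper: relative biexactness via Theorem \ref{thm: relative biexactness injective}, mixingness of $B$ in $M$ via Corollary \ref{cor: multiple biexact rel B}, and then the upgrading corollary with $N=B$. The only difference is that you spell out the verification the paper leaves implicit — injectivity of $B$, separability of $M$, and the $M$-nuclearity of $M\hookrightarrow\langle M,e_B\rangle$ obtained from weak exactness plus Lemma \ref{lem: weak exactness into smaller injective} — which is exactly the intended justification.
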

\begin{proof}
    By Theorem \ref{thm: relative biexactness injective}, $M$ is biexact relative to the amalgam $B$. The statement then follows from the previous corollary and Corollary \ref{cor: multiple biexact rel B}.
\end{proof}

\bibliographystyle{amsalpha}
\bibliography{bib}

\providecommand{\bysame}{\leavevmode\hbox to3em{\hrulefill}\thinspace}
\providecommand{\MR}{\relax\ifhmode\unskip\space\fi MR }
\providecommand{\MRhref}[2]{%
  \href{http://www.ams.org/mathscinet-getitem?mr=#1}{#2}
}
\providecommand{\href}[2]{#2}
\begin{thebibliography}{FMR03}

\bibitem[AD95]{anantharaman1995amenable}
Claire Anantharaman-Delaroche, \emph{Amenable correspondences and approximation
  properties for von {N}eumann algebras}, Pacific Journal of Mathematics
  \textbf{171} (1995), no.~2, 309--341.

\bibitem[Ake69]{akemann1969general}
Charles~A Akemann, \emph{The general {S}tone-{W}eierstrass problem}, Journal of
  Functional Analysis \textbf{4} (1969), no.~2, 277--294.

\bibitem[AP17]{anantharaman2017introduction}
Claire Anantharaman and Sorin Popa, \emph{An introduction to {${\rm II}_1$}
  factors}, preprint \textbf{8} (2017).

\bibitem[BH18]{BH18amenableabsorption}
R\'emi Boutonnet and Cyril Houdayer, \emph{Amenable absorption in amalgamated
  free product von {N}eumann algebras}, Kyoto J. Math. \textbf{58} (2018),
  no.~3, 583--593. \MR{3843391}

\bibitem[BIP21]{MR4258166}
R\'{e}mi Boutonnet, Adrian Ioana, and Jesse Peterson, \emph{Properly proximal
  groups and their von {N}eumann algebras}, Ann. Sci. \'{E}c. Norm. Sup\'{e}r.
  (4) \textbf{54} (2021), no.~2, 445--482. \MR{4258166}

\bibitem[BO08]{MR2391387}
Nathanial~P. Brown and Narutaka Ozawa, \emph{{$C^*$}-algebras and
  finite-dimensional approximations}, Graduate Studies in Mathematics, vol.~88,
  American Mathematical Society, Providence, RI, 2008. \MR{2391387}

\bibitem[BTZ18]{bosa2018open}
Joan Bosa, Gabriele Tornetta, and Joachim Zacharias, \emph{Open projections and
  suprema in the {C}untz semigroup}, Mathematical Proceedings of the Cambridge
  Philosophical Society, vol. 164, Cambridge University Press, 2018,
  pp.~135--146.

\bibitem[CFM13]{cameron2013mixing}
Jan Cameron, Junsheng Fang, and Kunal Mukherjee, \emph{Mixing subalgebras of
  finite von {N}eumann algebras}, New York J. Math \textbf{19} (2013),
  343--366.

\bibitem[Con94]{connes1994noncommutative}
Alain Connes, \emph{{Noncommutative geometry}}, Academic Press, 1994.

\bibitem[DEP23]{MR4675043}
Changying Ding, Srivatsav~Kunnawalkam Elayavalli, and Jesse Peterson,
  \emph{Properly proximal von {N}eumann algebras}, Duke Math. J. \textbf{172}
  (2023), no.~15, 2821--2894. \MR{4675043}

\bibitem[Din25]{ding2025unique}
Changying Ding, \emph{A unique {C}artan subalgebra result for {B}ernoulli
  actions of weakly amenable groups}, Journal of Functional Analysis (2025),
  110852.

\bibitem[DKE24]{DS24structure}
Changying Ding and Srivatsav Kunnawalkam~Elayavalli, \emph{Structure of
  relatively biexact group von {N}eumann algebras}, Comm. Math. Phys.
  \textbf{405} (2024), no.~4, Paper No. 104, 19. \MR{4733336}

\bibitem[DP23]{ding2023biexact}
Changying Ding and Jesse Peterson, \emph{Biexact von neumann algebras},
  arXiv:2309.10161 (2023).

\bibitem[Dri23]{Drimbe23}
Daniel Drimbe, \emph{Measure equivalence rigidity via s-malleable
  deformations}, Compos. Math. \textbf{159} (2023), no.~10, 2023--2050.
  \MR{4629453}

\bibitem[FMR03]{MR1986889}
Neal~J. Fowler, Paul~S. Muhly, and Iain Raeburn, \emph{Representations of
  {C}untz-{P}imsner algebras}, Indiana Univ. Math. J. \textbf{52} (2003),
  no.~3, 569--605. \MR{1986889}

\bibitem[Ge98]{MR1609522}
Liming Ge, \emph{Applications of free entropy to finite von {N}eumann algebras.
  {II}}, Ann. of Math. (2) \textbf{147} (1998), no.~1, 143--157. \MR{1609522}

\bibitem[HI17]{MR3570140}
Cyril Houdayer and Yusuke Isono, \emph{Unique prime factorization and
  bicentralizer problem for a class of type {III} factors}, Adv. Math.
  \textbf{305} (2017), 402--455. \MR{3570140}

\bibitem[Hou07]{houdayer2007thesis}
Cyril Houdayer, \emph{Sur la classification de certaines alg{\`e}bres de von
  {N}eumann}, Ph.D. thesis, Paris 7, 2007.

\bibitem[HU16]{MR3594283}
Cyril Houdayer and Yoshimichi Ueda, \emph{Rigidity of free product von
  {N}eumann algebras}, Compos. Math. \textbf{152} (2016), no.~12, 2461--2492.
  \MR{3594283}

\bibitem[IM22]{IsoMarra22}
Yusuke Isono and Amine Marrakchi, \emph{Tensor product decompositions and
  rigidity of full factors}, Ann. Sci. \'Ec. Norm. Sup\'er. (4) \textbf{55}
  (2022), no.~1, 109--139. \MR{4411859}

\bibitem[Ioa15]{Ioana15cartan}
Adrian Ioana, \emph{Cartan subalgebras of amalgamated free product {${\rm
  II}_1$} factors}, Ann. Sci. \'Ec. Norm. Sup\'er. (4) \textbf{48} (2015),
  no.~1, 71--130, With an appendix by Ioana and Stefaan Vaes. \MR{3335839}

\bibitem[IPP08]{IPP08}
Adrian Ioana, Jesse Peterson, and Sorin Popa, \emph{Amalgamated free products
  of weakly rigid factors and calculation of their symmetry groups}, Acta Math.
  \textbf{200} (2008), no.~1, 85--153. \MR{2386109}

\bibitem[Iso13]{MR3004955}
Yusuke Isono, \emph{Weak exactness for {$C^\ast$}-algebras and application to
  condition ({AO})}, J. Funct. Anal. \textbf{264} (2013), no.~4, 964--998.
  \MR{3004955}

\bibitem[Iso15]{Iso15biexactquantum}
\bysame, \emph{On bi-exactness of discrete quantum groups}, Int. Math. Res.
  Not. IMRN (2015), no.~11, 3619--3650. \MR{3373062}

\bibitem[Kir95]{MR1403994}
Eberhard Kirchberg, \emph{Exact {${\rm C}^*$}-algebras, tensor products, and
  the classification of purely infinite algebras}, Proceedings of the
  {I}nternational {C}ongress of {M}athematicians, {V}ol. 1, 2 ({Z}\"{u}rich,
  1994), Birkh\"{a}user, Basel, 1995, pp.~943--954. \MR{1403994}

\bibitem[Kuz23]{Kuz23}
Alexey Kuzmin, \emph{C{CR} and {CAR} algebras are connected via a path of
  {C}untz-{T}oeplitz algebras}, Comm. Math. Phys. \textbf{399} (2023), no.~3,
  1623--1645. \MR{4580530}

\bibitem[Mag98]{MR1616512}
Bojan Magajna, \emph{A topology for operator modules over {$W^*$}-algebras}, J.
  Funct. Anal. \textbf{154} (1998), no.~1, 17--41. \MR{1616512}

\bibitem[Mag00]{MR1750836}
\bysame, \emph{{$C^*$}-convex sets and completely bounded bimodule
  homomorphisms}, Proc. Roy. Soc. Edinburgh Sect. A \textbf{130} (2000), no.~2,
  375--387. \MR{1750836}

\bibitem[Oza04]{MR2079600}
Narutaka Ozawa, \emph{Solid von {N}eumann algebras}, Acta Math. \textbf{192}
  (2004), no.~1, 111--117. \MR{2079600}

\bibitem[Oza06]{MR2211141}
\bysame, \emph{A {K}urosh-type theorem for type {$\rm II_1$} factors}, Int.
  Math. Res. Not. (2006), Art. ID 97560, 21. \MR{2211141}

\bibitem[Oza10]{MR2730894}
\bysame, \emph{A comment on free group factors}, Noncommutative harmonic
  analysis with applications to probability {II}, Banach Center Publ., vol.~89,
  Polish Acad. Sci. Inst. Math., Warsaw, 2010, pp.~241--245. \MR{2730894}

\bibitem[Pas73]{paschke1973inner}
William~L Paschke, \emph{Inner product modules over {$C^*$}-algebras},
  Transactions of the American Mathematical Society \textbf{182} (1973),
  443--468.

\bibitem[Pop93]{MR1198815}
Sorin Popa, \emph{Markov traces on universal {J}ones algebras and subfactors of
  finite index}, Invent. Math. \textbf{111} (1993), no.~2, 375--405.
  \MR{1198815}

\bibitem[Pop06a]{MR2215135}
\bysame, \emph{On a class of type {${\rm II}_1$} factors with {B}etti numbers
  invariants}, Ann. of Math. (2) \textbf{163} (2006), no.~3, 809--899.
  \MR{2215135}

\bibitem[Pop06b]{MR2231961}
\bysame, \emph{Strong rigidity of {$\rm II_1$} factors arising from malleable
  actions of {$w$}-rigid groups. {I}}, Invent. Math. \textbf{165} (2006),
  no.~2, 369--408. \MR{2231961}

\bibitem[Shl04]{Shl04freeentropyestimate}
Dimitri Shlyakhtenko, \emph{Some estimates for non-microstates free entropy
  dimension with applications to {$q$}-semicircular families}, Int. Math. Res.
  Not. (2004), no.~51, 2757--2772. \MR{2130608}

\bibitem[Tak03]{takesaki2003theory}
M.~Takesaki, \emph{Theory of operator algebras. {II}}, Encyclopaedia of
  Mathematical Sciences, vol. 125, Springer-Verlag, Berlin, 2003, Operator
  Algebras and Non-commutative Geometry, 6. \MR{1943006}

\bibitem[Toy25]{MR4833744}
Kai Toyosawa, \emph{Weak exactness and amalgamated free product of von
  {N}eumann algebras}, J. Funct. Anal. \textbf{288} (2025), no.~4, Paper No.
  110759, 20. \MR{4833744}

\bibitem[Ued99]{MR1738186}
Yoshimichi Ueda, \emph{Amalgamated free product over {C}artan subalgebra},
  Pacific J. Math. \textbf{191} (1999), no.~2, 359--392. \MR{1738186}

\bibitem[Vae14]{vaes2014normalizers}
Stefaan Vaes, \emph{Normalizers inside amalgamated free product von {N}eumann
  algebras}, Publ. Res. Inst. Math. Sci. \textbf{50} (2014), no.~4, 695--721.
  \MR{3273307}

\bibitem[VDN92]{MR1217253}
D.~V. Voiculescu, K.~J. Dykema, and A.~Nica, \emph{Free random variables}, CRM
  Monograph Series, vol.~1, American Mathematical Society, Providence, RI,
  1992, A noncommutative probability approach to free products with
  applications to random matrices, operator algebras and harmonic analysis on
  free groups. \MR{1217253}

\bibitem[Voi06]{voiculescu2006symmetries}
Dan Voiculescu, \emph{Symmetries of some reduced free product
  {$C^*$}-algebras}, Operator Algebras and their Connections with Topology and
  Ergodic Theory: Proceedings of the OATE Conference held in Bu{\c{s}}teni,
  Romania, Aug. 29--Sept. 9, 1983, Springer, 2006, pp.~556--588.

\end{thebibliography}

\end{document}